\newlength\fullwidth
\numberwithin{equation}{section}
\DeclareMathSymbol{\leqslant}{\mathalpha}{AMSa}{"36} % nicer `smaller
\DeclareMathSymbol{\geqslant}{\mathalpha}{AMSa}{"3E} % nicer `larger or equal'
\DeclareMathSymbol{\eset}{\mathalpha}{AMSb}{"3F}     % nicer `emptyset'
\def\1{\ifmmode {1\hskip -3pt \rm{I}} \else {\hbox {$1\hskip -3pt \rm{I}$}}\fi}
\newcommand{\var}{\operatorname{Var}}
\newcommand{\given}{\;\big|\;}
\renewcommand{\epsilon}{\varepsilon}
\newcommand{\be}{\begin{equation} } \newcommand{\tmix}{T_{\textsc{mix}}}
\newtheorem{Theorem}{Theorem}[section]
\newtheorem{Lemma}[Theorem]{Lemma}
\newtheorem{Proposition}[Theorem]{Proposition}
\newtheorem{Corollary}[Theorem]{Corollary}
\newtheorem{claim}[Theorem]{Claim}
\newtheorem{definition}[Theorem]{Definition}
\newtheorem{maintheorem}{Theorem}
\newtheorem{maincoro}[maintheorem]{Corollary}
\theoremstyle{definition}{

\newtheorem*{remark*}{Remark}

}
\newcommand{\N}{\mathbb N}
\newcommand{\R}{\mathbb R}
\newcommand{\Z}{\mathbb Z}
\newcommand{\bP}{{\bf P}}
\newcommand{\bE}{{\bf E}}
\newcommand{\cA}{\ensuremath{\mathcal A}}
\newcommand{\cB}{\ensuremath{\mathcal B}}
\newcommand{\cE}{\ensuremath{\mathcal E}}
\newcommand{\cG}{\ensuremath{\mathcal G}}
\newcommand{\cL}{\ensuremath{\mathcal L}}
\newcommand{\cR}{\ensuremath{\mathcal R}}
\newcommand{\bbH}{{\ensuremath{\mathbb H}} }
\newcommand{\bbN}{{\ensuremath{\mathbb N}} }
\newcommand{\bbR}{{\ensuremath{\mathbb R}} }
\newcommand{\bbZ}{{\ensuremath{\mathbb Z}} }
\let\a=\alpha \let\b=\beta   \let\d=\delta  \let\gep=\varepsilon
 \let\g=\gamma \let\h=\eta    \let\k=\kappa  \let\l=\lambda
  \let\s=\sigma \let\t=\tau   
 \let\x=\xi 
\let\D=\Delta   \let\G=\Gamma  \let\L=\Lambda 
\let\O=\Omega      
\def\\{\hfill\break}
\def\thsp{\thinspace}
\def\x{\thinspace}
\def\tthsp{\kern .083333 em}
\def\?{\mskip -10mu}
\def\hexnumber#1{%
  \ifcase#1 0\or 1\or 2\or 3\or 4\or 5\or 6\or 7\or 8\or
  9\or A\or B\or C\or D\or E\or F\fi}
\def\({\left(}
\def\){\right)}
\let\neper=e
\let\ii=i
\def\ie{\hbox{\it i.e.\ }}
\let\sset=\subset
\def\nep#1{ \neper^{#1}}
\let\imp=\Rightarrow
\def\tc{\thsp | \thsp}
\def\Var{ \mathop{\rm Var}\nolimits }
\newcommand{\gap}{\text{\tt{gap}}}
\def\inte#1{\lfloor #1 \rfloor}
\newcommand{\grad}{\nabla}
\newcommand{\gtop}{\textsc{top}}
\newcommand{\gbot}{\textsc{bot}}
\newcommand{\gse}{\gamma_\textsc{se}}
\newcommand{\gsw}{\gamma_\textsc{sw}}
\newcommand{\htx}{\operatorname{ht}}
\newcommand{\gain}{\operatorname{gn}}
\newcommand{\oned}{1\textsc{d} }
\newcommand{\twod}{2\textsc{d} }
\newcommand{\Stop}{S_\textsc{top}}
\newcommand{\Sbot}{S_\textsc{bot}}
\begin{document}
\title[Quasi-polynomial mixing of 2d Ising with plus boundary]{Quasi-polynomial mixing of the 2d stochastic Ising model
with ``plus'' boundary up to criticality}
\author{Eyal Lubetzky}
\address{E.\ Lubetzky\hfill\break
Microsoft Research\\ One Microsoft Way\\ Redmond, WA 98052-6399, USA.}
\email{eyal@microsoft.com}
\author {Fabio Martinelli}
\address{F.\ Martinelli\hfill\break
Dipartimento di Matematica\\ Universit\`a Roma Tre\\ Largo S.\ Murialdo 1 \\ 00146 Roma,~Italia.}
\email{martin@mat.uniroma3.it}
\author{Allan Sly}
\address{A.\ Sly\hfill\break
 Microsoft Research\\ One Microsoft Way\\ Redmond, WA 98052-6399, USA.}
\email{allansly@microsoft.com}
\author {Fabio Lucio Toninelli}
\address{F.L.\ Toninelli\hfill\break
CNRS and ENS Lyon\\ Laboratoire de Physique\\
46 All\'ee d'Italie, 69364 Lyon, France.}
\email{fabio-lucio.toninelli@ens-lyon.fr}

\thanks{This work was
  supported by the European Research Council through the ``Advanced
  Grant'' PTRELSS 228032}

\begin{abstract}
We considerably improve upon the recent result
  of~\cite{cf:MT} on the mixing time of Glauber dynamics for the \twod Ising model in a box of
  side $L$ at low temperature and with random boundary conditions
whose distribution $\bP$ stochastically dominates the
  extremal plus phase. An important special case is when $\bP$ is
  concentrated on the homogeneous all-plus configuration, where the mixing time $\tmix$ is conjectured to be polynomial in $L$. In~\cite{cf:MT} it was shown that for a large enough inverse-temperature $\b $
  and any $\gep >0$ there exists
  $c=c(\b,\gep)$ such that $\lim_{L\to\infty}\bP\(\tmix\ge \exp({cL^\gep})\)
  =0$. In particular, for the all-plus boundary conditions and $\beta$ large enough $\tmix\le \exp({cL^\gep})$.

    Here we show that the same conclusions hold for all $\beta$ larger than the critical value
  $\beta_c$ and with $\exp({cL^\gep}) $ replaced by
  $L^{c \log L }$ (\ie quasi-polynomial mixing). The key point is a
modification of the inductive scheme of~\cite{cf:MT} together with refined equilibrium estimates that hold up to criticality, obtained via duality and random-line representation tools for the Ising model. In particular, we establish new precise bounds
on the law of Peierls contours which quantitatively
  sharpen the Brownian bridge picture established e.g.\ in~\cites{cf:GI,cf:Higuchi,cf:Hryniv}.
\end{abstract}

\keywords{Ising model, Mixing time, Phase coexistence, Glauber dynamics.}
\subjclass[2010]{60K35, 82C20}

\maketitle

\thispagestyle{empty}

\vspace{-1cm}

\section{Introduction}\label{sec:intro}

The Ising model on lattices at and near criticality has been the focus of numerous research papers since its introduction in 1925, establishing it as one of the most studied models in mathematical physics. In two dimensions the model was exactly solved by Onsager~\cite{cf:Onsager} in 1944, determining its critical inverse-temperature $\beta_c=\frac12\log(1+\sqrt{2})$ in the absence of an external magnetic field. While the classical study of the Ising model concentrated on its static properties, over the last three decades significant efforts were dedicated to the analysis of stochastic dynamical systems that both model its evolution and provide efficient methods of sampling it. Of particular interest is the interplay between the behaviors of the static and dynamical models as they both undergo a phase transition at the critical $\beta_c$.

The Glauber dynamics for the Ising model (also known as the \emph{stochastic Ising model}), introduced by Glauber~\cite{cf:Glauber} in 1963, is considered to be the most natural sampling method for it, with notable examples including \emph{heat-bath} and \emph{Metropolis}.
It is known that on a box of side-length $L$ in $\Z^2$ with \emph{free} boundary conditions (b.c.), alongside the phase transition in the range of spin-spin correlations in the static Ising model around $\beta_c$, the corresponding Glauber dynamics exhibits a \emph{critical slowdown}: Its mixing time (formally defined in \S\ref{sec:Glauber}) transitions from being logarithmic in $L$ in the high temperature regime $\beta<\beta_c$ to being exponentially large in $L$ in the low temperature regime $\beta>\beta_c$, en route following a power law at the critical $\beta_c$.

One of the most fundamental open problems in the study of the stochastic Ising model is understanding the system's behavior in the so-called phase-coexistence region under \emph{homogenous} boundary conditions, e.g.\ \emph{all-plus} boundary. In the presence of these b.c.\ the $(-)$ phase becomes unstable and as such the reduced bottleneck between the two phases drastically accelerates the rate of convergence of the dynamics to equilibrium. Indeed, in this case the Glauber dynamics is known to mix in time that is sub-exponential in the surface area of the box, contrary to its low-temperature behavior with free boundary. The central and longstanding conjecture addressing this phenomenon states that the mixing time of Glauber dynamics for the Ising model on a box of side-length $L$ with all-plus boundary conditions is at most polynomial in $L$ at \emph{any temperature}.

So far this has been confirmed on the \twod lattice throughout the one-phase region $\beta <\beta_c$ (see~\cites{cf:MO1,cf:MO2}) and very recently at the critical $\beta=\beta_c$ (see~\cite{cf:LS}). Despite intensive efforts over the last two decades, establishing a power-law behavior for the mixing of Glauber dynamics at the phase-coexistence region $\beta > \beta_c$ under the all-plus b.c.\ remains an enticing open problem.

In~\cite{cf:FH} the precise order of mixing in this regime on a \twod square lattice of side-length $L$ was conjectured to be $L^2$ in accordance with Lifshitz's law (see~\cite{cf:Lifshitz} and also~\cites{cf:CSS,cf:Ogielski,cf:Spohn}). The heuristic behind this prediction argues that when a droplet of the $(-)$ phase is surrounded by the $(+)$ phase at low temperature it proceeds to shrink according to the mean-curvature of the interface between them.
Unfortunately, rigorous analysis is still quite far from establishing the expected Lifshitz behavior of $O(L^2)$ mixing.

Until recently the best upper bound on the mixing at the phase-coexistence region under the all-plus boundary was $\exp(L^{1/2+o(1)})$ due to~\cite{cf:Martinelli} and valid for large enough $\beta$. This bound from 1994 was substantially improved in  a recent breakthrough paper~\cite{cf:MT}, where it was shown (as a special case of a result on a wider class of b.c.) that for a sufficiently large $\beta$ and any $\epsilon>0$ the mixing time is $\exp(O(L^\epsilon))$. The approach of~\cite{cf:MT}
hinged on a novel inductive scheme on boxes with \emph{random} boundary conditions, combined with a careful use of the so-called Peres-Winkler \emph{censoring inequality}; these ideas form the foundation of the present paper. Note that the requirement of large $\beta$ in~\cites{cf:Martinelli,cf:MT} was essential in order to make use of results of~\cite{cf:DKS} on the Wulff construction, available only at low enough temperature by cluster expansion methods. For smaller values of $\beta> \beta_c$ the best known estimates on the mixing time are due to~\cite{cf:CGMS} and of the weaker form $\exp(o(L))$.

In this work we improve these estimates into an upper bound of the form $L^{O(\log L)}$ on the mixing-time (\ie quasi-polynomial in the side-length $L$) valid for any $\beta > \beta_c$.
The key to our analysis is a modification of the recursive framework introduced in~\cite{cf:MT} combined with refined equilibrium estimates that hold up to criticality. To establish these, in lieu of relying on cluster-expansions, we utilize duality and the random-line representation machinery for the high temperature Ising model developed in~\cites{cf:PV1,cf:PV2}.

%An important ingredient in the proofs of the equilibrium results is controlling the behavior of the Ising interfaces on an infinite strip of fixed width under mixed b.c. It is well-known that in the scaling-limit these converge to the Brownian bridge (cf.~\cites{cf:DH,cf:Higuchi,cf:Hryniv}) yet for our arguments it is crucial to have  \emph{quantitative} estimates on large deviations, the probability of remaining in the positive half-plane, etc. In~\S\ref{sec:strip} (see e.g.\ Theorem~\ref{thm:positive})
%we establish such results for these Peierls contours that may be of independent interest.

A key new element of our proof concerns fine estimates on the
fluctuations of cluster boundaries. Whenever the boundary is pinned at
two vertices $u=(0,0)$ and $v=(\ell,0)$, the contour of the cluster is
known to converge to the Brownian bridge
(cf.~\cites{cf:DH,cf:Higuchi,cf:Hryniv}). This does not, however,
allow us to estimate the probability of events when these converge to
0 in the limit. In particular, we are interested in:
(i) a Gaussian
bound for the
probability that the contour would reach height $x\sqrt{\ell}$
(established in~Theorem~\ref{thm-strip-large-dev});
(ii) the probability that the contour remains in the upper half-plane,
an event that would have probability $1/\ell$ were the contour to
behave like a \oned random walk of length $\ell$ conditioned to return
to 0. In~\S\ref{sec:strip} (see Theorem~\ref{thm:positive}) we prove
that up to multiplicative constants this indeed holds for a given
contour.

These then provide important tools in estimating the probability of various other events characterizing the Ising interfaces at equilibrium.

\subsection{Glauber dynamics for the Ising model}\label{sec:Glauber}
Let $\L$ be a generic finite subset of $\bbZ^2$.
Write $x\sim y$ for the nearest-neighbor relation in $\Z^2$ (\ie $x\sim y$ if
$\max_{i=1,2}|x_i-y_i|= 1$) and define $\partial \L$, the boundary of $\L$, to be the nearest-neighbors of $\L$ in $\Z^2\setminus\L$:
\[ \partial \L := \{x\in \Z^2\setminus\L\;:\; x\sim y\mbox{ for some }y \in \L\}\,.\]
The classical Ising model on $\L$ with no external magnetic field is
a spin-system whose set of possible configurations is $\Omega_\L=\{-1,+1\}^\Lambda$.
Each configuration $\sigma\in\Omega_\Lambda$
corresponds to an assignment of plus/minus spins to the sites in $\Lambda$ and has a
statistical weight determined by the Hamiltonian
\[ H_\L^{\t} ( \s ) =
- \sum_{\substack{x,y \in \L \\ x\sim y}} \s_x\s_y
- \sum_{\substack{x \in \L\,,\, y  \in \partial\L \\ x\sim y}}\s_x \t_y \, ,
\]
where $\tau \in \Omega_{\partial \Lambda}$ forms the boundary conditions (b.c.) of the system.
The Gibbs measure associated to the spin-system with boundary conditions
$\t$ is
\begin{equation}
  \label{eq-gibbs-def}
\pi^{\t}_\L ( \s ) = \frac{1}{Z_{\b,\L}^{\t}}
  e^{ - \b H_\L^{\t} ( \s )} \qquad(\sigma\in\Omega_\L)\,,
\end{equation}
where $\b$ is the inverse of the temperature (\ie $\b = \frac{1}{T}$) and
the partition-function $Z_{\b,\L}^{\t}$ is a normalizing constant.  When the boundary
conditions are uniformly equal to $+1$ (resp.\ $-1$) we will denote the Gibbs measure by $\pi_\L^+$ (resp.\ $\pi_\L^-$). Throughout the paper we will omit the superscript $\t$ and
  the subscript $\L$ from the notation of the Gibbs measure $\pi$ when these are made clear from the context.

The Gibbs measure enjoys a useful \emph{monotonicity} property that will play a key role in our analysis. Consider the usual partial order on $\O_\L$
whereby $\s \le \h$ if $\s_x\le\h_x$ for all $x\in \L$.
A function $f:\O_\L\mapsto\mathbb R$ is {\it monotone increasing}
({\it decreasing})
if $\s\le\h$ implies $f(\s)\le f(\h)$ ($f(\s)\ge f(\h)$).
An event is increasing (decreasing) if its
characteristic function is increasing (decreasing).
Given two probability measures $\mu,\nu$ on $\O_\L$
we say that $\mu$ is \emph{stochastically dominated} by $\nu$, denoted by $\mu \preceq \nu$,
 if $\mu(f) \le\nu(f)$ for all increasing
functions $f$ (here and in what follows $\mu(f)$ stands for $\int f d\mu$).
According to these notations the well-known FKG inequalities~\cite{cf:FKG} state that
\begin{itemize}
\item If $\t\le\t'$ then $\pi_\L^\t \preceq \pi_\L^{\t'}$.
\item If $f$ and $g$ are increasing then
   $\pi_\L^\t(f g) \ge \pi_\L^\t(f) \pi_\L^\t(g)$.
\end{itemize}

The phase transition regime in the \twod Ising model occurs at low
temperature and it is characterized by spontaneous magnetization
in the thermodynamic limit.
There is a critical value $\b_c$ such that for all $\b > \b_c$,
\begin{equation}
\label{magnetization}
\lim_{\L\to \bbZ^2} \pi_\L^+ (\s_0) = -
\lim_{\L\to \bbZ^2} \pi_\L^- (\s_0) = m_\b >0 \, .
\end{equation}
Furthermore, in the thermodynamic limit the measures $\pi_\L^+$ and
$\pi_\L^-$ converge (weakly) to two distinct Gibbs measures $\pi^+_\infty$
and $\pi^-_\infty$
which are measures on the space $\Omega_{\Z^2}$, each representing a pure state. We will focus on this phase-coexistence region $\beta > \beta_c$.

The Glauber dynamics for the Ising model is a family of continuous-time Markov chains on the state space $\O_\L$, reversible with respect to the Gibbs distribution $\pi_\L^\t$. An important and natural example of this stochastic dynamics is the heat-bath dynamics, which we will now describe, postponing the formulation of the general Glauber dynamics to \S\ref{sec:prelim-glauber}. Note that our results apply to all of these chains (e.g., Metropolis etc.) by standard arguments for comparing their mixing times (see e.g.~\cite{cf:Martinelli97}).% while the focus on heat-bath is for the sake of the exposition.

The \emph{heat-bath dynamics} for the Ising model $\O_\L$ is defined as follows. With a rate one independent Poisson process for each vertex $x$,
the spin $\s_x$ is refreshed by sampling a new value from the set
$\{-1,+1\}$ according to the conditional Gibbs measure
\[\pi^\t_{\s,x}:=\pi_\L^\t\left(\cdot \mid \s_y,\, y\neq x\right)\,.\]
It is easy to verify that the heat-bath chain is indeed reversible with respect to $\pi_\Lambda^\tau$ and is characterized by the generator
\[(\cL_\L^{\t} f)(\s) = \sum_{x\in \L} \left[ \pi^\t_{\s,x} (f) - f(\s)
  \right]\,,
\]
where $\pi^\t_{\s,x}(f)$ is the average of $f$ with respect to the conditional Gibbs measure $\pi^\t_{\s,x}$ acting only on the variable $\s_x$.
The Dirichlet form associated to $\cL_\L^{\t}$ takes the form
\[
\cE_\L^{\t}(f,f) = \sum_{x\in \L} \pi^{\t}_\L \left(\, \Var^\t_{\s,x}(f)\,\right)
\]
where $\Var^\t_{\s,x}(f)$ denotes the variance with respect to
$ \pi^\t_{\s,x}$. It is possible to extend the above definition of the generator $\cL_\L^{\t}$ directly to the whole lattice $\bbZ^2$ and get a well defined
Markov process on $\O_{\bbZ^2}$ (see e.g.~\cite{cf:Liggett}). The
latter will be referred to as the infinite volume Glauber dynamics, with generator denoted by $\cL_{\Z^2}$.

We will denote by $\mu_t^\sigma$ the distribution of the chain at
time $t$ when the starting configuration is identically equal to $\sigma$. For instance, for any $f$ and $\sigma$ the expectation of $f$ w.r.t.\ $\mu_t^\s$ is given by $(T_\L^\tau(t))f(\sigma)$ where $T_\L^\t$ is the Markov semigroup generated by $\cL_\L^\t$.
The notation $\mu_t^\pm$ will stand for the corresponding quantity for an initial configuration of either all-plus or all-minus.

A key quantity that measures the rate of convergence of Glauber dynamics to stationarity is the gap in the spectrum of its generator, denoted by $\gap_\L^\t$. The Dirichlet form associated with $\cL_\L^\tau$ produces the following characterization for the spectral-gap:
\[
\gap_\L^\t = \inf_f \frac{\cE_\L^\tau(f,f)}
{\var^\t_\L(f)}\,,
\]
where the infimum is over all nonconstant $f\in L^2(\pi_\L^\t)$.
Another useful measure for the speed of relaxation to equilibrium is the \emph{total-variation mixing time} which is defined as follows. Recall that the total-variation distance between two measures $\phi,\psi$ on a finite probability space $\O$ is defined as
\[
\|\phi-\psi\| := \sup_{A \subset\Omega} \left|\phi(A) - \psi(A)\right| = \frac{1}{2}\sum_{x\in\Omega} |\phi(x)-\psi(x)|\,.\]
For any $\epsilon\in (0,1)$, the $\epsilon$-mixing-time of the Glauber dynamics is given by
\[
  \tmix(\epsilon):=\inf\Bigl\{t>0: \sup_{\s\in\O_\L} \|\mu_t^\s-\pi_\L^\t\|\le \epsilon\Bigr\}.
\]
When $\epsilon=1/(2e)$ we will simply write $\tmix$.
This particular definition yields the following well-known inequalities (see e.g.~\cites{cf:SaloffCoste,cf:LPW}):
\begin{align*}
  \sup_{\s\in\O_\L} \|\mu_t^\s-\pi_\L^\t\|&\leq \exp(-\inte{t/\tmix}) \quad \mbox{ for all $t\ge 0$}\,,\\
  \frac{1}{\gap} \leq \tmix &\leq \frac1{\gap} \log \frac{2e}{\pi_{\min}}\qquad\mbox{ where }\pi_{\min} = \min_{\s\in\O_\L}\pi_\L^\t(\s)\,.
\end{align*}
The last inequality shows that in our setting $\gap^{-1}$ and $\tmix$ are always within a factor of $O(|\L|)$ from one another (to see this, observe that $\pi_\L^\t(\sigma)/ \pi_\L^\t(\sigma') \leq \exp(O(|\L|))$ for any $\s,\s'\in\O_\L$ by Eq.~\eqref{eq-gibbs-def} whereas $|\O_\L|=2^{|\L|}$).
One could restate our results as well as the analogous conjecture on the polynomial mixing time under all-plus b.c.\ in terms of $\gap^{-1}$ (expected to have order $L$, the side-length of $\Lambda$, for any $\beta > \beta_c$; see~\cites{cf:BM,cf:CMST}).

\subsection{Main results}
We are now in a position to formalize the main contribution of this paper. The following theorem is the counterpart of the main result obtained by two of the authors in~\cite{cf:MT}. Here we feature an improved estimate that in addition holds not only for large enough $\beta$ but throughout the phase-coexistence region.
\begin{maintheorem}
\label{th:quadrato}
For any $\beta>\beta_c$ there exists some $c(\beta)>0$ so that the following
holds for the Glauber dynamics for the Ising model on the square $\Lambda_L$ at inverse-temperature $\beta$.
If $L$ is of the form $L=2^n-1$ for some integer $n$ then:
\begin{enumerate}
\item If the boundary conditions $\tau$ are sampled from a law $\bP$ that
either stochastically dominates the pure phase $\pi^+_\infty$ or is
stochastically dominated by $\pi^-_\infty$ then
\begin{eqnarray}
  \label{eq:maintmix00}
  \bE \|\mu^\pm_{t_L}-\pi^\tau\| \le c/L \quad\mbox{ for $t_L = L^{c\log L}$}\,.
\end{eqnarray}
In particular,
\begin{eqnarray}
\label{eq:maintmix0}
  \bP\bigl(\tmix\ge L^{c\log L}\bigr)\le c/L\,.
\end{eqnarray}
\item The estimates \eqref{eq:maintmix00},\eqref{eq:maintmix0} also hold if $\bP$ is stochastically dominated by $\pi^-_\infty$ on
one side of $\L_L$ and stochastically dominates $\pi^+_\infty$ on the union of the other three sides. A similar statement holds if the roles of $(+)$ and $(-)$ are reversed.
\end{enumerate}
\end{maintheorem}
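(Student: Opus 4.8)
The plan is to follow and quantitatively refine the recursive censoring scheme of~\cite{cf:MT}, replacing its low-temperature (Wulff/cluster-expansion) equilibrium input by the random-line and duality estimates of~\cites{cf:PV1,cf:PV2} and the new contour bounds proved in~\S\ref{sec:strip}. First I would set up the reduction. By the monotone coupling of the $(+)$ and $(-)$ dynamics, submultiplicativity of $\sup_\sigma\|\mu^\sigma_t-\pi\|$, and Markov's inequality (to pass from the expectation bound~\eqref{eq:maintmix00} to the tail bound~\eqref{eq:maintmix0}), it suffices to prove that for every rectangle $R$ of diameter $\ell$ and every boundary law in a fixed \emph{closed class} $\cC$ of ``good'' boundary conditions, $\bE\|\mu^+_{R,t_\ell}-\pi^\tau_R\|\le \ell^{-C}$ for $t_\ell=\ell^{c\log\ell}$. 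The class $\cC$ should contain: (a) boundary conditions that are plus, or stochastically dominate $\pi^+_\infty$, on every side; (b) those that are plus-dominating on three sides and arbitrary on the fourth; (c) the mixed ``three plus sides / one minus side'' configurations of part (2); and the symmetric minus versions — chosen so that bisecting a rectangle in $\cC$ produces rectangles in $\cC$. The hypothesis $L=2^n-1$ is used precisely here: $2^n-1=2(2^{n-1}-1)+1$, so $\Lambda_L$ bisects cleanly into two rectangles of shorter side $2^{n-1}-1$ separated by a line of width one, and iterating bisections in alternating directions yields a recursion tree of depth $\Theta(\log L)$.

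Next, the recursive step. Fix $R$ at scale $\ell$ and decompose it, via a central line/cross or two successive bisections, into a bounded number of sub-rectangles of diameter at most $\ell/2$ together with a (essentially one-dimensional) separating line. Starting from the extremal all-plus configuration — for which $\|\mu^+_t-\pi^+_R\|$ is maximal up to the standard factor — apply the Peres--Winkler censoring inequality with the schedule that performs $\mathrm{poly}(\ell)$ ``rounds,'' each round running the dynamics sequentially on each sub-rectangle for time $t_{\ell/2}$ and then on the separating line for time $\mathrm{poly}(\ell)$. Because updates outside the active block are censored, the block seen by the dynamics is frozen either at all-plus (on internal sides not yet touched, and on the portion of $\partial\Lambda$ where the $\tau$-marginal is plus-dominating) or at the near-equilibrium configuration produced earlier, which remains plus-dominating by monotonicity on nested boxes; in case (c) the single minus side is inherited by at most one sub-rectangle, keeping it in $\cC$. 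Since censoring only increases the total-variation distance to $\pi^+_R$ for a monotone extremal start, it suffices to bound the distance of this block-sequential evolution, which the inductive hypothesis controls sub-block by sub-block, and one argues that $\mathrm{poly}(\ell)$ rounds bring the resulting block dynamics within $\ell^{-C}$ of $\pi^+_R$. This yields $t_\ell\le \mathrm{poly}(\ell)\cdot t_{\ell/2}$, hence $t_\ell\le \ell^{c\log\ell}$ after unrolling the $\Theta(\log\ell)$ levels; the $\mathrm{poly}(\ell)$ per-round ``bad boundary'' failure events are union-bounded across rounds and levels against the time budget so the total failure probability is $\le c/L$, which dictates how large $C$ (and hence $c(\beta)$) must be taken.

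The crux — and the step that must be pushed from ``$\beta$ large'' to ``$\beta>\beta_c$'' — is the equilibrium input used in the recursive step: (i) that the boundary condition induced on a sub-rectangle by a near-$\pi^+$ configuration on the complement lies in $\cC$ with probability $\ge 1-\ell^{-C}$; (ii) that the measures $\pi^+$ on the nested boxes of the recursion agree closely enough on the relevant local observables that errors do not accumulate; and (iii), in the mixed case, that the interface emanating from a minus side stays confined near that side. All three reduce to showing that a long Peierls contour is unlikely under $\pi^+$ (or the relevant mixed Gibbs measure). In place of~\cite{cf:DKS}, I would invoke duality and the random-line representation of~\cites{cf:PV1,cf:PV2} together with Theorem~\ref{thm-strip-large-dev} (the Gaussian bound on the probability that a contour pinned at distance $\ell$ reaches height $x\sqrt{\ell}$) and Theorem~\ref{thm:positive} (the $\Theta(1/\ell)$ estimate for a contour staying in a half-plane), which sharpen the Brownian-bridge picture of~\cites{cf:GI,cf:Higuchi,cf:Hryniv} to exactly the regime of probabilities tending to $0$ that the recursion requires, uniformly for $\beta>\beta_c$.

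I expect the main obstacle to be exactly this equilibrium step: producing contour estimates that are simultaneously (a) valid uniformly up to $\beta_c$ — which forces the high-temperature duality and random-line machinery rather than any low-temperature expansion — and (b) quantitatively sharp (genuine Gaussian tails, genuine $\Theta(1/\ell)$ positivity), since a softer sub-Gaussian or $e^{-o(\sqrt{\ell})}$-type bound degrades when composed through $\Theta(\log L)$ recursion levels and reproduces only the $\exp(L^\epsilon)$ bound of~\cite{cf:MT}. A secondary but delicate technical point is the bookkeeping of the censoring schedule and the error budget, so that the $\mathrm{poly}(L)$ ``good boundary'' failure events at the top level union-bound to $o(1/L)$; this is where the dependence of $c(\beta)$ on $\beta$ enters, blowing up as $\beta\downarrow\beta_c$ through the constants in the contour estimates.
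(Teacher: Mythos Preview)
Your broad strategy is the right one, and you correctly identify the two equilibrium ingredients from \S\ref{sec:strip} (Theorems~\ref{thm-strip-large-dev} and~\ref{thm:positive}) that replace the cluster-expansion input of~\cite{cf:DKS} and allow the argument to run for all $\beta>\beta_c$. The arithmetic that a $\mathrm{poly}(\ell)$ overhead per doubling yields $L^{c\log L}$ is also fine. But two concrete pieces of the mechanism are missing, and without them the induction does not close.

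First, the geometry. The recursion is \emph{not} on squares bisected into half-squares; it is on thin rectangles $R_n$ of dimensions $L_n\times \kappa_N\sqrt{L_n}$ with $\kappa_N\asymp\sqrt{\log L}$, together with slightly taller intermediate rectangles $Q_n$ (see \S\ref{sec:new-induction}). This aspect ratio is forced by the equilibrium bounds: Proposition~\ref{prop:Bound1} gives a failure probability of the form $\ell^{c_1}e^{-c_2(\delta\alpha)^2}$ for a rectangle of height $\alpha\sqrt{\ell}$, so one needs $\alpha^2$ of order $\log L$ to beat the polynomial prefactor, and this is exactly what fixes the height to $\kappa_N\sqrt{L_n}$ and the per-step overhead to $e^{c\kappa_N^2}=L^{O(1)}$. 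Bisecting squares does not give you access to a usable large-deviation estimate of this type.

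Second, and more seriously, your sentence ``the near-equilibrium configuration produced earlier \dots\ remains plus-dominating by monotonicity on nested boxes'' is precisely the step that fails and that the scheme is designed around. In the horizontal doubling $\cB(L_n,\cdot)\Rightarrow\cA(L_{n+1},\cdot)$, after equilibrating the central $Q_L^{\mathrm{centr}}$ under $(-,+,+,+)$ boundary, the unique open contour stays near the \emph{top} side, so the induced law on the East boundary of $Q_L^{\mathrm{left}}$ is \emph{not} dominated by $\pi^-_\infty$, and the neighboring block is not in the good class. The remedy (already in~\cite{cf:MT}, but here with sharper parameters) is to artificially flip a short interval $\Delta$ of length $\asymp\kappa_N^2$ on the South side to $(-)$, paying a mixing-time factor $e^{c|\Delta|}=L^{O(1)}$, and then invoke Proposition~\ref{prop:Bound2} to show that under the $(-,+,\Delta)$ boundary the two open contours are pulled down to $\Delta$ and stay in their respective halves. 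This is the step that actually closes the class; your ``closed class $\cC$'' cannot be chosen to be closed under plain bisection without this device. Relatedly, your item (b) ``plus-dominating on three sides and \emph{arbitrary} on the fourth'' is too broad to be usable --- the working class is $\cD(R)$, with the fourth side required to dominate $\pi^+_\infty$ (or be dominated by $\pi^-_\infty$), and the passage from the rectangle statement $\cA(L_N,\cdot)$ to the square $\Lambda_L$ with the boundary laws of Theorem~\ref{th:quadrato} is a separate (and comparatively routine) argument, as in~\cite{cf:MT}.
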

The most natural consequence of the above result is obtained when $\bP$ concentrates on homogenous boundary conditions, where the best previous bounds were $\exp(O(L^{\gep}))$ for any $\gep>0$ and $\beta$ large enough (\cite{cf:MT}) along with $\exp(o(L))$ for all other $\beta>\beta_c$ (\cite{cf:CGMS}).
\begin{maincoro}
\label{maincor-allplus}
For any $\beta > \beta_c$ there exists some $c(\beta)>0$ so that the mixing time of Glauber dynamics for the Ising model on the square $\Lambda_L$ with b.c.\ $\tau\equiv +1$ satisfies
\begin{eqnarray}
\label{eq:maintmix}
  \tmix \le L^{c \log L}\,.
\end{eqnarray}
The same bound holds if the boundary conditions are $(+)$ on three sides
and $(-)$ on the remaining one, and similarly if $(+)$ is replaced by $(-)$.
\end{maincoro}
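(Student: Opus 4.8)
The plan is to deduce Corollary~\ref{maincor-allplus} from Theorem~\ref{th:quadrato} by specializing the random boundary law $\bP$ to a point mass and then removing the arithmetic restriction $L=2^n-1$.

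\emph{Step 1 (deterministic boundary conditions).} I would take $\bP=\delta_\tau$, the point mass on the configuration $\tau\equiv+1$ on $\partial\Lambda_L$ — respectively, the configuration equal to $+1$ on three sides and to $-1$ on the fourth. Since $\delta_\tau$ stochastically dominates \emph{every} law on $\Omega_{\partial\Lambda_L}$, it dominates the marginal of $\pi^+_\infty$ on $\partial\Lambda_L$ (or on any three of the sides), while the point mass $-1$ on the remaining side is stochastically dominated by the marginal of $\pi^-_\infty$ there; thus the hypotheses of Theorem~\ref{th:quadrato}(1), resp.\ (2), hold. For $L=2^n-1$, estimate~\eqref{eq:maintmix0} then says $\1\{\tmix\ge L^{c\log L}\}\le c/L$, and since the left-hand side is $\{0,1\}$-valued it must vanish as soon as $L>c$; hence $\tmix< L^{c\log L}$ for all large $n$, and after increasing $c(\beta)$ to absorb the finitely many small $n$ (each giving a finite, irreducible chain) we obtain~\eqref{eq:maintmix} for every $L$ of the form $2^n-1$. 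The case with $(+)$ and $(-)$ interchanged follows from the global spin-flip symmetry $\sigma\mapsto-\sigma$.

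\emph{Step 2 (arbitrary $L$).} Given a general side-length $L$, I would let $n$ be minimal with $L\le L':=2^n-1$, so that $L'\le 2L+1\le 3L$, and embed $\Lambda_L$ inside $\Lambda_{L'}$ (in the mixed case aligning the $(-)$ side of $\Lambda_L$ with the $(-)$ side of $\Lambda_{L'}$) so that the two boxes carry the same type of boundary condition. Using the classical monotonicity of the Glauber relaxation time under enlargement of a box with $(+)$-type boundary conditions, $\gap_{\Lambda_L}^{-1}\le\gap_{\Lambda_{L'}}^{-1}$ (which holds in the three-plus/one-minus geometry as well), together with the two-sided comparison $\gap^{-1}\le\tmix\le\gap^{-1}\log(2e/\pi_{\min})$ recalled in~\S\ref{sec:Glauber} and $\pi_{\min}\ge e^{-O(L^2)}$, one gets $\tmix(\Lambda_L)\le O(L^2)\,\gap_{\Lambda_L}^{-1}\le O(L^2)\,\gap_{\Lambda_{L'}}^{-1}\le O(L^2)\,\tmix(\Lambda_{L'})$; by Step~1 this last quantity is $\le O(L^2)\,(L')^{c\log L'}\le L^{c'\log L}$ for a suitable $c'=c'(\beta)$, which is~\eqref{eq:maintmix}.

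\emph{Expected difficulty.} There is no genuine obstacle: everything is a quotation of Theorem~\ref{th:quadrato} except for the volume-monotonicity invoked in Step~2. That fact is standard — it follows, for instance, from the identity $\pi^+_{\Lambda_L}=\pi^+_{\Lambda_{L'}}(\,\cdot\mid\sigma\equiv+1\text{ on }\Lambda_{L'}\setminus\Lambda_L)$ together with a comparison of Dirichlet forms — but if one prefers to bypass it, one may simply observe that the inductive proof of Theorem~\ref{th:quadrato} does not really use $L=2^n-1$: halving $\Lambda_L$ into sub-boxes of sizes $\lfloor(L-1)/2\rfloor$ and $\lceil(L-1)/2\rceil$ runs the identical recursion for every $L$, at the cost of marginally heavier bookkeeping.
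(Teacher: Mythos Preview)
Your proposal is correct and matches the paper's approach: the paper derives the corollary simply by specializing Theorem~\ref{th:quadrato} to the point mass $\bP=\delta_\tau$ (your Step~1), deferring any remaining details to~\cite{cf:MT}. Your Step~2 goes beyond what the paper spells out; one minor caution is that the volume-monotonicity of the gap, while classical for homogeneous $(+)$ boundary, is less standard in the three-plus/one-minus geometry, so your alternative of running the recursion with halves of sizes $\lfloor(L-1)/2\rfloor$ and $\lceil(L-1)/2\rceil$ is the cleaner route there.
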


We believe that improving the above $L^{c \log L}$ bound into the conjectured polynomial one would require substantial new ideas. Indeed, in the present recursive framework in which the final scale of the system is reached via a doubling sequence, at each step the mixing-time estimate worsens by a power of $L$ (hence the quasi-polynomial bound).
%For a polynomial upper bound one could afford at most a constant factor at each step.
For a polynomial upper bound one could not afford to lose more than a constant factor on average along these steps.

One may also apply Theorem~\ref{th:quadrato} to deduce the mixing behavior of the \twod Ising model under Bernoulli boundary conditions, as illustrated by the next corollary. Here and in what follows we say that an event holds \emph{with high probability} (w.h.p.) to denote that its probability tends to $1$ as the size of the system tends to $\infty$.
\begin{maincoro}
\label{maincor-bernoulli}
Let $\beta > \beta_c$ and consider Glauber dynamics for the Ising model on the square $\Lambda_L$ with b.c.\ $\tau$ comprised of i.i.d.\ Bernoulli variables, $\bP(\tau(x)=+1) = p_+$ for some $p_+ \geq \frac12(1+\tanh(4\beta))$. Then w.h.p.\ $\tmix \le L^{c \log L}$ for some $c(\beta)>0$.
\end{maincoro}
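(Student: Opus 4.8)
The plan is to reduce Corollary~\ref{maincor-bernoulli} to part~(1) of Theorem~\ref{th:quadrato}: its hypothesis asks that the law $\bP$ of the boundary spins stochastically dominate the pure phase $\pi^+_\infty$ (restricted to $\partial\L_L$), and I claim this holds precisely because $p_+\ge p^\star:=\tfrac12(1+\tanh(4\beta))$, the threshold in the statement being chosen exactly so that this works.

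The first ingredient is the elementary bound on single-site conditional probabilities. For any finite $\L\subset\Z^2$, any boundary condition, any $x\in\L$, and any values of the remaining spins, the heat-bath probability of a $+1$ at $x$ equals $e^{\beta S}/(e^{\beta S}+e^{-\beta S})$ with $S=\sum_{y\sim x}\sigma_y\le 4$, hence is at most $e^{4\beta}/(e^{4\beta}+e^{-4\beta})=p^\star$; a $+$ boundary contributes at most $4$ to $S$, so this is unaffected near $\partial\L$.

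The second, and only substantial, step is the stochastic domination $\pi^+_\infty\preceq\mathrm{Ber}(p_+)^{\otimes\Z^2}$, where $\mathrm{Ber}(p_+)^{\otimes\Z^2}$ denotes the i.i.d.\ Bernoulli$(p_+)$ field. Here I would work in a large finite box $\L_N$ with $+$ b.c.\ and couple, driven by common Poisson clocks and auxiliary uniforms and both started from all-plus, the heat-bath Glauber dynamics (which converges to $\pi^+_{\L_N}$) with the ``resampling'' chain that sets each updated site to $+1$ with probability exactly $p_+$ regardless of the current configuration (which converges to $\mathrm{Ber}(p_+)^{\otimes\L_N}$). By the bound above the Ising update accepts a $+1$ with probability $\le p^\star\le p_+$, so the standard monotone coupling keeps the Ising configuration coordinate-wise below the resampled one at all times; letting $t\to\infty$ and then $N\to\infty$ (stochastic domination passes to weak limits, and $\pi^+_{\L_N}$ converges weakly to $\pi^+_\infty$) gives the claim. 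Restricting to $\partial\L_L$ then furnishes precisely the hypothesis of Theorem~\ref{th:quadrato}(1) with $\bP=\mathrm{Ber}(p_+)^{\otimes\partial\L_L}$. (Alternatively, since $\pi^+_\infty$ is itself a Gibbs measure its single-site conditionals are $\le p^\star$ directly, so one may invoke the standard fact that a field whose conditionals are bounded by $p$ is dominated by $\mathrm{Ber}(p)$, or use the tower property to bound the conditionals of the marginal $\pi^+_\infty|_{\partial\L_L}$ and then apply the same finite-set coupling.)

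Finally I would apply Theorem~\ref{th:quadrato}(1): for $L=2^n-1$ it gives $\bP(\tmix\ge L^{c\log L})\le c/L\to 0$, that is $\tmix\le L^{c\log L}$ with high probability; for a general side-length one passes to the nearest scale of the form $2^n-1$ by the usual monotonicity comparison between Glauber dynamics on nested boxes. I do not expect a genuine obstacle here: the content of the corollary lies entirely in the domination step, and the only point requiring care is that this step concerns an infinite-volume measure, which is handled either by the finite-volume coupling above or by the tower-property remark.
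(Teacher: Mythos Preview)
Your proposal is correct and follows exactly the route the paper takes: the paper's entire proof is the one-line remark immediately after the statement, namely that ``the Bernoulli boundary conditions with the above specified $p^+$ clearly stochastically dominate the marginal of $\pi^+_\infty$ on $\partial\L$,'' after which Theorem~\ref{th:quadrato}(1) applies. You have simply unpacked the word ``clearly'': the threshold $p^\star=\tfrac12(1+\tanh(4\beta))=e^{4\beta}/(e^{4\beta}+e^{-4\beta})$ is exactly the maximal single-site conditional probability of a $+1$ in $\Z^2$, and your monotone coupling (or the equivalent Holley-type criterion you mention as an alternative) yields $\pi^+_\infty\preceq\mathrm{Ber}(p_+)^{\otimes\Z^2}$ for $p_+\ge p^\star$, which is precisely the domination the paper invokes.
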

To obtain the above corollary observe that the Bernoulli boundary conditions with the above specified $p^+$ clearly stochastically dominate the marginal of $\pi_\infty^+$ on $\partial\L$.

The mixing time of Glauber dynamics for Ising on a finite box under all-plus b.c.\ is closely related to the asymptotic decay of the time auto-correlation function in the infinite-volume dynamics on $\Z^2$ started at the plus phase. Here it was conjectured in~\cite{cf:FH} that the decay should follow a stretched exponential of the form $\exp(-c\sqrt{t})$. As a by-product of Corollary~\ref{maincor-allplus} (and standard monotonicity arguments) we obtain a new bound on this quantity, improving on the previous estimate due to~\cite{cf:MT} of $\exp(-(\log t)^{c})$ with arbitrarily large $c$ which was applicable for large enough $\beta$.
\begin{maincoro}
\label{maincor-autocor}
Let $\b>\b_c$, let $f(\s) =\s_0$ and define $\rho(t)\equiv
\Var^+_\infty\(\nep{t\cL_{\Z^2}}f\)$ to be the time autocorrelation of the spin
at the origin started from the plus phase $\pi^+_\infty$ (the variance is w.r.t.\ the plus phase $\pi^+_\infty$). Then there exists some $c(\beta) > 0$ such that for any $t$,
\begin{equation}
\label{eq:decorrel}
\rho(t)\le \exp\bigl(-e^{c \sqrt{\log t}}\bigr) \,.
\end{equation}
\end{maincoro}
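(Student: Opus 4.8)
The plan is to derive \eqref{eq:decorrel} from the finite-volume estimate of Corollary~\ref{maincor-allplus} by a monotone-coupling argument, the crucial device being to use reversibility of $\cL_{\Z^2}$ with respect to $\pi^+_\infty$ to reduce the whole problem to a \emph{one-sided} comparison with the $(+)$ boundary. Write $f=\s_0$, $m_\beta=\pi^+_\infty(\s_0)$, $T_t=\nep{t\cL_{\Z^2}}$, and let $T^{\Lambda_L,+}_t$ be the heat-bath semigroup in $\Lambda_L$ with $(+)$ boundary conditions. First I would record the reduction: since $\pi^+_\infty$ is stationary and reversible for $\cL_{\Z^2}$ one has $\rho(t)=\pi^+_\infty\bigl((T_tf)^2\bigr)-m_\beta^2=\pi^+_\infty\bigl(f\,T_{2t}f\bigr)-m_\beta^2$, so that
\[\rho(t)=\pi^+_\infty\bigl((f-m_\beta)(T_{2t}f-m_\beta)\bigr)\ \le\ 2\,\bigl\|T_{2t}f-m_\beta\bigr\|_{L^1(\pi^+_\infty)}\qquad(\text{using }\|f-m_\beta\|_\infty\le 2)\,.\]
Because $\pi^+_\infty(T_{2t}f-m_\beta)=0$, the positive and negative parts of $T_{2t}f-m_\beta$ carry equal $\pi^+_\infty$-mass, whence $\rho(t)\le 4\,\pi^+_\infty\bigl((T_{2t}f-m_\beta)_+\bigr)$. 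The point of this step is that it now suffices to bound the probability that $T_{2t}f$ \emph{exceeds} $m_\beta$, which is precisely the side on which a comparison with $(+)$ b.c.\ is available.

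Next I would fix a free scale $L$ and pass to a finite box. Under the standard monotone coupling of the infinite-volume dynamics with the $\Lambda_L$-dynamics with $(+)$ boundary, started from the same configuration on $\Lambda_L$, the finite-volume configuration dominates the infinite-volume one on $\Lambda_L$ at all times; since $\s_0$ is increasing this gives $T_{2t}f(\s)\le (T^{\Lambda_L,+}_{2t}f)(\s_{\Lambda_L})$, where $\s_{\Lambda_L}$ is the restriction of $\s$ to $\Lambda_L$. Hence $(T_{2t}f(\s)-m_\beta)_+\le\Psi_L(\s_{\Lambda_L})$ with $\Psi_L(\eta):=\bigl((T^{\Lambda_L,+}_{2t}f)(\eta)-m_\beta\bigr)_+$, and $\Psi_L$ is increasing on $\Omega_{\Lambda_L}$ by attractiveness of the finite-volume chain. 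Since the marginal of $\pi^+_\infty$ on $\Omega_{\Lambda_L}$ is stochastically dominated by $\pi^+_{\Lambda_L}$ (FKG, averaging $\pi^{\eta}_{\Lambda_L}\preceq\pi^+_{\Lambda_L}$ over the outside configuration), this yields $\rho(t)\le 4\,\pi^+_{\Lambda_L}(\Psi_L)$. Now $\pi^+_{\Lambda_L}$ is invariant for $T^{\Lambda_L,+}$, so writing $m_L=\pi^+_{\Lambda_L}(\s_0)$ and $\delta_L=|m_L-m_\beta|$, and using the total-variation decay estimate of \S\ref{sec:Glauber} in the form $\|T^{\Lambda_L,+}_{2t}f-m_L\|_\infty\le 2\,\nep{-\inte{2t/\tmix(\Lambda_L,+)}}$ together with $\Psi_L\le|T^{\Lambda_L,+}_{2t}f-m_L|+\delta_L$, I arrive at
\[\rho(t)\ \le\ 8\,\nep{-\inte{2t/\tmix(\Lambda_L,+)}}\ +\ 4\,\delta_L\,.\]
Into this I plug Corollary~\ref{maincor-allplus}, $\tmix(\Lambda_L,+)\le L^{c_0\log L}$ (choosing $L$ of the form $2^n-1$, a harmless rounding that costs only a bounded factor), and the equilibrium bound $\delta_L\le\nep{-c_1 L}$, valid for every $\beta>\beta_c$: this last is a standard fact — exponential relaxation of the influence of $(+)$ boundary conditions, equivalently exponential decay of the truncated two-point function in the plus phase up to criticality — furnished by the equilibrium estimates of the present paper (cf.~\S\ref{sec:strip} and~\cites{cf:Higuchi,cf:Hryniv}).

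It then remains to optimise over $L=L(t)$. Taking $L\asymp\exp(a\sqrt{\log t})$ with $a=a(\beta)$ small enough that $a^2 c_0<\tfrac12$ makes $L^{c_0\log L}\le\sqrt t$ for $t$ large, so the first term above is super-polynomially small in $t$, while the second becomes $\delta_L\le\exp\!\bigl(-\tfrac{c_1}{2}\,\nep{a\sqrt{\log t}}\bigr)$; together these give \eqref{eq:decorrel} for all large $t$ with any $c<a$ (and the bound is vacuous for bounded $t$ upon adjusting $c$). I expect the only genuinely non-formal ingredient to be the estimate $\delta_L=|\pi^+_{\Lambda_L}(\s_0)-m_\beta|\le\nep{-c_1(\beta)L}$ for all $\beta>\beta_c$ — this is exactly where ``up to criticality'' enters and where the equilibrium machinery of the paper is used; everything else is the reversibility identity of the first paragraph plus careful bookkeeping of the directions of the monotone couplings and stochastic dominations (infinite volume $\preceq$ $(+)$-boundary finite volume, and the marginal of $\pi^+_\infty$ on $\Lambda_L$ $\preceq\pi^+_{\Lambda_L}$), the decisive observation being that reversibility is what makes a one-sided comparison suffice.
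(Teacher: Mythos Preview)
Your argument is correct and is precisely the kind of ``standard monotonicity argument'' the paper alludes to (the paper gives no proof of its own, deferring to~\cite{cf:MT}): reversibility plus the mean-zero trick to reduce to the positive part $(T_{2t}f-m_\beta)_+$, then a one-sided monotone comparison with the $(+)$-boundary finite box, then the mixing-time bound of Corollary~\ref{maincor-allplus}, optimised over $L\asymp\exp(a\sqrt{\log t})$.

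One small correction: your justification of $\delta_L=|\pi^+_{\Lambda_L}(\s_0)-m_\beta|\le e^{-c_1 L}$ points to \S\ref{sec:strip} and~\cites{cf:Higuchi,cf:Hryniv}, but those concern fluctuations of interfaces and Brownian-bridge asymptotics, not the relaxation of the $(+)$ boundary. What you actually need is the exponential decay of truncated correlations (equivalently, of finite-$(-)$-cluster connectivities) in the plus phase for all $\beta>\beta_c$; this is classical and does not rely on the machinery of the present paper. With that reference fixed, everything goes through.
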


\subsection{Related work}

Over the last two decades considerable effort was devoted to the formidable problem of establishing polynomial mixing for the stochastic Ising model on a finite lattice with all-plus b.c. Following is a partial account of related results.

Analogous to its conjectured behavior on $\Z^2$, the mixing of Glauber dynamics for the Ising model on the lattice $\Z^d$ in any fixed dimension $d$ is believed to be polynomial in the side-length of the box at any temperature in the presence of an all-plus boundary.
Unfortunately, the state-of-the-art rigorous analysis of the problem in three dimensions and higher is far more limited. Faced with the polynomial lower bounds of~\cite{cf:BM}, the best known upper bound for dimension $d\geq 3$ is $\exp(L^{d-2+o(1)})$ for large enough $\beta$ (as usual $L$ being the side-length) due to~\cite{cf:Sugimine}. Compare this with the case of no (\ie free) boundary conditions case where it was shown in~\cite{cf:Thomas} that $\gap^{-1}$ (and thus also $\tmix$) is at least $c \exp(c' \beta L^{d-1})$ for some $c=c(\beta)>0$ and an absolute constant $c'>0$.

In two dimensions, ever since the work of Martinelli~\cite{cf:Martinelli} in 1994 (an upper bound of $\exp(L^{1/2+o(1)})$ at low enough temperatures) and until quite recently no real progress has been made on the original problem.
Nevertheless, various variants of this problem became fairly well understood. For instance, nearly homogenous boundary conditions were studied in~\cites{cf:Alexander,cf:AY}. Analogues of the problem on non-amenable geometries (in terms of a suitable parameter measuring the growth of balls to replace the side-length) were established, pioneered by the work of~\cite{cf:MSW} on trees and followed by results of~\cite{cf:Bianchi} on a class of hyperbolic graphs of large degrees. The Solid-On-Solid model (SOS), proposed as an idealization of the behavior of Ising contours at low temperatures, was studied in~\cite{cf:MS} where the authors obtained several insights into the evolution of the contours. Finally, the conjectured Lifshitz behavior of $O(L^2)$ was confirmed at zero temperature~\cites{cf:CSS,cf:FSS,cf:CMST}, with the recent work~\cite{cf:CMST} providing sharp bounds also for near-zero temperatures (namely when $\beta \geq c\log L$ for a suitably large $c>0$) in both dimensions two and three.

As mentioned above, the $\exp(L^{1/2})$ barrier was finally broken in the recent paper~\cite{cf:MT}, replacing it by $\exp(c L^\epsilon)$ for an arbitrarily small $\epsilon>0$ and sufficiently large $\beta$ (where the constant $c=c(\b,\gep)$ diverges to $+\infty$  as $\gep \to 0$). At the heart of the proof of the main result of that paper (\cite{cf:MT}*{Theorem 1.6}) was an inductive procedure which will serve as our main benchmark here. We will shortly review that argument in \S\ref{sec:ff-induction} in order to motivate and better understand the new steps gained in the present work.

Finally, there is an extensive literature on the phase-separation lines in the \twod Ising model, going back to~\cites{cf:AR,cf:Gallavotti}. In \S\ref{sec:prelim} we will review the tools we will need from the random-line representation framework of~\cites{cf:PV1,cf:PV2}. For further information
see e.g.~\cite{cf:Pfister} and the references therein.

\section{Preliminaries}\label{sec:prelim}

\subsection{General Glauber dynamics}\label{sec:prelim-glauber}
The class of Glauber dynamics for the Ising model on a finite box $\Lambda \subset \Z^2$ consists of the continuous-time Markov chains on the state space $\O_\L$ that are given by the generator
\begin{equation}
  \label{eq-Glauber-gen}
  (\mathcal{L}^\tau_\L f)(\sigma)=\sum_{x\in \Lambda} c(x,\sigma) \left(f(\sigma^x)-f(\sigma)\right)\,,
\end{equation}
where $\sigma^x$ is the configuration $\sigma$ with the spin at $x$ flipped and the transition rates $c(x,\sigma)$ should satisfy the following conditions:
\begin{enumerate}[(1)]
\item \emph{Finite range interactions}: For some fixed $R>0$ and any $x \in \Lambda$, if $\sigma,\sigma' \in \O_\L$ agree on the ball of diameter $R$ about $x$ then
$c(x,\sigma)=c(x,\sigma')$.
\item \emph{Detailed balance}: For all $\sigma\in \O_\L$ and $x \in \Lambda$,
\[ \frac{c(x,\sigma)}{c(x,\sigma^x)} = \exp\Bigl(-\beta \grad_x H_\L^\t(\s) \Bigr)\,,
\]
where $\grad_x H_\L^\t(\s) = H_\L^\t(\s^x)-H_\L^\t(\s) = 2\s_x \Bigl[\sum_{\substack{y\in\L\\ y \sim x}}\sigma_y + \sum_{\substack{y\in\partial\L \\ y\sim x}} \tau_y\Bigr]$.
\item \emph{Positivity and boundedness}: The rates $c(x,\sigma)$ are uniformly bounded from below and above by some fixed $C_1,C_2 > 0$.
\item \emph{Translation invariance}: If $\sigma \equiv \sigma'(\cdot + \ell)$, where $\ell \in \Lambda$ and addition is according to the lattice metric,
then $c(x,\sigma) = c(x+\ell,\sigma')$ for all $x \in \Lambda$.
\end{enumerate}
The Glauber dynamics generator with such rates defines a unique Markov process, reversible with respect to the Gibbs measure $\mu_\Lambda^\tau$.
The two most notable examples for the choice of transition rates are
\begin{enumerate}[(i)]
\item \emph{Metropolis}: $  c(x,\sigma) = \exp\Bigl(-\beta \grad_x H_\L^\t(\s)\Bigr)  \;\wedge\; 1\; $.
\item \emph{Heat-bath}:   $\;c(x,\sigma) = \Bigl[1+ \exp\bigl(\beta\grad_x H_\L^\t(\s)\bigr)\Bigr]^{-1}\;$.
\end{enumerate}
See e.g.~\cite{cf:Martinelli97} for standard comparisons between these chains, in particular implying that their individual mixing times are within a factor of at most $O(|\L|)$ from one another (hence our results apply to every one of these chains).

\subsection{Surface tension}\label{sec:prelim-tau}
Denote by $\tau_\beta(\theta)$ the surface tension that corresponds to the angle $\theta$, defined as follows. Associate with each angle $\theta\in[-\pi/4,\pi/4]$ the unit vector $\vec{n}_\theta=(\cos\theta,\sin\theta) \in \mathbb{S}^1$ and the following b.c.\ $\eta_\theta$ for $\Lambda_L=\{-\lfloor L/2\rfloor ,\ldots,\lfloor L/2\rfloor\}^2$:
\[\eta_\theta(v) = \left\{\begin{array}{ll}
+1 & \mbox{if $\left<v,\vec{n}_\theta\right> > 0$}\,,\\
-1 & \mbox{if $\left<v,\vec{n}_\theta\right> \leq 0$}\,.
\end{array}\right.
\]
Let $Z_{\b,\L_L}^{\eta_\theta}$ be the partition-function of the corresponding Ising model and, as usual, let $Z_{\b,\L_L}^+$ denote the partition-function under the all-plus b.c. The surface tension in the direction orthogonal to $\vec{n}_\theta$ is the limit
\[ \tau_\b(\theta) = -\lim_{L\to\infty} \frac{\cos\theta}{L}\log\frac{Z_{\b,\L_L}^{\eta_\theta}}{Z_{\b,\L_L}^{+}}\,,
\]
which gives rise to an even analytic function $\tau_\b$ with period $\pi/2$ on $\R$ (a closed formula appears e.g.\ in~\cite{cf:PV1}*{Section 5}). One can then extend the definition of $\tau_\beta$ to $\R^2$ by homogeneity, setting $\tau_\b(x)=\tau_\b(\theta_x)|x|$, where $|x|$ denotes the Euclidean norm of $x$ and $\theta_x$ is the angle it forms with $(1,0)$. For all $\beta>\beta_c$ this qualifies as a norm on $\R^2$.

The surface tension measures the effect of the interface induced by the boundary conditions $\eta_\theta$ on the free-energy and thus plays an important role in the geometry of the low temperature Ising model. For instance, it was shown in~\cite{cf:Shlosman} that the large deviations of the magnetization in a square are governed by $\tau_\b(0)$ (also see~\cites{cf:Ioffe1,cf:Ioffe2}).

One of the useful properties of the surface tension is the \emph{sharp triangle inequality} (see for instance~\cite{cf:PV1}*{Proposition~2.1}): For any $\beta>\beta_c$ there exists a strictly positive constant $\kappa_\beta$ such that for any $x,y\in\R^2$ we have
 \begin{align}
 \tau_{\beta}(x)+\tau_{\beta}(y) - \tau_{\beta}(x+y) \geq \kappa_\beta \left(|x|+|y|-|x+y|\right)\,,\label{eq-sharp-tri-ineq}
 \end{align}
 A thorough account of additional properties of the surface tension may be found e.g.\ in~\cite{cf:DKS} and~\cite{cf:Pfister}.

\subsection{Duality}\label{sec:prelim-duality}
Let $\bbZ^{2*}:=\Z^2+(\frac12,\frac12)$ denote the dual lattice to
$\bbZ^2$. The collection of edges of $\bbZ^2$ and  of $\bbZ^{2*}$ will
be denoted by $E_{\Z^2}$ and $E_{\Z^{2*}}^*$ respectively. It is useful to identify an
edge $e=(x,y)\in E_{\Z^2}$ with the closed unit segment in $\bbR^2$ whose
endpoints are $\{x,y\}$, and similarly do so for edges in $E_{\Z^{2*}}^*$. To each edge
$e=(x,y)\in E_{\Z^2}$ there corresponds a
unique dual edge $e^*\in E_{\Z^{2*}}^*$ defined by the condition
$e\cap e^*\neq \emptyset$.

Given a finite box $\L\sset \bbZ^2$ of the form
$\L=\{x=(x_1,x_2)\in \bbZ^2:\ |x_1|\le L_1, \ |x_2|\le L_2\}$, the
dual box $\L^*\sset \bbZ^{2*}$ is $\L^*=\{x=(x_1,x_2)\in \bbZ^{2*}:\
|x_1|\le L_1+\frac12, \ |x_2|\le L_2+\frac12\}$.  The set of dual edges of
$\L^*$, denoted by $E^*_{\L^*}$, is the set of dual edges for which both
endpoints lie in $\L^*$. Notice that for each edge
$e=(x,y)\in E_{\Z^2}$ such that
$\{x,y\}\cap \L\neq \emptyset$, the corresponding
dual edge $e^*$ necessarily belongs to $E^*_{\L^*}$.
These definitions readily generalize to an arbitrary finite
$\mathcal{G}\subset \bbZ^2$, in which case $\mathcal{G}^*\subset\bbZ^{2*}$ consists
of all dual sites whose $L^1$-distance from $\mathcal{G}$ equals $1$.

For any $\b > \b_c$ we associate the dual inverse-temperature $\b^*$
via the duality relation $\tanh(\b^*)=\nep{-2\b}$. Notice that for any $\b>\b_c$ the dual inverse
temperature $\b^*$ lies below $\b_c$ which is the unique fixed point of the
map $\b\mapsto \b^*$. We will often refer to the Gibbs measure on a subset of the dual lattice $\L^*$ at the inverse-temperature $\b^*$ under free boundary, denoting it by $\pi^*_{\L^*}$.
The following well-known fact addresses the exponential decay of the two-point
correlation function for the free Ising Gibbs measure above the critical temperature.
\begin{Lemma}[e.g.~\cite{cf:MW}*{p309 Eq.~(4.39)}, together with the GKS inequalities~\cites{cf:Griffiths,cf:KS}]\label{lem-spin-spin}
Let $\L \subset \Z^2$ and $\b>\b_c$. There exists some $C_\beta>0$ such that for any $x,y\in \L^*$,
\[  \pi_{\L^*}^*(\s_x\,\s_y)\leq \left(\frac{C_\beta}{\sqrt{x-y}}\wedge 1\right)\exp\big(-\tau_{\beta}(x-y)\big)\,.
\]
\end{Lemma}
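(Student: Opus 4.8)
The idea is to reduce the claim to sharp asymptotics for the \emph{infinite-volume} free two-point function at the dual inverse-temperature $\b^*$, and then to combine the Kramers--Wannier duality identification of its decay rate with the two-dimensional Ornstein--Zernike correction.

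\emph{Step 1: reduction to infinite volume.} Since $\b^*>0$ is a ferromagnetic coupling and $\L^*\subset\Z^{2*}$, the GKS inequalities (monotonicity of $\pi(\s_x\s_y)$ in the coupling constants, comparing the bonds inside $\L^*$ with all bonds of $\Z^{2*}$) give $\pi^*_{\L^*}(\s_x\s_y)\le\pi^*_{\Z^{2*}}(\s_x\s_y)$ for all $x,y\in\L^*$, where $\pi^*_{\Z^{2*}}$ denotes the (unique, since $\b^*<\b_c$) infinite-volume free Gibbs measure at $\b^*$. Combined with the trivial bound $\pi^*_{\L^*}(\s_x\s_y)\le1$, which supplies the $\wedge\,1$, it suffices to prove $\pi^*_{\Z^{2*}}(\s_x\s_y)\le C_\b\,|x-y|^{-1/2}\exp\!\big(-\tau_\b(x-y)\big)$ for $|x-y|$ large.

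\emph{Step 2: the exponential rate is the surface tension.} By planar self-duality the high-temperature two-point function $\pi^*_{\Z^{2*}}(\s_x\s_y)$ at $\b^*$ is expressed, via the (resummed) random-line representation of~\cites{cf:PV1,cf:PV2}, as a weighted sum over lattice paths from $x$ to $y$; the exponential decay rate of the total weight of paths from $0$ to $n\,\vec n_\theta$ equals, by the Onsager/Kramers--Wannier duality, the surface tension $\tau_\b(\theta)$ defined in \S\ref{sec:prelim-tau} (closed formulas for both quantities are recorded in~\cite{cf:PV1}*{Section~5}). Thus $\exp(-\tau_\b(x-y))$ is the correct exponential weight.

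\emph{Step 3: the $|x-y|^{-1/2}$ prefactor, and the main obstacle.} This factor is the planar Ornstein--Zernike correction. For $\b^*<\b_c$ it is classical: it can be read off the exact formulas for the Ising correlation functions, see~\cite{cf:MW}*{p.~309, Eq.~(4.39)}, which yield $\pi^*_{\Z^{2*}}(\s_x\s_y)\le C(\theta_{x-y})\,|x-y|^{-1/2}\exp(-\tau_\b(x-y))$ for large $|x-y|$; by analyticity and strict convexity of $\tau_\b$ (valid for all $\b>\b_c$) the constant may be taken as $C_\b=\sup_\theta C(\theta)<\infty$, uniformly in $\L$. Alternatively the same upper bound follows directly from the random-line representation of~\cites{cf:PV1,cf:PV2}. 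This last step is the only genuine work: the bare exponential bound $\exp(-\tau_\b(x-y))$ would follow from much softer tools (e.g.\ a Simon--Lieb type inequality), whereas the polynomial gain requires either the exact solution or the full Ornstein--Zernike machinery; Steps~1--2 are standard. Combining the three steps proves the lemma.
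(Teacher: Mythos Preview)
Your proposal is correct, and in fact the paper gives no proof of this lemma at all: the result is simply cited in the lemma header as a known fact, pointing to the exact solution in McCoy--Wu~\cite{cf:MW}*{p.~309, Eq.~(4.39)} combined with the GKS inequalities~\cites{cf:Griffiths,cf:KS}. Your three steps are precisely an unpacking of those two citations --- GKS for the reduction to infinite volume (your Step~1), and the exact Ising asymptotics from~\cite{cf:MW} for the Ornstein--Zernike prefactor together with the duality identification of the decay rate with $\tau_\b$ (your Steps~2--3) --- so there is nothing to compare.
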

A matching exponent for the spin-spin correlation was established by~\cite{cf:GI} for two opposite points in the (dual) infinite strip. Let $S = \{1,\ldots,\ell\}\times\Z$ for some integer $\ell$ and fix $\b>\b_c$. In the dual $S^*$ we let
$x=(\frac12,\frac12)$ and $y=(\ell+\frac12,\frac12)$ and consider the free Gibbs measure at inverse-temperature $\b^*$.
It was shown in \cite{cf:GI}*{formula (2.22)} that in this setting there exists some $c_\b> 0$ such that
\begin{equation}
  \label{eq-strip-two-pt-corr}
\pi_{S^*}^*(\s_x\,\s_y) = \frac{c_\b+o(1)}{\sqrt{\ell}}\exp(-\tau_{\beta}(0)\ell) \,,
\end{equation}
where the $o(1)$-term tends to $0$ as $\ell\to\infty$.

\begin{figure}
\centering
\includegraphics[width=0.6\textwidth]{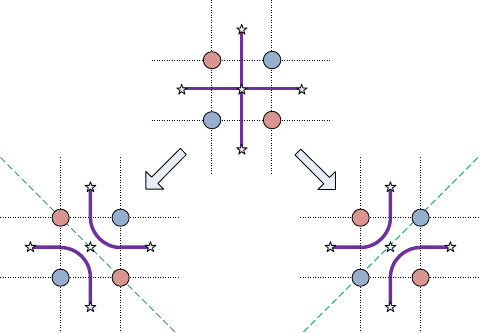}
\caption{SE and SW splitting-rules for forming the contours}
\label{fig:split}
\end{figure}

\subsection{Contours}\label{sec:prelim-contours}
Let $\cG=(V,E)$ be a finite subgraph of $(\bbZ^{2*},E_{\Z^{2*}}^*)$.
The \emph{boundary} of a subset of dual edges $B \subset E$, denoted by $\d B$, is the set of vertices of $V$ with an odd number of adjacent edges of $B$. If $\d B=\emptyset$ we say that $B$ is \emph{closed}, otherwise it is \emph{open}.

A \emph{chain} of sites of length $k$ from $x$ to $y$ in $\cG$ has the
standard definition of a sequence of sites $x=u_0,u_1,\ldots,u_k=y$ such that $u_i\in \cG$
and $|u_i-u_{i-1}|=1$ for all $i$. A $*$-\emph{chain} from $x$ to $y$ is similarly defined with the exception that the distance requirement is relaxed into $1\leq|u_i-u_{i-1}|\leq\sqrt{2}$ for all $i$.
A \emph{path} from $x$ to $y$ in $B$ is a chain of sites consisting of edges of $B$, that is $(u_{i-1} ,u_i) \in B$ for all $i$.
We say that a path is \emph{closed} if its endpoint and starting point coincide, otherwise we say that it is \emph{open}.

A set of dual edges $B\subset E$ can be uniquely partitioned into a finite number of edge-disjoint simple lines in $E \cap \bbR^2$ called \emph{contours}. This is achieved by repeating the following procedure referred to as the \emph{South-East (SE) splitting-rule}: When four bonds meet at a vertex we separate them along the SE-oriented diagonal going through the intersection. Alternatively, one may globally apply the \emph{SW splitting-rule}, analogously defined with the South-West orientation replacing the South-East one (see Figure~\ref{fig:split}).

Contours can be either open or closed (with the same distinction as in paths).
The length of a contour $\gamma$, denoted by $|\gamma|$, is the number of edges in $\gamma$,
and the length of a collection of contours $\underline\g=\{\g_1,\ldots,\g_k\}$ will simply be the sum of all the individual
lengths. Given a finite family of contours $\underline\g=\{\g_1,\ldots,\g_k\}$ we say that it is \emph{compatible}
if it is the contour decomposition of its collection of dual edges $\cup_i \g_i$. We further say that
$\underline\g$ is \emph{$E$-compatible} (or \emph{$\cG$-compatible}) to emphasize that in addition all the edges of $\cup_i\g_i$  belong to $E$, the edge-set of $\cG$.

Given boundary conditions $\t\in \{-1,1\}^{\bbZ^2}$ and a box $\L$, each spin-configuration $\s$ compatible with $\t$ outside $\L$ (\ie $\s_x=\t_x$
for any $x\notin \L$) can be uniquely specified by giving all the
edges $e=(x,y)\in E_{\Z^2}$ such that $\s_x\neq \s_y$ and
$\{x,y\}\cap \L\neq \emptyset$ (that is, all edges whose endpoint sites disagree).
Equivalently, one can specify the corresponding dual edges of $\L^*$. By applying the
above contour decomposition we see that each configuration $\s$
compatible with $\t$ is uniquely characterized by its collection of
closed and open contours (see Figure~\ref{fig:contours} for an illustration).
The open contours obtained in this manner are called the \emph{phase-separation lines}.

\begin{figure}
\centering
\includegraphics[width=5in]{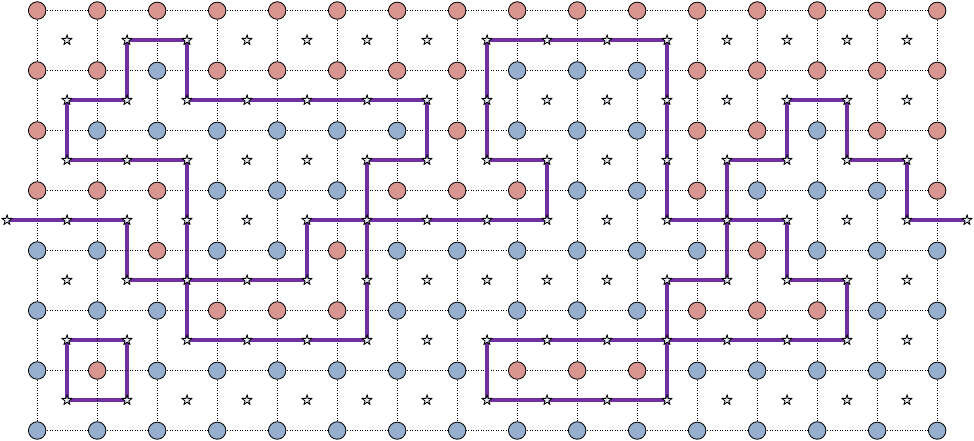}\\
\vspace{0.15in}
\includegraphics[width=2.7in]{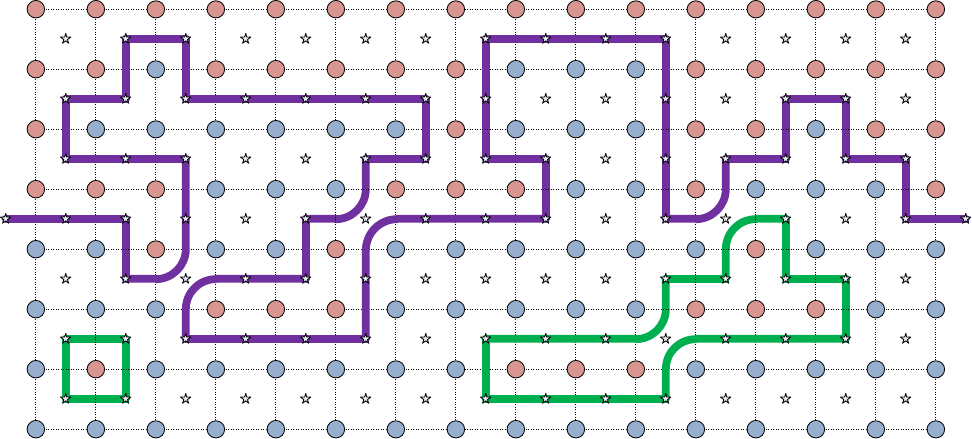}
\hspace{0.05in}
\includegraphics[width=2.7in]{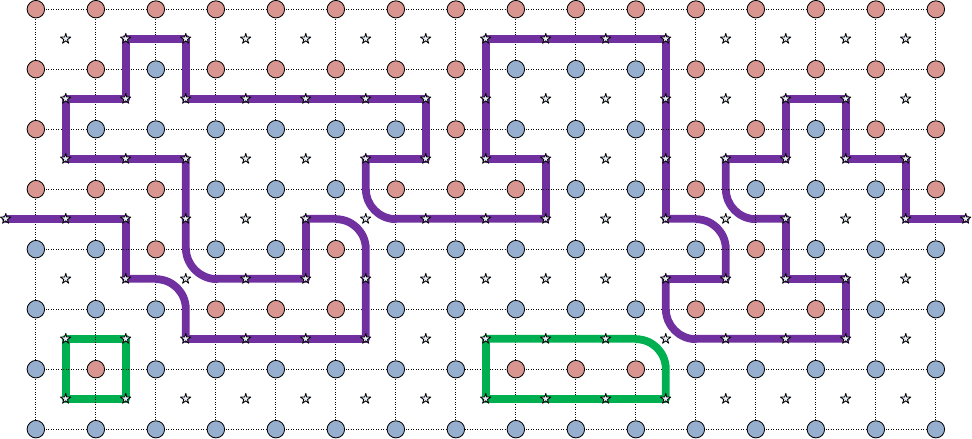}
\caption{Contour decomposition of the edge-set induced by an Ising configuration on a box with mixed boundary conditions according to both SE and SW splitting-rules.}
\vspace{-0.05in}
\label{fig:contours}
\end{figure}

It is clear that the boundary $\d
\underline \l$ of the open contours belongs to $\partial \L^*$ and
must coincide with a certain set $V(\t)$ uniquely
specified by the boundary conditions $\t$ (\ie independent of the values $\s$ gives to the spins of $\L$).
Notice that the cardinality of $V(\t)$, if different
from zero, must be even.

A family of closed and open
simple lines $\underline \g$ is called $\t$-\emph{compatible} if there
exists a configuration $\s$ compatible with $\t$ in $\L$ from which $\underline \g$ is obtained in the above procedure.
One can easily verify that when $\L$ is a box the set of
$\t$-compatible contours coincides with the set of
$E^*_{\L^*}$-compatible contours whose boundary is equal to $V(\t)$.

\subsection{Random-line representation}
For a finite subgraph $\cG=(V,E)$ of $(\bbZ^{2*},E_{\Z^{2*}}^*)$ and an $E$-compatible family of
contours $\underline{\theta}$, two different partition functions $Z(\cG)$ and $Z(\cG\tc
\underline \theta)$ will turn out to be useful for a given $\b>0$:
\begin{align}
  \label{eq:3}
  Z(\cG) &= \sum_{\substack{\underline \g:\, \d \underline \g=\emptyset \\
    \underline \g \text{ is $E$-compatible}}}\nep{-2\b|\underline\g|}\,,
  \\
  Z(\cG\tc \underline \theta) &= \sum_{\substack{\underline \g:\, \d \underline \g=\emptyset \\
    \underline \g \cup \underline \theta\text{ is $E$-compatible}}}\nep{-2\b|\underline\g|}\,.
\end{align}
Using $Z(\cG)$ and $Z(\cG\tc \underline \theta)$ we define the \emph{weight} (not necessarily a probability distribution) corresponding to the family of contours $\underline\theta$, denoted by $q_\cG(\underline \theta)$, to be
\begin{equation}
  \label{eq:2}
  q_\cG(\underline \theta)=
  \left\{\begin{array}
    {ll}
\frac{Z(\cG\tc \underline \theta)}{Z(\cG)}\,  \nep{-2\b |\underline \theta|} &
\mbox{if $\underline \theta$ is $E$-compatible,}\\
0 & \mbox{otherwise.}
  \end{array}
\right.
\end{equation}
The key reason for the above formula is the following \emph{random-line representation} for even-point correlation functions:
Consider the Ising model on $\cG$ at inverse temperature $\b^*$ and free boundary
conditions. Let $\pi^*_\cG$ be the associated Gibbs measure and let $A\sset V$
have even cardinality. Then the following holds (see~\cite{cf:PV1}*{Lemma~6.9}):
\begin{equation}
  \label{eq:7}
  \sum_{\underline \l:\ \d \underline \l =A}q_\cG(\underline \l)=
  \pi_\cG^*\Big(\prod_{x\in A}\s_x\Big)\,.
\end{equation}
\begin{remark*}
If the cardinality of $A$ is odd then the r.h.s.\ of
\eqref{eq:7} is zero by symmetry and the l.h.s.\ is zero due to the
definition of $q_{\cG}(\underline \l)$.
\end{remark*}
Back to the low temperature Ising model in a box $\L$ with boundary
condition $\t$, let $\underline \l$ be a collection of $\t$-compatible
open contours. Then, by construction,
\begin{align}
  \label{eq:8}
  \pi^\t_\L\bigl(\s:\ \underline \l(\s)=\underline \l\bigr)&= \frac{  q_{\L^*}(\underline \l)}{\sum_{\underline \l': \d\underline
    \l'=V(\t)} q_{\L^*}(\underline \l')} = \frac{q_{\L^*}(\underline \l)}{\pi_{\L^*}^*\bigr(\prod_{x\in V(\t)}\s_x\bigl)}\,,
\end{align}
where with a slight abuse of notation we have identified $\L^*$ with
the graph $\cG=(\L^*, E^*_{\L^*})$ and in the last equality we used \eqref{eq:7}.
The above formula will be the starting point of the proof of the new equilibrium estimates,
Propositions~\ref{prop:Bound1} and~\ref{prop:Bound2}.

We conclude this section with some of the
main properties of the weights $q_\cG(\underline \l)$. For further information see~\cites{cf:PV1,cf:PV2}.

\begin{Lemma}[\cite{cf:PV1}*{Lemma 6.3}]
\label{l:6.3}
Let $\cG=(V,E)$ be a finite subgraph of $(\bbZ^{2*},E_{\Z^{2*}}^*)$ and let $\underline \theta$ be a family of
$E$-compatible contours (open and closed). If $\cG'$ is a subgraph of
$\cG$ then $q_{\cG'}(\underline \theta) \geq q_{\cG}(\underline \theta)$.
\end{Lemma}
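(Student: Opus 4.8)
We may assume $\underline\theta$ is $\cG'$-compatible (the case needed later, where one shrinks the graph while keeping the relevant contours; otherwise $q_{\cG'}(\underline\theta)=0$ by~\eqref{eq:2}), so that its edge-set, call it $\Theta$, lies in $E'$. Let $\mu_\cG$ be the probability measure on closed $\cG$-compatible families obtained by normalizing the weights in~\eqref{eq:3}, i.e.\ $\mu_\cG(\underline\gamma)=Z(\cG)^{-1}\nep{-2\b|\underline\gamma|}$. By~\eqref{eq:3}--\eqref{eq:2},
\[
q_\cG(\underline\theta)=\nep{-2\b|\underline\theta|}\,\mu_\cG(\cC_{\underline\theta}),\qquad
\cC_{\underline\theta}:=\bigl\{\underline\gamma:\ \underline\gamma\cup\underline\theta\text{ is }\cG\text{-compatible}\bigr\}.
\]
Because the contour decomposition is defined locally (through the splitting rule), a closed family all of whose edges lie in $E'$ is $\cG$-compatible iff it is $\cG'$-compatible, and likewise for $\underline\gamma\cup\underline\theta$; hence $\mu_{\cG'}=\mu_\cG(\,\cdot\mid\underline\gamma\text{ avoids }E\setminus E')$, and $\cC_{\underline\theta}\cap\{\underline\gamma\text{ avoids }E\setminus E'\}$ is exactly the event whose $\mu_{\cG'}$-probability equals $\nep{2\b|\underline\theta|}q_{\cG'}(\underline\theta)$. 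Cancelling $\nep{-2\b|\underline\theta|}$, the claim becomes the positive correlation, under $\mu_\cG$, of $\cC_{\underline\theta}$ with the decreasing event $\{\underline\gamma\text{ avoids }E\setminus E'\}$.

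The heart of the matter is a local analysis of $\cC_{\underline\theta}$ via the SE-splitting rule. Going through the possible pictures at each vertex of $\underline\theta$ (degree $2$, $3$ or $4$; a contour turning ``with'' or ``against'' the fixed diagonal; endpoints of open contours), and using that at a vertex of $\underline\theta$ a compatible $\underline\gamma$ can only occupy an even number of the free incident slots, one checks that $\cC_{\underline\theta}$ is a \emph{decreasing} event: any closed family whose edge-set is contained in that of a member of $\cC_{\underline\theta}$ again lies in $\cC_{\underline\theta}$. In the configurations relevant here --- and, with more care, in general --- this analysis in fact identifies $\cC_{\underline\theta}$ with the event $\{\underline\gamma\text{ avoids }\widehat\Theta\}$ for a fixed edge-set $\widehat\Theta\supseteq\Theta$, namely $\Theta$ together with the complementary incident bonds at every vertex where $\underline\theta$ locally interferes with the splitting rule; crucially $\widehat\Theta$ depends only on $\underline\theta$, not on the ambient graph.

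Granting this, it suffices (telescoping over $E\setminus E'$, and dropping isolated vertices, which affect neither $Z$ nor $q$) to handle the deletion of a single edge $e_0=(u_0,v_0)$. Splitting $Z(\cG)=Z(\cG\setminus e_0)+\sum_{C\ni e_0,\ \partial C=\emptyset}\nep{-2\b|C|}$ and splitting $Z(\cG\tc\underline\theta)$ in the same way, the inequality $q_{\cG\setminus e_0}(\underline\theta)\ge q_\cG(\underline\theta)$ reduces, once the $e_0$-free contributions cancel, to the positive correlation under $\mu_\cG$ of $\{\underline\gamma\text{ avoids }\widehat\Theta\}$ and $\{\underline\gamma\text{ avoids }e_0\}$. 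This is immediate when $e_0\in\widehat\Theta$; otherwise, writing (via the high-temperature expansion of $\pi^*_\cG$) $\mu_\cG(\underline\gamma\text{ avoids }F)=(\cosh\b^*)^{|F|}\,\pi^*_\cG\!\bigl(\prod_{(x,y)\in F}\nep{-\b^*\s_x\s_y}\bigr)$ for every edge-set $F\subset E$, the assertion is equivalent to $Z^*_{\cG\setminus(\widehat\Theta\cup\{e_0\})}\,Z^*_\cG\ \ge\ Z^*_{\cG\setminus\widehat\Theta}\,Z^*_{\cG\setminus e_0}$ (with $Z^*$ the partition function of $\pi^*$), i.e.\ to the supermodularity of $\log Z^*$ in the couplings --- a standard consequence of the monotonicity of two-point functions, hence of the second Griffiths inequality $\pi^*\bigl(\prod_{x\in A}\s_x\,\prod_{y\in B}\s_y\bigr)\ge\pi^*\bigl(\prod_{x\in A}\s_x\bigr)\pi^*\bigl(\prod_{y\in B}\s_y\bigr)$ for the ferromagnetic measure $\pi^*$. (In the residual case where, at interior endpoints of open contours, $\cC_{\underline\theta}$ instead carries genuine ``at most one of two given bonds'' constraints, one replaces this step by the usual switching lemma for the polygon ensemble --- superpose a configuration sourced at $\{u_0,v_0\}$ with a sourceless one and re-split --- using that $\cC_{\underline\theta}$ is decreasing to transport membership in $\cC_{\underline\theta}$ in the direction needed.)

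The main obstacle is the second paragraph: the combinatorics of the SE/SW splitting rule. Showing that $\cC_{\underline\theta}$ is decreasing, and pinning down $\widehat\Theta$ (and any residual endpoint constraints) precisely, requires a careful case enumeration of the local configurations around every vertex and every open-contour endpoint of $\underline\theta$; this is exactly where the Pfister--Velenik formalism does the real work, while everything downstream is a routine application of correlation inequalities (or of a switching argument).
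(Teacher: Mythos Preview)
The paper does not prove this lemma; it is quoted verbatim from \cite{cf:PV1}*{Lemma~6.3}. Your outline is in fact essentially the Pfister--Velenik argument and is correct, but you are making the key combinatorial step much harder than it is.

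You spend the second paragraph on a ``careful case enumeration'' of the splitting rule at degree-$2$, $3$, $4$ vertices and at open-contour endpoints, and you hedge by allowing for possible ``residual endpoint constraints'' that would force a switching argument. None of this is needed. The paper already records, just before Lemma~\ref{lem-edge-boundary} (this is \cite{cf:PV1}*{Lemma~6.1}), that a closed family $\underline\gamma$ is compatible with a fixed family $\underline\theta$ \emph{if and only if} the edge-set of $\underline\gamma$ avoids the edge-boundary $\Delta(\underline\theta)$. In your notation, $\widehat\Theta=\Delta(\underline\theta)$ on the nose, in full generality, with no leftover parity constraints at endpoints. (One can also check directly from the paper's explicit description of $\Delta(e)$ that the relation $e'\in\Delta(e)$ is symmetric, so the equivalent condition $\Delta(\underline\gamma)\cap E_{\underline\theta}=\emptyset$ quoted in Lemma~\ref{lem-edge-boundary} says the same thing.) Hence $\cC_{\underline\theta}$ is a pure avoidance event, and your reduction to the positive correlation of two avoidance events under the even-subgraph measure $\mu_\cG$ goes through cleanly.

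From there your GKS step is right: telescoping over $e_0\in E\setminus E'$, the case $e_0\in\Delta(\underline\theta)$ is trivial since then $Z(\cG\tc\underline\theta)=Z(\cG\setminus e_0\tc\underline\theta)$ while $Z(\cG)\ge Z(\cG\setminus e_0)$; for $e_0\notin\Delta(\underline\theta)$ the inequality becomes $Z(\cG\setminus(\Delta(\underline\theta)\cup\{e_0\}))\,Z(\cG)\ge Z(\cG\setminus\Delta(\underline\theta))\,Z(\cG\setminus e_0)$, which is exactly supermodularity of $\log Z^*$ in the ferromagnetic couplings and follows from the second Griffiths inequality. The switching-lemma branch you sketched can be deleted.

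One small remark on your opening sentence: saying ``otherwise $q_{\cG'}(\underline\theta)=0$'' does not dispose of that case, since then the stated inequality would \emph{fail}; the lemma is really being asserted (and is only used in the paper) for $\underline\theta$ that is $\cG'$-compatible, as you correctly restrict to.
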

\begin{remark*}
Lemma~\ref{l:6.3} enables one to extend the definition of the weights $q_\cG(\lambda)$ for finite contours $\lambda$ in a infinite graph $\cG$ by taking a limit
for $n\to\infty$ of $q_{\cG_n}(\lambda)$, where $\cG_n$ is the intersection of $\cG$ with a box of size $n$. By Lemma~\ref{l:6.3} the sequence $q_{\cG_n}(\lambda)$ is monotone non-increasing and non-negative hence its limit indeed exists.
%In the sequel, we will always establish bounds on sums of weights $q_{\cG}(\lambda)$ for $\cG$ which are finite or infinite subgraphs of an infinite
%strip of finite width in $\bbZ^{2*}$.
%When $\cG$ is not finite, the weights $q_{\cG}(\lambda)$ are defined as a limit for $n\to\infty$ of $q_{\cG_n}(\lambda)$,
%where $\cG_n$ is the intersection of $\cG$ with a box of size $n$. Existence of the limit is guaranteed by the monotonicity in Lemma \ref{l:6.3}.
\end{remark*}

Let $\cG=(V,E)$ be a subgraph of $(\bbZ^{2*},E_{\Z^{2*}}^*)$. The \emph{edge-boundary} of an edge $e\in E$, denoted by $\Delta(e)$, is comprised of the edge $e$ itself together
with any edge $e'\in E$ that is incident to it and would belong to the same contour
in the contour decomposition of $E$ via the agreed splitting-rule. For instance, with the SE splitting-rule
the horizontal edge $e=[(x,y),(x+1,y)]$ in
the dual lattice $\Z^{2*}$ would have an edge-boundary of $\Delta(e) = \left\{e, [(x,y),(x,y+1)],
[(x+1,y),(x+1,y-1)]\right\}$. Given a subset of edges $B\subset E$ we define its edge-boundary as $\Delta(B) = \cup_{e\in B}\Delta(e)$.
This definition implies that two contours $\lambda$ and $\gamma$, where $\lambda$ is closed and $\gamma$ is either open or closed, are $\cG$-compatible if and only if the edge-set of $\lambda$ does not intersect $\Delta(\gamma)$ (see the related~\cite{cf:PV1}*{Lemma 6.1}).
The following lemma is a special case of~\cite{cf:PV1}*{Lemma 6.4}):
\begin{Lemma}[\cite{cf:PV1}*{Eq.~(6.17)}]\label{lem-edge-boundary}
Let $\cG=(V,E)$ be a  subgraph of $(\bbZ^{2*},E_{\Z^{2*}}^*)$ and let $\underline\theta$ and $\underline\l$ denote two $\cG$-compatible families of contours with corresponding edge-sets $E_{\underline\theta}$ and $E_{\underline\l}$ respectively. If $\underline\l\cup\underline\theta$ is $\cG$-compatible (or equivalently if
$\Delta(\underline\l)\cap E_{\underline\theta}= \emptyset$) then
\[ q_\cG(\underline\theta \cup \underline\l) = q_{\cG_{\underline\l}}(\underline\theta) q_{\cG}(\underline\l)\,,\]
where $\cG_{\underline\l}$ is the subgraph of $\cG$ given by the edge-set $E \setminus \Delta(\underline\l)$.
\end{Lemma}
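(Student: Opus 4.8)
\medskip
\noindent
The plan is to unwind the definition~\eqref{eq:2} of the weights $q$ and reduce the claimed factorization to the two partition-function identities
\[
  Z(\cG\tc\underline\l)=Z(\cG_{\underline\l})\qquad\text{and}\qquad Z(\cG\tc\underline\theta\cup\underline\l)=Z(\cG_{\underline\l}\tc\underline\theta)\,.
\]
Two remarks are used throughout. Since $e\in\Delta(e)$, the hypothesis $\Delta(\underline\l)\cap E_{\underline\theta}=\emptyset$ forces $E_{\underline\l}\cap E_{\underline\theta}=\emptyset$, hence $|\underline\l\cup\underline\theta|=|\underline\l|+|\underline\theta|$; and the same hypothesis gives $E_{\underline\theta}\subseteq E\setminus\Delta(\underline\l)$, so $\underline\theta$ is an $\cG_{\underline\l}$-compatible family and $q_{\cG_{\underline\l}}(\underline\theta)$ is computed from the first case of~\eqref{eq:2}.

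Both identities rest on the following combinatorial dictionary: for every closed family of contours $\underline\g$,
\[
  \underline\g\cup\underline\l\ \text{ is $\cG$-compatible}\ \iff\ \underline\g\ \text{ is $\cG_{\underline\l}$-compatible}\,,
\]
and, using that $\underline\l\cup\underline\theta$ is $\cG$-compatible by hypothesis,
\[
  \underline\g\cup\underline\theta\cup\underline\l\ \text{ is $\cG$-compatible}\ \iff\ \underline\g\cup\underline\theta\ \text{ is $\cG_{\underline\l}$-compatible}\,.
\]
For the first I would use the edge-boundary characterization of compatibility recalled before the statement — a closed contour and a contour are $\cG$-compatible precisely when the edge-set of the former avoids the edge-boundary of the latter — in its family form from~\cite{cf:PV1}*{Lemmas~6.1 and~6.3}: $\underline\g\cup\underline\l$ is $\cG$-compatible iff $\underline\g$ and $\underline\l$ are $\cG$-compatible and $E_{\underline\g}\cap\Delta(\underline\l)=\emptyset$, and together these say exactly that every edge of $\underline\g$ lies in $E\setminus\Delta(\underline\l)$, i.e.\ that $\underline\g$ is $\cG_{\underline\l}$-compatible. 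The second is the same manipulation applied to the closed $\underline\g$ against the $\cG$-compatible family $\underline\l\cup\underline\theta$, together with $\Delta(\underline\l\cup\underline\theta)=\Delta(\underline\l)\cup\Delta(\underline\theta)$: the condition $E_{\underline\g}\cap\Delta(\underline\l\cup\underline\theta)=\emptyset$ splits into ``$\underline\g$ lies in $\cG_{\underline\l}$'' and $E_{\underline\g}\cap\Delta(\underline\theta)=\emptyset$, and for families $\underline\g$ living in $\cG_{\underline\l}$ this pair of conditions is in turn exactly $\cG_{\underline\l}$-compatibility of $\underline\g\cup\underline\theta$ (here one uses how the edge-boundary of $\underline\theta$ behaves on passing from $\cG$ to $\cG_{\underline\l}$, which is the substance of~\cite{cf:PV1}*{Eq.~(6.17)}).

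Granting the dictionary, summing $\nep{-2\b|\underline\g|}$ over the closed families $\underline\g$ admissible on each side turns the two equivalences, via~\eqref{eq:3}, into the two displayed partition-function identities (the right-hand sums being $Z(\cG_{\underline\l})$ and $Z(\cG_{\underline\l}\tc\underline\theta)$ respectively). Since $\underline\theta\cup\underline\l$ is $E$-compatible, \eqref{eq:2} together with $|\underline\l\cup\underline\theta|=|\underline\l|+|\underline\theta|$ now gives
\begin{align*}
  q_\cG(\underline\theta\cup\underline\l)
  &=\frac{Z(\cG\tc\underline\theta\cup\underline\l)}{Z(\cG)}\,\nep{-2\b(|\underline\theta|+|\underline\l|)}\\
  &=\frac{Z(\cG_{\underline\l}\tc\underline\theta)}{Z(\cG_{\underline\l})}\,\nep{-2\b|\underline\theta|}\cdot\frac{Z(\cG\tc\underline\l)}{Z(\cG)}\,\nep{-2\b|\underline\l|}\\
  &=q_{\cG_{\underline\l}}(\underline\theta)\,q_\cG(\underline\l)\,,
\end{align*}
where the second equality substitutes the two partition-function identities and the third reads~\eqref{eq:2} in reverse for each factor. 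This proves the lemma.

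The only step that goes beyond bookkeeping is the combinatorial dictionary, and within it the second equivalence, which turns on how the edge-boundary — and hence the notion of compatibility — behaves under the restriction from $\cG$ to $\cG_{\underline\l}$: deleting precisely the edges $\Delta(\underline\l)$ must remove the obstruction to adjoining $\underline\l$ without creating any new ones. This is the heart of~\cite{cf:PV1}*{Eq.~(6.17)} (proved there via Lemmas~6.1 and~6.3), of which the present lemma is a restatement; the computation above merely reorganizes it into the stated product formula, so one may instead simply invoke~\cite{cf:PV1}*{Eq.~(6.17)} directly.
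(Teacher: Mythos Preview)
The paper does not prove this lemma at all: it is stated as a direct citation of~\cite{cf:PV1}*{Eq.~(6.17)} (introduced as ``a special case of~\cite{cf:PV1}*{Lemma~6.4}''), with no argument given. Your proposal therefore goes well beyond what the paper does, and the approach you outline --- reducing to the two partition-function identities $Z(\cG\tc\underline\l)=Z(\cG_{\underline\l})$ and $Z(\cG\tc\underline\theta\cup\underline\l)=Z(\cG_{\underline\l}\tc\underline\theta)$ via the edge-boundary characterization of compatibility --- is exactly how the result is established in~\cite{cf:PV1} and is correct.

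Two small remarks. First, your citation of ``Lemmas~6.1 and~6.3'' of~\cite{cf:PV1} for the family-level compatibility criterion looks off: in the numbering used both here and in~\cite{cf:PV1}, Lemma~6.3 is the monotonicity statement $q_{\cG'}\ge q_\cG$ for subgraphs, not a compatibility lemma; the relevant statements are Lemma~6.1 (the two-contour criterion) and Lemma~6.4 (its extension to families, of which the present lemma is a special case). Second, you are right to flag the one genuinely nontrivial point --- that passing from $\cG$ to $\cG_{\underline\l}=(V,E\setminus\Delta(\underline\l))$ does not create new compatibility obstructions for $\underline\theta$ --- and right that this is precisely what is handled in~\cite{cf:PV1}; the rest is, as you say, bookkeeping from~\eqref{eq:2} and~\eqref{eq:3}.
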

%Note that in general the contour decomposition of the edge-set of %$\underline\theta\cup\underline\l$ may not necessarily agree with %$\underline\theta\cup\underline\l$; this however is not the case in the above setting since %the two families $\underline\theta,\underline\l$ are compatible by assumption.

We will frequently need estimates on the weight of a contour constrained to go through certain dual sites; to this end, the following definition will be useful. Let $\cG=(V,E)$ and let $\l_1,\l_2$ be two open contours such that $\d\l_1=\{x,y\}$
and $\d\l_2=\{u,v\}$. We say that $\l_1,\l_2$ are \emph{disjoint} if either they are
$\cG$-compatible or their edge-sets are disjoint and the contour
decomposition of the union of their edges is a single contour $\l$. Observe that in the latter case
necessarily $\{x,y\}\cap \{u,v\}\neq \emptyset$.
For a pair of disjoint open contours $\l_1,\l_2$ we write $\l_1 \sqcup \l_2$ to denote either the collection $(\l_1,\l_2)$ in
the former case or the single contour $\l$ in the latter.
\begin{Lemma}[\cite{cf:PV1}*{Lemma 6.5}]
\label{l:6.5}
Let $\cG=(V,E)$ be a  graph in the dual lattice $\Z^{2*}$. For any $x,y,u,v\in V$,
\[    \sum_{\substack{\underline \l= \l_1\sqcup\l_2 \\
\d\l_1=\{x,y\}\,,\, \d\l_2=\{u,v\}}}q_\cG(\underline \l)\le
\sum_{\substack{\l_1 \\
\d\l_1=\{x,y\}}}q_\cG(\l_1) \sum_{\substack{\l_2 \\
\d\l_2=\{u,v\}}}q_\cG(\l_2)\,.
\]
\end{Lemma}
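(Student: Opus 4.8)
The plan is to reduce the inequality, via the random-line representation~\eqref{eq:7}, to a submultiplicativity estimate for two-point functions, and then to prove the latter by combining the factorization of the weights $q_\cG$ with the monotonicity of Ising correlations. For $p\neq q$ in $V$ let $\Gamma_{pq}(\cG)$ denote the set of open contours $\l$ in $\cG$ with $\d\l=\{p,q\}$; then~\eqref{eq:7} with $A=\{p,q\}$ gives $\sum_{\l\in\Gamma_{pq}(\cG)}q_\cG(\l)=\pi^*_\cG(\s_p\s_q)$, the two-point function of the free Ising measure on $\cG$ at inverse-temperature $\b^*$. Hence it suffices to prove
\[
\sum_{\substack{\underline\l=\l_1\sqcup\l_2\\ \d\l_1=\{x,y\},\ \d\l_2=\{u,v\}}}q_\cG(\underline\l)\ \le\ \pi^*_\cG(\s_x\s_y)\,\pi^*_\cG(\s_u\s_v)\,.
\]

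The core of the argument is to reveal the piece $\l_1$. In any disjoint pair $\l_1\sqcup\l_2$ — whether it be the two-contour family $\{\l_1,\l_2\}$ or the single contour obtained by joining $\l_1$ and $\l_2$ at a common endpoint — the constituent with boundary $\{x,y\}$ is unambiguously $\l_1$, so the left-hand side equals $\sum_{\l_1\in\Gamma_{xy}(\cG)}\bigl(\sum_{\l_2}q_\cG(\l_1\sqcup\l_2)\bigr)$, the inner sum running over all $\l_2\in\Gamma_{uv}(\cG)$ forming a disjoint pair with $\l_1$. I would then fix $\l_1$ with $q_\cG(\l_1)>0$, let $\cG_{\l_1}$ be the subgraph of $\cG$ with edge-set $E\setminus\Delta(\l_1)$, and use the factorization $q_\cG(\l_1\sqcup\l_2)=q_{\cG_{\l_1}}(\l_2)\,q_\cG(\l_1)$ — which for $\cG$-compatible $\l_1,\l_2$ is Lemma~\ref{lem-edge-boundary} (with $\l_2$ and $\l_1$ playing the roles of $\underline\theta$ and $\underline\l$) and in general is the corresponding statement of \cite{cf:PV1}*{Lemma~6.4}. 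Since every admissible $\l_2$ has $q_{\cG_{\l_1}}(\l_2)>0$ and is thus an open contour of $\cG_{\l_1}$ with $\d\l_2=\{u,v\}$, applying the identity of the previous paragraph to the graph $\cG_{\l_1}$ yields
\[
\sum_{\l_2}q_{\cG_{\l_1}}(\l_2)\ \le\ \sum_{\l_2\in\Gamma_{uv}(\cG_{\l_1})}q_{\cG_{\l_1}}(\l_2)\ =\ \pi^*_{\cG_{\l_1}}(\s_u\s_v)\,.
\]
Because $\cG_{\l_1}$ is obtained from $\cG$ by deleting edges — equivalently, by setting the corresponding ferromagnetic couplings at inverse-temperature $\b^*$ to zero — Griffiths' inequalities (the GKS inequalities~\cites{cf:Griffiths,cf:KS}) make the two-point function monotone non-decreasing in the coupling constants, so $\pi^*_{\cG_{\l_1}}(\s_u\s_v)\le\pi^*_\cG(\s_u\s_v)$. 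Therefore the inner sum is at most $q_\cG(\l_1)\,\pi^*_\cG(\s_u\s_v)$, and summing over $\l_1\in\Gamma_{xy}(\cG)$ produces exactly $\pi^*_\cG(\s_x\s_y)\,\pi^*_\cG(\s_u\s_v)$, as required.

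The step I expect to be the crux is establishing the factorization $q_\cG(\l_1\sqcup\l_2)=q_{\cG_{\l_1}}(\l_2)\,q_\cG(\l_1)$ uniformly over both alternatives in the notion of a disjoint pair. When $\l_1$ and $\l_2$ are genuinely $\cG$-compatible this is exactly Lemma~\ref{lem-edge-boundary}. When $\{x,y\}\cap\{u,v\}\neq\emptyset$ and $\l_1\sqcup\l_2$ is the single contour formed by concatenating $\l_1$ and $\l_2$ at their common vertex, one must check that deleting $\Delta(\l_1)$ from $\cG$ still records precisely the disjointness of $\l_2$ from $\l_1$ — in particular that removing the continuation edge of $\l_1$ at the shared endpoint does not spuriously force $q_{\cG_{\l_1}}(\l_2)=0$; handling this correctly may require enlarging $\cG_{\l_1}$ by that single edge, which still leaves it a subgraph of $\cG$ and so does not affect the Griffiths step. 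This is precisely the type of bookkeeping carried out by the edge-boundary formalism of \S\ref{sec:prelim-contours} and by \cite{cf:PV1}*{Lemma~6.4}. The remaining point — that for a fixed $\l_1$ every $\l_2$ forming a disjoint pair with it is a contour of the relevant $\cG_{\l_1}$, so that passing to $\Gamma_{uv}(\cG_{\l_1})$ only enlarges the summation range — is then routine from the definitions of \S\ref{sec:prelim-contours}.
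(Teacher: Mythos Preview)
The paper does not prove this lemma; it is quoted verbatim from \cite{cf:PV1}*{Lemma~6.5} and used as a black box. Your proposal reconstructs what is essentially the argument of \cite{cf:PV1}: fix $\l_1$, factor $q_\cG(\l_1\sqcup\l_2)=q_\cG(\l_1)\,q_{\cG_{\l_1}}(\l_2)$ via the edge-boundary lemma, bound the inner sum by $\pi^*_{\cG_{\l_1}}(\s_u\s_v)$ through~\eqref{eq:7}, apply GKS monotonicity to pass to $\pi^*_\cG(\s_u\s_v)$, and then sum over $\l_1$. This is the correct line and matches the source.

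You have also put your finger on the one genuine subtlety. In the shared-endpoint case, if $\l_1$ terminates at $y=u$ with last edge $e_1$, then the first edge $e_2$ of $\l_2$ is precisely the continuation edge of $e_1$ under the splitting rule (otherwise the union would decompose into two contours, not one), hence $e_2\in\Delta(\l_1)$ and $\l_2$ is \emph{not} a contour of $\cG_{\l_1}=(V,E\setminus\Delta(\l_1))$. So the naive factorization via Lemma~\ref{lem-edge-boundary} does not apply literally, and your proposed fix of adding back that single edge would spoil the identity $Z(\cG\mid\l_1)=Z(\cG_{\l_1})$ on which the factorization rests. In \cite{cf:PV1} this is handled not by enlarging $\cG_{\l_1}$ but by the full statement of their Lemma~6.4, which factors $q_\cG$ along any decomposition of a contour into sub-arcs (not just along compatible families) using a graph $\cG_{\l_1}$ defined so that the continuation edge is retained for $\l_2$ while the remaining edge-boundary of $\l_1$ is removed; the bookkeeping is exactly the splitting-rule accounting you anticipate. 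With that version of the factorization in hand, the rest of your argument goes through unchanged.
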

In particular,
\begin{Corollary}[\cite{cf:PV2}*{Eq.~(5.29)}]\label{lem-3-pt-spins}
Let $\cG=(V,E)$ be a graph in the dual lattice $\Z^{2*}$. For any $\b>\b_c$ and any $u,v,z\in V$,
\begin{align*} \sum_{\substack{\lambda:\,\d\lambda=\{u,v\}\\ z\in\lambda}}
q_{\cG}(\lambda) &\leq \biggl(\sum_{\lambda:\,\d\lambda=\{u,z\}}
q_{\cG}(\lambda) \biggr)\biggl(\sum_{\lambda:\,\d\lambda=\{z,v\}}
q_{\cG}(\lambda)\biggr) =\pi_{\cG}^*\left(\s_u\,\s_z\right) \, \pi_{\cG}^*\left(\s_v\,\s_z\right)\,.\end{align*}
\end{Corollary}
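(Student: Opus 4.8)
The plan is to obtain the corollary directly from Lemma~\ref{l:6.5} and the random-line representation~\eqref{eq:7}, via the elementary observation that an open contour passing through the marked site $z$ can be cut there into two shorter contours of the same total weight. Throughout I would fix $u,v,z\in V$ and may assume $z\notin\{u,v\}$: if $z$ coincides with $u$ or $v$ the asserted inequality is in fact an equality, since one of the two factors on the right is then $\pi^*_\cG(\s_z\s_z)=1$ and the other reproduces the left-hand side via~\eqref{eq:7}.

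The main step is to set up a weight-preserving injection. Let $\lambda$ be an open contour with $\d\lambda=\{u,v\}$ and $z\in\lambda$. Since contours produced by the fixed splitting-rule are simple lines and $z$ is not an endpoint of $\lambda$, exactly two edges of $\lambda$ are incident to $z$, and cutting $\lambda$ at $z$ produces a \emph{unique} pair of sub-contours $\lambda_1,\lambda_2$ with $\d\lambda_1=\{u,z\}$ and $\d\lambda_2=\{z,v\}$ (neither is trivial because $z\notin\{u,v\}$). Their edge-sets partition that of $\lambda$, so $\lambda_1,\lambda_2$ are disjoint in the sense preceding Lemma~\ref{l:6.5}, $\lambda=\lambda_1\sqcup\lambda_2$, and $q_\cG(\lambda)=q_\cG(\lambda_1\sqcup\lambda_2)$; and since $\lambda$ is recovered from $(\lambda_1,\lambda_2)$ by re-applying the splitting-rule to the union of their edges, the assignment $\lambda\mapsto(\lambda_1,\lambda_2)$ is injective. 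Because all the weights $q_\cG(\cdot)$ are nonnegative, it follows that the left-hand side of the corollary is bounded above by the sum of $q_\cG(\lambda_1\sqcup\lambda_2)$ over all disjoint pairs with $\d\lambda_1=\{u,z\}$ and $\d\lambda_2=\{z,v\}$, which is exactly the quantity appearing on the left of Lemma~\ref{l:6.5} for that choice of boundary data. Applying Lemma~\ref{l:6.5} bounds it by $\big(\sum_{\lambda_1:\,\d\lambda_1=\{u,z\}}q_\cG(\lambda_1)\big)\big(\sum_{\lambda_2:\,\d\lambda_2=\{z,v\}}q_\cG(\lambda_2)\big)$, and~\eqref{eq:7} identifies the two factors with $\pi^*_\cG(\s_u\s_z)$ and $\pi^*_\cG(\s_v\s_z)$, giving both the inequality and the displayed identity.

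The only point that requires any care is the combinatorial claim underlying the injection: that a contour through an interior vertex splits there in exactly one way and is reconstructed from the resulting pair, so that distinct $\lambda$ give distinct pairs. Without this the passage to Lemma~\ref{l:6.5} could double count weight and the inequality might fail. Both facts follow from the structure of the splitting-rule decomposition into simple lines (every non-endpoint vertex has degree two), so I expect the argument to be entirely routine, with no analytic input beyond Lemma~\ref{l:6.5} and~\eqref{eq:7}.
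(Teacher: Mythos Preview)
Your proposal is correct and is exactly the argument the paper intends: the corollary is stated with the preface ``In particular,'' immediately after Lemma~\ref{l:6.5}, and your proof spells out the routine combinatorial step (cut the simple line $\lambda$ at the interior vertex $z$ to obtain a disjoint pair $\lambda_1\sqcup\lambda_2$ in the sense preceding Lemma~\ref{l:6.5}) that reduces the left-hand side to the left-hand side of that lemma, followed by~\eqref{eq:7} for the identification with spin correlations. There is nothing to add.
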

Together with Lemma~\ref{lem-spin-spin} the above lemma immediately implies an upper bound on the weights in mention in terms of the surface tensions $\tau_\b(u-v)$ and $\tau_\b(v-z))$. The next lemma provides an analogous bound for the weights of \emph{closed} contours going through a set of prescribed sites.
\begin{Lemma}[\cite{cf:PV2}*{Lemma 5.5 part (ii)}]\label{lem-long-loop}
Let $\cG=(V,E)$ be a graph in $\Z^{2*}$. Let $x_1,\ldots,x_k\in V$ and identify $x_0 \equiv x_k$. Then
\[
\sum_{\substack{\lambda: \d\lambda=\emptyset\\ x_1,\ldots,x_k\in\lambda}} q_{\cG}(\lambda) \leq \exp\bigg( -\sum_{i=1}^k\tau_{\beta}(x_i - x_{i-1})\bigg)\,.\]
\end{Lemma}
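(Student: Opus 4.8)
The plan is to reduce the closed-loop estimate to a corresponding estimate for open contours and then iterate the ``BK-type'' splitting inequality of Lemma~\ref{l:6.5} (equivalently, of Corollary~\ref{lem-3-pt-spins}), bounding each resulting two-point factor by the surface-tension decay of Lemma~\ref{lem-spin-spin}. Throughout I read the event $\{x_1,\dots,x_k\in\lambda\}$ with the standard convention that $\lambda$ visits $x_1,\dots,x_k$ in this cyclic order (with $x_0\equiv x_k$); this is what makes the telescoping sum of surface tensions on the right-hand side the natural target. The building block I would record first: for any finite $\cG=(V,E)\subset(\bbZ^{2*},E^*_{\Z^{2*}})$ and any $a,b\in V$, identity~\eqref{eq:7} gives $\sum_{\lambda:\,\d\lambda=\{a,b\}}q_\cG(\lambda)=\pi_\cG^*(\sigma_a\sigma_b)$, and combining Lemma~\ref{lem-spin-spin} with GKS monotonicity in the graph (pass to a box $\L^*\supseteq\cG$) yields $\pi_\cG^*(\sigma_a\sigma_b)\le\exp(-\tau_\beta(a-b))$. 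This already settles $k\le 2$: a closed contour through $x_1,x_2$ cuts at those two sites into open contours $\lambda_1,\lambda_2$ with $\d\lambda_1=\d\lambda_2=\{x_1,x_2\}$ and $\lambda_1\sqcup\lambda_2=\lambda$, so Lemma~\ref{l:6.5} bounds the sum by $\pi_\cG^*(\sigma_{x_1}\sigma_{x_2})^2\le\exp(-2\tau_\beta(x_1-x_2))$, which is precisely the asserted bound since $x_0=x_2$. The degenerate case $k=1$ ($x_0=x_1$) amounts to the known bound $\sum_{\lambda:\,\d\lambda=\emptyset,\;x_1\in\lambda}q_\cG(\lambda)\le1=\exp(-\tau_\beta(0))$, which one deduces by opening $\lambda$ along an incident edge at $x_1$ and applying Lemma~\ref{l:6.5} once.

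\textbf{Open analogue.} Next I would prove, by induction on the number $m\ge0$ of intermediate points, that for any $a,b\in V$ and $y_1,\dots,y_m\in V$
\[
\sum_{\substack{\mu:\,\d\mu=\{a,b\}\\ y_1,\dots,y_m\in\mu\text{ in order}}}q_\cG(\mu)\;\le\;\exp\Bigl(-\tau_\beta(y_1-a)-\sum_{j=1}^{m-1}\tau_\beta(y_{j+1}-y_j)-\tau_\beta(b-y_m)\Bigr)\,,
\]
with the right-hand side read as $\exp(-\tau_\beta(b-a))$ when $m=0$. The base case $m=0$ is the building block above. For the inductive step one peels off the first intermediate point $y_1$: a contour $\mu$ from $a$ to $b$ through $y_1,\dots,y_m$ in order splits at $y_1$ as $\mu=\mu_1\sqcup\mu_2$ with $\d\mu_1=\{a,y_1\}$ and $\d\mu_2=\{y_1,b\}$, where $y_2,\dots,y_m\in\mu_2$ in order. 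Applying Lemma~\ref{l:6.5} — in the refined form in which an admissibility constraint (here: ``passes through $y_2,\dots,y_m$ in order'') is retained on the second factor — bounds the sum by $\bigl(\sum_{\mu_1:\,\d\mu_1=\{a,y_1\}}q_\cG(\mu_1)\bigr)\bigl(\sum_{\mu_2:\,\d\mu_2=\{y_1,b\},\;y_2,\dots,y_m\in\mu_2\text{ in order}}q_\cG(\mu_2)\bigr)$; the first factor is $\le\exp(-\tau_\beta(y_1-a))$ by the building block and the second is bounded by the inductive hypothesis (with endpoints $y_1,b$ and the $m-1$ intermediate points $y_2,\dots,y_m$), and the exponents add up exactly as required.

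\textbf{Closing the loop.} Finally, for $k\ge3$ I would cut the closed contour $\lambda$ simultaneously at $x_1$ and $x_k$ into the short arc $\alpha$ from $x_k$ to $x_1$ carrying no other marked point ($\d\alpha=\{x_k,x_1\}$) and the long arc $\beta$ from $x_1$ to $x_k$ through $x_2,\dots,x_{k-1}$ in order ($\d\beta=\{x_1,x_k\}$), so that $\lambda=\alpha\sqcup\beta$. The same refined Lemma~\ref{l:6.5} then gives
\[
\sum_{\substack{\lambda:\,\d\lambda=\emptyset\\ x_1,\dots,x_k\in\lambda}}q_\cG(\lambda)\;\le\;\Bigl(\sum_{\alpha:\,\d\alpha=\{x_k,x_1\}}q_\cG(\alpha)\Bigr)\Bigl(\sum_{\substack{\beta:\,\d\beta=\{x_1,x_k\}\\ x_2,\dots,x_{k-1}\in\beta\text{ in order}}}q_\cG(\beta)\Bigr)\,,
\]
where the first factor is $\le\exp(-\tau_\beta(x_1-x_k))=\exp(-\tau_\beta(x_1-x_0))$ (using that $\tau_\beta$ is even) and the second is $\le\exp\bigl(-\tau_\beta(x_2-x_1)-\sum_{j=2}^{k-2}\tau_\beta(x_{j+1}-x_j)-\tau_\beta(x_k-x_{k-1})\bigr)$ by the open analogue with $m=k-2$ intermediate points. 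Multiplying and telescoping the exponents (recall $x_0\equiv x_k$) yields exactly $\exp\bigl(-\sum_{i=1}^k\tau_\beta(x_i-x_{i-1})\bigr)$, which is Lemma~\ref{lem-long-loop}.

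\textbf{Main obstacle.} The one genuinely delicate point is the ``constraint-retaining'' form of Lemma~\ref{l:6.5} invoked twice above: the statement quoted in the excerpt bounds $\sum_{\lambda_1\sqcup\lambda_2}q_\cG$ by an \emph{unconstrained} product of sums, whereas the argument needs to carry the ordering constraint onto one of the two factors so that the induction can close. This is legitimate because the proof of Lemma~\ref{l:6.5} in~\cite{cf:PV1} first establishes the term-wise factorization $q_\cG(\lambda_1\sqcup\lambda_2)\le q_{\cG'}(\lambda_1)\,q_\cG(\lambda_2)$ (a consequence of the edge-boundary identity, Lemma~\ref{lem-edge-boundary}) and only then sums, so any restriction on the set of admissible $\lambda_2$'s is preserved; still, one must check that the cut producing the pair $(\alpha,\beta)$ from a closed $\lambda$ (and $(\mu_1,\mu_2)$ from an open $\mu$) is a well-defined injection into $\sqcup$-pairs, which is exactly where the ``visited in cyclic order'' convention is used and where the combinatorics of the SE/SW splitting-rule must be handled with care. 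Everything else is bookkeeping of surface tensions via the sharp norm property of $\tau_\beta$ and the spin-spin bound of Lemma~\ref{lem-spin-spin}.
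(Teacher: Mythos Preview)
The paper does not supply its own proof of this lemma: it is quoted directly from \cite{cf:PV2}*{Lemma~5.5(ii)} and used as a black box. Your approach---cut the closed contour at two marked points into two open arcs, bound the resulting $\sqcup$-pair via Lemma~\ref{l:6.5}, and then iterate the same splitting along the ordered intermediate points to reduce everything to two-point weights controlled by Lemma~\ref{lem-spin-spin}---is precisely the argument of \cite{cf:PV2}, so there is nothing to contrast. Your reading of ``$x_1,\dots,x_k\in\lambda$'' as ``visited in this cyclic order'' is the intended one there; the identification $x_0\equiv x_k$ and the telescoping right-hand side only make sense under that convention.

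The one point you flag as delicate---retaining the ordering constraint on one factor when applying Lemma~\ref{l:6.5}---is indeed the only non-formal step, and your justification is correct: the proof of Lemma~\ref{l:6.5} in \cite{cf:PV1} proceeds through the identity of Lemma~\ref{lem-edge-boundary}, giving $q_\cG(\lambda_1\sqcup\lambda_2)=q_{\cG_{\lambda_2}}(\lambda_1)\,q_\cG(\lambda_2)$, after which one sums the first factor using \eqref{eq:7} and GKS to pass back from $\cG_{\lambda_2}$ to $\cG$; any restriction on the admissible $\lambda_2$'s therefore survives untouched. This is exactly how \cite{cf:PV2} carries constraints through (cf.\ \cite{cf:PV2}*{Lemma~5.4}, which is the ``open analogue'' you prove by induction and which the present paper also invokes at \eqref{eq:whereby}).
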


\section{Inductive framework for rectangles with ``plus'' boundaries}\label{sec:ff-induction}
In this section we outline the recursive scheme developed in~\cite{cf:MT} which, as mentioned in \S\ref{sec:intro}, established a significantly improved upper bound of $\exp(c L^\epsilon)$ for the mixing time on a box of side-length $L$ with ``plus'' b.c.\ at sufficiently low temperatures.

Given $\gep>0$ (to be thought of as very small) and $L\in \N$ let
\begin{equation*}
  R_L=\{x=(i,j)\in \bbZ^2:\ 1\le i\le L, \, 1\le j\le \lceil L^{\frac 12+\gep}\rceil\}.
\end{equation*}
Similarly one defines the rectangle $Q_L$, the only difference being
that the vertical sides contain now $\lceil (2L+1)^{\frac 12+\gep}\rceil$
sites.
\begin{definition}
  A distribution $\bP$ of b.c.\ for a rectangle $R$ (which
  will be $R_L$, $Q_L$ or some translation of them)
is said to belong to $\mathcal D(R)$ if its marginal on the union of
  North, East and West borders of $R$ is stochastically dominated by
  (the marginal of) the minus phase $\pi^-_\infty$ of the infinite system,
  while the marginal on the South border of $R$ dominates the
  (marginal of the) infinite plus phase $\pi^+_\infty$.
\end{definition}
The most natural example is to take $\bP$ concentrated on the boundary
condition $\tau\equiv -1$ on the North, East and West
borders, and $\tau\equiv+1$ on the South border.
\begin{definition}
\label{def-stat}
For any given $L\in\N, \delta>0, t>0$ consider the Ising model in $R_L$, with boundary condition $\tau$ chosen from
some distribution $\bf P$. We say that
$\cA(L,t,\delta)$ holds if
\begin{eqnarray}
  \bE\|\mu^{\pm}_t-\pi^\t\|\le \delta
\label{eq:A(L)}
\end{eqnarray}
for every $\bP\in \mathcal D(R_L)$.
The statement $\cB(L,t,\delta)$ is defined similarly, the only difference
being that the rectangle $R_L$ is replaced by $Q_L$ (and $\bP$ is required to belong to $\mathcal D(Q_L)$).
\end{definition}
With these definitions the iterative scheme developed in \cite{cf:MT} can be summarized as follows.
\begin{Proposition}[The starting point]
\label{th:raf}
  For every $\beta$ (thus not necessarily large) there exists $c=c(\beta)$ such that for every
$L\in\N$ %and $ t>0$
the statements
$
\cA(L,t,e^{-t\,e^{-c L^{1/2 +\gep}}})
$
and
$
%B(L,\log( L^3)\, e^{L c},0)
\cB(L,t,e^{-t\,e^{-c L^{1/2 +\gep}}})
$
hold.
\end{Proposition}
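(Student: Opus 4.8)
The plan is to deduce both statements from a single worst-case (i.e.\ uniform in the boundary condition) relaxation estimate in a thin rectangle, obtained via a block-dynamics decomposition, and then to pass to the stated exponential decay by soft semigroup manipulations.

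First I would reduce to a deterministic statement. Since $\cA(L,t,\delta)$ and $\cB(L,t,\delta)$ only involve the $\bP$-average of $\|\mu^\pm_t-\pi^\tau\|$, and the constraint $\bP\in\mathcal D(R_L)$ is irrelevant at this stage, it suffices to prove that for a suitable $c=c(\beta)$, for \emph{every} boundary condition $\tau$, and for all $t\ge0$,
\begin{equation}\label{eq-plan-target}
\bar d_{R_L,\tau}(t):=\sup_{\s\in\Omega_{R_L}}\|\mu^{\s}_t-\pi^\tau_{R_L}\|\ \le\ e^{-t\,e^{-cL^{1/2+\gep}}},
\end{equation}
and the analogue for $Q_L$ (whose short side exceeds that of $R_L$ only by a bounded factor, hence is absorbed in $c$). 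Write $K:=\lceil L^{1/2+\gep}\rceil$ for the length of the short side of $R_L$. Using the submultiplicativity of $\bar d$ (which turns $\tmix\le T_0$ into $\bar d(t)\le e^{-\lfloor t/T_0\rfloor}$), the inequality $\tmix\le\gap^{-1}\log(2e/\pi_{\min})$, and the crude bound $\log(1/\pi_{\min})=O(|R_L|)=O(K^3)$ (recall $|R_L|=LK$ and $L\le K^2$), the bound \eqref{eq-plan-target} follows --- by a routine, if slightly delicate, bookkeeping that uses that in a thin rectangle $\pi^\tau_{R_L}$ charges its dominant macrostate with probability $1-e^{-\Theta(K)}$ rather than $1-e^{-\Theta(LK)}$, so that the relevant decay has the asserted rate --- from the purely spectral estimate
\begin{equation}\label{eq-plan-gap}
\gap^{-1}_{R_L,\tau}\ \le\ e^{c_0K}\qquad\text{uniformly in }\tau,
\end{equation}
for some $c_0=c_0(\beta)$.

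To prove \eqref{eq-plan-gap}: $R_L$ is an $L\times K$ rectangle with $K\le L$, which I would cover by $m=O(L/K)$ squares $B_1,\dots,B_m$ of side $K$, placed along the long axis so that consecutive ones overlap in a set of dimensions $(K/2)\times K$. Let $\widehat\cL$ be the block dynamics that, at rate one, picks $i$ uniformly and resamples $\s|_{B_i}$ from $\pi^\tau_{R_L}(\,\cdot\mid\s|_{R_L\setminus B_i})$. Because the $B_i$ overlap and form a one-dimensional chain of length $m$, comparing $\widehat\cL$ with a birth--death chain on $\{1,\dots,m\}$ (using FKG to control correlations across the overlaps) gives $\gap^{-1}_{\widehat\cL}\le Cm^2$. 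On the other hand, the standard canonical-path bound for a square of side $\ell$, valid at every temperature and uniformly in the boundary condition $\xi$,
\[
\gap^{-1}_{\text{square}(\ell),\xi}\ \le\ e^{c_\beta\ell},
\]
whose point is that the worst intermediate configuration of the chosen paths is a single interface crossing the square, of energy cost $O(\ell)$ (and not $O(\ell^2)$), applied to each $B_i$ with its inherited boundary condition gives $\max_i\sup_\xi\gap^{-1}_{B_i,\xi}\le e^{c_\beta K}$. The block-dynamics comparison theorem then yields
\[
\gap^{-1}_{R_L,\tau}\ \le\ C\,\gap^{-1}_{\widehat\cL}\ \max_i\sup_\xi\gap^{-1}_{B_i,\xi}\ \le\ C'm^2e^{c_\beta K}\ \le\ e^{c_0K},
\]
the last step because $m=O(L/K)$, so $m^2$ is polynomial in $K$ (as $L\le K^2$) and is absorbed into the exponential. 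The case of $Q_L$ is identical with $K$ replaced by the comparable length of its short side.

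The genuinely non-trivial ingredient --- and hence where I expect the main obstacle to lie --- is the uniform-in-$\xi$ square bound $\gap^{-1}_{\text{square}(\ell),\xi}\le e^{c_\beta\ell}$: it asserts that the worst bottleneck of the Gibbs measure in a square, \emph{over all boundary conditions and all temperatures}, is still an interface of length $O(\ell)$, which forces routing the canonical paths through monotone, ``column-by-column'' configurations so that the energy barrier stays linear in $\ell$. The rest --- the overlapping-block comparison together with the $cm^{-2}$ lower bound for $\gap_{\widehat\cL}$, the submultiplicativity, and the relation between $\tmix$ and $\gap$ --- is standard, and since everything is uniform in $\tau$ the passage to the averaged statements $\cA$ and $\cB$ (and to every $\bP\in\mathcal D(R_L),\mathcal D(Q_L)$) is immediate. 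In particular no low-temperature (Wulff-type) input is needed here, which is exactly why this base case of the induction holds for every $\beta$.
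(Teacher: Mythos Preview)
Your approach matches the paper's: the proposition is stated in Section~3 as a summary of~\cite{cf:MT}, and the only justification the paper offers is the remark that ``the factor $e^{-cL^{1/2+\gep}}$ in front of the time $t$ is nothing but the negative exponential of the shortest side of the rectangle.'' In other words, the entire content is the uniform-in-$\tau$ bound $\gap^{-1}_{R_L,\tau}\le e^{c_0K}$ with $K$ the short side, which is precisely what you isolate, and your route to it (block dynamics over a one-dimensional chain of overlapping $K\times K$ squares, reducing to the well-known canonical-path bound $\gap^{-1}_{\mathrm{square}(\ell),\xi}\le e^{c_\beta\ell}$) is the standard one.

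One comment: the parenthetical about $\pi^\tau_{R_L}$ charging a ``dominant macrostate with probability $1-e^{-\Theta(K)}$'' is both unclear and unnecessary. The honest bookkeeping is simply $\tmix\le \gap^{-1}\log(2e/\pi_{\min})\le e^{c_0K}\cdot O(K^3)\le e^{c_1K}$, followed by submultiplicativity, which gives $d(t)\le e^{-\lfloor t/\tmix\rfloor}\le e\cdot e^{-t\,e^{-c_1K}}$. This already yields the stated bound for all $t\gtrsim e^{c_1K}$, which is the only regime in which the proposition is ever invoked (see the proof of Corollary~\ref{th:solving}). For very small $t$ the literal inequality $d(t)\le e^{-t\,e^{-cK}}$ is delicate because $d(0)=1-\pi_{\min}$ is extremely close to $1$; your macrostate remark does not resolve this, and neither does the paper --- it is a harmless artifact of how the statement is phrased, not a substantive issue. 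I would drop that sentence rather than try to justify it.

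A minor stylistic point: the block-dynamics gap estimate $\gap_{\widehat\cL}^{-1}\le Cm^2$ via a ``birth--death comparison using FKG'' is not the standard formulation. The usual argument (e.g.\ in \cite{cf:Martinelli97}) proceeds by recursive halving and loses only a factor polynomial in $m$, which is all you need since $m\le K$; citing that directly would be cleaner than sketching a nonstandard comparison.
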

\begin{remark*}
Notice that the factor $e^{-c L^{1/2 +\gep}}$ in front of the time $t$
is nothing but the negative exponential of  the shortest side of
the rectangle.
\end{remark*}
\begin{Theorem}[The inductive step]
\label{th:rec}
For every $\beta$ large enough there exist constants $c_1,c_2,c_3$ such that:
\begin{equation}
  \label{eq:rec1}
  \cA(L,t,\delta) \imp \cB(L,t_1,\delta_1)\imp \cA(2L+1,t_2,\delta_2)
\end{equation}
where
\begin{align}
  \label{eq:rec2}
\delta_1&=c_1\left(\delta+\nep{-c_2 L^{2\gep}}+L^2\nep{-c_2 \log t}\right) & &; &  t_1&= 2t \\
\d_2&= c_1(\delta_1+\nep{-c_2L^{3\gep}})=c_3(\delta + \nep{-c_2 L^{2\gep}}
+ L^2\nep{-c_2 \log t})& &; &  t_2&= \nep{c_3 L^{3\gep}}t_1=2\nep{c_3 L^{3\gep}}t
\end{align}
\end{Theorem}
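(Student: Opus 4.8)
The plan is to follow the architecture of~\cite{cf:MT}: use the Peres--Winkler censoring (monotonicity) inequality to reduce the true Glauber dynamics to a convenient block dynamics on a decomposition of the larger rectangle into (translates of) the smaller one, and feed in equilibrium estimates on the phase-separation line of $\mathcal D$-type boundary conditions. Throughout I would exploit that, by attractivity and the grand monotone coupling, $\mu^-_s\preceq\pi^\tau\preceq\mu^+_s$ for every $s$, so it is enough to bound $\bE\sum_x\big(\mu^+_s(\sigma_x=+1)-\mu^-_s(\sigma_x=+1)\big)$, i.e.\ to show the $+$- and $-$-started chains have essentially coalesced on every site by the target time; censoring is admissible here since it can only slow coalescence. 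The role of ``$\beta$ large'' is confined to the equilibrium inputs, for which at low temperature I would invoke the Wulff construction of~\cite{cf:DKS} via cluster expansion: under a $\mathcal D$-b.c.\ the open contour is a Gaussian-type fluctuation of a flat segment hugging the ``plus'' side, with exponentially small probability of a large excursion. (Replacing these inputs by the duality/random-line estimates of~\S\ref{sec:prelim}, which hold up to $\beta_c$, is what the rest of the paper is for.)

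For $\cA(L,t,\delta)\Rightarrow\cB(L,2t,\delta_1)$ --- the softer implication --- I would write $Q_L=R\cup T$, with $R$ a translate of $R_L$ (up to a negligible $O(\sqrt L)$ enlargement of its short side) placed against the plus side and $T$ the complementary top strip, and censor the dynamics on $Q_L$ into two phases of length $t$: first update only $T$, then only $R$. From $\mu^-$ the first phase is vacuous and $T$ stays $-$; from $\mu^+$ the first phase relaxes $T$, which --- with $R$ frozen at $+1$ --- carries a $\mathcal D$-b.c.\ and so, by the inductive/base-case control at the relevant (smaller) scale, reaches near-equilibrium, whose interface is confined to an $O(\sqrt L)$-layer above the bottom of $T$. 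Either way the $N$-boundary of $R$ (chosen a few rows above that layer) is then dominated by $\pi^-_\infty$, $R$ carries a $\mathcal D$-b.c., and the second phase brings it within $\delta$ of its Gibbs measure by $\cA(L,t,\delta)$. It remains to compare the resulting ``$\pi^{\mathcal D}_R$ on $R$, all $-$ above'' law with $\pi^\tau_{Q_L}$: the latter keeps its interface inside $R$ with probability $1-\exp(-cL^{2\epsilon})$ (a Gaussian bound for a pinned interface reaching height $L^{1/2+\epsilon}$ over span $L$, i.e.\ $\exp(-c(L^{1/2+\epsilon})^2/L)$), and on that event the two laws agree up to a negligible closed-loop term; summing this against $\delta$ and the polynomial-in-volume losses from passing to high-probability control of the induced boundary law produces $\delta_1=c_1(\delta+e^{-c_2L^{2\epsilon}}+L^2 e^{-c_2\log t})$.

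For $\cB(L,t_1,\delta_1)\Rightarrow\cA(2L+1,t_2,\delta_2)$ I would split $R_{2L+1}$ along a vertical line near the middle into two overlapping translates $Q^{(1)},Q^{(2)}$ of $Q_L$ plus a narrow vertical seam $S$ of width $\Theta(L^{3\epsilon})$ and full height, and run the censored block dynamics cycling through $Q^{(1)}$, $Q^{(2)}$, $S$ with each $Q$-update lasting $t_1$. By the Brownian-bridge picture for the interface pinned at the two outer corners, at the midpoint it lies at height $O(\sqrt L)\ll L^{1/2+\epsilon}$ with probability $\ge 1-\exp(-cL^{3\epsilon})$; on that event $S$ --- and hence the vertical boundary each of $Q^{(1)},Q^{(2)}$ inherits from the other half --- is dominated by $\pi^-_\infty$, so every $Q$-update is a genuine $\cB(L,t_1,\delta_1)$-step. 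After the two halves are near equilibrium the only surviving slow mode is the interface within $S$ together with the heights at which it meets the two halves; since $S$ is a thin region of short side $\Theta(L^{3\epsilon})$, flushing it costs an extra time factor $\exp(c_3L^{3\epsilon})$ (supplied by the base-case estimate at that small scale), which is precisely why $t_2=\exp(c_3L^{3\epsilon})t_1$. Adding up the errors as before gives $\delta_2=c_1(\delta_1+e^{-c_2L^{3\epsilon}})=c_3(\delta+e^{-c_2L^{2\epsilon}}+L^2 e^{-c_2\log t})$.

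The hard part, in both implications, will be the equilibrium input rather than the dynamical bookkeeping: establishing with the stated probabilities $1-e^{-cL^{2\epsilon}}$ and $1-e^{-cL^{3\epsilon}}$ that the boundary condition each sub-rectangle inherits from its complement really does lie in $\mathcal D$ --- equivalently, sharp localization of the phase-separation line (no excursion of height $\gtrsim L^{1/2+\epsilon}$ across the full span, respectively at the midpoint). The censoring reduction, the coupling sandwich, and the accounting of aspect ratios and errors are robust and essentially soft once that localization is in hand; for $\beta$ large it is delivered by DKS Wulff theory through cluster expansion, and it is exactly this step that must be re-proved by duality and the random-line representation to carry the whole scheme up to $\beta_c$.
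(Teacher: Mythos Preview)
Your overall architecture --- censoring, the monotone sandwich, and the identification of the equilibrium localization of the separation line as the only place where ``$\beta$ large'' enters --- matches the paper. For the first implication your decomposition $Q_L=R\cup T$ differs slightly from the paper's two \emph{overlapping} vertical translates $A,B$ of $R_L$ (Figure~\ref{fig:Q}); you would need to say why the inductive hypothesis applies to the complementary strip $T$, whose height is not that of $R_L$, but this is repairable.

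The second implication has a genuine gap. Your claim that ``at the midpoint the interface lies at height $O(\sqrt L)$ and hence the vertical boundary each $Q^{(i)}$ inherits is dominated by $\pi^-_\infty$'' conflates the \emph{equilibrium} picture in $R_{2L+1}$ with what happens \emph{during} the censored block dynamics. When you first update $Q^{(1)}$ starting from all-plus, its East side is the frozen $+1$ from the not-yet-updated $Q^{(2)}$; the b.c.\ on $Q^{(1)}$ is therefore $(-,+,+,+)$ on (N,E,S,W), the open contour is pinned at the two \emph{upper} corners of $Q^{(1)}$ and hugs the \emph{top}, so after relaxation the column $Q^{(1)}$ presents to its neighbor is almost entirely $(+)$ --- never dominated by $\pi^-_\infty$. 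Cycling through $Q^{(1)},Q^{(2)},S$ does not break this: every $Q$-update sees a $(+)$-dominated vertical side, so $\cB(L,t_1,\delta_1)$ is never applicable. The paper flags exactly this obstruction (the discussion preceding Lemma~\ref{lem-geo2}) and remarks that the ``intuitive'' fix of waiting for a rare fluctuation to pull the contour down is hard to make rigorous.

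The actual mechanism in~\cite{cf:MT} is different. One covers $R_{2L+1}$ by \emph{three} translates $Q_L^{\text{left}},Q_L^{\text{centr}},Q_L^{\text{right}}$ of $Q_L$ (Figure~\ref{fig:R}), and before running the censored dynamics in $Q_L^{\text{centr}}$ one \emph{flips by brute force} the boundary to $(-)$ on a short interval $\Delta$ of length $L^{3\epsilon}$ on the \emph{South} border. This costs a factor $\exp(c(\beta)|\Delta|)=\exp(cL^{3\epsilon})$ in mixing time --- \emph{this} is the origin of $t_2=e^{c_3 L^{3\epsilon}}t_1$, not the relaxation of a thin vertical seam. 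Under the resulting $(-,+,\Delta)$ b.c.\ the two open contours in $Q_L^{\text{centr}}$ now run from each upper corner \emph{down} to the near endpoint of $\Delta$ and stay on their own side of the midline (Lemma~\ref{lem-geo2}, error $e^{-cL^{3\epsilon}}$); only then are the vertical boundaries that $Q_L^{\text{left}},Q_L^{\text{right}}$ inherit genuinely dominated by $\pi^-_\infty$, and $\cB(L,t_1,\delta_1)$ can be invoked for the outer blocks.
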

\begin{remark*}
In the original statement in \cite{cf:MT} the obvious requirement of $\beta$ large was missing due to a typo.
\end{remark*}

\begin{Corollary}[Solving the recursion]
\label{th:corolla}
In the same setting of Theorem \ref{th:rec},
for every $L\in\{2^{n}-1\}_{n\in\N}$
there exists
%\begin{eqnarray}
%T(L)\le  e^{cL^{5\gep}}
%\end{eqnarray}
%and
\begin{eqnarray}
\Delta(L)\le \exp\left(-c'L^{\gep^2}\right)
\end{eqnarray}
such that
$\cA\(L,t,\Delta(L)\)$ holds for every
$t\ge T(L):=  e^{cL^{3\gep}}$.
\end{Corollary}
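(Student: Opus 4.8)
The plan is to iterate the inductive step of Theorem~\ref{th:rec} along a doubling sequence of scales, seeding the recursion with the crude estimate of Proposition~\ref{th:raf} at a carefully chosen base scale, and to track how the pair (waiting time, error) evolves. Given a target $L=2^n-1$, I would take the base scale $L_0$ to be the largest number of the form $2^{n_0}-1$ with $L_0\le L^{\gep/2}$, put $m:=n-n_0$, and let $L_{k+1}=2L_k+1$, so that $L_m=L$, $2^m=(L+1)/(L_0+1)\le L+1$ and $m\le\log_2(L+1)$. Starting from $\cA(L_0,t_0,\delta_0)$, which Proposition~\ref{th:raf} provides with $\delta_0=e^{-t_0 e^{-c_0 L_0^{1/2+\gep}}}$ (here $c_0=c_0(\beta)$ is the constant of that proposition and $t_0$ is still free), $m$ successive applications of Theorem~\ref{th:rec} yield $\cA(L_k,t_k,\delta_k)$ for $k=0,\dots,m$, where
\begin{align*}
t_{k+1}&=2\,e^{c_3 L_k^{3\gep}}\,t_k\,, & \delta_{k+1}&=c_3\bigl(\delta_k+e^{-c_2 L_k^{2\gep}}+L_k^2\,e^{-c_2\log t_k}\bigr)\,.
\end{align*}

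Unrolling gives $t_m=2^m t_0\,\exp\bigl(c_3\sum_{k=0}^{m-1}L_k^{3\gep}\bigr)$ and $\delta_m\le c_3^m\delta_0+\sum_{k=0}^{m-1}c_3^{m-k}\bigl(e^{-c_2 L_k^{2\gep}}+L_k^2 e^{-c_2\log t_k}\bigr)$. Two features make these tractable. First, only $m=O(\log L)$ doublings are performed, so the per-step constant $c_3$ costs just a polynomial factor $c_3^m\le (L+1)^{\log_2 c_3}$ (we may assume $c_3\ge1$); this is exactly why the scheme stops at quasi-polynomial time. Second, since the scales double, $\sum_{k=0}^{m-1}L_k^{3\gep}$ is a geometric sum dominated by its top term, $\sum_{k<m}L_k^{3\gep}\le (L+1)^{3\gep}/(2^{3\gep}-1)$, so $t_m\le (L+1)^{O(1)}\,t_0\,\exp\bigl(\tfrac{c_3}{2^{3\gep}-1}(L+1)^{3\gep}\bigr)$.

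Now I would set $t_0:=(L+1)\,e^{c_0 L_0^{1/2+\gep}}$. This makes the starting error exactly $\delta_0=e^{-(L+1)}$, and since $L_0\le L^{\gep/2}$ gives $L_0^{1/2+\gep}\le L^{\gep/4+\gep^2/2}$, an exponent below $3\gep$ for $\gep$ small, the factor $e^{c_0 L_0^{1/2+\gep}}$ is absorbed, so $t_m\le e^{cL^{3\gep}}=T(L)$ once $c=c(\beta)$ is taken large enough. For the error, the dominant term in $\delta_m$ is the $k=0$ summand $c_3^m e^{-c_2 L_0^{2\gep}}$; since $L_0\ge\tfrac12 L^{\gep/2}$ we have $L_0^{2\gep}\ge\tfrac14 L^{\gep^2}$, so this is at most $(L+1)^{O(1)}e^{-(c_2/4)L^{\gep^2}}$, while $c_3^m\delta_0\le (L+1)^{O(1)}e^{-(L+1)}$ and $c_3^{m-k}L_k^2 t_k^{-c_2}\le (L+1)^{O(1)}e^{-c_2 c_0 L_0^{1/2+\gep}}$ (using $t_k\ge t_0$) are far smaller, decaying with exponent $\gep/4+\gep^2/2>\gep^2$. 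Hence $\delta_m\le e^{-c'L^{\gep^2}}$ for some $c'=c'(\beta)>0$ and all large $L$. Finally, because $t\mapsto\bE\|\mu^\pm_t-\pi^\tau\|$ is non-increasing (each $t\mapsto\|\mu^\sigma_t-\pi^\tau\|$ is, by the contraction of the Markov semigroup in total variation, and taking the expectation over $\tau$ preserves this), $\cA(L,t_m,\delta_m)$ upgrades to $\cA(L,t,\delta_m)$ for all $t\ge t_m$, in particular for all $t\ge T(L)$; setting $\Delta(L):=\delta_m$ finishes the argument. The finitely many small values of $L$ are handled directly by Proposition~\ref{th:raf} at $t=T(L)$, shrinking $c'$ so that the resulting constant (which is $<1$) still lies below $e^{-c'L^{\gep^2}}$.

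The only delicate point is arranging the three competing demands on the free parameters $L_0$ and $t_0$ simultaneously: Proposition~\ref{th:raf} is useful only when $t_0$ is (stretched-)exponentially large; the target error $e^{-c'L^{\gep^2}}$ forces $L_0$ to be a genuine power of $L$ (a poly-logarithmic base scale would yield only $e^{-(\log L)^{O(1)}}$); and the time budget $e^{cL^{3\gep}}$ caps $L_0$ from above, leaving a window of admissible base exponents roughly $[\gep/2,6\gep)$. I expect the verification that this window is nonempty and that the geometric-sum bounds close with room to spare to be the main — though essentially routine — bookkeeping; no probabilistic input beyond Proposition~\ref{th:raf} and Theorem~\ref{th:rec} is required.
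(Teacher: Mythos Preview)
Your proposal is correct and follows essentially the same approach as the paper. The paper does not prove Corollary~\ref{th:corolla} directly (it is cited from~\cite{cf:MT}), but the analogous Corollary~\ref{th:solving} for the new scheme is proved with the same strategy you use: seed the recursion at a well-chosen base scale with an initial time large enough that Proposition~\ref{th:raf} yields a tiny starting error, then iterate Theorem~\ref{th:rec} and control the geometric sums; your version simply fills in the bookkeeping in more detail, including the identification of the admissible window for the base-scale exponent.
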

In turn, at the basis of the proof of Theorem \ref{th:rec}, besides
the so called \emph{Peres-Winkler censoring inequality} (see
\cite{cf:noteperes} and \cite{cf:MT}*{Section~2.4}), there were two
key equilibrium estimates on the behavior of (very) low temperature
Ising interfaces which we now recall and which were the responsible for
both the various $\nep{-L^\gep}$ error terms in $\d_1,\d_2$ and the constraint $\beta \gg 1$ on the inverse-temperature. The latter was necessary since the techniques of \cite{cf:MT} were based on several results of \cite{cf:DKS} on the Wulff construction which in turn use in an essential way low temperature cluster expansion.

\subsection{Equilibrium bounds on low temperature Ising interfaces used in~\cite{cf:MT}}
The first estimate is the key for the proof of the first part of the inductive statement namely $ \cA(L,t,\delta) \imp \cB(L,t_1,\delta_1)$. Given the rectangle $Q_L$ write it as the union of two overlapping rectangles, each of which is a suitable vertical  translate of the rectangle $R_L$ (see Figure~\ref{fig:Q}).
\begin{figure}
\centering
\includegraphics[width=0.8\textwidth]{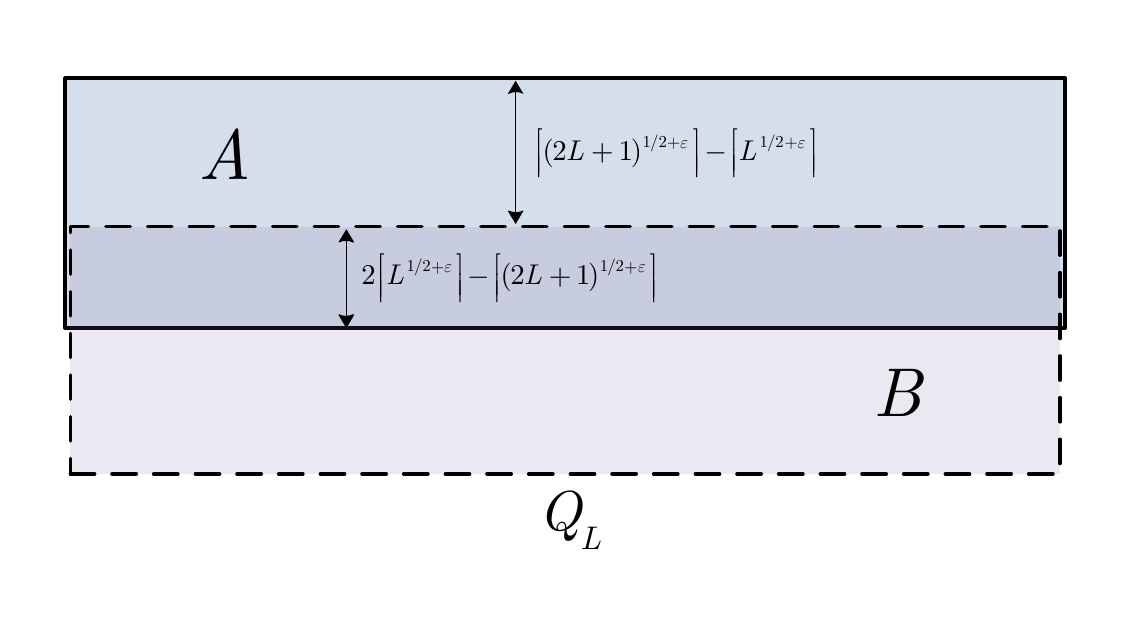}
\vspace{-0.5in}
\caption{The box $Q_L$ and its
covering with the rectangles $A,B$.}
\label{fig:Q}
\end{figure}
Call $B$ the lowest rectangle and $A$ the highest one. Then
\begin{Lemma}[see Claim 3.6 in \cite{cf:MT}]
\label{claim1}
There exists $c=c(\beta,\gep)>0$ such that
\begin{eqnarray}
\label{eq:gamma3}
\sum_{x\in B^c}  \bE\(\pi^\t(\s_x=+)-\pi^{\t,-}(\s_x=+)\)\le \nep{-cL^{2\gep}}
\end{eqnarray}
where $\pi^{\t,-}$ denotes the Gibbs measure in $Q_L$ with minus boundary conditions on its lowest side and $\t$ on the other three sides.
\end{Lemma}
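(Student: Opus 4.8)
I would follow the strategy of~\cite{cf:MT}*{Claim~3.6}; the one genuinely new ingredient is that the interface estimate underlying it must now be extracted from the duality and random-line tools of~\S\ref{sec:prelim} rather than from cluster expansion, so that it holds for all $\beta>\beta_c$. Recall that $Q_L$ has width $L$ and height $H_Q:=\lceil(2L+1)^{1/2+\epsilon}\rceil$, while $B$ is a vertical translate of $R_L$ and hence has height $H_R:=\lceil L^{1/2+\epsilon}\rceil$, so that $B^c$ consists of the rows of heights $H_R+1,\dots,H_Q$; note $H_R^2/L=\Theta(L^{2\epsilon})$, $H_R/L=o(1)$ and $H_Q\gg\sqrt L$. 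Since $\pi^\tau$ and $\pi^{\tau,-}$ agree on their boundary condition on the North, East and West sides of $Q_L$ and differ only on the South side — where $\pi^\tau$ carries $\tau_S\succeq\pi^+_\infty$ while $\pi^{\tau,-}$ carries $\tau\equiv-1$ — FKG gives $\pi^\tau\succeq\pi^{\tau,-}$, so every summand in~\eqref{eq:gamma3} is nonnegative and it suffices to bound $\pi^\tau(\s_x=+)-\pi^{\tau,-}(\s_x=+)$ for each $x\in B^c$.

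First I would peel off the atypical boundary conditions. Let $G$ be the event that the restriction of $\tau$ to the North/East/West sides has no $(+)$-cluster joining the bottommost $O(1)$ rows of $\partial Q_L$ to height $\ge H_R/2$; since this marginal is stochastically dominated by $\pi^-_\infty$, a union bound over the $\le|\partial Q_L|$ possible locations of such a cluster gives $\bP(G^c)\le|\partial Q_L|\,e^{-cH_R}\le e^{-cL^{1/2+\epsilon}/2}$, and on $G^c$ one simply bounds the summand by $1$, so $\sum_{x\in B^c}\bP(G^c)\le|Q_L|\,e^{-cL^{1/2+\epsilon}/2}$ is negligible against the target. On $G$ I would use the standard monotone coupling of $\pi^\tau$ and $\pi^{\tau,-}$: it produces $\s\ge\s'$ with the two prescribed marginals, so $\pi^\tau(\s_x=+)-\pi^{\tau,-}(\s_x=+)=\bP(\s_x=+,\ \s'_x=-)$. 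On the latter event a disagreement-percolation argument forces $x$ to be joined to the South side of $Q_L$ by a $(+)$-path of $\s$ lying in the disagreement set, and on $G$ this path cannot run along the North/East/West boundary; hence the open phase-separation line of $\s$ issuing from the two bottom corners of $Q_L$ must pass through a dual site at height $H_R$, giving on $G$
\[
\pi^\tau(\s_x=+)-\pi^{\tau,-}(\s_x=+)\ \le\ \pi^\tau\bigl(\text{the bottom phase-separation line passes through a dual site at height }H_R\bigr)\,.
\]

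The heart of the proof is to bound this probability by $e^{-cL^{2\epsilon}}$, for which I would use the random-line representation~\eqref{eq:8}. On $G$ the boundary set $V(\tau)$ consists of the pair $\{u,v\}$ of dual bottom corners, at horizontal distance $L$, together with possibly a few extra pairs lying low on the North/East/West sides; using Lemma~\ref{lem-edge-boundary} for the numerator of~\eqref{eq:8} and an approximate-factorization lower bound for the dual correlation in its denominator, these extra pairs cost only a factor $e^{o(L^{2\epsilon})}$, so it is enough to treat the case $V(\tau)=\{u,v\}$. For a dual site $z$ at height $H_R$, Corollary~\ref{lem-3-pt-spins} together with~\eqref{eq:7} bounds the probability that the line passes through $z$ by $\pi^*_{Q_L^*}(\s_u\s_z)\,\pi^*_{Q_L^*}(\s_z\s_v)/\pi^*_{Q_L^*}(\s_u\s_v)$. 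Bounding the numerator by Lemma~\ref{lem-spin-spin}, the denominator from below by the Ornstein--Zernike lower bound $\pi^*_{Q_L^*}(\s_u\s_v)\ge c_\beta L^{-1/2}e^{-\tau_\beta(0)L}$ (legitimate because $H_Q\gg\sqrt L$), and using $\tau_\beta(u-v)=\tau_\beta(0)L$ together with the sharp triangle inequality~\eqref{eq-sharp-tri-ineq}, this ratio is at most
\[
C_\beta\,L^{1/2}\exp\bigl(-\kappa_\beta\,(|u-z|+|z-v|-|u-v|)\bigr)\ \le\ C_\beta\,L^{1/2}\,e^{-\kappa_\beta H_R^2/L}\,,
\]
the last step being the elementary inequality $|u-z|+|z-v|-|u-v|\ge H_R^2/L$ for $z$ at height $H_R$ (the left-hand side being minimal when $z$ is horizontally centred). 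Summing over the $\le L+2$ dual sites at height $H_R$ and then over the $|B^c|\le|Q_L|$ choices of $x$, and recalling $H_R^2/L=\Theta(L^{2\epsilon})$, yields $\sum_{x\in B^c}(\cdots)\le\mathrm{poly}(L)\,e^{-\kappa_\beta H_R^2/L}\le e^{-cL^{2\epsilon}}$ for $L$ large (the finitely many small $L$ being absorbed into $c$), which together with the $G^c$ term gives~\eqref{eq:gamma3}.

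I expect the main obstacle to be precisely the Ornstein--Zernike lower bound $\pi^*_{Q_L^*}(\s_u\s_v)\ge c_\beta L^{-1/2}e^{-\tau_\beta(0)L}$ up to criticality in a box of aspect ratio $H_Q\sim L^{1/2+\epsilon}$ (and, relatedly, the approximate factorization of the dual correlations used on $G$): this is the sharp equilibrium input that~\cite{cf:MT} drew from the DKS cluster expansion — hence its restriction to large $\beta$ — whereas here it must come from the random-line representation of~\cites{cf:PV1,cf:PV2}, of which~\eqref{eq-strip-two-pt-corr} and the companion Propositions~\ref{prop:Bound1} and~\ref{prop:Bound2} are instances. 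By contrast the monotone coupling, the treatment of the atypical $\tau$, and the bookkeeping on $G$ are routine, if somewhat tedious to spell out.
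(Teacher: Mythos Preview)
Your overall strategy—reduce to an interface-height estimate and then control that estimate via the random-line representation and the sharp triangle inequality—is the right one, and your treatment of the interface bound itself is essentially the argument behind Proposition~\ref{prop:Bound1}. (One small correction: since $u,v$ lie on the boundary of the dual domain, the relevant lower bound on $\pi^*_{Q_L^*}(\sigma_u\sigma_v)$ carries a prefactor $L^{-3/2}$ rather than $L^{-1/2}$—this is precisely Theorem~\ref{thm:positive} and Corollary~\ref{cor-spin-spin-strip}—but the extra power of $L$ is harmless here.)

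The divergence from the paper is in the \emph{reduction} step, and there your argument has a genuine gap. The paper does not work inside $Q_L$ with the random $\tau$ at all: following~\cite{cf:MT}, it uses monotonicity to pass to the \emph{enlarged} rectangle $E_L$ of sides $3L\times 2H_Q$ with deterministic $(-,-,+,-)$ boundary, so that there is a single open contour and the whole question becomes Lemma~\ref{lem-geo1}. Your route—monotone coupling, disagreement percolation, and the event $G$—stays in $Q_L$ and breaks at the sentence ``hence the open phase-separation line of $\sigma$ issuing from the two bottom corners of $Q_L$ must pass through a dual site at height $H_R$''. Under a generic $\tau$ with law in $\cD(Q_L)$ the set $V(\tau)$ contains many points on all four sides: your $G$ forbids long $(+)$-clusters of $\tau$ on the N/E/W sides but not sign changes there, and it says nothing about $\tau$ on the South side, which merely dominates $\pi^+_\infty$ and need not be identically $+$. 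Thus there is no canonical ``bottom phase-separation line''; the disagreement argument delivers a $(+)$-path in $\sigma$ from $x$ to the South side, not a statement about any particular open contour of $\sigma$ under b.c.\ $\tau$. Your subsequent reduction to $V(\tau)=\{u,v\}$ by asserting that the extra pairs ``cost only a factor $e^{o(L^{2\gep})}$'' via Lemma~\ref{lem-edge-boundary} and an ``approximate factorization'' of the multi-point dual correlation in the denominator of~\eqref{eq:8} is exactly the difficulty the enlargement trick is designed to bypass, and it is not clear how to carry it out rigorously. In short: the hard equilibrium input you identify is correct, but the monotonicity reduction that feeds into it should go through $E_L$ as in~\cite{cf:MT} rather than through a disagreement argument inside $Q_L$.
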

In turn, by suitably playing with monotonicity properties of the measure $\pi$ as a function of the boundary conditions (see the short discussion in the proof of Claim 3.6 in \cite{cf:MT}),  the proof of the Lemma can be reduced to establishing the following bound.

Consider the enlarged rectangle $E_L$ with sides $3L$ and $2 \lceil (2L+1)^{\frac 12+\gep}\rceil$ respectively, which can be viewed as consisting of six rectangles $Q_L$ stacked together. Let $\pi^{(-,-,+,-)}$ be the associated Gibbs measure with $(-)$ boundary conditions on the North, East and West sides and $(+)$ on the South side. For any spin configuration $\s\in \{-1,+1\}^{E_L}$ let $\g=\g(\s)$ denote the unique open contour corresponding to these boundary conditions. Then
  \begin{Lemma} For any $\beta$ large enough there exists $c=c(\beta,\gep)$ such that for any $L$
    \label{lem-geo1}
    \begin{equation}
      \label{eq:rec3}
      \pi^{(-,-,+,-)}\Bigl(\g \text{ reaches height }  L^{\frac 12+\gep}\Bigr)\le \nep{-cL^{2\gep}}
    \end{equation}
  \end{Lemma}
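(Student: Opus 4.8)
This estimate is recalled from~\cite{cf:MT}, where it is derived from the DKS Wulff machinery (which forces $\beta\gg1$); the route I would take, natural to the duality framework of~\S\ref{sec:prelim} and reappearing in sharper form in our new equilibrium bounds, is as follows. Let $u,v$ be the two bottom corners of $E_L^*$ — the only sites of $\partial E_L^*$ where the b.c.\ $(-,-,+,-)$ changes sign — so $V(\t)=\{u,v\}$, the open contour $\g$ is the unique contour with $\d\g=\{u,v\}$, and $D:=|u-v|$ is of order $3L$, $\tau_\beta(u-v)=D\,\tau_\beta(0)$. Put $h:=L^{\frac12+\gep}$. Using~\eqref{eq:8} and~\eqref{eq:7},
\[
\pi^{(-,-,+,-)}\bigl(\g\text{ reaches height }h\bigr)=\frac{\sum_{\g:\ \d\g=\{u,v\},\ \g\text{ reaches height }h}q_{E_L^*}(\g)}{\pi^*_{E_L^*}(\s_u\s_v)}\,,
\]
so it suffices to bound the numerator by $L^{O(1)}\nep{-\tau_\beta(u-v)}\nep{-cL^{2\gep}}$ and the denominator below by $L^{-O(1)}\nep{-\tau_\beta(u-v)}$.

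For the numerator, a contour reaching height $h$ passes through some $z\in E_L^*$ with $\htx(z)\ge h$; a union bound over such $z$, Corollary~\ref{lem-3-pt-spins} (splitting the contour at $z$) and Lemma~\ref{lem-spin-spin} give
\[
\sum_{\g\text{ reaches }h}q_{E_L^*}(\g)\le\sum_{z:\,\htx(z)\ge h}\pi^*_{E_L^*}(\s_u\s_z)\,\pi^*_{E_L^*}(\s_z\s_v)\le\sum_{z:\,\htx(z)\ge h}\nep{-\tau_\beta(u-z)-\tau_\beta(z-v)}\,.
\]
By the sharp triangle inequality~\eqref{eq-sharp-tri-ineq} the exponent is at least $\tau_\beta(u-v)+\kappa_\beta\bigl(|u-z|+|z-v|-|u-v|\bigr)$, and an elementary planar estimate gives $|u-z|+|z-v|-|u-v|\ge\htx(z)^2/D\ge h^2/D=\tfrac13L^{2\gep}$ for every $z$ with $\htx(z)\ge h$. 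Since $|E_L^*|=L^{O(1)}$, the numerator is at most $L^{O(1)}\nep{-\tau_\beta(u-v)}\nep{-\kappa_\beta L^{2\gep}/3}$.

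The denominator is the delicate point: one must show that confining the phase-separation line to the box $E_L$ costs only a polynomial factor. Let $S^{+}$ be the half-infinite strip obtained from $E_L$ by deleting its ceiling, so $u,v$ lie on the floor of $S^{+}$ and every contour from $u$ to $v$ inside $S^{+}$ has all vertices at height $\ge\htx(u)$. Since $E_L^*\subset (S^+)^*$, Lemma~\ref{l:6.3} gives $q_{E_L^*}(\g)\ge q_{(S^+)^*}(\g)$ for $\g\subset E_L^*$, and hence, using~\eqref{eq:7} again,
\[
\pi^*_{E_L^*}(\s_u\s_v)\ \ge\ \pi^*_{(S^+)^*}(\s_u\s_v)-\sum_{\substack{\g\subset (S^+)^*\\ \g\not\subset E_L^*}}q_{(S^+)^*}(\g)\,.
\]
The subtracted sum runs over contours from $u$ to $v$ in $S^{+}$ that exceed the height $\asymp L^{\frac12+\gep}$ of $E_L$, and the estimate of the previous paragraph bounds it by $\nep{-\tau_\beta(u-v)}$ times a quantity decaying faster than any power of $L$; thus it is enough to have $\pi^*_{(S^+)^*}(\s_u\s_v)\ge L^{-O(1)}\nep{-\tau_\beta(u-v)}$, i.e.\ an Ornstein--Zernike-type lower bound for a contour pinned at two boundary points of a half-space. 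For $\beta$ large such a bound (with any polynomial loss) is supplied by the DKS cluster expansion used in~\cite{cf:MT}; in general it is precisely the ``positivity'' estimate we establish via the random-line representation (Theorem~\ref{thm:positive}). Given this input, dividing the two displays yields $\pi^{(-,-,+,-)}(\g\text{ reaches }h)\le L^{O(1)}\nep{-\kappa_\beta L^{2\gep}/3}\le\nep{-cL^{2\gep}}$ for a suitable $c=c(\beta,\gep)>0$ and all large $L$, the finitely many remaining $L$ being absorbed by shrinking $c$.

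The hard part is this denominator lower bound. A naive estimate of $\pi^*_{E_L^*}(\s_u\s_v)$ by the weight of the single straight contour along the bottom yields only $\nep{-2\beta D}\ll\nep{-D\tau_\beta(0)}$, missing the crucial entropic gain; and the cheap ``subtract the bad events from an infinite strip'' device fails because a contour pinned at the boundary is not at all unlikely to touch or dip below it. Getting the right polynomial order of $\pi^*_{E_L^*}(\s_u\s_v)$ — equivalently the $1/D$ positivity factor for a boundary-pinned bridge — is the one genuinely nontrivial ingredient, off the shelf for large $\beta$ and a new contribution of this paper in general.
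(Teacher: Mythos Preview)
Your argument is correct, and you have correctly identified that the paper itself does not prove this lemma: it is quoted in \S\ref{sec:ff-induction} as a result of~\cite{cf:MT}, where the proof goes through the DKS Wulff/cluster-expansion machinery (this is what forces the hypothesis $\beta\gg1$). The duality route you sketch is not the proof of~\cite{cf:MT} but rather the approach the present paper takes for its \emph{replacement} estimate, Proposition~\ref{prop:Bound1}, in \S\ref{sec:new-equlibrium-proofs}: write the probability as a ratio via~\eqref{eq:8}, bound the numerator by splitting the contour at a high point via Corollary~\ref{lem-3-pt-spins} and the sharp triangle inequality, and bound the denominator using Theorem~\ref{thm:positive}.

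There is one organizational difference worth noting. In its proof of Proposition~\ref{prop:Bound1} the paper first observes that the quantity~\eqref{eq:10} is monotone increasing in the height of the rectangle, so it replaces the finite box by the half-infinite strip $S^+$ at the outset and works with both numerator and denominator there. You instead keep the finite box $E_L$ and recover the half-strip denominator by subtracting off the contribution of contours that escape through the ceiling. Both approaches are valid and rest on the same nontrivial input (Theorem~\ref{thm:positive}/Corollary~\ref{cor-spin-spin-strip}); the paper's monotonicity reduction is slightly cleaner since it avoids the subtraction step altogether. Your closing remarks about why the denominator bound is the genuine difficulty, and why one cannot get it by a naive straight-contour estimate or by subtracting from the full strip, are exactly on point.
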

Notice that the height $L^{\frac 12+\gep}$ is well beyond the typical
$O(\sqrt{L})$ fluctuations of the interface.

The second equilibrium bound is required for the proof of the statement $ \cB(L,t_1,\delta_1) \imp \cA(2L+1,t_2,\delta_2)$ (see Section~3.2 and in particular Claim~3.10 in~\cite{cf:MT}). Here the bottom line is the following bound.

Let $\bar R_L$ consists of two copies of $R_L$ stacked one on top of
the other. Let $\D\sset \partial \bar R_L$ consists of those boundary
sites $x=(i,j)$ in the South border such that $|i-L/2|\le \frac 12
L^{3\gep}$ and $j=0$. Consider the Gibbs
measure $\pi_{\bar R_L}^{(-,+,\D)}$ on $\bar R_L$ with $(-)$ boundary
conditions on the union of the North boundary and $\D$ and $(+)$ on the rest of $\partial \bar R_L$.  Let $\G_1$ be the event that the open contour $\g_1$ starting on the upper left corner of $\bar R_L$  ends at the left end of the interval $\D$ without ever crossing the vertical  line at $i=L/2$. Then
\begin{Lemma}
  \label{lem-geo2}
For any $\beta$ large enough there exists $c=c(\beta,\gep)$ such that for any $L$
\begin{equation}
  \label{eq:5}
  \pi_{\bar R_L}^{(-,+,\D)}\bigl(\G^c_1\bigr) \le \nep{-cL^{3\gep}}
\end{equation}
\end{Lemma}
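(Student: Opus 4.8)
The plan is to pass to equilibrium via the random-line representation and reduce the claim to bookkeeping with the surface tension; throughout I abbreviate $\pi^*=\pi^*_{\bar R_L^*}$. Under the boundary conditions $(-,+,\D)$ the boundary spin changes sign along $\partial\bar R_L$ at exactly four dual sites: the two top corners $a$ (north-west), $b$ (north-east), and the endpoints $p$ (left), $q$ (right) of $\D$. Thus $V(\tau)=\{a,b,p,q\}$, every configuration carries exactly two open contours, and — since the splitting-rule produces a planar (non-crossing) family — the only matchings of $\{a,b,p,q\}$ that can occur are $\{(a,b),(p,q)\}$ and $\{(a,p),(b,q)\}$; $\G_1$ is the event that the latter occurs and, in addition, the contour $\g_1$ from $a$ to $p$ avoids the line $\{i=L/2\}$. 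By \eqref{eq:8},
\[
\pi_{\bar R_L}^{(-,+,\D)}(\G_1^c)=\frac{\sum_{\underline\l\in\G_1^c}q_{\bar R_L^*}(\underline\l)}{\pi^*(\s_a\s_b\s_p\s_q)},
\]
so I would bound the numerator above and the denominator below, splitting $\G_1^c$ into (i) the matching is $\{(a,b),(p,q)\}$, and (ii) the matching is $\{(a,p),(b,q)\}$ but $\g_1$ passes through some dual site $z$ with $i_z=L/2$ — which is forced once $\g_1$, whose endpoints both lie in $\{i<L/2\}$, reaches $\{i\ge L/2\}$, as $L$ odd means the line $\{i=L/2\}$ carries dual sites.

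For the numerator I would use only the weight bounds recalled in \S\ref{sec:prelim}. In case (i) the two open contours form a $\bar R_L^*$-compatible pair, so Lemma~\ref{l:6.5}, \eqref{eq:7} and Lemma~\ref{lem-spin-spin} give
\[
\sum_{\underline\l\in(\mathrm{i})}q_{\bar R_L^*}(\underline\l)\le\pi^*(\s_a\s_b)\,\pi^*(\s_p\s_q)\le e^{-\tau_\b(b-a)-\tau_\b(q-p)}.
\]
In case (ii), decoupling $\g_1$ from $\g_2$ by \cite{cf:PV1}*{Lemma~6.4}, bounding the weight of contours from $a$ to $p$ through $z$ by Corollary~\ref{lem-3-pt-spins}, and summing over the at most $N_0=O(L^{1/2+\gep})$ dual sites $z\in\bar R_L^*$ with $i_z=L/2$,
\[
\sum_{\underline\l\in(\mathrm{ii})}q_{\bar R_L^*}(\underline\l)\le\Bigl(\sum_{z:\,i_z=L/2}\pi^*(\s_a\s_z)\,\pi^*(\s_z\s_p)\Bigr)\pi^*(\s_b\s_q)\le N_0\,e^{-\tau_\b(q-b)}\max_{z:\,i_z=L/2}e^{-\tau_\b(z-a)-\tau_\b(z-p)}.
\]

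The heart of the argument is then purely metric. Writing $m=L/2$, $s=\tfrac12L^{3\gep}$, $H=2\lceil L^{1/2+\gep}\rceil$, one has up to $O(1)$ coordinate errors $a=(0,H)$, $b=(2m,H)$, $p=(m-s,0)$, $q=(m+s,0)$. Since $b-a$ and $q-p$ are horizontal and $\tau_\b(x)=\tau_\b(0)|x|$ for axis-parallel $x$, the case-(i) exponent is $\tau_\b(0)(2m+2s)+O(1)$. As $\tau_\b(0)=\min_\theta\tau_\b(\theta)$ and $\tau_\b$ is even and analytic, a vector at small angle $\theta$ to the horizontal satisfies $\tau_\b(0)|x|\le\tau_\b(x)\le(\tau_\b(0)+C\theta^2)|x|$; applied to $p-a$ and $q-b$ (angle $O(L^{-1/2+\gep})$, length $\le m-s+O(L^{2\gep})$) this yields $\tau_\b(p-a)+\tau_\b(q-b)\le\tau_\b(0)(2m-2s)+O(L^{2\gep})$, so the case-(i) exponent exceeds $\tau_\b(p-a)+\tau_\b(q-b)$ by at least $4\tau_\b(0)s-O(L^{2\gep})=2\tau_\b(0)L^{3\gep}-O(L^{2\gep})$. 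For case (ii), any $z$ with $i_z=L/2$ obeys $|z-a|\ge m-O(1)$ and $|z-p|\ge s-O(1)$, whence by the sharp triangle inequality \eqref{eq-sharp-tri-ineq},
\[
\tau_\b(z-a)+\tau_\b(z-p)\ge\tau_\b(p-a)+\kappa_\b\bigl(|z-a|+|z-p|-|p-a|\bigr)\ge\tau_\b(p-a)+\kappa_\b\bigl(2s-O(L^{2\gep})\bigr).
\]
Hence in both cases the surplus over $\tau_\b(p-a)+\tau_\b(q-b)$ is at least $c_1L^{3\gep}$ for $L$ large, with $c_1=\tfrac12\min\{\tau_\b(0),\kappa_\b\}>0$, and therefore $\sum_{\underline\l\in\G_1^c}q_{\bar R_L^*}(\underline\l)\le C\,L^{1/2+\gep}\,e^{-\tau_\b(p-a)-\tau_\b(q-b)}\,e^{-c_1L^{3\gep}}$.

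For the denominator I would use the Griffiths (GKS) inequality, $\pi^*(\s_a\s_b\s_p\s_q)\ge\pi^*(\s_a\s_p)\,\pi^*(\s_b\s_q)$, together with a lower bound on the boundary two-point function of the box, $\pi^*(\s_a\s_p)\ge c_2L^{-C_2}e^{-\tau_\b(p-a)}$ and likewise for $\s_b\s_q$ — the height $H\gg\sqrt L$ of $\bar R_L$ being exactly what makes the box ``thick enough'' for such a bound. Combining,
\[
\pi_{\bar R_L}^{(-,+,\D)}(\G_1^c)\le C'L^{C'}e^{-c_1L^{3\gep}}\le e^{-cL^{3\gep}}
\]
for $L$ large and any $c<c_1$ (the case of bounded $L$ being standard upon shrinking $c$). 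The substantive input is this last two-point lower bound — equivalently, a matching lower bound on the relevant partition function: for large $\b$ it is the Ornstein--Zernike estimate of \cite{cf:DKS}, while up to $\b_c$, where cluster expansion is unavailable, it must be obtained from the random-line representation, which is exactly the role of the refined equilibrium estimates of this paper (Propositions~\ref{prop:Bound1}--\ref{prop:Bound2} and Theorems~\ref{thm-strip-large-dev}--\ref{thm:positive}); the only mild complication is that $a$ is a corner, which costs merely an extra polynomial factor. I expect this two-point lower bound to be the main obstacle; the remainder is routine.
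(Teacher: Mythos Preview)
Your proposal is essentially correct and, in fact, mirrors closely the paper's proof of Proposition~\ref{prop:Bound2}, which is the sharpened analogue of Lemma~\ref{lem-geo2} valid for all $\b>\b_c$. Note that the paper does not itself prove Lemma~\ref{lem-geo2}: it is quoted from~\cite{cf:MT}, where it was established via the Wulff-construction machinery of~\cite{cf:DKS} (hence the restriction to large $\b$). Your decomposition into cases (i) and (ii) corresponds exactly to Lemmas~\ref{lem:ab-cd} and~\ref{lem:ac-left-bd-right} respectively; the upper bounds via Lemma~\ref{l:6.5}, Corollary~\ref{lem-3-pt-spins} and Lemma~\ref{lem-edge-boundary}, together with the surface-tension bookkeeping (sharp triangle inequality and analyticity of $\tau_\b$ at $\theta=0$), match the paper's argument step for step.

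The one point where you are more casual than the paper is the denominator: you assert $\pi^*(\s_a\s_p)\ge c_2 L^{-C_2}e^{-\tau_\b(p-a)}$ and correctly flag it as the substantive input. For large $\b$ this is indeed available from~\cite{cf:DKS}-type Ornstein--Zernike estimates, so for Lemma~\ref{lem-geo2} as stated your black-boxing is legitimate. For all $\b>\b_c$ the paper has to work harder (see the proof of Lemma~\ref{lem:ab-cd}): it reduces the domain by GKS, routes through the center of a sub-rectangle, and then invokes Theorem~\ref{thm:positive} to obtain the polynomial lower bound in the half-strip --- this is precisely the ``refined equilibrium estimate'' you allude to, and the corner position of $a$ is what forces the extra decomposition rather than being a throwaway polynomial factor. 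So your diagnosis of where the difficulty lies is accurate, and the remainder of your argument is indeed routine.
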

In the scheme envisaged in \cite{cf:MT} the role played by the tiny extra piece of $(+)$ boundary conditions at
the vertices of $\D$, being the main source of the $\nep{c_3L^{3\gep}}$
factor relating the time scales $t_2,t_1$ in \eqref{eq:rec2}, is quite crucial and therefore it needs a bit of
explanation.

\begin{figure}
\centering
\includegraphics[width=0.49\textwidth]{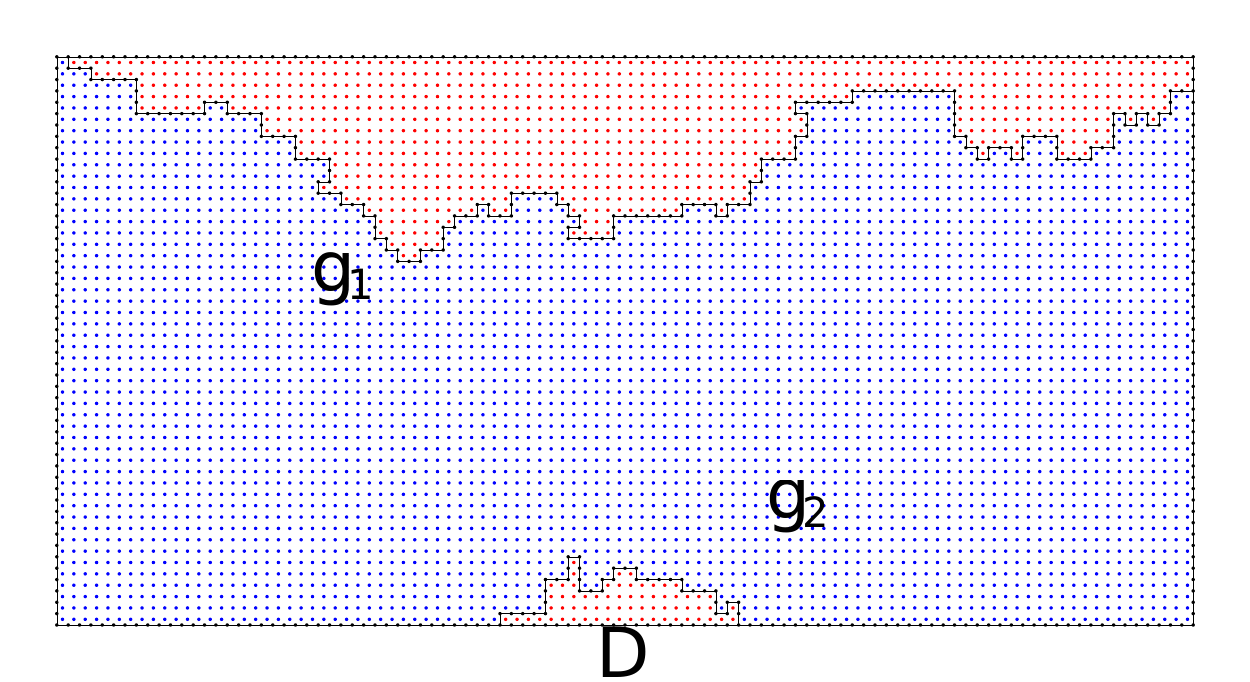}
%\hspace{0.05in}
\includegraphics[width=0.49\textwidth]{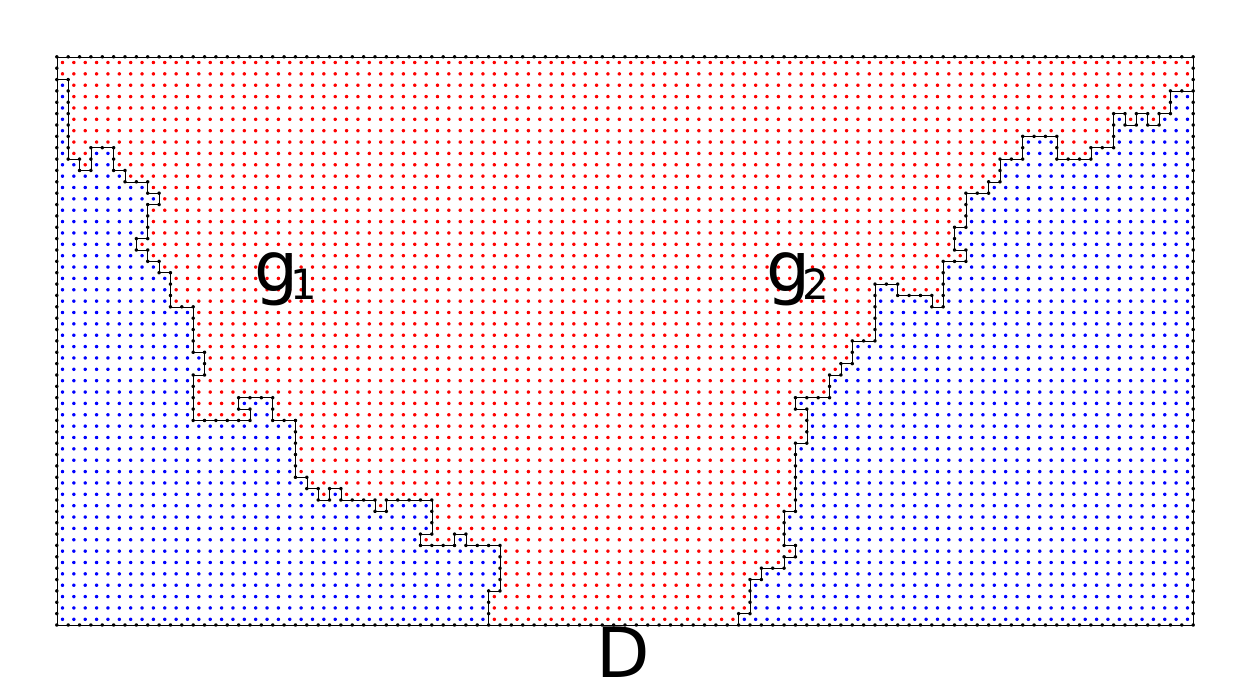}
\caption{Two possible arrangements for the open Peierls contours in the Gibbs measure under a $(-,+,\D)$ boundary condition.}
\label{fig:eq-cont}
\vspace{-0.1in}
\end{figure}

Let us first explain why the length of $\D$ was chosen equal to
$L^{3\gep}$. Under the boundary conditions $(-,+,\D)$,
for any configuration there exist exactly two open
Peierls contours $\g_1,\g_2$ with two possible
scenarios for their endpoints (illustrated in Figure~\ref{fig:eq-cont}):
\begin{enumerate}[(a)]
\item $\g_1$ joins the two upper corners of $\bar R_L$ and $\g_2$ the two ends of the interval $\D$;
\item $\g_1$ joins the left upper corner of $\bar R_L$ with the left
  boundary of $\D$ whereas $\g_2$ joins the right upper corner of $\bar R_L$ with the right boundary of $\D$.
\end{enumerate}

In~\cite{cf:MT} it was shown, using a significant part of the main
machinery of~\cite{cf:DKS}, that the ratio between the probabilities of the two
cases is roughly of the form $\nep{-\b(L + |\D|-2 D)\t_\b(0)}
$ where $D$ is the Euclidean distance between the left upper corner of $\bar R_L$
and the left boundary of $\D$.
Clearly $D\approx L/2-|\D| + O(L^{2\gep})$ and therefore case (b) is much more likely
than case (a) iff $|\D|\gg L^{2\gep}$. The choice $L^{3\gep}$ was
clearly not optimal and just a very safe one. Once the
first scenario can be neglected
then the fact that $\g_1$ does not intersect the vertical line at
$i=L/2$ is quite natural (but painful to prove).

Next we sketchily explain why the need of attracting the contour $\g_1$
deep down inside the rectangle $\bar R_L$.

When proving the
implication $\cB(L,t_1,\delta_1)\imp \cA(2L+1,t_2,\delta_2)$ we can
imagine that the rectangle $R_{2L+1}$ is written as the union of three
copies of the rectangle $Q_L$ denoted by $Q_L^{\text{centr}}, Q_L^{\text {left}},
Q_L^{\text{right}}$ (see Figure~\ref{fig:R}).

\begin{figure}
\centering
\includegraphics[width=\textwidth]{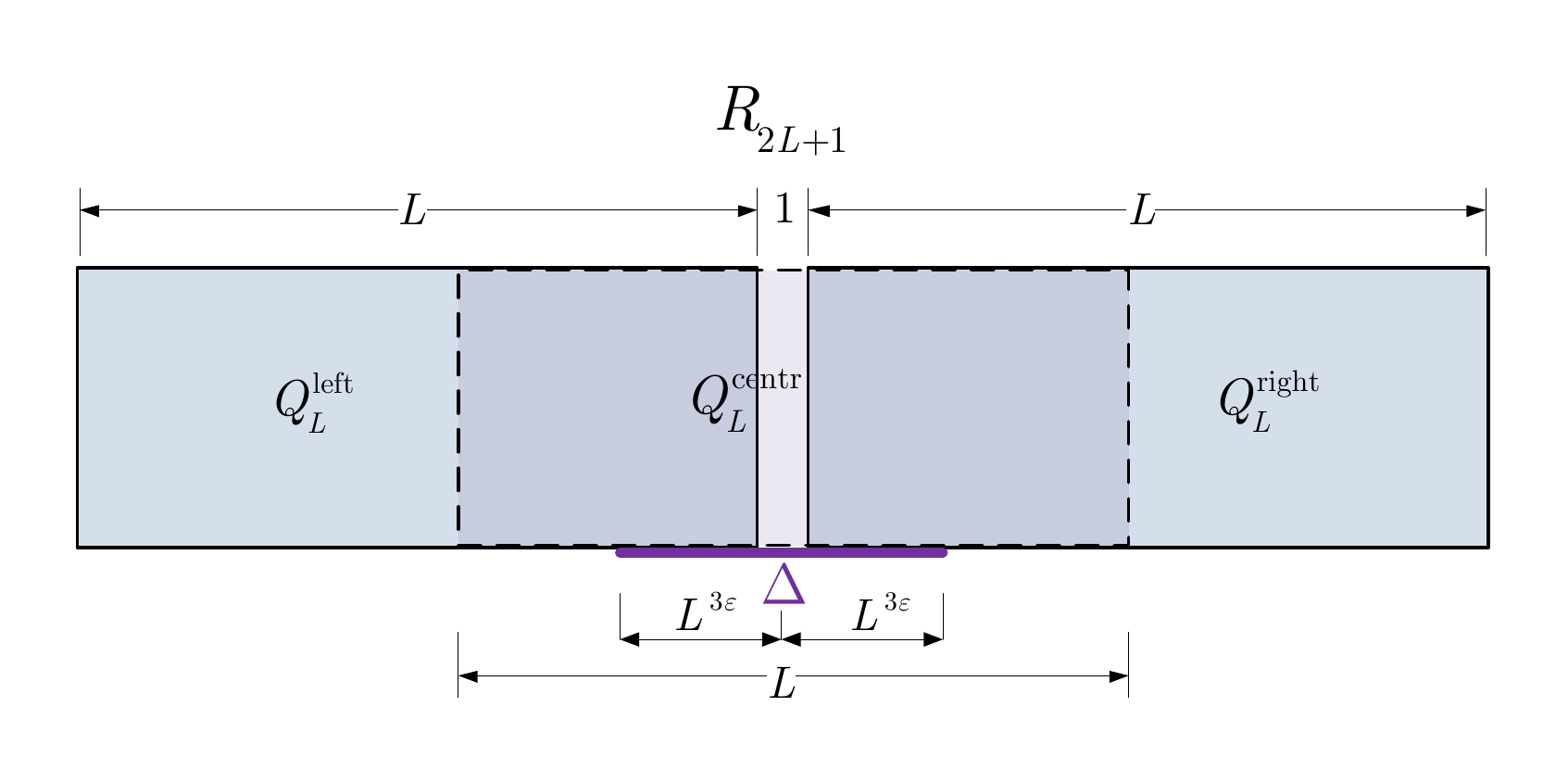}
\vspace{-0.5in}
\caption{The box $R_{2L+1}$ and its
covering with $Q_L^{\text{centr}}, Q_L^{\text {left}}, Q_L^{\text{right}}$. In bold the exceptional set $\Delta$.}
\label{fig:R}
\vspace{-0.1in}
\end{figure}

For simplicity suppose that the boundary conditions around $R_{2L+1}$
are the ``extreme ones'' namely $(-,-,+,-)$  ordered clockwise starting
from the North one and imagine starting the
dynamics from all pluses.

The Peres-Winkler results allow us to e.g.\ first
run the dynamics in the central rectangle $Q_L^{\text{centr}}$ for a
time $t_1$ and then in the left and right ones for some other time lag. Thus the dynamics in
$Q_L^{\text{centr}}$ runs with b.c.\
$(-,+,+,+)$ and after a time lag $t_1$ it will be close to the Gibbs measure
$\pi^{(-,+,+,+)}_{Q_L^{\text{centr}}}$ by less than $\d_1$ because of $\cB(L,t_1,\delta_1)$. The trouble
is that the marginal of this measure on e.g.\ the East boundary of
$Q_L^{\text{left}}$ is \emph{not} dominated by $\pi_\infty^-$ because
the unique open contour joining the left upper corner of
$Q_L^{\text{centr}} $ to the right one will stay  close to the upper side of
$Q_L^{\text{centr}}$. Therefore we cannot use statement
$\cB(L,t_1,\delta_1)$ for the dynamics in $Q_L^{\text{left}}$ to force
equilibrium there in another time lag $t_1$.

An appealing and very intuitive possible way out of this serious problem
would be to run many times the dynamics in $Q_L^{\text{centr}}$
until a large deviation forces the open contour to go below and to the
left of the East side of $Q_L^{\text{left}}$. Since the probability of
this fluctuation is $O(\exp(-c L^{2\gep}))$ it would be enough to wait
$O(\exp(c L^{2\gep})t_1)$ runs. However a rigorous implementation of
this idea is far from trivial and in \cite{cf:MT} the solution was
another one, less natural but much easier to carry out.

If one, by brute force, flips the boundary conditions inside the interval $\D$
on the South side of $Q_L^{\text{centr}}$ to $(-)$ the mixing time of the dynamics cannot change by more that
$\exp(c(\b)|\D|)$ (see \cite{cf:MT}*{Section 2.5} for more
details). Once the boundary conditions have been flipped then, thanks
to  \eqref{eq:5}, the contours in $Q_L^{\text{centr}}$ will follow
scenario (b) above and the resulting distribution over the East
boundary of $Q_L^{\text{left}}$ will now be dominated by the minus
phase $\pi_\infty^-$ allowing another application of the inductive statement
$\cB(L,t_1,\delta_1)$ to $Q_L^{\text{left}}$ and $Q_L^{\text{right}}$.

\section{A new recursive scheme}\label{sec:new-induction}
In this section we modify the recursion scheme of \cite{cf:MT} and,
modulo two equilibrium estimates very similar to Lemma~\ref{lem-geo1} and~\ref{lem-geo2}, we prove Theorem~\ref{th:quadrato}. We begin by fixing
some notation.

Let $N\in \mathbb N$ be a large integer, let $L=L_N= 2^N-1$
and choose $N_0$ to be the smallest integer such that
$L_{N_0}:=2^{N_0}-1\ge \inte{\log(L)^3}$. In our recursion $N_0$ and $N$ will
represent the
initial and final scales respectively.  To any intermediate scale
$n\in [N_0,N]$ we associate a length scale $L_n=2^n-1$.
We also define the rectangles $R_n,Q_n$ to have sides
(parallel to the coordinate axes) of length
$(L_n, \k_N \sqrt{L_n})$ and $(L_n, \k_N \sqrt{L_{n+1}})$
respectively where $\k_N=\sqrt{\k N}= O(\log(L)^{1/2})$ and $\k$ is a positive constant that later will
be chosen large enough depending on $\b$. Thus
the very definition of the rectangles depends on the final
scale. It is worth noticing
that $L_n\gg \k_N \sqrt{L_{n+1}}$ for any $n\in [N_0,N]$. Finally, for any $n\in [N_0,N]$, we define the statements $\cA(L_n, t, \d)$
and $\cB(L_n,t,\d)$ as in Definition~\ref{def-stat}.

Having fixed the basic notation our inductive scheme can be formulated
as follows. We repeat the result on the starting point for
completeness, despite it being completely obvious after Proposition~\ref{th:raf} and the remark after it.

\begin{Proposition}[The starting point]
\label{th:rafnew}
  For every $\beta$ there exists $c=c(\beta)$ such that for every
$n\in [N_o,N]$
the statements
$
\cA(L_n,t,e^{-t\,e^{-c \k_N\sqrt{L_n}}})
$
and
$
\cB(L_n,t,e^{-t\,e^{-c \k_N\sqrt{L_n}}})
$
hold.
\end{Proposition}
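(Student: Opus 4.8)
The plan is to obtain Proposition~\ref{th:rafnew} directly from Proposition~\ref{th:raf} and the remark following it, the only change being the aspect ratio of the rectangles $R_n,Q_n$. First I would recall the mechanism behind Proposition~\ref{th:raf}: for Glauber dynamics on a rectangle $R$ with an \emph{arbitrary} boundary condition, a block-dynamics decomposition of $R$ into overlapping blocks whose shorter side equals the length $s$ of the shortest side of $R$ and whose longer side is $O(s)$ reduces the relaxation time, up to a factor polynomial in the dimensions, to that of a single block; bounding the single-block gap crudely by $e^{-cs}$ (comparison with $\pi_{\min}$) gives $\gap^{-1}\le\exp(cs)$ with $c=c(\b)$, and then $\tmix\le\gap^{-1}\log(2e/\pi_{\min})\le\exp(c's)$ since $\log(1/\pi_{\min})=O(|R|)$ and both extra factors are $\exp(o(s))$ once $s\to\infty$. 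Feeding this into the general inequality $\sup_\s\|\mu^\s_t-\pi^\t\|\le\exp(-\inte{t/\tmix})$ from \S\ref{sec:Glauber}, sharpened to $e\cdot e^{-t/\tmix}$ via $\inte{x}\ge x-1$, yields $\bE\|\mu^\pm_t-\pi^\t\|\le e^{-t\,e^{-c\,s}}$ for every $\bP$ in the relevant class after absorbing the extra constant into $c$ (the bound being of course only of substance, and needed in the recursion, for $t$ exponentially large in $s$).

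All that then remains is to read off $s$. For $R_n$, with sides $(L_n,\k_N\sqrt{L_n})$, one has $L_n\gg\k_N\sqrt{L_{n+1}}\ge\k_N\sqrt{L_n}$ as noted right after the definition of $R_n,Q_n$, so the shortest side is the vertical one and $s=\k_N\sqrt{L_n}$; this is precisely $\cA(L_n,t,e^{-t\,e^{-c\k_N\sqrt{L_n}}})$. For $Q_n$, with sides $(L_n,\k_N\sqrt{L_{n+1}})$, the shortest side is $s=\k_N\sqrt{L_{n+1}}\le\sqrt3\,\k_N\sqrt{L_n}$ because $L_{n+1}=2L_n+1\le 3L_n$, so after replacing $c$ by $\sqrt3\,c$ we get $\cB(L_n,t,e^{-t\,e^{-c\k_N\sqrt{L_n}}})$. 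I would also record the two bookkeeping points: the constant $c$ depends on $\b$ alone — its only dependence on the final scale $N$ enters through the \emph{value} $\k_N=\sqrt{\k N}$ of $s$, not through $c$ itself — and the class $\mathcal D(\cdot)$ together with the statements $\cA,\cB$ do not see the aspect ratio of the rectangle, so nothing else in the argument of~\cite{cf:MT} behind Proposition~\ref{th:raf} needs to be modified.

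I do not expect any genuine obstacle here: this proposition is recorded for completeness as the base case of the recursion, and the only care required (uniformity of $c$ in $N$ and in the boundary law, and harmlessness of the polynomial-in-the-dimensions losses against $e^{o(s)}$) is routine. The real difficulty of the paper lies elsewhere — in the modified inductive step and in the new equilibrium estimates replacing Lemmas~\ref{lem-geo1} and~\ref{lem-geo2}, which must hold all the way up to $\beta_c$ and are therefore derived from the random-line representation rather than from cluster expansion.
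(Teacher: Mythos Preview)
Your proposal is correct and matches the paper's own treatment, which in fact gives no separate proof but simply declares the statement ``completely obvious after Proposition~\ref{th:raf} and the remark after it'' --- precisely the observation that the exponent is the negative exponential of the shortest side of the rectangle. You have supplied the routine bookkeeping (identifying the short side as $\k_N\sqrt{L_n}$ for $R_n$ and $\k_N\sqrt{L_{n+1}}\le\sqrt3\,\k_N\sqrt{L_n}$ for $Q_n$, and the uniformity of $c$ in $N$) that the paper leaves implicit.
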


\begin{Theorem}[The inductive step]
\label{th:recnew}
There exist constants $c_1,c_2,c_3$  and for every
$\beta>\beta_c$ there exists $\k_0$ such that for any $\k\ge \k_0$,
for any $N$ large enough and for any $n\in [N_0,N]$,
\begin{equation}
  \label{eq:rec1b}
  \cA(L_n,t_n,\delta_n) \imp \cB(L_n,t'_n,\delta'_n)\imp \cA(L_{n+1},t_{n+1},\delta_{n+1})
\end{equation}
where
\begin{align}
  \label{eq:rec2b}
\delta'_n&=c_1\left(\delta_n+\nep{-c_2 \k_N^2 }+L_n^2\nep{-c_2 \log t_n}\right) & &; &  t_n'&= 2t_n \\
\d_{n+1}&= c_3(\delta_n + \nep{-c_2 \k_N^2 }
+ L_n^2\nep{-c_2 \log t_n} )& &; &  t_{n+1}&= \nep{c_3 \k_N^2 }t_n
\end{align}
\end{Theorem}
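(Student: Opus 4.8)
The plan is to follow the architecture of the inductive step of~\cite{cf:MT} (Theorem~\ref{th:rec}) essentially verbatim at the level of the dynamical/combinatorial arguments --- censoring inequalities, the covering of $Q_n$ by two translates $A,B$ of $R_n$ for the first implication, and the covering of $R_{n+1}$ by three translates $Q_n^{\text{centr}},Q_n^{\text{left}},Q_n^{\text{right}}$ together with the boundary-flip trick on the exceptional set $\D$ for the second implication --- but to replace every input that came from~\cite{cf:DKS} and cluster expansion by a counterpart valid for all $\beta>\beta_c$, derived from the random-line representation of~\S\ref{sec:prelim} and the new equilibrium estimates (Propositions~\ref{prop:Bound1},~\ref{prop:Bound2} and the strip results Theorems~\ref{thm-strip-large-dev},~\ref{thm:positive}). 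The bookkeeping change is purely one of scale: whereas in~\cite{cf:MT} the short side of $R_L$ was $L^{1/2+\gep}$ and deviations beyond it cost $\exp(-cL^{2\gep})$, here the short side is $\k_N\sqrt{L_n}=\sqrt{\k N}\sqrt{L_n}$, i.e. we inflate the natural $\sqrt{L_n}$ interface fluctuation scale by the logarithmic-in-$L$ factor $\k_N$; consequently the Gaussian large-deviation cost of the interface reaching the top of the rectangle is $\exp(-c\k_N^2)=\exp(-c\k N)=L^{-c\k}$, which is the origin of the error term $\nep{-c_2\k_N^2}$ in~\eqref{eq:rec2b}, and the analogous tilted-contour cost on the set $\D$ (now taken of width $\asymp \k_N^2$ rather than $L^{3\gep}$) produces the time-inflation factor $\nep{c_3\k_N^2}=L^{c_3\k}$ per step.

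\textbf{First implication $\cA(L_n,t_n,\delta_n)\Rightarrow\cB(L_n,t'_n,\delta'_n)$.} I would reproduce the proof of Claim~3.6 of~\cite{cf:MT}: cover $Q_n$ by the two overlapping copies $A$ (top) and $B$ (bottom) of $R_n$, run the censored dynamics in $B$ then in $A$ for time $t_n$ each (total $t'_n=2t_n$), and use $\cA(L_n,t_n,\delta_n)$ in each piece. The one genuinely new estimate needed is the replacement of Lemma~\ref{lem-geo1}: the statement that under $(-,-,+,-)$ boundary conditions on a rectangle of the relevant aspect ratio the unique open contour $\g$ reaches height $\k_N\sqrt{L_n}$ only with probability $\le\nep{-c\k_N^2}$. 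This is exactly the Gaussian large-deviation bound for the phase-separation line furnished by Theorem~\ref{thm-strip-large-dev}, applied with $x=\k_N=\sqrt{\k N}$, giving the claimed $\nep{-c\k N}=L^{-c\k}$; monotonicity in the boundary conditions then transfers it to the magnetization sum $\sum_{x\in B^c}\bE(\pi^\t(\s_x=+)-\pi^{\t,-}(\s_x=+))$ exactly as in~\cite{cf:MT}. The term $L_n^2\nep{-c_2\log t_n}$ is the standard contribution from the $L^2(\pi)$ control of the censored chain and is unchanged.

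\textbf{Second implication $\cB(L_n,t'_n,\delta'_n)\Rightarrow\cA(L_{n+1},t_{n+1},\delta_{n+1})$.} Again I follow Section~3.2 of~\cite{cf:MT}: write $R_{L_{n+1}}$ as the union of $Q_n^{\text{centr}},Q_n^{\text{left}},Q_n^{\text{right}}$, run the censored dynamics first in $Q_n^{\text{centr}}$ (where by $\cB(L_n,t'_n,\delta'_n)$ it reaches, up to $\delta'_n$, the Gibbs state with $(-,+,+,+)$ b.c.), flip by brute force the b.c. on the short interval $\D$ (width $\asymp\k_N^2$) on the South side of $Q_n^{\text{centr}}$ --- at a multiplicative cost $\nep{c(\beta)|\D|}=\nep{c\k_N^2}$ in mixing time, hence the factor $\nep{c_3\k_N^2}$ relating $t_{n+1}$ to $t_n$ --- and then run $\cB$ in $Q_n^{\text{left}}$ and $Q_n^{\text{right}}$. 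What makes the flip effective is the analogue of Lemma~\ref{lem-geo2}: after flipping, the open contour $\g_1$ starting at the upper-left corner terminates at the left end of $\D$ without crossing the vertical midline, except with probability $\le\nep{-c\k_N^2}$. This follows from Proposition~\ref{prop:Bound2} (and the random-line computation~\eqref{eq:8}): the ratio of the weights of scenario (b) to scenario (a) is controlled by $\nep{-\beta(L_n+|\D|-2D)\tau_\beta(0)}$ with $D\approx L_n/2-|\D|$, via the sharp triangle inequality~\eqref{eq-sharp-tri-ineq}, so scenario (b) dominates once $|\D|\gg$ the $O(\sqrt{L_n}\cdot\k_N)$ fluctuation scale, which holds since $|\D|\asymp\k_N^2=\k N$ and $\sqrt{L_n}\cdot\k_N$ is enormous --- here one must be slightly careful and take $|\D|$ of order $\k_N^2$ times a large constant, or a small power of $L_n$, exactly matching the $\nep{-c_2\k_N^2}$-budget. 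Collecting the errors ($\delta_n$ from the induction hypothesis, $\nep{-c_2\k_N^2}$ from the two geometric estimates, $L_n^2\nep{-c_2\log t_n}$ from the $L^2$-bounds) and absorbing constants gives $\delta_{n+1}=c_3(\delta_n+\nep{-c_2\k_N^2}+L_n^2\nep{-c_2\log t_n})$ as stated.

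\textbf{Main obstacle.} The dynamical skeleton is inherited wholesale from~\cite{cf:MT}, so no new difficulty arises there; the real work --- and the only place where $\beta>\beta_c$ (rather than $\beta\gg1$) is used --- is entirely in the two equilibrium inputs, i.e. proving the Gaussian deviation bound and the positivity/tilted-contour bound for Peierls contours up to criticality without cluster expansion. Those are the content of~\S\ref{sec:strip} and Propositions~\ref{prop:Bound1}--\ref{prop:Bound2}; once they are in hand, the proof of Theorem~\ref{th:recnew} is a matter of rerunning the~\cite{cf:MT} argument with $L^{1/2+\gep}\rightsquigarrow\k_N\sqrt{L_n}$ and $L^{3\gep}\rightsquigarrow\k_N^2$ and tracking constants. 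A secondary technical point to check is that with the logarithmically-inflated aspect ratio the rectangles still satisfy $L_n\gg\k_N\sqrt{L_{n+1}}$ (noted in the text) so that all the ``strip'' estimates apply in the genuinely elongated regime, and that the brute-force boundary-flip cost $\nep{c\k_N^2}$ is dominated by --- not comparable to --- the scale at which the geometric estimates kick in; both hold for $N$ large and $\k\ge\k_0(\beta)$, which is why the theorem is stated with those quantifiers.
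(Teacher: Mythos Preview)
Your proposal is correct and matches the paper's approach exactly: the paper states outright that the proof of Theorem~\ref{th:recnew} ``follows step by step the proof of Theorem~\ref{th:rec} in~\cite{cf:MT}'' once the two equilibrium inputs (Propositions~\ref{prop:Bound1} and~\ref{prop:Bound2}) are available, with the remarks after those propositions specifying the parameter choices $\ell=3L_n$, $\a=3\sqrt{2}\k_N$, $\d=(3\sqrt{2})^{-1}$ for the first implication and $\ell=L_n$, $\a=2\sqrt{2}\k_N$ for the second. One small refinement: for the first implication the direct input is Proposition~\ref{prop:Bound1} (the finite-rectangle bound), not Theorem~\ref{thm-strip-large-dev} (the infinite-strip bound) --- the latter enters only inside the proof of the former, and crucially together with the $c/\ell$ lower bound of Theorem~\ref{thm:positive} needed to control the denominator in~\eqref{eq:10}.
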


\begin{Corollary}[Solving for the final scale]
\label{th:solving}
In the same setting of Theorem \ref{th:recnew} there exists $c>0$ such
that, if $t_N:=\nep{c\, \k N^2}$ and $\d_N := c\,\nep{- c^{-1}\k N}$, then
for any $N\in \bbN$ large enough
statement $\cA(L_N,t_N,\d_N)$ holds.
\end{Corollary}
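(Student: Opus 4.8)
The plan is to iterate the inductive step of Theorem~\ref{th:recnew} from the initial scale $N_0$ up to the final scale $N$, seeding it with a sufficiently accurate estimate supplied by Proposition~\ref{th:rafnew}. Throughout, fix $\kappa$ large enough that $\kappa\ge\kappa_0$ (so Theorem~\ref{th:recnew} applies) and, in addition, $c_3 e^{-c_2\kappa}\le e^{-c_2\kappa/2}$, i.e.\ $\kappa\ge 2\log c_3/c_2$; enlarging constants if necessary we assume $c_3>1$; choose the constant $c$ in the statement large, say $c\ge c_3+1$ and $c\ge 4/c_2$; and assume $N$ is as large as needed for the various ``$N$ large enough'' hypotheses. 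Recall that $\kappa_N^2=\kappa N$, that $L_n=2^n-1$, and that $N_0$ is of order $\log N$ (so $L_{N_0}$ is of order $(\log L)^3=\Theta(N^3)$, and the recursion runs for $N-N_0<N$ steps).

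First I would fix the seed. Apply Proposition~\ref{th:rafnew} at scale $N_0$ with the initial time $t_{N_0}:=e^{\kappa N^2}$, which yields $\cA\bigl(L_{N_0},t_{N_0},\delta_{N_0}\bigr)$ with $\delta_{N_0}=\exp\bigl(-t_{N_0}\,e^{-c\kappa_N\sqrt{L_{N_0}}}\bigr)$. Since $\kappa_N=\sqrt{\kappa N}$ and $\sqrt{L_{N_0}}=\Theta(N^{3/2})$, we have $\kappa_N\sqrt{L_{N_0}}=\Theta(\sqrt\kappa\,N^2)$, so for $\kappa$ large $t_{N_0}e^{-c\kappa_N\sqrt{L_{N_0}}}\ge e^{\kappa N^2/2}$ and hence $\delta_{N_0}\le\exp(-e^{\kappa N^2/2})$, which is far smaller than everything that follows.

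Next I would run the recursion: applying \eqref{eq:rec1b}--\eqref{eq:rec2b} for $n=N_0,N_0+1,\dots,N-1$ produces $\cA(L_n,t_n,\delta_n)$ at every scale $n\in[N_0,N]$, with
\[
t_{n+1}=e^{c_3\kappa_N^2}t_n=e^{c_3\kappa N}t_n,\qquad
\delta_{n+1}=c_3\bigl(\delta_n+e^{-c_2\kappa N}+L_n^2 t_n^{-c_2}\bigr).
\]
For the time, $t_N=e^{c_3\kappa N(N-N_0)}t_{N_0}\le e^{c_3\kappa N^2}e^{\kappa N^2}=e^{(c_3+1)\kappa N^2}\le e^{c\kappa N^2}$; since $t\mapsto\bE\|\mu_t^\pm-\pi^\tau\|$ is non-increasing, $\cA(L_N,e^{c\kappa N^2},\delta_N)$ holds. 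For the accuracy, unrolling the linear recursion gives
\[
\delta_N=c_3^{\,N-N_0}\delta_{N_0}+\sum_{k=N_0}^{N-1}c_3^{\,N-k}\bigl(e^{-c_2\kappa N}+L_k^2 t_k^{-c_2}\bigr).
\]
The first term is at most $c_3^{\,N}\exp(-e^{\kappa N^2/2})$, doubly-exponentially small. In the sum, $\sum_k c_3^{\,N-k}e^{-c_2\kappa N}\le\tfrac{c_3}{c_3-1}(c_3 e^{-c_2\kappa})^N\le\tfrac{c_3}{c_3-1}e^{-c_2\kappa N/2}$ by the choice of $\kappa$; and since $t_k\ge t_{N_0}=e^{\kappa N^2}$ and $L_k<2^N$, we get $\sum_k c_3^{\,N-k}L_k^2 t_k^{-c_2}\le C_\beta(4c_3)^N e^{-c_2\kappa N^2}$, again super-exponentially small. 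Collecting the three contributions, $\delta_N\le c\,e^{-c^{-1}\kappa N}$ for $N$ large (with $c^{-1}$ taken to be $c_2/4$, absorbed into $c$), which is exactly the $\delta_N$ of the statement. Hence $\cA(L_N,t_N,\delta_N)$ holds.

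The one genuinely delicate point is the growth of the multiplicative constant $c_3$ across the $\approx N$ iterations: the factor $c_3^{\,N-N_0}$ would be fatal were it not dominated by the geometrically small per-step errors. This dictates the two design choices above — taking $\kappa$ large so that $e^{-c_2\kappa N}\ll c_3^{-N}$, and taking the seed time $t_{N_0}$ exponentially large in $N$ so that both $L_n^2 t_n^{-c_2}\ll c_3^{-N}$ and $\delta_{N_0}$ is negligible. Once these are in place, what remains is routine bookkeeping of geometric sums.
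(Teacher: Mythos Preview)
Your proof is correct and follows essentially the same approach as the paper's: seed the recursion at scale $N_0$ with a time of order $e^{c'\kappa N^2}$ so that Proposition~\ref{th:rafnew} gives a negligibly small $\delta_{N_0}$, then iterate Theorem~\ref{th:recnew} and control the unrolled linear recursion using $t_n\ge t_{N_0}$. The paper's own proof is a terse two-sentence version of exactly this; your write-up makes the bookkeeping (the three contributions to $\delta_N$ and the role of the $c_3^{\,N}$ factor) explicit, and the monotonicity in $t$ you invoke is justified by the Peres--Winkler censoring inequality already used throughout the paper.
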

\begin{proof}[Proof of the Corollary]
Choose $t_{N_0}= \nep{c'\, \k N^2}$ for some $c'>0$.  Thanks to Proposition~\ref{th:rafnew}, for any $\b\ge 0$ it is possible to choose $c'=c'(\b)$ in
such a way that
$\mathcal A(L_{N_0},t_{N_0},\delta_{N_0})$ holds with
$\d_{N_0}=\nep{-c' \k N^2/2}$. Theorem~\ref{th:recnew}
immediately implies (use $t_n\ge t_{N_0}$) that $t_N\le \nep{c\, \k N^2}$ and $\d_N\le
c\nep{-c^{-1} \k N}$ for some other constant $c$.
\end{proof}
Once Corollary~\ref{th:solving} is proved, Theorem~\ref{th:quadrato} and its
corollaries (Corollaries~\ref{maincor-allplus} and \ref{maincor-autocor}) follow by exactly
the same arguments envisaged in \cite{cf:MT} for the analogous
results.

In turn the proof of Theorem~\ref{th:recnew} follows step by step the
proof of Theorem~\ref{th:rec} in \cite{cf:MT} once we assume two key
bounds on Ising interfaces that we state below.

\subsection{Two key equilibrium estimates for the new recursion}

Consider a rectangle with boundary conditions that are identically $(+)$ on the South boundary
and $(-)$ elsewhere. The following proposition
addresses a large deviation estimate for the vertical fluctuations of the unique open contour
in this setting, as illustrated in Figure~\ref{fig:rect-dev}.
\begin{figure}
\centering
\hspace{-0.1in}\includegraphics[width=0.8\textwidth]{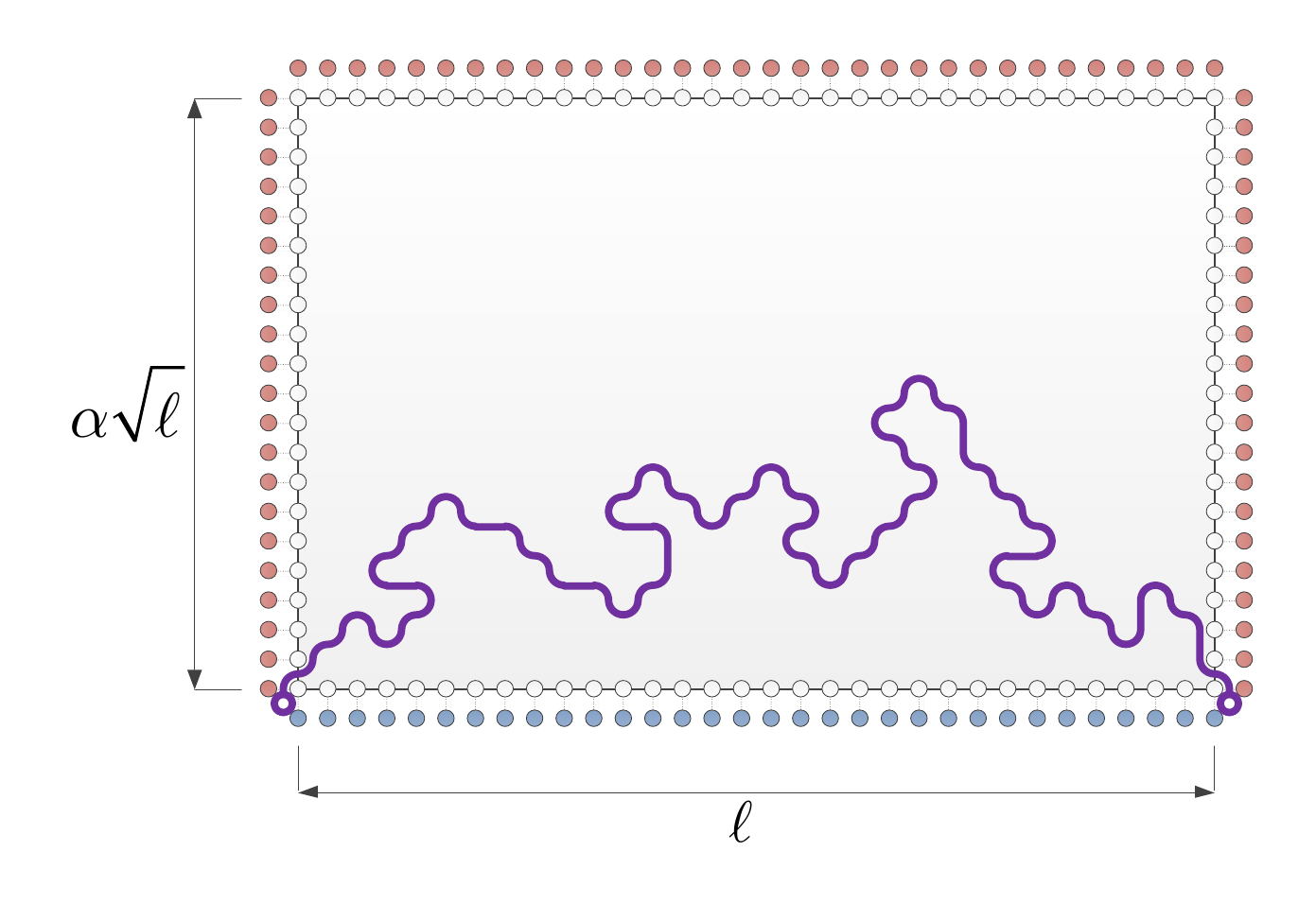}
\vspace{-0.3in}
\caption{Large deviation estimate for vertical fluctuations of the open Peierls contour under $(-,-,+,-)$ b.c., established in Proposition~\ref{prop:Bound1}.}
\label{fig:rect-dev}
\end{figure}

\begin{Proposition}
\label{prop:Bound1}
Let $R$ be a rectangle of dimensions $\ell\times \a\sqrt{\ell}$ with $1<\a\leq\sqrt{\ell}$ and let
$\pi_R^{(-,-,+,-)}$ be the corresponding Ising Gibbs measure with
$(-,-,+,-)$ ordered clockwise starting from the North side. Let $\l=\l(\s)$ denote
the unique open Peierls contour of the spin configuration $\s\in\O_R$.
Then for any $\beta>\beta_c$ there exist constants
$c_1,c_2>0$ depending only on $\b$ such that for any $0<\delta<1$ and $\a,\ell$ as above
    \begin{equation}
      \pi_R^{(-,-,+,-)}\Bigl(\s:\ \l(\s) \text{ reaches height }  \d \a
      \sqrt{\ell}\Bigr)\le \ell^{\,c_1}\nep{-c_2 (\d\a)^2}\,.
    \label{eq:B1}
\end{equation}
  \end{Proposition}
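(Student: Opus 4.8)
The plan is to pass to the dual high-temperature model at inverse-temperature $\beta^*<\beta_c$, use the random-line representation of \S\ref{sec:prelim}, and then read off the Gaussian tail directly from the \emph{sharp triangle inequality} \eqref{eq-sharp-tri-ineq} for the surface tension; in this way no cluster expansion is needed and the argument is valid for every $\beta>\beta_c$. Place coordinates so that the two bottom corners of the dual box $R^*$ at which $\tau$ switches between $(+)$ and $(-)$ are $u$ and $v$, with $v-u=(\ell,0)$. Under the $(-,-,+,-)$ boundary condition $V(\tau)=\{u,v\}$, so $\partial\lambda=\{u,v\}$; moreover $\lambda$ uses only edges of $R^*$ and therefore stays in the closed half-plane above the line through $u$ and $v$. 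Writing $h:=\delta\alpha\sqrt\ell$ for the target height, formula \eqref{eq:8} gives
\[
 \pi_R^{(-,-,+,-)}\bigl(\lambda\text{ reaches height }h\bigr)
 \;=\;\frac{\sum_{\substack{\lambda:\,\partial\lambda=\{u,v\}\\ \lambda\text{ reaches height }h}} q_{R^*}(\lambda)}{\pi_{R^*}^*\bigl(\sigma_u\sigma_v\bigr)}\,.
\]

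For the numerator, any contour $\lambda$ with $\partial\lambda=\{u,v\}$ that reaches height $h$ must contain a dual vertex $z$ of $R^*$ lying on the horizontal dual line of height in $[h,h+1]$, and that line carries at most $\ell+2$ vertices of $R^*$. Summing over them, Corollary~\ref{lem-3-pt-spins} followed by Lemma~\ref{lem-spin-spin} (whose polynomial prefactor is $\le 1$) give
\[
 \sum_{\substack{\lambda:\,\partial\lambda=\{u,v\}\\ \lambda\text{ reaches }h}} q_{R^*}(\lambda)
 \;\le\; \sum_{z}\pi_{R^*}^*(\sigma_u\sigma_z)\,\pi_{R^*}^*(\sigma_z\sigma_v)
 \;\le\;(\ell+2)\max_{z}e^{-\tau_\beta(z-u)-\tau_\beta(v-z)}\,.
\]
Write $z-u=(x,h')$ with $0\le x\le\ell$ and $h\le h'\le h+1$; since $\alpha\le\sqrt\ell$ and $\delta<1$ we have $h'\le 2\ell$, hence $\sqrt{y^2+h'^2}-y=h'^2/(\sqrt{y^2+h'^2}+y)\ge h'^2/(4\ell)\ge h^2/(4\ell)$ for every $0\le y\le\ell$. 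With $a=z-u$ and $b=v-z=(\ell-x,-h')$ one has $a+b=(\ell,0)$, so by homogeneity $\tau_\beta(a+b)=\ell\,\tau_\beta(0)=\tau_\beta(v-u)$, and \eqref{eq-sharp-tri-ineq} yields
\begin{align*}
 \tau_\beta(z-u)+\tau_\beta(v-z)-\ell\,\tau_\beta(0)
 &\;\ge\;\kappa_\beta\bigl(|a|+|b|-|a+b|\bigr)\\
 &\;=\;\kappa_\beta\Bigl(\bigl(\sqrt{x^2+h'^2}-x\bigr)+\bigl(\sqrt{(\ell-x)^2+h'^2}-(\ell-x)\bigr)\Bigr)\\
 &\;\ge\;\frac{\kappa_\beta}{2}\cdot\frac{h^2}{\ell}\,.
\end{align*}
Since $h^2/\ell=(\delta\alpha)^2$, the numerator is at most $(\ell+2)\,e^{-\ell\tau_\beta(0)}\,e^{-c_2(\delta\alpha)^2}$ with $c_2=\kappa_\beta/2$.

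It remains to bound the denominator from below by $c\,\ell^{-C}e^{-\ell\tau_\beta(0)}$ for constants depending only on $\beta$. Since $u,v$ sit on the boundary of $R^*$, this is the dual two-point function of two points at distance $\ell$ subject to a one-sided (positivity) confinement, and, the rectangle having height $\alpha\sqrt\ell>\sqrt\ell$ (the Ornstein--Zernike fluctuation scale), the finite height costs only a constant factor. The required matching lower bound, $\pi_{R^*}^*(\sigma_u\sigma_v)\ge c\,\ell^{-3/2}e^{-\ell\tau_\beta(0)}$ up to constants, is part of the random-line machinery of~\cites{cf:PV1,cf:PV2} and also follows from the confinement estimates of \S\ref{sec:strip} (Theorem~\ref{thm:positive}, together with \eqref{eq-strip-two-pt-corr}); concretely one lower-bounds $\sum_\lambda q_{R^*}(\lambda)$ by restricting to contours from $u$ to $v$ that remain inside the bottom sub-rectangle of height $\sqrt\ell$. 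Dividing numerator by denominator gives $\pi_R^{(-,-,+,-)}(\lambda\text{ reaches }h)\le\ell^{c_1}e^{-c_2(\delta\alpha)^2}$, which is \eqref{eq:B1}.

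The only genuinely substantial ingredient is this last lower bound on $\pi_{R^*}^*(\sigma_u\sigma_v)$, where the positivity constraint on the open contour must be handled correctly — precisely the kind of estimate the paper develops separately in \S\ref{sec:strip}. Everything else is routine: the reduction via \eqref{eq:8}, the union over the $O(\ell)$ dual vertices at the prescribed height, and the passage from \eqref{eq-sharp-tri-ineq} to the quadratic gain $h^2/\ell=(\delta\alpha)^2$, uniformly over the horizontal position $x$ of $z$ (the estimate $\sqrt{y^2+h'^2}-y\ge h^2/(4\ell)$ above holds for all $0\le y\le\ell$, including the degenerate endpoints $y=0$ and $y=\ell$).
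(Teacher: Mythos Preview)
Your argument follows the same route as the paper's: express the probability via~\eqref{eq:8} as a ratio of $q_{R^*}$-weights, bound the numerator by a union over the $O(\ell)$ dual sites at height $h$ via Corollary~\ref{lem-3-pt-spins} and the sharp triangle inequality~\eqref{eq-sharp-tri-ineq}, and bound the denominator below using Theorem~\ref{thm:positive} together with~\eqref{eq-strip-two-pt-corr}. The Gaussian exponent you extract, $c_2=\kappa_\beta/2$, is exactly the one the paper obtains.

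One point of presentation is worth flagging. You work directly in the finite rectangle $R^*$ and assert that the finite height ``costs only a constant factor'' because $\alpha\sqrt\ell>\sqrt\ell$; the sentence about restricting to contours in a sub-rectangle of height $\sqrt\ell$ does not by itself deliver this, since one must simultaneously enforce the positivity constraint (probability $\asymp 1/\ell$) and the height cap, and a naive subtraction $c/\ell-C e^{-\kappa_\beta}$ is useless. The paper sidesteps this cleanly: it first observes that the probability in~\eqref{eq:10} is monotone increasing in the height of $R$ (an FKG argument, since the event that the interface climbs to level $h$ is increasing in $\sigma$ and raising the North boundary can only raise the spins), so one may replace $R$ by the half-infinite strip $S^+$; then the denominator is literally $\pi^*_{(S^+)^*}(\sigma_u\sigma_v)$, for which Theorem~\ref{thm:positive} and~\eqref{eq-strip-two-pt-corr} directly give $\ge c\,\ell^{-3/2}e^{-\ell\tau_\beta(0)}$ (this is Corollary~\ref{cor-spin-spin-strip}). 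Your version can be repaired without this reduction by noting that the claimed inequality~\eqref{eq:B1} is vacuous unless $(\delta\alpha)^2\ge (c_1/c_2)\log\ell$, in which case Theorem~\ref{thm-strip-large-dev} makes $\pi_S^\eta(\lambda\text{ exceeds }\alpha\sqrt\ell)\le Ce^{-\kappa_\beta\alpha^2}\ll 1/\ell$, so $\pi_S^\eta(\lambda\subset R^*)\ge c/\ell - o(1/\ell)$; but the monotonicity step is the tidier route.
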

  \begin{remark*}
In the proof of the statement $\cA(L_n,t_n,\d_n)\imp \cB(L_n,t'_n,\d'_n)$ for $n\in
  [N_0,N]$, the above
  proposition is used with $\ell = 3L_n$, $\a= 3\sqrt{2}\k_N$ and $\d
  = (3\sqrt{2})^{-1}$. Thus for $\k$ large enough depending on
  $\b$ and for every $n\in [N_0,N]$ the r.h.s.\ of~\eqref{eq:B1} is quite small.
  \end{remark*}

The second equilibrium bound that is needed can be formulated as
follows. Mark the rectangle $R$ as given above by the corners $(x,y,y',x')$ clockwise starting from the Northwest corner.
Consider the Ising Gibbs measure on $R$ with the following b.c.:
\begin{enumerate}[(i)]
\item $-1$ on the North boundary and on an interval $\D$
of length $s\a^2$ belonging to the South boundary and centered around
its midpoint;
\item $+1$ elsewhere.
\end{enumerate}
We refer to these boundary conditions as the b.c.\ $(-,+,\D)$ and let $u$ and $v$ denote
the West and East endpoints of the interval $\Delta$ centered on the South border.

In this new setting we aim to show that w.h.p.\ in the random-line representation there are two open contours $\lambda_1$ and $\lambda_2$ with $\delta\lambda_1=\{ x,  u\}$ and $\delta \lambda_2=\{y, v\}$ and such that $\lambda_1$ (resp.\ $\lambda_2$) lies entirely in the left (resp.\ right) half of $R$, as shown in Figure~\ref{fig:rect-equil2} (recall the discussion following Lemma~\ref{lem-geo2} for the role of this event in the inductive scheme). This is established by the next proposition.

\begin{figure}
\centering
\hspace{-0.1in}\includegraphics[width=0.9\textwidth]{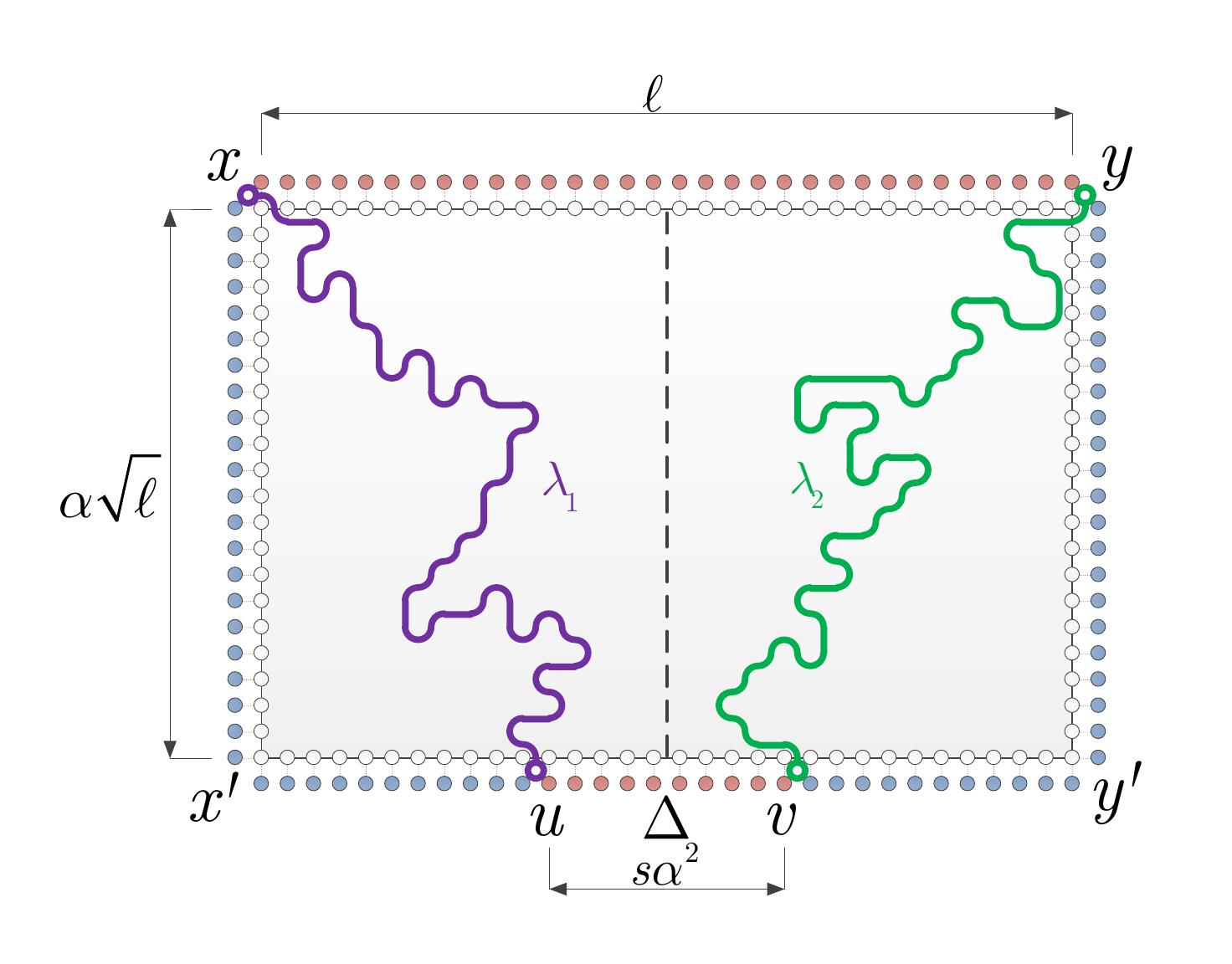}
\vspace{-0.3in}
\caption{Open Peierls contours confined to the left and right halves of the rectangle $R$ under $(-,+,\D)$ b.c., addressed by the equilibrium estimate of Proposition~\ref{prop:Bound2}.}
\label{fig:rect-equil2}
\end{figure}

\begin{Proposition}
\label{prop:Bound2}
For any $\b>\b_c$ there exist $c_1,c_2,s_0>0$ depending only on $\beta$ so that the following holds.
Let $R$ be a rectangle of size $\ell\times \a\sqrt{\ell}$ with $1<\a < (1/s_0)\sqrt{\ell}$ and let
$\D=[u,v]$ be an interval of length $s \a^2$ centered on the South border for some $s\geq s_0$.
Let $\mathcal{V}$ denote the event that there are two open Peierls contours confined to the left and right halves of $R$ and connecting the top corners
with $u,v$. Then
\begin{equation}
  \label{eq:B2}
  \pi^{(-,+,\D)}(\mathcal{V}^c) \le \ell^{c_1} \nep{-c_2 \a^2}\,.
\end{equation}
\end{Proposition}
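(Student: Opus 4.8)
The plan is to work entirely in the random-line representation via the identity~\eqref{eq:8}, passing to the dual graph $\cG=(R^*,E^*_{R^*})$ at inverse-temperature $\b^*<\b_c$. Under the $(-,+,\D)$ boundary conditions the set $V(\t)$ of endpoints of open contours consists of the four points $x,u,v,y$ (the two top corners and the two endpoints of $\D$). Consequently every $\t$-compatible family of open contours falls into one of exactly two topological classes (cf.\ the discussion following Lemma~\ref{lem-geo2}): class (a), where one open contour pairs $\{x,y\}$ and another pairs $\{u,v\}$; and class (b), where one pairs $\{x,u\}$ and one pairs $\{y,v\}$. Write $\mathcal V^c = \mathcal V^c_{(a)} \cup \mathcal V^c_{(b),\text{cross}}$, where the first piece is the class-(a) event and the second is the class-(b) sub-event that at least one of the two contours crosses the vertical midline of $R$ (equivalently, one contour is forced into the ``wrong'' half). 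By~\eqref{eq:8}, bounding $\pi^{(-,+,\D)}(\mathcal V^c)$ amounts to bounding
\[
\frac{\sum_{\underline\l\,\in\,\mathcal V^c} q_{R^*}(\underline\l)}{\sum_{\underline\l:\ \d\underline\l=V(\t)} q_{R^*}(\underline\l)}\,,
\]
and for the denominator it suffices to produce \emph{one} good lower bound: restricting to a single pair of ``straight'' contours in class (b) hugging the respective halves, Lemma~\ref{lem-edge-boundary} factorizes the weight and Lemma~\ref{lem-spin-spin} (or rather its strip refinement~\eqref{eq-strip-two-pt-corr}) gives a lower bound of order $\ell^{-O(1)}\exp(-\tau_\b(x-u)-\tau_\b(y-v))$. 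Since $|x-u|$ and $|y-v|$ are each essentially $\ell/2$ (up to the $O(\a^2)$ shift coming from $|\D|=s\a^2$ and the $O(\a\sqrt\ell)$ vertical height), and $\tau_\b$ is a norm, the denominator is at least $\ell^{-O(1)}\exp(-\tau_\b(0)\,\ell - C\a^2)$ for a suitable constant $C$; here I use the sharp triangle inequality~\eqref{eq-sharp-tri-ineq} to control the excess of $\tau_\b$ over $\tau_\b(0)\ell$ coming from the vertical displacement, which is $O((\a\sqrt\ell)^2/\ell)=O(\a^2)$.

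For the numerator I would handle the two pieces separately. For $\mathcal V^c_{(a)}$: a class-(a) family consists of a contour $\l_{xy}$ with $\d\l_{xy}=\{x,y\}$ and a contour $\l_{uv}$ with $\d\l_{uv}=\{u,v\}$. Summing $q_{R^*}$ over all such (using Lemma~\ref{l:6.5} to decouple, or simply $q_{R^*}(\underline\l)\le q_{R^*}(\l_{xy})q_{R^*}(\l_{uv})$ after discarding the compatibility constraint, then~\eqref{eq:7}) gives at most $\pi^*_{R^*}(\s_x\s_y)\,\pi^*_{R^*}(\s_u\s_v)$. By Lemma~\ref{lem-spin-spin} this is $\le \exp(-\tau_\b(x-y)-\tau_\b(u-v))$. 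Now $|x-y|=|u-v|$ is the \emph{width} $\ell$ of $R$ minus a small correction — wait: $x,y$ are the two top corners so $|x-y|=\ell$, while $|u-v|=|\D|=s\a^2$. Hence this product is $\le \exp(-\tau_\b(0)\ell)\exp(-\kappa_\b s\a^2)$ roughly (using that $\tau_\b(u-v)\ge \kappa'_\b |\D|$). Dividing by the denominator lower bound yields a ratio $\le \ell^{O(1)}\exp(-\kappa_\b s\a^2 + C\a^2)$, which for $s\ge s_0$ large enough is $\le \ell^{O(1)}\exp(-c_2\a^2)$. For $\mathcal V^c_{(b),\text{cross}}$: if the $\{x,u\}$-contour $\l_1$ crosses the midline $i=\ell/2$, it must pass through some dual site $z$ on that line; I would union-bound over the $O(\a\sqrt\ell)$ choices of $z$ and apply Corollary~\ref{lem-3-pt-spins}, getting $\sum_{\l_1:\,\d\l_1=\{x,u\},\,z\in\l_1} q_{R^*}(\l_1)\le \pi^*(\s_x\s_z)\pi^*(\s_z\s_u)\le \exp(-\tau_\b(x-z)-\tau_\b(z-u))$, then $\le \exp(-\tau_\b(x-u))\exp(-\kappa_\b(|x-z|+|z-u|-|x-u|))$ by the sharp triangle inequality. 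The ``detour cost'' $|x-z|+|z-u|-|x-u|$ is bounded below by a constant times the squared transverse displacement of $z$ from the segment $[x,u]$ divided by $\ell$; since $x$ is a \emph{corner} and $u$ is near the bottom-center, the segment $[x,u]$ sits well inside the left half, so any $z$ on the midline is at transverse distance $\gtrsim \ell$ from it — actually one must be slightly careful near the endpoints. The cleanest route: force $\l_1$ through the midline means it accumulates horizontal displacement at least $\ell/2 + \ell/2 = \ell$ going out to the midline and back, versus $\tau_\b(x-u)$ which only charges $|x-u|\approx \ell/2$; the sharp triangle inequality then gives an extra factor $\exp(-\kappa_\b\cdot(\text{order }\ell))$, which crushes the $\ell^{O(1)}$ from the union bound and the $\exp(C\a^2)$ from the denominator (recall $\a\le (1/s_0)\sqrt\ell$ so $\a^2\le \ell/s_0^2\ll \ell$). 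The second class-(b) contour is handled symmetrically, and a third similar sub-event — that $\l_1$ and $\l_2$ are in the correct halves but fail to reach the prescribed corners — does not arise since $\d\l_1,\d\l_2$ are forced.

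The main obstacle I anticipate is the \emph{geometric bookkeeping of the surface-tension exponents}, i.e.\ making precise the claim ``the detour/wrong-pairing cost beats the denominator slack''. The denominator is only controlled up to an additive $C\a^2$ in the exponent (and multiplicative $\ell^{O(1)}$), so every numerator bound must come with a \emph{strictly larger} deficit: $\Theta(\a^2)$ for the class-(a) piece (hence the need for $s\ge s_0$ with $s_0$ large — this is exactly why $\D$ is taken of length $s\a^2$ rather than, say, length $\a^2$) and $\Theta(\ell)\gg\a^2$ for the crossing piece. Turning the qualitative ``$\tau_\b$ is a norm plus sharp triangle inequality'' statements into clean quantitative lower bounds on $|x-z|+|z-u|-|x-u|$ uniformly in the position of $z$ along the midline, and similarly estimating $\tau_\b(\text{near-vertical vector of length}\sim\sqrt\ell)$ against $\tau_\b(0)\cdot(\text{horizontal part})$ with error $O(\a^2)$, is the technical heart; all of it is elementary convexity/homogeneity of the norm $\tau_\b$ together with~\eqref{eq-sharp-tri-ineq}, but it needs to be carried out carefully so that constants line up. Everything else — the reduction to the two topological classes, the factorization lemmas, and the spin–spin upper bounds — is supplied directly by the machinery recalled in \S\ref{sec:prelim}.
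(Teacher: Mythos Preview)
Your decomposition into the two topological classes, and the numerator strategy for each (Lemma~\ref{l:6.5} plus Lemma~\ref{lem-spin-spin} for class~(a); Corollary~\ref{lem-3-pt-spins} plus the sharp triangle inequality~\eqref{eq-sharp-tri-ineq} for the crossing sub-event of class~(b)), are exactly what the paper does in Lemmas~\ref{lem:ab-cd} and~\ref{lem:ac-left-bd-right}. One minor correction to your geometry: the point $u$ is the left endpoint of $\Delta$, hence sits at horizontal coordinate $(\ell-s\alpha^2)/2$, only $s\alpha^2/2$ away from the midline. The segment $[x,u]$ therefore nearly touches the midline at its lower end, and the detour cost $|x-z|+|z-u|-|x-u|$ for $z$ on the midline is of order $s\alpha^2$, \emph{not} of order $\ell$ (the paper shows $\min_z(|x-z|+|z-u|-|x-u|)=|v-x|-|u-x|\geq \tfrac12 s\alpha^2$). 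This still beats the $C\alpha^2$ denominator slack once $s\geq s_0$, so the conclusion survives, but your ``order $\ell$'' claim is false.

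The real gap is the denominator lower bound, which you treat as immediate from~\eqref{eq-strip-two-pt-corr}. That asymptotic is for the \emph{infinite} strip, whereas here the weights are $q_{R^*}$ with contours confined to a rectangle of height only $\alpha\sqrt{\ell}$. Restricting to a single ``straight'' contour gives the wrong exponential rate (a fixed contour of length $L$ has weight at most $e^{-2\beta L}\ll e^{-\tau_\beta(0)L}$), so one must sum over all contours that stay inside $R^*$, and the passage from the infinite strip to this constrained sum is precisely where the claimed $\ell^{-O(1)}$ factor must be \emph{earned}. The paper earns it via the chain: GKS reduces to $\pi^*_{\cG^*}(\sigma_x\sigma_u)$ for a sub-rectangle $\cG$; a midpoint split and a further domain reduction to a half-strip $\bar S$ bring in $\bar X=\sum_{\lambda\subset\cG_1}q_{\bar S^*}(\lambda)$; Proposition~\ref{prop:Bound1} shows $\bar X=(1-o(1))\bar Y$ with $\bar Y$ the unconstrained half-strip sum; and finally Theorem~\ref{thm:positive} gives $\bar Y/Y\geq c/\ell$ against the full-strip sum $Y$, which is what~\eqref{eq-strip-two-pt-corr} actually estimates. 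The input of Theorem~\ref{thm:positive} (the open contour stays above a horizontal line with probability $\geq c/\ell$) is the genuinely new equilibrium estimate of \S\ref{sec:strip} and is not supplied by the random-line machinery of \S\ref{sec:prelim} alone. The ``geometric bookkeeping'' you flag as the main obstacle is routine by comparison; the denominator is where the work lies.
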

\begin{remark*}
In the proof of the statement $\cB(L_n,t'_n,\d'_n)\imp \cA(L_{n+1},t_{n+1},\d_{n+1})$ the above
proposition is invoked with a choice of $\ell= L_n$, $n\in [N_0,N]$, and $\a =
2\sqrt{2}\k_N$, so that $\alpha = o(\sqrt\ell)$; also,
the r.h.s.\ of \eqref{eq:B2} is always very
small provided that the constant $\k$ is chosen to be large enough.
\end{remark*}

\section{Equilibrium crossing probabilities for the infinite strip}\label{sec:strip}

In this section we study the behavior of the unique open contour in the infinite strip with boundary conditions $(+)$ in the upper half-plane and $(-)$ in the lower half-plane.
Deriving sharp estimates for the probability that this contour is confined to the upper half-plane, as well as a large deviation estimate for the its vertical fluctuations, will later serve as a key element in the proofs of Propositions~\ref{prop:Bound1} and~\ref{prop:Bound2}.
The analysis in this section hinges on the duality tools developed in~\cites{cf:PV1,cf:PV2}, which enable us to characterize the Ising interfaces for any $\b>\b_c$. By using this machinery together with some additional ideas we establish various properties of the contours, roughly analogous to Brownian bridges with logarithmic ``decorations''.

\begin{figure}
\centering
\includegraphics[width=0.6\textwidth]{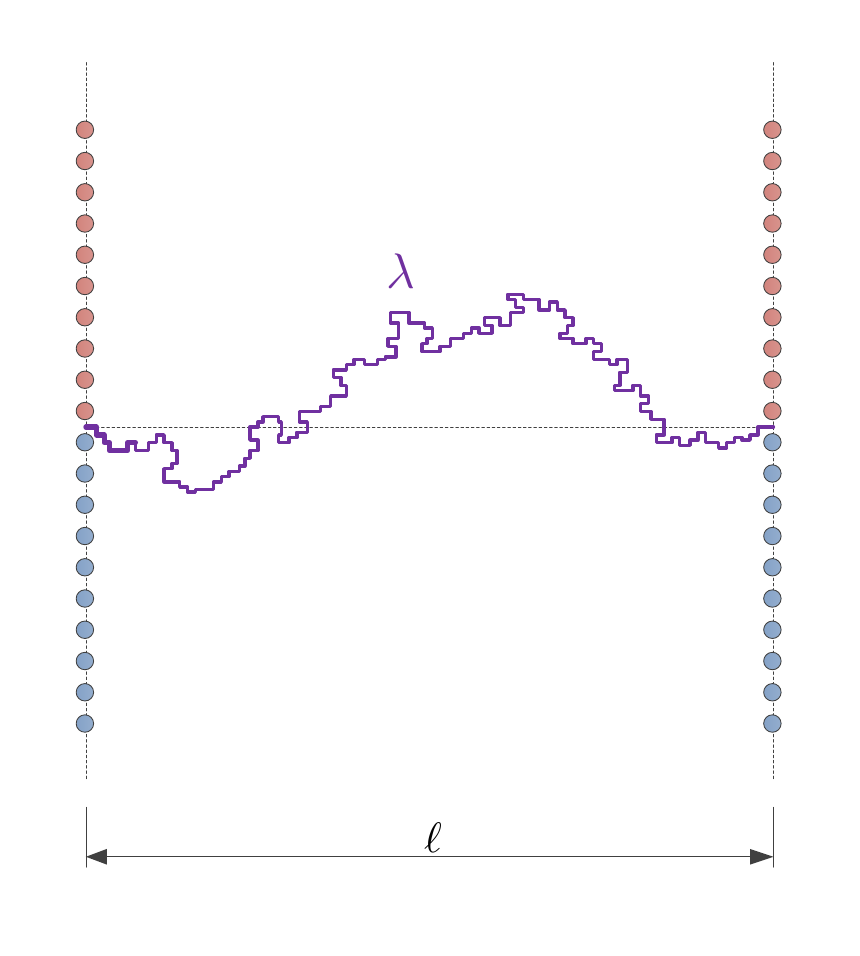}
\vspace{-0.3in}
\caption{Infinite strip with mixed b.c.\ and its unique open contour.}
\label{fig:infstrip}
\end{figure}

For $S \subset \Z^2$ define the boundary condition $\eta\in\{\pm1\}^{\Z^2\setminus S}$ to be
\begin{equation}
  \label{eq-plus-minus-bc-def}
\eta(x,y) = \left\{\begin{array}{ll}
-1 & y > 0\,,\\
+1 & y \leq 0\,.
\end{array}\right.
\end{equation}
We focus on the case where $S$ is the infinite-strip of width $\ell$,
\begin{equation}
  \label{eq-infinite-strip-def}
  S = \{1,\ldots,\ell\} \times \Z\,,
\end{equation}
whereby the above b.c.\ $\eta$ gives rise to a unique open contour $\lambda$ connecting the dual vertices $\{\frac12,\frac12\},\{\ell+\frac12,\frac12\}$ in $S^*$ (see Figure~\ref{fig:infstrip}).
Such contours have been intensively studied and the scaling limit of $\lambda$ is known to be the \oned Brownian bridge between these two points~\cite{cf:Hryniv} while our proof requires more quantitative estimates.  Tight large deviation estimates for vertical fluctuations of $\lambda$ are necessary in several places in our proof. This will be established by Theorem~\ref{thm-strip-large-dev} below (in a slightly more general setting) via an argument akin to those used for controlling the deviations of the Brownian bridge yet carried out within the duality framework of~\cites{cf:PV1,cf:PV2}.

Significantly more delicate is the crucial estimate of obtaining a lower bound on the probability that $\lambda$ is contained in the upper half-plane. The Brownian bridge heuristic suggests that this event holds with probability proportional to $1/\ell$, and as the following theorem confirms this is indeed the case.

\begin{Theorem}\label{thm:positive} Let $S$ be the infinite strip of width $\ell$ with b.c.\ $\eta$ as given in~\eqref{eq-plus-minus-bc-def},\eqref{eq-infinite-strip-def}. For an Ising configuration $\sigma$ on $S$ let $\lambda=\lambda(\sigma)$ be its unique open contour in the dual lattice $S^*$ (\ie $\d \l = \{(\frac12,\frac12),(\ell+\frac12,\frac12)\}$). For $i\in\Z$ let $H^*_i = \{\frac12,\ldots,\ell+\frac12\}\times\{i+\frac12\}$ be the vertices comprising the $i$-th horizontal level of $S^*$.  Then for every $\ell$, \[ \frac{c}\ell \leq \pi^\eta_S\left(\sigma :\, \lambda(\sigma) \mbox{ stays above } H^*_{-1}\right) \leq \frac{C}\ell\,,\] where $c,C>0$ are constants that depend only on $\beta$.  \end{Theorem}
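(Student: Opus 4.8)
The plan is to pass, via the random-line representation, to a sharp estimate for the total weight of the open contours staying above $H^*_{-1}$, and then to reflect the contour through the line $H^*_{-1}$.

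\emph{Reduction.} Let $x=(\tfrac12,\tfrac12)$, $y=(\ell+\tfrac12,\tfrac12)$. The strip version of~\eqref{eq:8} gives
\[
\pi^\eta_S\bigl(\lambda\text{ stays above }H^*_{-1}\bigr)=\frac{1}{\pi^*_{S^*}(\sigma_x\sigma_y)}\sum_{\substack{\lambda:\ \partial\lambda=\{x,y\}\\ \lambda\text{ does not meet }H^*_{-1}}}q_{S^*}(\lambda)\,,
\]
and by~\eqref{eq-strip-two-pt-corr} (the case of bounded $\ell$ being trivial) the denominator is $\asymp\ell^{-1/2}e^{-\tau_\beta(0)\ell}$. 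So it suffices to show that the numerator is $\asymp\ell^{-3/2}e^{-\tau_\beta(0)\ell}$.

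\emph{Reflection.} Since $\pi^*_{S^*}(\sigma_x\sigma_y)$ is, by~\eqref{eq:7}, the sum of $q_{S^*}(\lambda)$ over all $\lambda$ with $\partial\lambda=\{x,y\}$, the numerator equals $\pi^*_{S^*}(\sigma_x\sigma_y)$ minus the total weight of the contours that do meet $H^*_{-1}$. I would reflect each such contour, past its first visit to $H^*_{-1}$, through $H^*_{-1}$: as $S^*$ is invariant under this reflection, $x$ lies strictly above $H^*_{-1}$, and the reflected endpoint $y'=(\ell+\tfrac12,-\tfrac32)$ lies strictly below it, this gives a length-preserving bijection between $\{\lambda:\partial\lambda=\{x,y\},\ \lambda\text{ meets }H^*_{-1}\}$ and the \emph{full} family $\{\lambda':\partial\lambda'=\{x,y'\}\}$. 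On the bare weights $e^{-2\beta|\lambda|}$ the map is measure preserving, which would give $\sum_{\lambda\text{ meets }H^*_{-1}}q_{S^*}(\lambda)=\pi^*_{S^*}(\sigma_x\sigma_{y'})$; the conclusion would then follow from the sharp transverse Ornstein--Zernike asymptotics of the strip two-point function and the uniform positivity of the Wulff curvature (which gives $\tau_\beta(\ell,d)=\tau_\beta(0)\ell+c_0 d^2/\ell+O(d^4/\ell^3)$ with $c_0=c_0(\beta)>0$), since then
\[
\pi^*_{S^*}(\sigma_x\sigma_y)-\pi^*_{S^*}(\sigma_x\sigma_{y'})\ \asymp\ \ell^{-1/2}e^{-\tau_\beta(0)\ell}\bigl(\tau_\beta(\ell,2)-\tau_\beta(\ell,0)\bigr)\ \asymp\ \ell^{-3/2}e^{-\tau_\beta(0)\ell}\,.
\]

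\emph{The main obstacle.} The weight $q_{S^*}(\lambda)=e^{-2\beta|\lambda|}\,Z(S^*|\lambda)/Z(S^*)$ is \emph{not} invariant under the partial reflection, because the partition-function ratio depends on the position of $\lambda$ in the strip and not only on $|\lambda|$; moreover, reflecting a sub-arc of a simple contour may create a self-crossing, so one has to argue on a level where admissibility is preserved. A merely \emph{uniform} multiplicative control of the resulting discrepancy is useless here, since the numerator is a difference of two nearly equal quantities, so the error has to be shown to be $o(\ell^{-3/2}e^{-\tau_\beta(0)\ell})$ --- of relative size $o(1/\ell)$. Carrying out the reflection at this precision is the technical core: it relies on the fine quasi-multiplicative structure of the Pfister--Velenik weights (Lemmas~\ref{l:6.3},~\ref{lem-edge-boundary},~\ref{l:6.5}), equivalently on the renewal/irreducible-piece decomposition of the strip contour --- under which cutting and reflecting $\lambda$ affects only finitely many pieces and the weight errors cancel to the required order --- and this is precisely the content of the ``precise bounds on the law of Peierls contours'' announced in the introduction. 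With these in hand (and Theorem~\ref{thm-strip-large-dev} to discard the contours that swing far from $H^*_{-1}$), the reduction above yields $c/\ell\le\pi^\eta_S(\lambda\text{ stays above }H^*_{-1})\le C/\ell$. An alternative route, sidestepping the global reflection, is an induction over dyadic widths: decompose a contour at its crossing height $h$ of the middle column, apply sub-multiplicativity (Corollary~\ref{lem-3-pt-spins}) to the two half-scale pieces together with the scale-independent Gaussian factor $e^{-c_0 h^2/\ell}$ coming from the convexity of $\tau_\beta$ (cf.~\eqref{eq-sharp-tri-ineq}), and sum over $h$, the sum $\sum_h\ell^{-1/2}e^{-c_0 h^2/\ell}(h/\ell)^2\asymp 1/\ell$ reproducing the exponent; the difficulty then migrates to tracking the multiplicative constants through the recursion and handling the base scale.
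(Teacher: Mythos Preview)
Your reduction is correct and matches the paper: the probability is a ratio whose denominator is $\pi^*_{S^*}(\sigma_x\sigma_y)\asymp\ell^{-1/2}e^{-\tau_\beta(0)\ell}$, so the task is to show the numerator is $\asymp\ell^{-3/2}e^{-\tau_\beta(0)\ell}$. From there, however, your proposal is not a proof.

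You correctly identify the obstacle to the Andr\'e-type reflection: the weight $q_{S^*}(\lambda)$ carries the factor $Z(S^*\mid\lambda)/Z(S^*)$, which depends on the \emph{position} of $\lambda$ and is not invariant under reflecting a sub-arc (and the splitting-rule contour structure need not be preserved either). But you then assert that the discrepancy can be controlled to relative order $o(1/\ell)$ via ``the renewal/irreducible-piece decomposition'' and ``the precise bounds on the law of Peierls contours announced in the introduction''. The latter reference is circular: those announced bounds \emph{are} Theorems~\ref{thm-strip-large-dev} and~\ref{thm:positive} themselves. And the renewal/Ornstein--Zernike machinery you invoke is neither developed in the paper nor referenced by Lemmas~\ref{l:6.3}--\ref{l:6.5}, which give only crude sub-multiplicativity and domain monotonicity, far from the $o(1/\ell)$ precision a subtraction argument would need. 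So as written there is no argument for the lower bound, only a statement of what would have to be true.

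The paper does something entirely different. The \emph{upper bound} is obtained directly (no reflection, no subtraction): one bounds the numerator above by $\pi^*_{S^+}(\sigma_x\sigma_y)\le\pi^*_{\bbH^*}(\sigma_x\sigma_y)$ via Lemma~\ref{l:6.3} and GKS, and the half-plane two-point function is known exactly to be $O(\ell^{-3/2}e^{-\tau_\beta(0)\ell})$ from~\cite{cf:MW}. The \emph{lower bound} is the substantial part and uses a probabilistic doubling argument rather than any weight identity. One reveals the contour $\gse$ progressively from both endpoints; the key difficulty is that $\gse$ alone is \emph{not} symmetric about the horizontal axis (the splitting rule biases it), so one cannot naively say ``up or down with probability $1/2$''. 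The paper's device is to work with $\Gamma=\gse\cup\gsw$, whose law \emph{is} symmetric, and to show (Claims~\ref{clm-height-gamma-se},~\ref{clm-log-tube}, Lemma~\ref{cl:tube}) that $\gse$ stays within a logarithmic tube of $\Gamma_\gtop$. This yields Lemma~\ref{lem-strip-hitting-h}: from height $h$, the contour hits $-h$ before $+h$ with probability at most $\tfrac12+O(1/h)$. Iterating over dyadic heights $w_j\sim2^j$ (Claim~\ref{cl:pathInduction}) and using FKG to decouple the two ends gives probability at least $\prod_j(\tfrac14-c2^{-j})\asymp 4^{-\frac12\log_2\ell}=1/\ell$ of climbing to height $\sim\sqrt\ell$ without touching $H^*_{-1}$; a final application of Theorem~\ref{thm-strip-large-dev} shows that from there the contour closes up with uniformly positive probability. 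Your ``alternative route'' gestures at a dyadic decomposition but in the horizontal direction and via sub-multiplicativity of weights; the paper's induction is in the vertical height, is carried out pathwise via monotonicity and FKG, and crucially relies on the $\gse\cup\gsw$ symmetrization---an idea absent from your sketch.
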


As an immediate consequence we obtain the following lower bound on the spin-spin correlation at high temperature for two points on the horizontal boundary of the half-strip $ S^+=\{\frac12,\ldots,\ell+\frac12\}\times \bbZ^{+*}$, which to our knowledge was previously unknown.
\begin{Corollary}\label{cor-spin-spin-strip}
Let $u=(\frac12,\frac12)$ and $v=(\ell+\frac12,\frac12)$.
For every $\beta>\beta_c$
there exist constants $c,c'$ such that
\[
\frac{c}{\ell^{3/2}}e^{-\tau_\beta(u-v)}\le  \pi^*_{S^+}(\sigma_u \sigma_v)\le \frac{c'}{\ell^{3/2}}e^{-\tau_\beta(u-v)}\,.
\]
\end{Corollary}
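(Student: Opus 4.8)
The plan is to extract both inequalities from the random-line representation of $\pi^*_{S^+}(\sigma_u\sigma_v)$, the strip estimate \eqref{eq-strip-two-pt-corr}, and Theorem~\ref{thm:positive}. Write $u=(\tfrac12,\tfrac12)$, $v=(\ell+\tfrac12,\tfrac12)$, and note that an open contour $\l$ with $\d\l=\{u,v\}$ is $S^+$-compatible exactly when it stays above $H^*_{-1}$. By \eqref{eq:7} in the graph $S^+$,
\[
\pi^*_{S^+}(\sigma_u\sigma_v)=\sum_{\substack{\l:\ \d\l=\{u,v\}\\ \l\text{ is $S^+$-compatible}}} q_{S^+}(\l),
\]
while, recalling that the b.c.\ $\eta$ of \eqref{eq-plus-minus-bc-def},\eqref{eq-infinite-strip-def} forces the unique open contour of the full strip $S^*$ to have endpoints $u,v$, equations \eqref{eq:8} and \eqref{eq:7} give
\[
\sum_{\substack{\l:\ \d\l=\{u,v\}\\ \l\text{ is $S^+$-compatible}}} q_{S^*}(\l)=\pi^\eta_S\bigl(\l\text{ stays above }H^*_{-1}\bigr)\,\pi^*_{S^*}(\sigma_u\sigma_v).
\]

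For the lower bound I would invoke Lemma~\ref{l:6.3}: as $S^+$ is a subgraph of $S^*$, $q_{S^+}(\l)\ge q_{S^*}(\l)$ for every $S^+$-compatible $\l$. Summing and using the two displays,
\[
\pi^*_{S^+}(\sigma_u\sigma_v)\ \ge\ \pi^\eta_S\bigl(\l\text{ stays above }H^*_{-1}\bigr)\,\pi^*_{S^*}(\sigma_u\sigma_v)\ \ge\ \frac{c}{\ell}\cdot\frac{c_\beta}{2\sqrt\ell}\,e^{-\tau_\beta(u-v)}
\]
for all large $\ell$, by the lower bound of Theorem~\ref{thm:positive}, by \eqref{eq-strip-two-pt-corr}, and by $\tau_\beta(0)\ell=\tau_\beta(u-v)$; the finitely many small $\ell$ are absorbed into the constant since Ising correlations are strictly positive.

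The upper bound is the substantive part and does \emph{not} follow by reversing this comparison: the ratio $q_{S^+}(\l)/q_{S^*}(\l)$ is not uniformly bounded, being exponentially large in $\ell$ for contours that hug the floor $H^*_0$ (although such contours, having bare weight $e^{-2\beta|\l|}$ far below the surface-tension scale $e^{-\tau_\beta(0)\ell}$, carry negligible mass). To bound $\sum_{\l}q_{S^+}(\l)$ directly I would use the tools of~\S\ref{sec:strip}: after pulling out the factor $e^{-\tau_\beta(0)\ell}$, decompose each admissible $\l$ into its excursions above $H^*_0$ and, within each excursion, along its Ornstein--Zernike break points, as in the proof of Theorem~\ref{thm:positive}. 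Each piece carries weight comparable to $e^{-\tau_\beta(\cdot)}$ with tails that are summable uniformly over $\beta>\beta_c$ by the sharp triangle inequality \eqref{eq-sharp-tri-ineq} and Lemma~\ref{lem-spin-spin}; the renormalised sum then reduces to the partition function of a centered, finite-variance random-walk bridge of length of order $\ell$ conditioned to stay non-negative, which is of order $\ell^{-3/2}$. Reinstating $e^{-\tau_\beta(u-v)}$ gives $\pi^*_{S^+}(\sigma_u\sigma_v)\le c'\ell^{-3/2}e^{-\tau_\beta(u-v)}$.

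The main obstacle is exactly this renewal analysis with a \emph{hard floor}, carried out uniformly for all $\beta>\beta_c$ without cluster expansion: one must show that the mass of $\sum_{\l}q_{S^+}(\l)$ concentrates on contours meeting $H^*_0$ only $O(1)$ (or at most polylogarithmically many) times, so that the pointwise blow-up of $q_{S^+}(\l)/q_{S^*}(\l)$ is entropically irrelevant, and then identify the resulting ``positive bridge'' combinatorial factor. This is precisely where the refined contour estimates of~\S\ref{sec:strip} underpinning Theorem~\ref{thm:positive} enter, which is the sense in which the corollary is an ``immediate'' consequence.
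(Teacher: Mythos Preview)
Your lower bound is exactly the paper's argument: Lemma~\ref{l:6.3} to pass from $q_{S^+}$ to $q_{S^*}$, then Theorem~\ref{thm:positive} and \eqref{eq-strip-two-pt-corr}.

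For the upper bound, however, you are working much too hard. The paper obtains it in one line and without any contour analysis: by the GKS inequalities the free-boundary correlation is monotone in the domain, and since $S^+$ sits inside the dual half-plane $\bbH^*=\{(x,y):y\ge \tfrac12\}$ one has
\[
\pi^*_{S^+}(\sigma_u\sigma_v)\ \le\ \pi^*_{\bbH^*}(\sigma_u\sigma_v)\,.
\]
The right-hand side is a boundary two-point function of the half-plane Ising model at high temperature, for which the exact solution of McCoy--Wu (\cite{cf:MW}*{p.\,161, Eq.~(5.29)}, quoted in the proof of the upper bound of Theorem~\ref{thm:positive}) gives precisely
\[
\pi^*_{\bbH^*}(\sigma_u\sigma_v)\ \le\ \frac{C_1}{\ell^{3/2}}\,e^{-\tau_\beta(u-v)}\,.
\]
No renewal/Ornstein--Zernike decomposition, no hard-floor analysis, and no control of how often the contour touches $H^*_0$ is needed. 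Your proposed route---an OZ-type decomposition uniform in $\beta>\beta_c$ with a wall constraint---is a substantial project in its own right (and is \emph{not} what \S\ref{sec:strip} actually does; Theorem~\ref{thm:positive} is proved via the SE/SW symmetry trick, not via renewal), whereas the corollary really is immediate once one remembers that the half-plane boundary correlation is known exactly.
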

 Note that the above corollary also extends to other geometries, for instance rectangles with a wide range of aspect ratios where $u,v$ correspond to the upper corners. We postpone the proof of Theorem~\ref{thm:positive} and Corollary~\ref{cor-spin-spin-strip} in order to first obtain several of the ingredients that it would require, the first of which being the aforementioned large deviation inequality for the open contour in the infinite strip $S$.
\begin{Theorem}\label{thm-strip-large-dev}
Let $\bar{S}=\bar{S}(a,b)$ be the infinite strip $\{1,\ldots,\ell\}\times \Z$ excluding the horizontal slits $\{1,\ldots, a\}\times\{0,1\}$ and $\{b,\ldots,\ell\}\times\{0,1\}$ for $0\leq a < b \leq \ell+1$ with b.c.\ $\eta$ as defined in~\eqref{eq-plus-minus-bc-def}. For an Ising configuration $\sigma$ on $\bar{S}$ let
$\lambda=\lambda(\sigma)$ be its unique open contour in the dual $\bar{S}^*$, and for $i\in\Z$ let $H^*_i = \{\frac12,\ldots,\ell+\frac12\}\times\{i+\frac12\}$. %be the vertices comprising the $i$-th horizontal level of $\bar{S}^*$.
Then there exist some constant $C(\beta)>0$ such that for any $\ell$ the following holds:
\begin{align*}
\pi^\eta_{\bar{S}}\left(\sigma :\, \lambda(\sigma) \mbox{ reaches } H^*_{x\sqrt{\ell}}\right) &\leq C\exp\left(-\kappa_\beta\; x^2\right)&\mbox{for all $x \leq \sqrt{\ell}$}\,,\\
\pi^\eta_{\bar{S}}\Big(\sigma :\, \lambda(\sigma) \mbox{ reaches } H^*_{h}\Big) &\leq C \exp\left(-\kappa_\beta\; h\right)&\mbox{for all $h \geq \ell$}\,,
 \end{align*}
 where $\kappa_\beta>0$ is the constant in the sharp triangle inequality of the surface tension $\tau_\beta$.
\end{Theorem}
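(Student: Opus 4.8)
The plan is to run everything through the random-line representation. Let $\{u,v\}=V(\eta)$ be the two dual endpoints of the (unique) open contour: they both lie on $H^*_0$, with $u=(\tfrac12,\tfrac12)$, $v=(\ell+\tfrac12,\tfrac12)$ in the plain strip and, in the slit geometry, at the inner ends of the two slits, at horizontal distance $\ell_0\le\ell$. Identity~\eqref{eq:8} gives
\[
\pi^\eta_{\bar{S}}\bigl(\lambda\text{ reaches }H^*_h\bigr)=\frac{1}{\pi^*_{\bar{S}^*}(\sigma_u\sigma_v)}\sum_{\substack{\lambda:\ \partial\lambda=\{u,v\}\\ \lambda\cap H^*_h\neq\emptyset}}q_{\bar{S}^*}(\lambda),
\]
so it suffices to bound the constrained weight sum from above and $\pi^*_{\bar{S}^*}(\sigma_u\sigma_v)$ from below. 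For the denominator we use the sharp strip asymptotics~\eqref{eq-strip-two-pt-corr}: in the plain strip $\pi^*_{\bar{S}^*}(\sigma_u\sigma_v)\ge c_\beta\,\ell^{-1/2}e^{-\tau_\beta(u-v)}$; in the slit case we first bound below, by monotonicity of the Ising correlation in the domain (GKS together with Lemma~\ref{l:6.3}), by the two-point function of a thin dual strip of width $\ell_0$ joining $u$ to $v$, which is $\ge c_\beta\,\ell_0^{-1/2}e^{-\tau_\beta(u-v)}$.

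The heart of the argument is a reflection across the level $H^*_h$, mimicking the reflection principle for a random-walk bridge. Given a contour $\lambda$ from $u$ to $v$ meeting $H^*_h$, let $z\in H^*_h$ be the vertex of first passage and split $\lambda=\lambda_1\sqcup\lambda_2$, with $\partial\lambda_1=\{u,z\}$ (the part staying strictly below $H^*_h$) and $\partial\lambda_2=\{z,v\}$. Reflecting $\lambda_2$ in the horizontal line through $H^*_h$ yields a contour $\tilde\lambda_2$ from $z$ to the mirror point $\tilde v$ of $v$, which lies on $H^*_{2h}$; because the vertical coordinate set $\Z+\tfrac12$ of the dual strip is invariant under this reflection, the weight is unchanged, $q(\tilde\lambda_2)=q(\lambda_2)$, and $\bar\lambda:=\lambda_1\sqcup\tilde\lambda_2$ is a contour from $u$ to $\tilde v$ whose own first passage of $H^*_h$ is again $z$, so $\lambda\mapsto\bar\lambda$ is injective. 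Using the multiplicativity of the $q$-weights along the cut at $z$ (Lemma~\ref{lem-edge-boundary} and its general form) one gets $q_{\bar{S}^*}(\lambda)\le C\,q_{\bar{S}^*}(\bar\lambda)$, and summing,
\[
\sum_{\substack{\lambda:\ \partial\lambda=\{u,v\}\\ \lambda\cap H^*_h\neq\emptyset}}q_{\bar{S}^*}(\lambda)\ \le\ C\sum_{\lambda':\ \partial\lambda'=\{u,\tilde v\}}q_{\bar{S}^*}(\lambda')\ =\ C\,\pi^*_{\bar{S}^*}(\sigma_u\sigma_{\tilde v}).
\]
The point of this manoeuvre is precisely to avoid the naive union bound over the $\Theta(\ell)$ vertices of $H^*_h$ via the three-point estimate of Corollary~\ref{lem-3-pt-spins}, which would cost an irrecoverable polynomial factor of $\ell$.

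It remains to plug in the equilibrium bounds. By Lemma~\ref{lem-spin-spin}, $\pi^*_{\bar{S}^*}(\sigma_u\sigma_{\tilde v})\le C_\beta\,|u-\tilde v|^{-1/2}e^{-\tau_\beta(u-\tilde v)}$ with $|u-\tilde v|=\sqrt{\ell_0^2+4h^2}\ge\ell_0$, so the prefactor $|u-\tilde v|^{-1/2}$ is absorbed by the $\ell_0^{-1/2}$ of the denominator. For the exponent, the sharp triangle inequality~\eqref{eq-sharp-tri-ineq} (with the evenness of $\tau_\beta$) makes $\tau_\beta(\cdot)-\kappa_\beta|\cdot|$ a symmetric convex function, whence
\[
\tau_\beta(u-\tilde v)-\tau_\beta(u-v)\ \ge\ \kappa_\beta\bigl(\sqrt{\ell_0^2+4h^2}-\ell_0\bigr)\ \ge\ \begin{cases}\kappa_\beta\,h^2/\ell=\kappa_\beta x^2 & \text{if } h\le\ell,\\ \kappa_\beta\,h & \text{if } h\ge\ell.\end{cases}
\]
Combining the three displays gives the two asserted bounds with no residual power of $\ell$.

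The step I expect to be the main obstacle is the reflection/surgery: one must check that cutting $\lambda$ at its first passage of $H^*_h$ splits $q_{\bar{S}^*}(\lambda)$ multiplicatively up to a bounded factor (which is all that is needed), since the $q$-weights are only sub-multiplicative in general (Lemma~\ref{l:6.5} is an inequality). A related nuisance is that the slits sit at levels $0,1$ and are not fixed by the reflection about $H^*_h$, so $q_{\bar{S}^*}(\lambda_2)=q_{\bar{S}^*}(\tilde\lambda_2)$ can fail in the slit case; there one either argues by a monotonicity/coupling that the slits only suppress the upward excursion (reducing to the plain strip) or carries the slit edges explicitly through the surgery at the cost of a bounded multiplicative constant.
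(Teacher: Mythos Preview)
Your reflection strategy is elegant and would give the result in one stroke if it worked, but the step you flag as the main obstacle is a genuine gap and I do not see how to close it with the tools quoted. The difficulty is not just that Lemma~\ref{l:6.5} is an inequality; it is that the weight $q_{\bar S^*}(\lambda)$ carries the global factor $Z(\bar S^*\,|\,\lambda)/Z(\bar S^*)$, so the contour measure is \emph{not} Markovian at the first-passage vertex $z$. Concretely, even if you grant an exact split $q(\lambda_1\sqcup\lambda_2)=q(\lambda_1)\,q_{\cG_{\lambda_1}}(\lambda_2)$ via Lemma~\ref{lem-edge-boundary} (and note $\lambda_1,\lambda_2$ share the vertex $z$, so compatibility in the sense of that lemma already needs care), the conditional domain $\cG_{\lambda_1}=\bar S^*\setminus\Delta(\lambda_1)$ is \emph{not} invariant under the reflection about $H^*_h$, since $\lambda_1$ sits entirely below that line. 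Hence $q_{\cG_{\lambda_1}}(\lambda_2)$ and $q_{\cG_{\lambda_1}}(\tilde\lambda_2)$ have no reason to be within a bounded ratio of one another, and the comparison $q(\lambda)\le C\,q(\bar\lambda)$ is unsupported. The same asymmetry obstructs the alternative factorisation conditioning on $\lambda_2$ first. In short, the reflection principle relies on a domain symmetry that the conditioning on the past destroys; this is exactly why the analogy with the random-walk bridge breaks down.

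The paper avoids this entirely by a different mechanism. It starts from the same identity~\eqref{eq:8} and the same pointwise three-point bound (Corollary~\ref{lem-3-pt-spins} together with~\eqref{eq-strip-two-pt-corr}), which yields~\eqref{eq-w-in-contour}. Applied naively with a union bound over $z\in H^*_h$ this costs a factor $|u-v|$, exactly the prefactor you set out to avoid. Instead of reflecting, the paper runs a multiscale argument: it dyadically partitions the interval $[u,v]$ into $M^k$ pieces at levels $k=0,\dots,K\asymp\tfrac12\log_M|u-v|$, controls the height at the partition points of level $k$ by $h_k=M^{-k/4}h$ using~\eqref{eq-w-in-contour} conditionally on the previous level, and sums. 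The point is that at level $k$ the exponent in~\eqref{eq-w-in-contour} is $h_k^2/(M^{-k}|u-v|)\asymp M^{k/2}h^2/|u-v|$, growing geometrically in $k$, so the union bound over the $M^k$ partition points is absorbed and the total error is dominated by the first level, giving $C\exp(-\kappa_\beta h^2/|u-v|)$ with no residual power of $\ell$. This multiscale scheme only uses the sub-multiplicativity direction of the $q$-weights (Corollary~\ref{lem-3-pt-spins}) and the known lower bound~\eqref{eq-strip-two-pt-corr} on the denominator, so it sidesteps the missing Markov/symmetry structure your reflection needs.
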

\begin{remark*}
It is fairly straightforward to establish upper bounds as above with an extra prefactor of order $|b-a|$
(see e.g.\ the first inequality in~\eqref{eq-middle-large-dev-rough}). Eliminating this spurious prefactor requires a
delicate multi-scale analysis.
\end{remark*}
\begin{proof}[Proof of theorem]
In what follows we will prove the following inequality, which is a stronger form of the required large deviation estimates: For some $C=C(\beta)>0$,
\begin{align}
  \label{eq-level-large-dev}
  \pi^\eta_{\bar{S}}\left(\sigma :\, \lambda(\sigma) \mbox{ reaches } H^*_{h}\right) &\leq C e^{-\kappa_\beta\big(\tfrac{h^2}{b-a-1} \;\wedge\; h\big)}\quad\mbox{ for any $h>0$}
\end{align}
(we may clearly assume that $b > a+1$ otherwise the unique open contour is trivial).
Indeed, the above probability estimate is clearly increasing in the value of $b-a$, which in turn is guaranteed to be at most $\ell+1$ (reflecting the bounds in the proposition). Notice that by choosing $C$ to be appropriately large we need only consider $h \geq \sqrt{b-a}$.

Fix some large cutoff height $n \geq (h\;\vee\;\ell)^2$ and let
\[\bar{S}_n =\bar{S} \cap (\Z \times \{-n,\ldots,n\})\]
be the strip $\bar{S}$ truncated at $\pm n$ with boundary conditions analogous to $\eta$, \ie negative on the upper half-plane and positive elsewhere. Due to the uniqueness of the Gibbs measure on $\bar{S}$, the probabilities we seek to bound are obtained as a limit of the corresponding ones for $\bar{S}_n$ as $n\to\infty$. Further let $u=(a+\frac12,\frac12)$ and $v=(b-\frac12,\frac12)$ denote the endpoints of the unique open contour in $\bar{S}_n^*$.
Define the \emph{height} of this open contour $\lambda\subset \bar{S}_n^*$ at the horizontal coordinate $x \in \{\frac12,\ldots,\ell+\frac12\}$ to be
\[ \htx(\lambda,x) = \max\{ y : (x,y)\in\lambda\}\,.\]
The main effort in the proof will be devoted to the analysis of the vertical fluctuations of the contour $\lambda$ within the inner strip with $x$-coordinates $\{a+\frac12,\ldots,b-\frac12\}$. It is the case that large vertical fluctuations in the margins (\ie large values of $\htx(\l,x)$ for $x< a$ or $x > b$) are far more unlikely and can be estimated via standard properties of the surface tension. To control the delicate fluctuations of $\htx(\l,x)$ for $a < x < b$ we will apply a multiscale approach, repeatedly bounding the deviations at the horizontal midpoints in a nested dyadic partition of the interval between $u$ and $v$.

The first step in the proof is to bound the event that the contour includes a given point $w=(x,h) \in \bar{S}^*$ in terms of its coordinates $h$ and $x$. First notice that by~\eqref{eq:8}, \begin{equation}\label{eq-large-dev-frac}
\pi^\eta_{\bar{S}_n}\left(\sigma :\, w\in\lambda(\sigma)\right) = \biggl[\sum_{\substack{\l:\, \d\l=\{u,v\} \\ w\in\l}} q_{\bar{S}_n^*}(\l)\biggr]\,\big/\, \biggl[\sum_{\substack{\l:\, \d\l=\{u,v\}}} q_{\bar{S}_n^*}(\l)\biggr]\,.
\end{equation}
Consider the numerator in the last expression: Corollary~\ref{lem-3-pt-spins} implies that
\[ \sum_{\substack{\lambda:\,\d\lambda=\{u,v\}\\ w\in\lambda}}
q_{\bar{S}_n^*}(\lambda)\leq \pi_{\bar{S}_n^*}^*\left(\s_u\,\s_w\right) \, \pi_{\bar{S }^*}^*\left(\s_v\,\s_w\right)\,,\]
and together with Lemma~\ref{lem-spin-spin} we deduce that for some $c_0=c_0(\beta)>0$
\begin{align}
  \sum_{\substack{\lambda:\,\d\lambda=\{u,v\}\\ w\in\lambda}}
q_{\bar{S}_n^*}(\lambda)&\leq
 \frac{c_0}{\sqrt{|u-w||v-w|}}\exp\big(-\tau_{\beta}(u-w)-\tau_{\beta}(v-w)\big)\,. %\nonumber\\
  \label{eq-large-dev-num}
\end{align}
To estimate the denominator in~\eqref{eq-large-dev-frac} recall Eq.~\eqref{eq:7} according to which
\[
\sum_{\substack{\l:\, \d\l=\{u,v\}}} q_{\bar{S}_n^*}(\l) = \pi_{\bar{S}_n^*}^*(\s_u\,\s_v)\,.\]
As it follows from GKS that decreasing our domain can only decrease the spin-spin correlations,
letting $S_n = \{a,\ldots,b\}\times \{-n,\ldots,n\}$ (\ie $S_n$ is the result of ``pushing'' the West and East boundaries of $\bar{S}_n$ to $a$ and $b$ resp.) we have
\[ \pi_{\bar{S}_n^*}^*(\s_u\,\s_v) \geq \pi_{S_n^*}^*(\s_u\,\s_v)\,.\]
By Eq.~\eqref{eq-strip-two-pt-corr} there exists some $c_1=c_1(\beta)> 0$ such that
the spin-spin correlation between $u,v$ in the dual to the infinite strip $S=\{a,\ldots,b\}\times \Z$ is
\[
\pi_{S^*}^*(\s_u\,\s_v) = \frac{c_1+o(1)}{\sqrt{|u-v|}}\exp(-\tau_{\beta}(|u-v|)) \,,
\]
where the $o(1)$-term tends to $0$ as $|u-v|\to\infty$.
Due to the strong spatial mixing properties of the high temperature region $\beta^* < \beta_c$, the value of $\pi_{S_n^*}^*(\s_u\,\s_v)$ converges to the above r.h.s.\ exponentially fast in $n$. Already for $n\geq \ell^2$ we could absorb the error in the constant $c_1$ and obtain that for some $c'_1(\beta)>0$,
\begin{equation}\label{eq-large-dev-denom}\sum_{\substack{\l:\, \d\l=\{u,v\}}} q_{\bar{S}_n^*}(\l) \geq \frac{c'_1}{\sqrt{|u-v|}}\exp(-\tau_{\beta}(|u-v|)) \,.\end{equation}
By combining~\eqref{eq-large-dev-frac} with \eqref{eq-large-dev-num} and \eqref{eq-large-dev-denom} we conclude that for some $c_2 =c_2(\beta)>0$,
\begin{equation}
  \label{eq-w-in-contour-gen}
 \pi^\eta_{\bar{S}_n}\left(\sigma :\, w\in\lambda(\sigma)\right) \leq
\frac{c_2\sqrt{|u-v|}}{\sqrt{|u-w||v-w|}}
\exp\left(-\tau_{\beta}(u-w)-\tau_{\beta}(v-w)+\tau_{\beta}(u-v)\right)\,.
\end{equation}
At the same time, by the sharp triangle inequality property~\eqref{eq-sharp-tri-ineq} of the surface tension,
 \begin{align}
 \tau_{\beta}(u-w)&+\tau_{\beta}(v-w) \geq \tau_{\beta}(u-v) + \kappa_\beta \left(|w-v|+|u-w|-|u-v|\right)\,.\label{eq-contours-sharp-tri}
 \end{align}
Recalling that $w$ is at height $h$ it is easy to verify that
 \[ |w-v|+|u-w|-|u-v| \geq \frac{4h^2}{\sqrt{|u-v|^2+4h^2}+|u-v|}\,.\]
Set $\xi = \frac4{1+\sqrt{5}} > \frac65 $ and now observe that whenever $h^2 \leq |u-v|^2$ the last expression is at least $\xi\tfrac{h^2}{|u-v|}$ and otherwise it is at least
$ \xi h$. Using this bound for the r.h.s.\ of Eq.~\eqref{eq-contours-sharp-tri} now allows us to produce the following bound out of Eq.~\eqref{eq-w-in-contour-gen}:
\begin{equation}
  \label{eq-w-in-contour}
\pi^\eta_{\bar{S}_n}\left(\sigma :\, w\in\lambda(\sigma)\right) \leq
\frac{c_2\sqrt{|u-v|}}{\sqrt{|u-w||v-w|}} \exp\left(-\tfrac65 \kappa_\beta \left(\tfrac{h^2}{|u-v|}\;\wedge\; h\right)\right)\,.
\end{equation}

 Straightforward applications of the above bounds will now yield the required bounds on the height of $\l$ along the margins $x \leq a+\frac12$ and $x \geq b-\frac12$ as well as whenever $b-a$ is a uniformly bounded. Indeed, by symmetry we may assume without loss of generality that $x \leq a+\frac12$ and note that in this case $w=(x,h)$ satisfies $|w-v| \geq |u-v|$. Applying~\eqref{eq-w-in-contour-gen} combined with the sharp triangle inequality as in Eq.~\eqref{eq-contours-sharp-tri} we get
\[ \pi^\eta_{\bar{S}_n}\left(\sigma :\, w\in\lambda(\sigma)\right) \leq
\frac{c_2}{\sqrt{|u-w|}}\exp\left(-\kappa_\beta|u-w|\right)\,.\]
Summing the last expression over all $w=(x,y)$ with $x\leq a+\frac12$ and $y\geq h$ gives that
\begin{equation}
   \label{eq-margin-large-dev1}
   \pi^\eta_{\bar{S}_n}\big(\sigma :\, \htx(\lambda(\sigma),x)\geq h\mbox{ for some $x\leq a+\tfrac12$}\big) \leq C_1\, e^{-\kappa_\b h}
 \end{equation}
for some $C_1=C_1(\beta)>0$, and analogously
\begin{equation}
   \label{eq-margin-large-dev2}
   \pi^\eta_{\bar{S}_n}\big(\sigma :\, \htx(\lambda(\sigma),x)\geq h\mbox{ for some $x\geq b-\tfrac12$}\big) \leq C_1\, e^{-\kappa_\b h}\,.
 \end{equation}
We now turn our attention to the main task of bounding the vertical fluctuations of $\lambda$ along the interval $(a+\frac12,b-\frac12)$.
First observe that \eqref{eq-w-in-contour} immediately provides the bound we seek (Eq.~\eqref{eq-level-large-dev}) in the special case where $|u-v|= O(1)$ (with an implicit constant that may depend on $\beta$): In that case a simple union bound over $w=(x,h)$ for $x\in(a+\frac12,b-\frac12)$ yields
\begin{align}
   \pi^\eta_{\bar{S}_n}\big(\sigma :\, &\htx(\lambda(\sigma),x)\geq h\mbox{ for some $a+\tfrac12< x < b-\tfrac12$}\big) \nonumber\\ & \leq c_2 |u-v|  \exp\left(-\tfrac65 \kappa_\beta \left(\tfrac{h^2}{|u-v|}\;\wedge\; h\right)\right) \leq
   C_2\, \exp\left(-\kappa_\beta \left(\tfrac{h^2}{|u-v|}\;\wedge\; h\right)\right)\,,
      \label{eq-middle-large-dev-rough}
 \end{align}
where $C_2(\beta)>0$ incorporates the uniform bound on $|u-v|$. Combined with~\eqref{eq-margin-large-dev1} and~\eqref{eq-margin-large-dev2} this concludes the bound in Eq.~\eqref{eq-level-large-dev} when $|u-v|=O(1)$.

Let $M\geq 2$ be some fixed integer whose value will depend only on $\beta$ and will be specified later. Justified by the above argument, assume without loss of generality that
\begin{equation}
  \label{eq-u-v-large-enough}
 |u-v| \geq M^2\quad\mbox{ and }\quad \exp\Big(\tfrac1{10}\kappa_\beta|u-v|^{1/4}\Big)\geq |u-v|\,.
\end{equation}
We claim that this in turn narrows our attention to proving Eq.~\eqref{eq-level-large-dev} for $h$ satisfying
\begin{equation}
  \label{eq-h-region}
 M |u-v| \leq h^2 \leq \tfrac12 |u-v|^{5/4}\,.
\end{equation}
To see this recall first that the lower bound on $h$ is justified by selecting a suitably large constant $C(\beta)$ in Eq.~\eqref{eq-level-large-dev}. For the upper bound, note that if $h^2 > \frac12|u-v|^{5/4}$ (in which case $\tfrac{h^2}{|u-v|} > \frac12 |u-v|^{1/4}$ whereas $h > \frac1{\sqrt{2}}|u-v|^{5/8}$) then~\eqref{eq-u-v-large-enough} implies that
$|u-v|$ is at most $\exp\left(\frac{\kappa_\b}{5} \big(\tfrac{h^2}{|u-v|}\;\wedge\; h\big)\right)$ and hence Eq.~\eqref{eq-level-large-dev} follows from a union bound over $x\in(a+\frac12,b-\frac12)$ as in~\eqref{eq-middle-large-dev-rough}.

Consider the event whereby the contour $\lambda$ visits a point $w\in \bar{S}_n$ given by
\[w=(x,y)\mbox{ for some }x\in\left(a+\tfrac1M|u-v|, b-\tfrac1M|u-v|\right)\mbox{ and }y\geq h\,.\]
Clearly $\sqrt{|u-w||v-w|} \geq \tfrac1M|u-v|$ thus we can rewrite~\eqref{eq-w-in-contour} as
\[ \pi^\eta_{\bar{S}_n}\left(\sigma :\, w\in\lambda(\sigma)\right) \leq
\frac{c_3}{\sqrt{|u-v|}} \exp\left(-\tfrac65 \kappa_\beta \left(\tfrac{y^2}{|u-v|}\;\wedge\; y\right)\right)\,,
\]
where $c_3>0$ depends only on $\beta$. Summing over all possible values of $y\geq h$ we now obtain that
\begin{align}
\pi^\eta_{\bar{S}_n}&\Big(\sigma :\, \htx\left(\l(\sigma),x\right)\geq h\Big) \leq
\frac{c_3}{\sqrt{|u-v|}}\bigg(\sum_{y =h}^{|u-v|} e^{-\tfrac65\kappa_\beta\, \tfrac{y^2}{|u-v|}}+
\sum_{y \geq (h\;\vee\;|u-v|)}\!\! e^{-\tfrac65\kappa_\beta\, y}\bigg)
\nonumber\\
&\leq
c_3 \sum_{z =h/\sqrt{|u-v|}}^{\sqrt{|u-v|}} e^{-\tfrac65\kappa_\beta z^2}
+ \frac{c_3}{\sqrt{|u-v|}} \sum_{y \geq h} e^{-\tfrac65\kappa_\beta\, y}
\leq c_3' e^{-\tfrac65\kappa_\beta \frac{h^2}{|u-v|}}
+ \frac{c'_3}{\sqrt{|u-v|}} e^{-\tfrac65\kappa_\beta h} \nonumber\\
&\leq C_3\, \exp\left(-\tfrac65 \kappa_\beta \left(\tfrac{h^2}{|u-v|}\;\wedge\; h\right)\right)\,,\label{eq-level-large-dev-mid}
 \end{align}
where the constant $C_3>0$ depends only on $\beta$.

We next wish to extend the above bound on $\htx(\l,x)$ to hold simultaneously for all $x\in(a+\frac12, b-\frac12 )$ by means of a dyadic partition of the interval between $u$ and $v$. Set
\[ K = \Big\lfloor \frac12\log_{M}|u-v|\Big\rfloor \]
and notice that~\eqref{eq-u-v-large-enough} ensures that $K \geq 1$. Define the following sequence of refinements of the interval between $u$ and $v$, indexed by $k = 0,\ldots,K$.
We begin with the trivial partition at level $0$,
\begin{align*}
 z_0^{(0)}=a+\tfrac12\,, &\quad z_1^{(0)}=b-\tfrac12\,,
\end{align*}
and refine level $k$ into level $k+1$ by subdividing each subinterval $(z_{j-1}^{(k)},z_{j}^{(k)})$ into $M$ equal parts (up to integer rounding):
\begin{align*}
  z_{Mj}^{(k+1)}&=z_{j}^{(k)}  &\mbox{ for }j=0,\ldots,M^k\,,\\ z_{M(j-1)+i}^{(k+1)}&=\left\lfloor z_{j-1}^{(k)} + \tfrac{i}M \big(z_{j-1}^{(k)}+z_{j}^{(k)} \big)\right\rfloor+\tfrac12 &\mbox{ for }i=1,\ldots,M-1\mbox{ and }j=1,\ldots,M^k\,.
\end{align*}
Observe that for all admissible $j,k$ we have
\[ M^{-k}|u-v| - 2 < z_{j}^{(k)}-z_{j-1}^{(k)} < M^{-k}|u-v| + 2\,,  \]
where the additive terms account for the rounding corrections along the refinements.
In particular, the expression in the lower bound on the sub-interval lengths satisfies
\[ M^{-k}|u-v| \geq M^{-K}|u-v| \geq \sqrt{|u-v|} > 10\]
(as $|u-v|$ is large enough). Next, define
\begin{align*}
 h_k = M^{-k/4} h &\quad\mbox{ for $k=0,\ldots,K$}\,,
\end{align*}
and let $\Upsilon_j^{(k)}$ be the event that the height of the contour at $z_j^{(k)}$ does not exceed $\sum_{i<k} h_i$:
\[ \Upsilon_{j}^{(k)} = \bigg\{ \sigma:\, \htx\big(\lambda(\sigma),z_j^{(k)}\big) < \sum_{i=0}^{k-1} h_i \bigg\}\mbox{ for $k \geq 1$ and $1 \leq j < M^k$}\,.\]
Recalling~\eqref{eq-level-large-dev-mid} and rewriting it in terms of
$\Upsilon_j^{(k)}$ and its complement $\overline{\Upsilon}_{j}^{(k)}$ we have that
\[ \pi^\eta_{\bar{S}_n}\left(\overline{\Upsilon}_j^{(1)}\right) \leq C_3\, \exp\bigg(-\tfrac65\kappa_\b \bigg(\frac{h_0^2}{z^{(0)}_1- z^{(0)}_0}\;\wedge\; h_0\bigg)\bigg)\mbox{ for $j=1,\ldots,M-1$}\,.
\]
Exactly the same argument yields that for general $k$, $1\leq j\leq M^{k}$ and $1\leq i \leq M-1$,
\begin{equation}
\pi^\eta_{\bar{S}_n}\left( \overline{\Upsilon}^{(k+1)}_{M(j-1)+i}\,,\,\Upsilon^{(k)}_{j-1}\,,\,
\Upsilon^{(k)}_{j} \right) \leq C_3\, \exp\bigg(-\tfrac65\kappa_\b \bigg(\frac{h_{k}^2}{z^{(k)}_{j}- z^{(k)}_{j-1}}\;\wedge\; h_{k}\bigg)\bigg)\,.
\label{eq-Upsilon-general}
\end{equation}
To estimate the last expression, observe that $h_{k}/(z_j^{(k)} - z_{j-1}^{(k)})$ increases with $k$ roughly as $M^{3k/4}$. More accurately,
\begin{align}
\frac{h_{k}}{z_j^{(k)} - z_{j-1}^{(k)}} \leq
  \frac{M^{\frac{k}4} h}{M^{-k}|u-v|-2} \leq
  \frac{M^{\frac{3k}4} h}{|u-v|} \Big(1+\frac2{M^{-k}|u-v|-2}\Big)<
  \frac54 M^{\frac{3k}4}
  \frac{h}{|u-v|}
  \label{eq-hk-z-upper-bound}
\end{align}
(where we used the fact that $M^{-k}|u-v|> 10$) and similarly
\begin{align}
\frac{h_{k}}{z_j^{(k)} - z_{j-1}^{(k)}} \geq
  \frac{M^{\frac{k}4} h}{M^{-k}|u-v|+2} >
\frac45 M^{\frac{3k}4}
  \frac{h}{|u-v|}\,.
  \label{eq-hk-z-lower-bound}
\end{align}
Our choice of $K$ and the upper bound~\eqref{eq-h-region} on $h$ enable us to derive from~\eqref{eq-hk-z-upper-bound} that for all $k \leq K$,
 \[\frac{h_{k}}{z_j^{(k)} - z_{j-1}^{(k)}} \leq
\frac54 M^{3 K /4}\frac1{\sqrt{2}} |u-v|^{-3/8} \leq \frac5{4\sqrt{2}} < 1\,.\]
In particular, this identifies the minimizer of the exponent in the r.h.s.\ of~\eqref{eq-Upsilon-general} and implies that
\[ \pi^\eta_{\bar{S}_n}\left(\overline{\Upsilon}^{(k+1)}_{M(j-1)+i}\,,\,\Upsilon^{(k)}_{j-1}\,,\,
\Upsilon^{(k)}_{j} \right) \leq C_3\, \exp\bigg(-\tfrac65\kappa_\b \frac{h_{k}^2}{z^{(k)}_{j}- z^{(k)}_{j-1}} \bigg)\,.
\]
Crucially however, the lower bound~\eqref{eq-hk-z-lower-bound} also gives that
\[
\frac{h^2_{k}}{z_j^{(k)} - z_{j-1}^{(k)}} \geq
  \frac45 M^{3k/4} \frac{h}{|u-v|} M^{-k/4}h = \frac45 M^{k/2} \frac{h^2}{|u-v|} \,.
\]

To simplify the notation put $ \rho = \frac{6}5\kappa_\b \frac{h^2}{|u-v|}$ and recall that $\rho \geq \frac65\kappa_\b M$ by~\eqref{eq-h-region}, hence we may take
$M$ sufficiently large so $\rho$ would also be large. The combination of the above inequalities together with a union bound gives
\begin{align*}
\pi^\eta_{\bar{S}_n}\bigg(\bigcup_{k=1}^{K-1} \bigcup_j \overline{\Upsilon}^{(k)}_{j} \bigg) &=
\pi^\eta_{\bar{S}_n}\bigg(\bigcup_{k=1}^{K-1} \bigcup_{i,j} \left\{\overline{\Upsilon}^{(k)}_{M(j-1)+i}\,,\,\Upsilon^{(k-1)}_{j-1}\,,\,
\Upsilon^{(k-1)}_{j}\right\} \bigg) \\
&\leq C_3 M e^{-\rho } + C_3 \sum_{k=2}^{K-1} M^{k} e^{-\rho\,\frac45 M^{(k-1)/2}}
\leq C'_3 e^{-\rho } = C'_3 e^{\frac65\kappa_\b\frac{h^2}{|u-v|}}\,,
 \end{align*}
where we used that $\rho \geq 2$ and $\sqrt{M} \geq \log M$ for any sufficiently large $M$ and it is understood that $\Upsilon^{(0)}_j$ is the full
 probability space.

We have reached level $K$ at which point we wish to examine the remaining points altogether.
Fix some $x \in  (z_{j-1}^{(K-1)}, z_{j}^{(K-1)})$ and let
\[ \Upsilon'_x = \bigg\{ \sigma:\, \htx\big(\lambda(\sigma),x\big) < \sum_{i=1}^{K} h_i \bigg\}\,.\]
As established before $h_{K}/(z_j^{(K)} - z_{j-1}^{(K)}) < 1$ and so
\begin{equation}
  \label{eq-Upsilon'-bound}
  \pi^\eta_{\bar{S}_n}\left(\overline{\Upsilon}'_x\,,\,\Upsilon^{(K)}_{j-1}\,,\,
\Upsilon^{(K)}_{j} \right) \leq C_3\, \exp\bigg(-\tfrac65\kappa_\b \frac{h_{K}^2}{z^{(K)}_{j}- z^{(K)}_{j-1}} \bigg)\,.
\end{equation}
On the other hand, by the definition of $K$ we have that $M^{K} \geq |u-v|^{1/2}/M$ (with the factor of $M$ due to possible integer rounding in $K$) and hence
\begin{align*}
\frac{h^2_{K}}{z_j^{(K)} - z_{j-1}^{(K)}} &\geq
\frac45 M^{K/2} \frac{h^2}{|u-v|} \geq \frac45 \frac{|u-v|^{1/4}}{\sqrt{M}} \frac{h^2}{|u-v|} \geq |u-v|^{1/4}\,,
\end{align*}
where the last inequality is due to the lower bound on $h^2$ in~\eqref{eq-h-region}. It now follows from~\eqref{eq-u-v-large-enough} that
\[ \exp\bigg(-\tfrac1{10}\kappa_\b \frac{h_{K}^2}{z^{(K)}_{j}- z^{(K)}_{j-1}} \bigg) \leq
\exp\left(-\tfrac1{10}\kappa_\beta|u-v|^{1/4} \right) \leq |u-v|^{-1}\,.\]
Together with~\eqref{eq-Upsilon'-bound} this implies that
\[ \pi^\eta_{\bar{S}_n}\left( \overline{\Upsilon}'_x\,,\,\Upsilon^{(K)}_{j-1}\,,\,
\Upsilon^{(K)}_{j} \right) \leq \frac{C_3}{|u-v|}\,\exp\bigg(-\tfrac{11}{10}\kappa_\b \frac{h^2}{|u-v|} \bigg)\,.\]
Summing over at most $|u-v|$ possible choices for $x$ we may now conclude that
\begin{align}\label{eq-large-dev-alpha-h}
\pi^\eta_{\bar{S}_n}\left(\htx(\lambda,x) \geq \alpha_M h \mbox{ for some $x\in(a+\tfrac12,b-\tfrac12)$}\right) \leq C_4 \exp\bigg(- \tfrac{11}{10}\kappa_\b \frac{h^2}{|u-v|}\bigg)\,,
 \end{align}
 where $\alpha_M = \sum_{0}^K M^{-i/4} < \big(1-M^{-1/4}\big)^{-1}$.

 Finally, by choosing $M$ to be sufficiently large we can obtain that $\alpha_M^2 < \frac{11}{10}$ and plugging this in~\eqref{eq-large-dev-alpha-h} (while recalling that we are in
 the regime where $\tfrac{h^2}{|u-v|} \leq h$ due to Eq.~\eqref{eq-h-region}) concludes the proof of~\eqref{eq-level-large-dev}, as required.
\end{proof}
\begin{remark*}
The truncation argument that was used in the proof of Theorem~\ref{thm-strip-large-dev} to reduce the problem to a finite domain is applicable in our upcoming arguments as well. Henceforth, when needed, we will thus work directly in the infinite volume setting to simplify the exposition.
\end{remark*}

We now introduce the main conceptual element in the proof of Theorem~\ref{thm:positive}. Recall our aim is to show that the open contour in the infinite strip $S=\{1,\ldots,\ell\}\times\Z$  has a reasonable probability --- namely of order $c/\ell$ --- of remaining in the upper half-plane (\ie above the dual line $\{(x,y) : y=-\frac12\}$).

%Our approach is iterative and based on the Brownian bridge heuristic.
%Condition on the event that the contour reaches level $2^j$ from both the left and the right.  By symmetry the probability that either the contours both reach height $2^{j+1}$ or connect without hitting level $0$ or $2^{j+1}$ is at least $\frac14$ --- as is the case for the Brownian bridge.
%
%Consider the open contour $\lambda$ as a unit speed parameterized curve $\lambda_l(t)$ starting from its left endpoint $\lambda_l(0)=(\frac12,\frac12)$ and let $\lambda_r(t)$ be the analogous curve starting from the right endpoint $\lambda_r(0)=(\ell+\frac12,\frac12)$. Let $A_j$ be the event that both $\lambda_l$ and $\lambda_r$ reach level $2^j$ before hitting level $0$.
%By symmetry, conditioned on $A_j$, the probability that either $\lambda$ is entirely confined to levels $1,\ldots,2^{j+1}$ or $\lambda_l,\lambda_r$ both hit level $2^{j+1}$ before level $0$ should be at least $1/4$ as for the Brownian bridge (thinking of $\lambda_l,\lambda_r$ as if they were independent).
%Iterating this argument $\frac12(1+\epsilon)\log_2 \ell$ times shows that with probability $\ell^{1+\epsilon}$ either the contour lies in the upper half plane or reaches height $\ell^{\frac12(1+\epsilon)}$.  The proof is completed by combining this with our a priori estimate on the maximum height of the contour.

Our approach, based on the Brownian bridge heuristics, is iterative and very much based on the intuitive picture in which the open contour really consists of two simple lines $\gamma_1, \ \gamma_2$, traveled at constant speed, one starting from the left boundary and moving towards the right boundary and viceversa for the second one, meeting in some intermediate point.  Such a picture, which can be made more precise by progressively revealing the contour from the left to right and from right to left (see Figure~\ref{fig:contparts}), allows \emph{hitting times} kind of arguments that we now explain.  Let $\tau_j^{(i)}$ ($i=1,2$) be the hitting time of either level $0$ or level $2^j$ for the curve $\gamma_i$. Then, conditioned to the event that both curves at their respective times $\tau_j^{(i)}$ have not yet joined and are both at level $2^j$, by monotonicity and symmetry, with probability at least $1/4$ both curves will either hit the next level $2^{j+1}$  or join together before hitting level $0$ (see Claim~\ref{cl:pathInduction} below for a precise formulation). Thus, with probability at least $4^{-n}$ we can force both curves to either hit level $2^n$ or join together before hitting level $0$. However, and that explains the heuristic bound $1/\ell$, once the curves are at level $2^n\approx \sqrt{\ell}$, then with probability bounded away from $0$ they will join together without hitting level $0$. In other words it is enough to force the curves to climb only $n=\frac 12 \log_2 \ell$ levels in order not to hit level~$0$.

The above sketch, however, suppresses a number of technical difficulties such as the boundary conditions and the dependence between the two contours. Moreover, and contrary to the behavior of the Brownian bridge, the law of the contour $\lambda$ is in fact \emph{asymmetric} w.r.t.\ the horizontal axis. This follows from our splitting-rule, which introduces a vertical bias for the contour: For instance, as illustrated in Figure~\ref{fig:asymm}, applying the SE splitting-rule clearly has the open contour move up with probability uniformly bounded away from $\frac12$.

\begin{figure}
\centering
\includegraphics[width=0.6\textwidth]{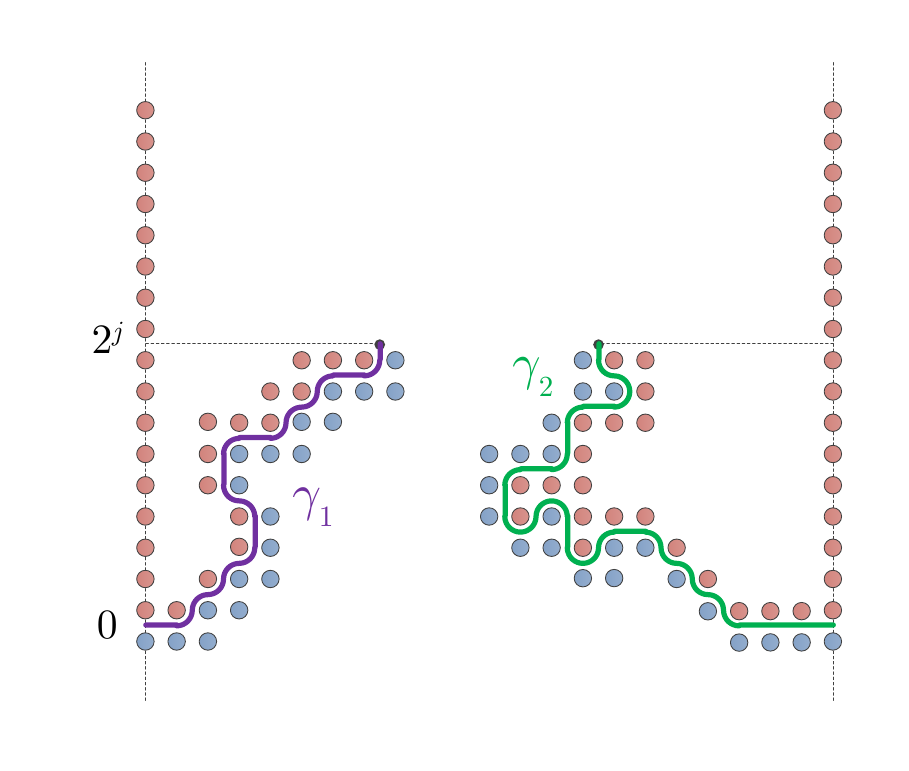}
\vspace{-0.3in}
\caption{The open contour in the infinite strip with mixed b.c., progressively exposed as two curves $\gamma_1,\gamma_2$ originating at its endpoints.}
\label{fig:contparts}
\end{figure}

\begin{figure}
\centering
\includegraphics[width=0.7\textwidth]{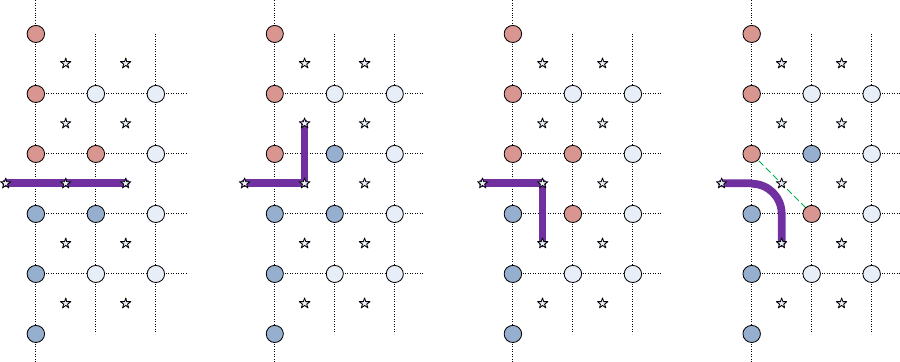}
\caption{Asymmetric behavior of the contour due to the global SE splitting-rule.}
\label{fig:asymm}
\end{figure}

To overcome this difficulty we consider the open contours formed by both the SE and the SW splitting-rules, $\gse$ and $\gsw$ resp., and examine their union $\Gamma = \gse \cup \gsw$. Most importantly the law of their union is symmetric w.r.t.\ the horizontal axis.  We will show that $\Gamma$ is essentially a ``tube'' of logarithmic width surrounding $\gse$, with added ``decorations'' from $\gsw$ which are components of at most logarithmic diameter (and similarly if we reverse the roles of $\gse,\gsw$). Up to these logarithmic corrections we may implement the heuristics of our above sketch, as stated in the following lemmas.
Here and in what follows we associate with an open contour $\gamma$ going from $u$ to $v$ a unit speed parametrization $\gamma(t)$, justifying hitting-time type of events (e.g.\ ``$\gamma$ hits the vertex $y$ prior to hitting $z$'' etc.).

\begin{Lemma}\label{lem-strip-hitting-h}
Let $\bar{S}=\bar{S}(a,b)$ and $H^*_i$ ($i\in\Z)$ be as in Theorem~\ref{thm-strip-large-dev}. For an Ising configuration $\sigma$ on $\bar{S}$ let $\gse(\sigma)$ and $\gsw(\sigma)$ be the two unique open contours in $\bar{S}^*$ formed by the SE and SW splitting-rules resp., \ie going from $(a+\frac12,\frac12)$ to $(b-\frac12,\frac12)$.
There exists some $C^\star(\beta)>0$ so that for any $h\ge 1$ the contour $\gse$ (resp.\ $\gsw$) hits $H^*_{-h-C^\star\log h}$ before hitting either $H^*_{h-C^\star\log h}$ or $(b-\frac12,\frac12)$ with probability at most $\frac12+C^\star/h$.
\end{Lemma}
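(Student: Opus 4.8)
The strategy is to pit the two splitting-rule contours against each other: although the law of a single one of $\gse,\gsw$ is biased vertically, the law of their union $\Gamma:=\gse\cup\gsw$ is \emph{exactly} symmetric about the horizontal axis, and $\Gamma$ is only a logarithmic ``thickening'' of either of its two defining contours.

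\textbf{Reflection symmetry.} Let $R$ be the lattice reflection $(x,y)\mapsto(x,1-y)$. It maps $\bar S(a,b)$ onto itself and swaps the half-planes $\{y\le 0\}$ and $\{y\ge 1\}$, so the involution $\sigma\mapsto-\sigma\circ R$ preserves both the boundary condition $\eta$ of~\eqref{eq-plus-minus-bc-def} and the Ising weights, hence preserves $\pi^\eta_{\bar S}$. On dual edges $R$ acts by reflection, and crucially it turns every NW--SE diagonal into an NE--SW one, hence interchanges the SE and SW splitting rules; a direct check gives $\gse(-\sigma\circ R)=R(\gsw(\sigma))$. Consequently $\gse\overset d=R(\gsw)$, the law of $\Gamma$ is $R$-invariant, $R$ sends $H^*_i\mapsto H^*_{-i}$ and fixes the endpoints $(a+\tfrac12,\tfrac12),(b-\tfrac12,\tfrac12)$ together with their left-to-right order. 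In particular, for any $m$ the event ``$\Gamma$ visits a vertex of height $\le-m+\tfrac12$ at a strictly smaller column than any vertex of height $\ge m+\tfrac12$'' has the same probability as the event with ``$\le-m+\tfrac12$'' and ``$\ge m+\tfrac12$'' swapped; the two being disjoint, each has probability $\le\tfrac12$.

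\textbf{Logarithmic decorations.} Since $\gse$ and $\gsw$ are two contour decompositions of the \emph{same} disagreement-edge set, they differ only through closed loops absorbed into the open line at four-valent vertices. By Lemma~\ref{lem-long-loop} a closed loop through a prescribed dual vertex has $q$-weight $\le e^{-c(\beta)\,s}$ once its diameter exceeds $s$. A contour started at $(a+\tfrac12,\tfrac12)$ that has not yet reached height $\pm(h-C^\star\log h)$ nor its endpoint occupies, with overwhelming probability, only $O(h^2\wedge\ell)$ columns and heights $O(h)$, by Theorem~\ref{thm-strip-large-dev}. Summing the loop weight over the resulting $\mathrm{poly}(h,\ell)$ many dual vertices, each appropriately weighted by the (again Theorem~\ref{thm-strip-large-dev}-controlled) probability that the open line actually passes near it, shows that for a suitable $C=C(\beta)$ and all sufficiently large $C^\star$,
\[
\pi^\eta_{\bar S}\bigl(\text{some absorbed loop has diameter}>C\log h\ \text{before the open line reaches height}\ \pm(h-C^\star\log h)\ \text{or its endpoint}\bigr)\le\frac{C^\star}{h}\,.
\]
Off this bad event, along the relevant portion of the left-to-right exposure, $\gse$ and $\gsw$ --- hence $\gse$ and $\Gamma$ --- agree up to $C\log h$ in height and in depth.

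\textbf{Putting it together.} For $h\ge K\sqrt\ell$ with $K=K(\beta)$ large, Theorem~\ref{thm-strip-large-dev} alone gives $\pi^\eta_{\bar S}(\gse\text{ reaches }H^*_{-h-C^\star\log h})\le Ce^{-\kappa_\beta(h+C^\star\log h)^2/\ell}\le\tfrac12$, which suffices. Otherwise fix $C^\star=2C$ and work off the bad event of the previous step. If $\gse$ hits $H^*_{-h-C^\star\log h}$ before $H^*_{h-C^\star\log h}$ or its endpoint, then $\Gamma$ (which contains $\gse$) visits height $-h-C^\star\log h+\tfrac12$ while having stayed below height $(h-C^\star\log h)+C\log h+\tfrac12\le h+\tfrac12$ up to that moment, so the symmetric-barrier event of the first paragraph holds with $m=h$ (a visit to depth $h+C^\star\log h$ forces an earlier visit to depth $h$). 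Hence the probability in question is at most $\tfrac12$ (first paragraph) plus $C^\star/h$ (bad-event probability), as claimed; the argument for $\gsw$ is identical, since $\gsw$ too sits within $C\log h$ of $\Gamma$.

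\textbf{Main obstacle.} The substantial part is the decoration bound of the third paragraph --- showing the discrepancy between the two splitting-rule contours along the \emph{relevant} part of the exposure is only logarithmic in $h$ --- which requires first confining that part to $O(h^2)$ columns via the large-deviation estimates of Theorem~\ref{thm-strip-large-dev} and then summing loop weights from Lemma~\ref{lem-long-loop}. A further delicate (but essentially bookkeeping) point is reconciling the hitting-order of the curve $\gse$ used in the statement with the column exposure-order of the union $\Gamma$; it is precisely to absorb the $O(\log h)$ slack produced in this comparison that the barriers are placed at $-h-C^\star\log h$ and $h-C^\star\log h$ rather than symmetrically about the horizontal axis.
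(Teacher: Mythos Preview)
Your overall architecture matches the paper's exactly: exploit that $\Gamma=\gse\cup\gsw$ is reflection-symmetric about $H^*_0$, show that $\gse$ sits within $O(\log h)$ of $\Gamma$ along the relevant stretch, and combine. The paper implements this via three technical claims (a gradient bound for $\gse$, a decoration bound for $\gsw\setminus\gse$, and a horizontal-confinement lemma) and then compares $\gse$ to the \emph{upper boundary path} $\Gamma_\gtop$ of $\Gamma$, using $\Gamma_\gtop\ge\Gamma_\gbot$ together with the reflection that swaps them.

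There is, however, a genuine gap in your confinement step. You assert that ``a contour \ldots\ that has not yet reached height $\pm(h-C^\star\log h)$ \ldots\ occupies only $O(h^2\wedge\ell)$ columns \ldots\ by Theorem~\ref{thm-strip-large-dev}.'' That theorem is purely a \emph{vertical} large-deviation bound: it upper-bounds the probability that $\lambda$ reaches a given height. It says nothing about how far the contour can travel horizontally while remaining in a slab of height $O(h)$, and indeed no surface-tension penalty arises from merely staying in that slab across many columns on the way to $v$. The paper isolates exactly this point as a separate result (their ``tube'' lemma, Lemma~\ref{cl:tube}): the contour exits the box of half-width $w$ through the sides with probability at most $C\exp(-w/(Ch^2))$, proved not from Theorem~\ref{thm-strip-large-dev} but from a multi-point estimate forcing the contour through an ordered sequence of $\sim w/h^2$ equally spaced intermediate points inside the slab. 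Without this, your decoration union bound over ``poly$(h,\ell)$'' vertices degrades the $\log h$ to $\log\ell$, which is not what the lemma asserts.

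A second, more structural point: your symmetric event is phrased in \emph{column order} (``leftmost column where $\Gamma$ reaches depth $m$'' vs.\ ``leftmost column where it reaches height $m$''), and you then have to reconcile this with the \emph{path order} of $\gse$. You call this ``essentially bookkeeping,'' but note that after $\gse$ first hits depth $h+C^\star\log h$ it may return to earlier columns at large height, and $\gsw$ need not respect $\gse$'s time parametrization at all; so ``$\Gamma$ stayed below $h$ up to that moment'' is not immediate. The paper sidesteps this by working with the path $\Gamma_\gtop$ (which has its own hitting order) and using the pointwise inequality $\Gamma_\gtop\ge\Gamma_\gbot$ plus the reflection swap; relating $\gse$ to $\Gamma_\gtop$ then only needs the column-wise $O(\log h)$ closeness inside the confined window. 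Your column-order route can presumably be made to work with additional care (and again using horizontal confinement to control post-hit backtracking), but as written it is not just bookkeeping.
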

\begin{proof}

We define the \emph{gain} of a connected subset of dual edges $A$ in the infinite strip $\bar{S}$ in the interval $I\subset\Z$ over distance $m$, denoted by $\gain(A,I,m)$, to be the maximal difference in $y$-coordinates between any two points in $A$ whose $x$-coordinates are contained in $I$ and differ by at most $m$:
\begin{equation}
  \label{eq-gain-def}
  \gain(A,I,m) := \max_{x,y,x',y'} \left\{ |y-y'| : |x-x'|\leq m, (x,y)\in A,\,(x',y')\in A, x,x'\in I \right\}\,.
\end{equation}
We define the \emph{gradient} of $A$ as its gain over distance 0.
%, \ie
%\begin{equation}
%  \label{eq-height-def}
%  \height(A) := \gain(A,0) = \max_{x,y,y'} \left\{ |y-y'| : (x,y)\in A,\,(x,y')\in A \right\}\,.
%\end{equation}
The following claim bounds the gain of $\gamma$ in a neighborhood of $a$ and $b$:
\begin{claim}\label{clm-height-gamma-se}
Let $\gamma$ be the open contour with either SE or SW splitting-rule in the infinite strip $\bar{S}=\bar{S}(a,b)$ of side-length $\ell$ defined in Theorem~\ref{thm-strip-large-dev}.  Then for any $c>0$ there exists a constant $C^\star_1=C^\star_1(\beta,c)>0$ such that for all $1\leq m \leq \ell$,
\[
\pi^\eta_{\bar{S}}\Big(\gain(\gamma,[a-m,a+m],c\log m) > C^\star_1 \log m \Big) \leq C^\star_1/m\,.
\]
The analogous statement holds replacing $a$ with $b$.
\end{claim}
\begin{proof}[Proof of Claim~\ref{clm-height-gamma-se}]
Define $I=[a-m,a+m]\cap\bbZ$ and in what follows take $C_1^\star \geq c$.
% As in the proof of Theorem~\ref{thm-strip-large-dev}, one should first truncate the column at height
% $\pm\ell^2$ and then show that this truncation has a negligible effect on the probability we wish to estimate. For simplicity, we avoid the details on this point, since they are identical to those of the previous case.
Further let $u=(a+\frac12,\frac12)$ and $v=(b-\frac12,\frac12)$ denote the endpoints of the open contour $\gamma$, and let $C^\star_1=C^\star_1(\beta)$ be some constant to be determined later.
Define the set
\[
\Xi=\{((x,y),(x',y'))\in \bar{S}^2: x,x'\in I, |x-x'|\leq c\log m, |y-y'| \geq C^\star_1 \log m\}\,.
\]
If $\gain(\gamma,I,c\log m) > C^\star_1 \log m$ then there exist $z=(x,y), z'=(x',y')\in\gamma$ such that $(z,z')\in\Xi$.
Taking a union bound over ordered pairs of intermediate points $z,z'\in \Xi$ such that $z=(x,y)$ and $z'=(x',y')$ we get that
\[
\pi^\eta_{\bar{S}}\Big( \gain(\gamma,I,c\log m) > C^\star_1 \log m\Big) \leq \biggl[
\sum_{\substack{(z,z')\in\Xi}}
\sum_{\substack{\l:\, \d\l=\{u,v\} \\ z,z'\in\l}} q_{\bar{S}^*}(\l)\biggr]\,\big/\, \biggl[\sum_{\substack{\l:\, \d\l=\{u,v\}}} q_{\bar{S}^*}(\l)\biggr]\,.
\]
As we have already seen, Eq.~\eqref{eq-strip-two-pt-corr} provides a sharp estimate for the above denominator and it remains to consider the numerator. Recall Corollary~\ref{lem-3-pt-spins} that treated the measure of all open
contours $\lambda$ in a domain $\Lambda^*$ that go between two
endpoints $u,v$ as well as an intermediate point $z$, bounding it from
above by the product of the spin-spin correlations
$\pi_{\L^*}^*\left(\s_u\,\s_{z}\right)$ and $\pi_{\L^*}^*\left(\s_v\,\s_{z}\right)$.
Following essentially the same proof, \cite{cf:PV2}*{Lemma~5.4} gives an analogous version of this statement for all such contours $\lambda$ going through two ordered intermediate points $z,z'$ (that is, $\gamma$ connects $u$ to $z$, thereafter proceeds to $z'$ and ends at $v$) whereby
\begin{align}
\label{eq:whereby}
\sum_{\substack{\lambda:\,\d\lambda=\{u,v\}\\ z,z'\in\lambda}}
q_{\Lambda^*}(\lambda) &\leq \pi_{\L^*}^*\left(\s_u\,\s_{z}\right) \,
\pi_{\L^*}^*\left(\s_{z}\,\s_{z'}\right) \,
\pi_{\L^*}^*\left(\s_{z'}\,\s_{v}\right) \,.\end{align}
Therefore, Lemma~\ref{lem-spin-spin} implies that
\[
  \sum_{\substack{\lambda:\,\d\lambda=\{u,v\}\\ z,z'\in\lambda}}
q_{\bar{S}^*}(\lambda)\leq \frac{C_\beta^3 \exp\big(-[\tau_{\beta}(u-z)+\tau_{\beta}(z-z')+\tau_{\beta}(v-z')]\big)}{\sqrt{|u-z|\cdot|z-z'|\cdot|z'-v|}}
 \,.
\]
Since $\tau_{\beta}(\theta)\ge \tau_{\beta}(0)$ (see e.g.\ \cite{cf:BEF}) we can bound the
last exponent from above by
\[ e^{-\tau_{\beta}(0)[|u-z|+|z-z'|+|v-z'|]} \leq
e^{-\tau_{\beta}(0)[|u-v|+c'(|y|/m+|y-y'|)]}\,, \]
where $c'>0$ is an absolute constant; indeed, the last inequality is justified by the fact that $|y-y'|$ is at least a constant times $|x-x'|$
(recall that $C_1^*\geq c$) and a similar statement holds w.r.t. $y/m$ compared to $|x-a|$. Since summing over $x,x'$ amounts to a factor of $O(m^2)$, absorbing an additional $O(m)$ term from the sum over $y/m$ while recalling that $|y-y'|\geq C_1^\star\log m$
now implies that
\[
  \sum_{\substack{\lambda:\,\d\lambda=\{u,v\}\\ z,z'\in\lambda}}
q_{\bar{S}^*}(\lambda)\leq Cm^{-p} |u-v|^{-1/2} \exp(-\tau_\beta(u-v))\,,
\]
where $p$ can be made arbitrarily large by taking $C_1^\star$ large enough.
In conclusion,
\[
\pi^\eta_{\bar{S}}\Big( \gain(\gamma,I,c\log m) > C^\star_1 \log
m\Big)\le C_1^\star/m\,,
\]
completing the proof.
\end{proof}

\begin{claim}\label{clm-log-tube}
Let $\gse$ and $\gsw$ be the open contours with the SE and SW splitting-rule resp.\ in the infinite strip $\bar{S}=\bar{S}(a,b)$ of side-length $\ell$ defined in Theorem~\ref{thm-strip-large-dev}.
Then there exists some $C^\star_2=C^\star_2(\beta)>0$ so that for all $1\leq m \leq \ell$ with probability at least $1-C^\star_2/m$ every connected component of $\gsw \setminus \gse$ with zero distance from  $([a-m,a+m]\times\Z)\cap\gse$ has diameter at most $C^\star_2 \log m$.
\end{claim}
\begin{proof}[Proof of Claim~\ref{clm-log-tube}]
Let $\mathcal{B}$ denote the event that there exists a connected component of $\gsw \setminus \gse$ with zero distance from $([a-m,a+m]\times\Z)\cap\gse$ and has diameter at least $C^\star_2 \log m$.
We begin by conditioning on $\gse$.  The contour partitions $\bar{S}$ into two sets $\Stop$ and $\Sbot$.  For a set of dual edges $A$ let $V(A)\subset \Z^2$ denote the set of vertices at distance $\frac12$ from $A$.
The effect of conditioning on $\gse$ is equivalent to conditioning that $\sigma_U=\eta_U$
where $U=V(\Delta(\gse))$ (recall the definition of the edge-boundary $\D(\cdot)$ in \S\ref{sec:prelim} after Lemma~\ref{l:6.3}) and $\eta_{U}\in\{-1,+1\}^U$ is the configuration given by
\[
\eta_{u}=\begin{cases}
-1 & u \in \Stop,\\
+1 & u \in \Sbot.
\end{cases}
\]
Conditional on $\gse$ the configuration $\sigma$ on $\bar{S}\setminus U$ is given by the Ising model on $\Stop\setminus U$ and $\Sbot\setminus U$ with minus and plus boundary conditions respectively.  Let $\theta$ denote the ensemble of contours of this configuration given by the SW (not SE!) splitting rule.  Since the boundary conditions are all minus and all plus there are no open contours.  Every maximal connected segment of $\gsw \setminus \gse$ must be a subset of one of the closed contours of $\theta$ and must share a common vertex with $\gse$.

By Theorem~\ref{thm-strip-large-dev} we see that $\gse\cup \gsw \subset \Lambda^*$ where $\Lambda=\{1,\ldots,\ell\}\times\{-\ell,\ldots,\ell\}$ except with probability $O(\exp(-c\ell))$ for some $c(\beta)>0$.
By Claim~\ref{clm-height-gamma-se} there are at most $2 C^\star_1 m\log m$ vertices in $\mathcal{I}=([a-m,a+m]\times\Z)\cap\gse$ except with probability $C_1^\star/m$.
For  $z,z'\in \Lambda^*$ the probability that both lie in the same closed contour of $\theta$ is at most $\exp(-\tau_{\beta}(z-z'))$ by Lemma~\ref{lem-long-loop}.  Since the surface tension achieves its minimum on the sphere at $\tau_{\beta}(0)>0$ \cite{cf:BEF} combining the above estimates we have that
\begin{align*}
\pi^\eta_{\bar{S}}\left(\mathcal{B}\right) &\leq C\exp(-c\ell) + C_1^\star/m + \sum_{z\in\mathcal{I}} ~ \sum_{z' : |z-z'| > C_2^\star \log m} \exp(-|z-z'|\tau_{\beta}(0))\\
  &\leq C\exp(-c\ell) + C_1^\star/m + C m^{1-C_2^\star \tau_{\beta}(0)}\log^3 m \, ,
\end{align*}
The desired result follows from a sufficiently large choice of $C^\star_2$.
\end{proof}

Finally we show that the contours $\gse$ and $\gsw$ are unlikely to travel much farther than $h^2$ in the $x$-coordinate before attaining height $h$ or $-h$.
\begin{Lemma}\label{cl:tube}
Let $\bar{S}=\bar{S}(a,b)$.
For any $w>0$ and $0<h\leq \ell$ define the rectangle
\[
\mathcal{R}=\{a-\tfrac12-w,\ldots,a-\tfrac12+w\}\times\{\tfrac12-h,\ldots,h+\tfrac12\}\,.
\]
Let $\mathcal{B}$ denote the event that the contour $\gse$ (resp.\ $\gsw$) beginning at $(a-\frac12,\frac12)$  exits $\mathcal{R}$ and the first point it hits in $\mathcal{R}^c$ is in $\{a-\frac12-w-1,a-\frac12+w+1\}\times\{\frac12-h,\ldots,h+\frac12\}$.
There exists a constant $C^\star_3(\beta)>0$ independent of $w$ and $h$ such that
\[
\pi^\eta_{\bar{S}}\left(\mathcal{B}\right) \leq C^\star_3 \exp\big(- w/(C_3^\star h^2) \big) \, .
\]
\end{Lemma}
\begin{proof}
As before % set $n=\ell^2$, let $\bar{S}_n = \bar{S} \cap (\Z \times \{-n,\ldots,n\})$ be the strip $\bar{S}$ trimmed at $\pm n$ with b.c.\ $(-)$ on the upper half-plane and $(+)$ elsewhere and
let $u=(a+\frac12,\frac12)$ and $v=(b-\frac12,\frac12)$ denote the endpoints of the open contour $\gamma$.
Let $\mathcal{B}_r$ and $\mathcal{B}_l$ denote the events that the contour exits to the right and left of $\cR$ respectively so that $\mathcal{B}=\mathcal{B}_l\cup\mathcal{B}_r$.  We will examine the case that the contour exits to the right and the left case will follow similarly.  Fix $c(\beta)=(3e C_\beta)^2$ where $C_\beta$ is the constant in Lemma~\ref{lem-spin-spin}.  For large enough $C_3^\star$ the bound holds trivially when $w\leq 2c h^2$ so assume that $w>2ch^2$.  We may define $M(w,h)\in\Z$ such that $0<w/(4ch^2)\leq M(w,h) \leq w/(ch^2)$ and $|cM h^2 - (b-a)|\geq \frac12 |b-a|$.  Define the set of sequences of points
\[
\Xi = \{(x_1,y_1),\ldots,(x_M,y_M)\in \bar{S}^M: \forall i, \ x_i=a+\tfrac12 + ich^2, |y_i| \leq h\}
\]
and note that if the contour exits $\mathcal{R}$ to the right then it must pass from $u$ through a sequence of points in $\Xi$ in order and then to $b$. For a sequence $\xi\in\Xi$ we say that $\lambda$ is $\xi$-admissible if it passes through the points in $\xi$ in order and then returns to $b$. Note also that by the construction of $M$ we have that $|x_M - (b-\frac12)| \geq \frac12|b-a|$.  Taking a union bound over sequences in $\Xi$ we get that
\[
\pi^\eta_{\bar{S}}\Big(\mathcal{B}_R \Big) \leq \biggl[
\sum_{\substack{\xi\in\Xi}}
\sum_{\substack{\l:\, \d\l=\{u,v\} \\ \xi-\text{admissible}}} q_{\bar{S}^*}(\l)\biggr]\,\big/\, \biggl[\sum_{\substack{\l:\, \d\l=\{u,v\}}} q_{\bar{S}^*}(\l)\biggr]\,.
\]

In analogy to  Corollary~\ref{lem-3-pt-spins} and \eqref{eq:whereby} one has
\begin{align*} \sum_{\substack{\lambda:\,\d\lambda=\{u,v\}\\ \xi-\text{admissible}}}
q_{\Lambda^*}(\lambda) &\leq \prod_{i=1}^{M+1} \pi_{\L^*}^*\left(\s_{(x_i,y_i)}\,\s_{(x_{i-1},y_{i-1})}\right)  \, ,
\end{align*}
where we denote $u=(x_0,y_0)$ and $v=(x_{M+1},y_{M+1})$.
Therefore, Lemma~\ref{lem-spin-spin} implies that
\begin{align*}
\sum_{\substack{\lambda:\,\d\lambda=\{u,v\}\\ \xi-\text{admissible}}}
q_{\bar{S}^*}(\lambda)&\leq \frac{C_\beta^{M+1} \exp\big(-\sum_{i=1}^{M+1}\tau_{\beta}((x_i,y_i)-(x_{i-1},y_{i-1}))\big)}{\sqrt{\prod_{i=1}^{M+1}|(x_i,y_i)-(x_{i-1},y_{i-1})|}}\\
&\leq C_\beta\frac{ \exp\big(-M-\tau_{\beta}(u-v)\big)}{(3h)^M\sqrt{\frac12|b-a|}}
 \,.
\end{align*}
Summing over the $(2h+1)^M$ elements of $\Xi$ we have that
\[
\sum_{\substack{\xi\in\Xi}}
\sum_{\substack{\l:\, \d\l=\{u,v\} \\ \xi-\text{admissible}}} q_{\bar{S}^*}(\l) \leq C_\beta\frac{ \exp\big(-M-\tau_{\beta}(u-v)\big)}{\sqrt{\frac12|b-a|}}\,.
\]
and it follows (recall Eq.~\ref{eq-strip-two-pt-corr}) that there exists a constant $C(\beta)>0$ such that
\[
\pi^\eta_{\bar{S}}\left(\mathcal{B}_R \right) \leq C e^{-M} \,
\]
and a similar estimate holds for $\pi^\eta_{\bar{S}}\left(\mathcal{B}_L \right)$.
% Invoking Lemma~\ref{l:6.3}, it now follows that
% \[
% \pi^\eta_{\bar{S}}\left(\mathcal{B}, \gamma\in \bar{S}_n^*\right) \leq 2 C e^{-M}\,.
% \]
% Since by Theorem~\ref{thm-strip-large-dev}
% \[
% \pi^\eta_{\bar{S}}\left(\sigma :\, \gamma\not\in \bar{S}_n^*\right) \leq C\exp(-c\ell) \, ,
% \]
Recalling that $M\geq w/(4ch^2)$ and taking a suitably large $C^\star_3$ now completes the proof.
\end{proof}

We now complete the proof of Lemma~\ref{lem-strip-hitting-h}.  Recalling that $\Gamma=\gse\cup\gsw$ we define $\Gamma_\gtop$ to be the highest path in $\Gamma$ connecting $(a+\frac12,\frac12)$ to $(b-\frac12,\frac12)$.
First observe that $\Gamma_\gtop$ is indeed well defined.  The collection of dual edges $\Gamma$ partitions $\bar{S}$ into two infinite components and possibly a number of finite components.
To construct $\Gamma_\gtop$, view the upper infinite component as a subset of $\R^2$ by
drawing a unit square centered at each of its points. Then $\Gamma_\gtop$ is its ``horizontal'' boundary connecting $(a+\frac12,\frac12)$ to $(b-\frac12,\frac12)$.
Define $\Gamma_\gbot$ similarly as the lowest path in $\Gamma$.  Then
\begin{align}\label{e:symmetryInequality}
&\pi_{\bar{S}}\left(\Gamma_\gtop\mbox{ hits $H^*_{-h}$ before hitting $H^*_{h}$ or $(b-\frac12,\frac12)$}\right)\nonumber\\
&\qquad \leq \pi_{\bar{S}}\left(\Gamma_\gbot\mbox{ hits $H^*_{-h}$ before hitting $H^*_{h}$ or $(b-\frac12,\frac12)$}\right)\nonumber\\
&\qquad = \pi_{\bar{S}}\left(\Gamma_\gtop\mbox{ hits $H^*_h$ before hitting $H^*_{-h}$ or $(b-\frac12,\frac12)$}\right) \, ,
\end{align}
where the inequality follows by the fact that $\Gamma_\gtop$ lies above $\Gamma_\gbot$ while the equality is by the symmetry of $\Gamma$.  It follows that
\[
\pi_{\bar{S}}\left(\Gamma_\gtop\mbox{ hits $H^*_{-h}$ before hitting $H^*_{h}$ or $(b-\frac12,\frac12)$}\right)\leq \frac12 \, .
\]
Lemma~\ref{cl:tube} guarantees that except with probability $O(\exp(-h^2))$ both contours $\gse$ and $\gsw$ hit either $H^*_{-h-C^\star\log h}$ or $H^*_{h-C^\star\log h}$ before traveling distance order $h^4$ in the horizontal direction, and as such we only need to consider the interval $[a-h^4,a+h^4]$.
Now set
\[C^\star := 8C^\star_1(\beta,C^\star_2) + 4C^\star_2(\beta)\]
where $C^\star_1,C^\star_2$ are the constants from Claim~\ref{clm-height-gamma-se} and Claim~\ref{clm-log-tube}. This guarantees that
\begin{align*}
\gain\left(\gse,[a-h^4,a+h^4],4C_2^\star \log h\right) &\leq 4C^\star_1(\beta,4C^\star_2) \log h\,,\\
\gain\left(\gsw,[a-h^4,a+h^4],4C_2^\star \log h\right) &\leq 4C^\star_1(\beta,4C^\star_2) \log h\,,\\
\max_{\substack{(x,y)\in\gsw\\x\in[a-h^4,a+h^4]}} d\left((x,y),\gse\right) &\leq 4C^\star_2 \log h
\end{align*}
and similarly around $b$ with probability at least $1 - C^\star/h$.  In particular, given the above event we have that the vertical distance between $\Gamma_\gtop$ and $\gse$ does not exceed $C^\star\log h$ in the intervals $[a-h^4,a+h^4]$ and $[b-h^4,b+h^4]$ which together with Eq.~\eqref{e:symmetryInequality} completes the proof.
\end{proof}

\begin{proof}[\textbf{\emph{Proof of Theorem~\ref{thm:positive}, lower bound}}]
The proof proceeds by progressively revealing the contour $\gse$. Let $w_0$ denote some large constant and let $w_i=2w_{i-1}-2C^\star \log w_{i-1}$ where $C^\star$ is the constant from Lemma~\ref{lem-strip-hitting-h}. Taking $w_0$ sufficiently large it is easily confirmed that $w_j \geq c 2^j$ for some constant $c>0$.

Starting from the left at $(\frac12,\frac12)$ for $j\geq 0$ let $A_{l,j}$ be the event that $\gse$ hits $H^*_{w_j}$ before hitting $H^*_{-1}$ or reaching $(\ell+\frac12,\frac12)$.  Similarly starting from the right  at $(\ell+\frac12,\frac12)$ let $A_{r,j}$ be the event that the contour hits $H^*_{w_j}$ before hitting $H^*_{-1}$ or reaching $(\frac12,\frac12)$.  Let $A_j=A_{l,j}\cap A_{r,j}$ and let $B_j$ be the event that the contour $\gse$ hits neither $H^*_{-1}$ nor $H^*_{w_j}$.
%We will show that for any integer $K>0$ there exists $C(\beta,K)$ such that
%\begin{align}\label{e:Bbound}
%\pi\Big(B_{K\frac{1}{2}+\log_2\ell}\Big) \geq C\ell^{-1}
%\end{align}
%for any large enough $\ell$ and a suitable constant $C>0$, which will establish the proposition.

We begin by giving a crude lower  bound on the probability of $A_0$. Let $\mathcal{U}$ be the event that the spin configuration takes the value $(+)$ for all the vertices $(1,j)$ and $(\ell,j)$ for $0\leq j \leq w_0+1$. This occurs with probability at least
\[
\pi_S(\mathcal{U})\geq(\tfrac12 e^{-8\beta})^{2w_0+2}\, .
\]
On the event $\mathcal{U}$ the contour $\gse(\sigma)$ directly passes from $(\frac12,\frac12)$ to $(\frac12,w_0+\frac12)$ and from $(\ell+\frac12,\frac12)$ to $(\ell+\frac12,w_0+\frac12)$. It follows that $\mathcal{U}\subset A_0$ and hence,
\begin{equation}\label{e:A0}
\pi_S(A_0)\geq(\tfrac12 e^{-8\beta})^{2w_0+2}\, ,
\end{equation}
so in particular $A_0$ occurs with constant probability.

We will establish the following claim.
\begin{claim}\label{cl:pathInduction}
There exists a constant $c>0$ such that for all $j\geq 0$ we have that
\[\pi_S(A_{j+1}\cup B_{j+1} \mid A_j)\geq \frac14-c2^{-j}.
\]
\end{claim}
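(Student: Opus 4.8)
The plan is to argue conditionally on $A_j$ by progressively exposing $\gse$ from both endpoints. On $A_j$, exposing from the left endpoint $u=(\tfrac12,\tfrac12)$ up to the first time it reaches $H^*_{w_j}$ produces an arc $\gamma_1$, ending at some $p_1\in H^*_{w_j}$, which stays strictly above $H^*_{-1}$ and has no vertex on $H^*_{w_{j+1}}$ (this is exactly the content of $A_{l,j}$ together with $w_j<w_{j+1}$); symmetrically, exposing from the right endpoint up to its first visit to $H^*_{w_j}$ produces $\gamma_2$ ending at some $p_2\in H^*_{w_j}$ (the content of $A_{r,j}$). If the first and last visits of $\gse$ to $H^*_{w_j}$ coincide, then $\gse=\gamma_1\cup\gamma_2$ reaches neither $H^*_{-1}$ nor $H^*_{w_{j+1}}$ (using $w_{j+1}=2w_j-2C^\star\log w_j>w_j$ for $w_0$ large), so $B_{j+1}$ holds and this sub-event already lies in $A_{j+1}\cup B_{j+1}$.

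Otherwise $\gamma_1$ and $\gamma_2$ are edge-disjoint and an unexposed middle arc $\gamma_3$ joining $p_1$ to $p_2$ remains; conditionally on $(\gamma_1,\gamma_2)$ its law is that of the unique open contour of an Ising model in the residual ``pocket'' domain, with $+$ (resp.\ $-$) boundary conditions below (resp.\ above) $\gamma_1\cup\gamma_2$. For $i=1,2$ let $E_i$ be the event that $\gamma_3$, exposed from the endpoint $p_i$, does not reach $H^*_{-1}$ before it reaches $H^*_{w_{j+1}}$ (in particular $E_i$ holds whenever $\gamma_3$ never reaches $H^*_{-1}$). I claim that on $A_j$ one has $E_1\cap E_2\subseteq A_{j+1}\cup B_{j+1}$: since $\gamma_1,\gamma_2$ lie strictly between $H^*_{-1}$ and $H^*_{w_{j+1}}$, the first and last visits of $\gse$ to $H^*_{-1}$ and to $H^*_{w_{j+1}}$ coincide with those of $\gamma_3$; if $\gamma_3$ avoids $H^*_{-1}$ then it either avoids $H^*_{w_{j+1}}$ as well, giving $B_{j+1}$, or reaches it, giving $A_{j+1}$; and if $\gamma_3$ does reach $H^*_{-1}$, then $E_1$ forces the first visit of $\gse$ to $H^*_{w_{j+1}}$ to precede its first visit to $H^*_{-1}$ while $E_2$ forces the last visit to $H^*_{w_{j+1}}$ to follow the last visit to $H^*_{-1}$, which is precisely $A_{l,j+1}\cap A_{r,j+1}$. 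It therefore suffices to prove $\pi_S(E_1\cap E_2\mid\gamma_1,\gamma_2)\ge\tfrac14-c2^{-j}$.

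Now $E_1$ and $E_2$ are increasing events for the conditional Gibbs measure in the pocket --- their complements ``$\gamma_3$ reaches $H^*_{-1}$ before $H^*_{w_{j+1}}$'' are decreasing, as raising spins pushes the contour upward --- so the FKG inequality gives $\pi_S(E_1\cap E_2\mid\gamma_1,\gamma_2)\ge\pi_S(E_1\mid\gamma_1,\gamma_2)\,\pi_S(E_2\mid\gamma_1,\gamma_2)$, and it remains to show $\pi_S(E_i\mid\gamma_1,\gamma_2)\ge\tfrac12-c2^{-j}$. To this end I would compare, by FKG/domain monotonicity, the conditional law of $\gamma_3$ exposed from $p_i$ to the law of an open contour in a shifted vertical strip whose boundary condition flips at height $\approx w_j$ and whose endpoints lie on $H^*_{w_j}$; here Lemma~\ref{cl:tube} (applied with a height band of order $w_j$) ensures that $\gamma_1,\gamma_2$ remain within horizontal distance $O(w_j^3)$ of the two side walls except on an event of probability $O(e^{-w_j/C})\le c2^{-j}$, so the pocket is comparable to an honest strip. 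To the latter, Lemma~\ref{lem-strip-hitting-h} (with the large-deviation control of Theorem~\ref{thm-strip-large-dev}) applies after a vertical translation: choosing the parameter $h$ in Lemma~\ref{lem-strip-hitting-h} so that $h+C^\star\log h=w_j+1$ gives $w_j+(h-C^\star\log h)=2w_j-2C^\star\log w_j=w_{j+1}$ by the very definition of the recursion $w_{j+1}=2w_j-2C^\star\log w_j$, so the lemma bounds the probability that this contour reaches $H^*_{-1}$ before $H^*_{w_{j+1}}$ (or before the opposite endpoint) by $\tfrac12+C^\star/h\le\tfrac12+c2^{-j}$, where we used $w_j\ge c'2^j$. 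Hence $\pi_S(E_i\mid\gamma_1,\gamma_2)\ge\tfrac12-c2^{-j}$, and combining with the first case yields $\pi_S(A_{j+1}\cup B_{j+1}\mid A_j)\ge\tfrac14-c2^{-j}$.

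The main obstacle is the domain-comparison step of the last paragraph. Conditioning on $(\gamma_1,\gamma_2)$ destroys the reflection symmetry about the horizontal axis on which the proof of Lemma~\ref{lem-strip-hitting-h} rests, so that lemma cannot be invoked verbatim for $\gamma_3$; instead one must sandwich the conditional law of $\gamma_3$ between open contours in reference strips where that symmetry is restored, while checking that the logarithmic ``decoration'' corrections coming from the interplay of the SE and SW splitting rules (the $\pm C^\star\log w_j$ shifts of Claims~\ref{clm-height-gamma-se} and~\ref{clm-log-tube}) are exactly absorbed by the $2C^\star\log w_j$ built into $w_{j+1}$. Getting this comparison to work cleanly, in concert with the horizontal-excursion control of Lemma~\ref{cl:tube} and the reduction to a finite sub-domain, is where the genuine technical effort lies.
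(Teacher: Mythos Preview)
Your overall architecture coincides with the paper's: expose $\gse$ from both endpoints up to the first visit of $H^*_{w_j}$, observe that $A_{j+1}\cup B_{j+1}$ is an increasing event, split the task for the middle arc into a left and a right event, decouple these via FKG and left/right symmetry, and reduce each to Lemma~\ref{lem-strip-hitting-h} with $h=w_j-C^\star\log w_j$ (so that the definition $w_{j+1}=2w_j-2C^\star\log w_j$ lines up). The divergence is exactly at the step you yourself flag as the obstacle, and the paper resolves it by a different mechanism than the one you sketch.

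You propose to compare the irregular ``pocket'' domain (the strip conditioned on the random arcs $\gamma_1,\gamma_2$) to a reference strip by first using Lemma~\ref{cl:tube} to bound the horizontal reach of $\gamma_1,\gamma_2$. This is unnecessary and, as you note, does not by itself restore the reflection symmetry that Lemma~\ref{lem-strip-hitting-h} needs: the pocket boundary still follows the arbitrary shapes of the arcs. The paper instead performs a three-step monotonicity transformation that lands \emph{exactly} on the domain $\bar S(a,b)$ of Lemma~\ref{lem-strip-hitting-h} (translated up by $w_j$) with $a=z^L_j$, $b=z^R_j-1$ the horizontal coordinates of your $p_1,p_2$:
\begin{enumerate}
\item force the spins at height $w_j+1$ (to the left of $z^L_j$ and to the right of $z^R_j$) to $(-)$; since $A_{j+1}\cup B_{j+1}$ is increasing this can only decrease its probability;
\item force the spins at height $w_j$ lying above the arcs $\gamma_1,\gamma_2$ to $(+)$; by the domain Markov property this changes nothing, because those spins are already $(+)$ given the arcs;
\item erase the remaining conditioning on $\gamma_1,\gamma_2$ (equivalently, drop the constraint that the spins immediately below the arcs are $(+)$); again monotonicity only decreases the probability of the increasing event.
\end{enumerate}
After these moves one has the strip with boundary condition $(+)$ up to height $w_j$ and $(-)$ above, together with horizontal slits along heights $\{w_j,w_j+1\}$ from the side walls inward to $z^L_j$ and $z^R_j$: precisely $\bar S(z^L_j,z^R_j-1)$ shifted up by $w_j$. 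In this domain the open contour has endpoints on $H^*_{w_j}$ and the full symmetry required by Lemma~\ref{lem-strip-hitting-h} is restored, so the lemma applies verbatim to each of the left/right events. No horizontal-excursion control is needed, and there is no residual dependence on the shapes of $\gamma_1,\gamma_2$ to handle.
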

If we assume the claim then
\begin{align*}
\pi_S(A_{j+1}) + \pi_S(B_{j+1} ) & = \pi_S(A_{j+1}\cap A_j) + \pi_S(B_{j+1}\cap A_j) + \pi_S(B_j)\\
&\geq\Big(\frac14-c2^{-j}\Big)\left[ \pi_S(A_{j}) + \pi_S(B_{j} ) \right]
\end{align*}
where the first inequality follows from the fact that $A_j$ and $B_j$ are disjoint.  Hence by induction and equation \eqref{e:A0} we have that for any fixed positive integer $K$
\begin{equation}\label{e:inductiveHeight}
P(A_{K+\frac12\log_2 \ell}) + P(B_{K+\frac12\log_2 \ell})\geq P(A_0) \prod_{j=1}^{K+\frac12\log_2 \ell}\Big(\frac14-c2^{-j}\Big)\geq c'4^{-K}\ell^{-1}\,.
\end{equation}

We now prove the Claim~\ref{cl:pathInduction}.  Note that if $\sigma'\ge\sigma$ then the curve $\gse(\sigma')$ must lie on or above $\gse(\sigma)$ and hence the event $A_{j+1}$ is  increasing in $\sigma$ and so is $A_{j+1}\cup B_{j+1}$.  Through a series of monotonicity arguments we will relate this event to that in Lemma~\ref{lem-strip-hitting-h}.
Suppose that $A_j$ holds and that the left part of $\gse$ first hits $H_{w_j}^*$ at dual vertex $(z^L_{j}+\frac12,w_j+\frac12)$ and denote this part of the contour by $\gamma^L_{j}$.  Similarly denote the right part of the contour as $\gamma^R_{j}$ from $(\ell+\frac12,\frac12)$ to $(z^R_{j}+\frac12,w_j+\frac12)$ with $\frac12 \leq z^L_j < z^R_j \leq \ell+\frac12$.
Finally, let $D_{j+1}$ denote the event that the contour $\gse$ running between $(z^L_{j}+\frac12,w_j+\frac12)$ and $(z^R_{j}+\frac12,w_j+\frac12)$ either hits $H^*_{w_{j+1}}$ at both ends before hitting $H^*_{-1}$ or hits neither $H^*_{w_{j+1}}$ nor $H^*_{-1}$. With these definitions we claim that
\begin{eqnarray}
  \label{eq:1}
 \pi_S\left(A_{j+1} \cup B_{j+1} \mid A_j\right) \geq \pi^j_S\left(D_{j+1} \mid \sigma_{{U}_j}={\eta}_{{U}_j}\right)\,,
\end{eqnarray}
where $\pi_S^j$ denote the measure on the strip $S$ with boundary condition given by $(+)$ up to $w_j$ and $(-)$ above $w_j$,
\[
{U_j}=\left([1,z^L_j +1]\cup [z^R_j ,\ell] \right) \times\{w_j,w_j+1\}
\]
and
\[
{\eta}_{(u_1,u_2)}=\begin{cases}
+1 & u_2=w_j,\\
-1 & u_2= w_j +1\,.
\end{cases}
\]
The sequence of monotonicity arguments establishing~\eqref{eq:1} is best explained schematically, see Fig.~\ref{fig:transform} and its caption.

\begin{figure}
\centering
\includegraphics[width=6in]{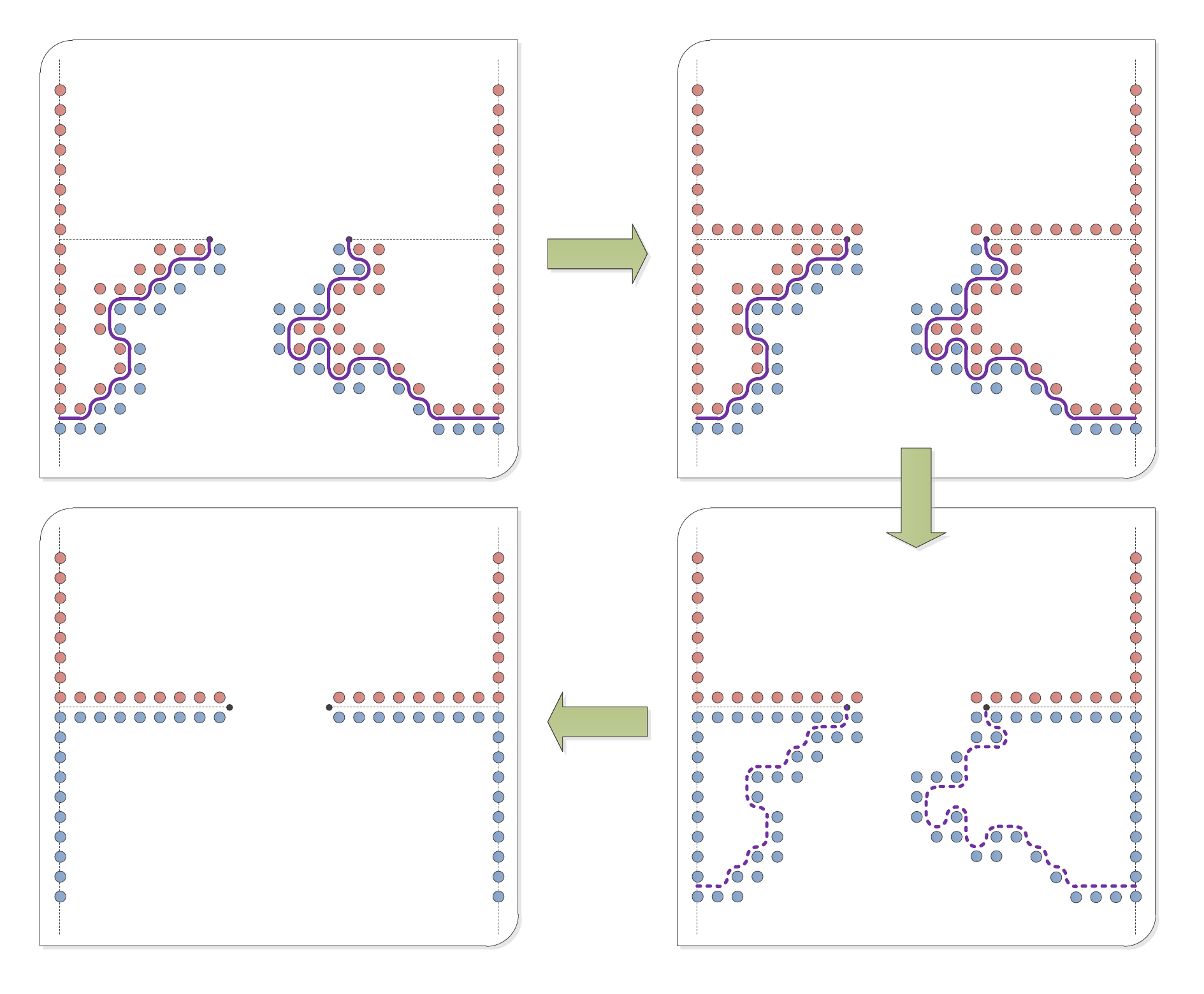}
\caption{Monotonicity
transformations to reduce the two segments of the open contour in the infinite strip to the setting of Lemma~\ref{lem-strip-hitting-h}.
First (top-right) impose extra $(-)$ spins at height $w_j+1$, making the increasing event $A_{j+1}\cup B_{j+1}$ more unlikely.
Next (bottom-right) set the spins at height $w_j$ and above the contours $\gamma_j^L,\gamma_j^R$ to $(+)$; this does not
change the probability of $A_{j+1}\cup B_{j+1}$ due to the Markov property of the Gibbs measure. Finally (bottom-left) remove the constraint that the spins just below $\gamma_j^L,\gamma_j^R$
are $(+)$, again making the event $A_{j+1}\cup B_{j+1}$ more unlikely. The dotted line is at height $w_j+1/2$.}
\label{fig:transform}
\end{figure}

Let $D^L_{j+1}$  denote the event that the contour $\gse$ from $(z^L_{j}+\frac12,w_j+\frac12)$ (resp.\ $(z^R_{j}+\frac12,w_j+\frac12)$) hits $H^*_{w_{j+1}}$ or $(z^R_{j}+\frac12,w_j+\frac12)$ (resp.\ $(z^L_{j}+\frac12,w_j+\frac12)$) before hitting $H^*_{-1}$.  Clearly $D_{j+1}=D^L_{j+1} \cap D^R_{j+1}$ and by symmetry we have that
\[
\pi^j_S\left(D^L_{j+1} \mid \sigma_{{U}_j}={\eta}_{{U}_j}\right)=\pi^j_S\left(D^R_{j+1} \mid \sigma_{{U}_j}={\eta}_{{U}_j}\right)\,.
\]
Since these events are both monotone, by the FKG inequality we have that
\[
\pi^j_S\left(D_{j+1} \mid \sigma_{{U}_j}={\eta}_{{U}_j}\right)\geq\pi^j_S\left(D^L_{j+1} \mid \sigma_{{U}_j}={\eta}_{{U}_j}\right)^2\,.
\]
It finally suffices to note that the conditional event $D^L_{j+1}$ is exactly the event considered in Lemma~\ref{lem-strip-hitting-h} once we shift the strip down by $w_j$ and set $h=w_j-C^\star \log w_j$, $a=z^L_{j}$ and $b=z^R_{j}-1$.  Then we have that
\[
\pi^j_S\left(D^L_{j+1} \mid \sigma_{{U}_j}={\eta}_{{U}_j}\right)\geq \frac12-O(1/w_j)\ge \frac12-c 2^{-j}\,,
\]
which completes the proof of Claim~\ref{cl:pathInduction}.

To complete the proof of the lower bound in Theorem~\ref{thm:positive} we will show that
\[
P(B_{K+1+\frac12\log_2 \ell} \mid A_{K+\frac12\log_2 \ell}) > \frac12\,,
\]
for a large enough constant $K=K(\beta)$.  Applying the same monotonicity transformations as in Claim~\ref{cl:pathInduction} this reduces to the probability of a contour reaching height $w_{j+1}-w_j > c 2^K \sqrt{\ell}$, which is less that $\frac12$ for large enough $K$ by Theorem~\ref{thm-strip-large-dev}.  Combining with equation~\eqref{e:inductiveHeight} it follows that
\[
P(B_{K+1+\frac12\log_2 \ell} ) > c \ell^{-1}\,,
\]
as required.
\end{proof}

\begin{proof}[\textbf{\emph{Proof of Theorem~\ref{thm:positive}, upper bound}}]
Let $\bbH = \{w=(x,y): y \geq 0\}$ denote the upper half-plane and consider the correlation between the spins at $u=(\frac12,\frac12)$ and $v = (\ell+\frac12,\frac12)$ in $\bbH^*$.
It is known (see e.g.~\cite{cf:MW}*{p161 Eq.~(5.29)}) that for some $C_1(\beta)>0$,
\[ \pi_{\bbH^*}^*(\s_u\, \s_v) \leq \frac{C_1}{\ell^{3/2}}e^{-\tau_\b(u-v)}\,.\]
By GKS we can reduce the domain to the dual half-strip of width $|u-v|$
\[ S^+ = \bbH^* \cap \{w=(x,y): \frac12\leq x \leq \ell+\frac12\}\,,\]
and obtain that
\begin{align*}
\pi_{\bbH^*}^*(\s_u\, \s_v) &\geq \pi_{S^+}^*(\s_u\, \s_v) = \sum_{\substack{\gamma\in S^+\\ \delta\gamma = \{u,v\}}} q_{S^+}(\gamma) \geq \sum_{\substack{\gamma\in S^+\\ \delta\gamma = \{u,v\}}} q_{S}(\gamma)\,,
\end{align*}
where $S=\{w=(x,y)\in\Z_2^*: \frac12\leq x \leq \ell+\frac12\}$ is the dual strip and the last inequality is justified by Lemma~\ref{l:6.3}.
Combining with Eq.~\eqref{eq-strip-two-pt-corr}
%\[ \sum_{\substack{\gamma\in S\\ \delta\gamma = \{u,v\}}} q_{S}(\gamma) = \pi_{S^*}^*(\s_x\,\s_y) \geq \frac{c_2}{\sqrt{z}}\exp(-\tau_{\beta}(0)z) \,,
%\]
%and by combining these inequalities
we can conclude that
\[ \pi_S^\eta\left(\sigma:\lambda(\sigma)\mbox{ stays above }H_{-1}^*\right) = \frac{\sum_{\substack{\gamma\in S^+\\ \delta\gamma = \{u,v\}}} q_{S}(\gamma)}{\sum_{\substack{\gamma\in S\\ \delta\gamma = \{u,v\}}} q_{S}(\gamma)} \leq \frac{{C_1 \ell^{-3/2}}e^{-\tau_\b(u-v)}}{{c_2 \ell^{-1/2}}e^{-\tau_\b(u-v)}} = C / \ell\,,\]
thus completing the proof.
\end{proof}

\begin{proof}[\textbf{\emph{Proof of Corollary~\ref{cor-spin-spin-strip}}}]
The upper bound follows directly from the GKS inequalities and the exact solution~\cite{cf:MW} in the infinite half-plane. As for the lower bound, one has
\begin{align*}
 \pi^*_{S^+}(\sigma_u \sigma_v)&=\sum_{\lambda\in S^+:\delta\lambda=\{u,v\}} q _{S^+} (\lambda)\ge
\sum_{\lambda\in S^+:\delta\lambda=\{u,v\}} q _{S^*} (\lambda)\\
& = \frac{\sum_{\lambda\in S^+:\delta\lambda=\{u,v\}} q _{S^*} (\lambda)}{\sum_{\lambda\in  S^*:\delta\lambda=\{u,v\}} q _{S^*} (\lambda)}
\sum_{\lambda\in S^*:\delta\lambda=\{u,v\}} q _{S^*} (\lambda) \ge \frac c\ell\pi^*_{S^*}(\sigma_u \sigma_v)
  \end{align*}
where in the first inequality we applied Lemma \ref{l:6.3} and in the second one we used~\eqref{eq:7} and the lower bound of Theorem~\ref{thm:positive}.
Recalling~\eqref{eq-strip-two-pt-corr}, the desired result follows.
\end{proof}

\section{Proofs of Propositions~\ref{prop:Bound1} and~\ref{prop:Bound2}}\label{sec:new-equlibrium-proofs}
The remaining part of this work is devoted to the proof of the two equilibrium estimates needed for the new recursive scheme (detailed in Section~\ref{sec:new-induction})
using the estimates obtained thus far.

\subsection{Proof of Proposition~\ref{prop:Bound1}}
Let $u,v$ be the initial and final point of $\l$ and as before let $H_h^*$ denote level $h$ of the rectangle $R^*$. By Eq.~\eqref{eq:8} we have the following:
\begin{gather}
  \pi^{(-,-,+,-)}\Bigl(\s:\, \l(\s) \text{ reaches height }  \d \a
      \sqrt{\ell}\Bigr)\nonumber \\= \sum_{\substack{\l \text{ reaches }H_{\d\alpha\sqrt{\ell}}^* \\ \d\l=\{u,v\}}}q_{R^*}(\l)\,\big/\sum_{\substack{\l\subset
      R^* \\ \d\l=\{u,v\}}} q_{R^*}(\l)\,.
  \label{eq:10}
\end{gather}
Observe that by monotonicity that \eqref{eq:10} is increasing in the height of $R$ and so without loss of generality we take
\[
R=S^+=\{(x_1,x_2):\ 1\le x_1\le \ell \text{ and } x_2\ge 1\}.
\]
We now bound separately the numerator and the denominator in \eqref{eq:10}.  By the same argument used to prove equation~\eqref{eq-w-in-contour} (note that here we bound the probability of the contour exceeding height $\delta\alpha\sqrt{\ell} < \ell$ by the assumptions on $\delta$ and $\alpha$), there exists some $c_1(\beta)>0$ such that for any $z \in H_{\delta\alpha\sqrt{\ell}}^*$,
\[
\sum_{\substack{\lambda:\,\d\lambda=\{u,v\}\\ z\in\lambda}}
q_{R^*}(\lambda)\leq
 \frac{c_1}{\ell} \exp\big(-\tau_{\beta}(0)\ell - \kappa_\beta (\delta\alpha)^2\big)\,,
\]
and therefore
\begin{equation}\label{e:BoxContourNumerator}
\sum_{z\in H_{\delta\alpha\sqrt{\ell}}^*} \sum_{\substack{\lambda:\,\d\lambda=\{u,v\}\\ z\in\lambda}}
q_{R^*}(\lambda)\leq
 c_1 \exp\big(-\tau_{\beta}(0)\ell - \kappa_\beta (\delta\alpha)^2\big)\,.
\end{equation}
Next we bound from below the denominator in \eqref{eq:10}. Recall that $S$ is the infinite
strip $S=\{(x_1,x_2):\ 1\le x_1\le
\ell\}$. Then by Lemma~\ref{l:6.3},
\begin{equation}
  \label{eq:14}
\sum_{\substack{ \l\subset R_n^* \\ \d\l=\{u,v\}}} q_{R^*}(\l)\ge \Biggl[\,\frac{\sum_{\substack{\l\subset
   R^* \\ \d\l=\{u,v\}}} q_{S^*}(\l)}{\sum_{\substack{\l\subset S^* \\ \d\l=\{u,v\}}}
 q_{S^*}(\l) }\,\Biggr]\,\sum_{\substack{\l\subset S^*\\ \d\l=\{u,v\}}} q_{S^*}(\l)\,.
\end{equation}
The last factor is estimated in Eq.~\ref{eq-strip-two-pt-corr}
%\begin{equation}
 % \label{eq:15}
 % \sum_{\l\subset S^*\,:\, \d\l=\{u,v\}} q_{S^*_n}(\l)\ge \frac{c-o(1)}{\sqrt{\ell}}\exp(-\t_\b(0)\ell)
%\end{equation}
%for large enough $n$.
while the first factor can be interpreted as the probability in the
\emph{canonical ensemble}
given by the weights $q_{S^*}(\cdot)$ and conditioned to start at $u$
and to end at $v$ that the contour $\l$ stays above the line at
height $-1/2$. By Theorem~\ref{thm:positive}, this is of order $c/\ell$. The desired claim then immediately follows.
\qed

\subsection{Proof of Proposition~\ref{prop:Bound2}}

Let $R$ be the $\ell \times \alpha\sqrt{\ell}$ rectangle given by the endpoints $(x,y,y',x')$ clockwise starting from the Northwest corner, and let $u$ and $v$ denote the West and East endpoints of the interval $\Delta$ centered on the South border, as was shown in Figure~\ref{fig:rect-equil2} in \S\ref{sec:new-induction}.
Recall that in our setting we have a b.c.\ $\eta$ which is $(-)$ in the North and on $\Delta$ and $(+)$ otherwise, and that the event $\mathcal{V}$ states that $\Delta$ is connected to the North via two contours confined to the left and right halves of $R$ respectively.

Our first step in establishing that $\mathcal{V}$ occurs except with probability $c_1 \ell^{c_1}\exp(-c_2 \alpha^2)$ is eliminating (except with the aforementioned error probability) the scenario where open contours connect $x$ to $y$ and $u$ to $v$.

\begin{Lemma}\label{lem:ab-cd}
Let $R = (x,y,y',x')$ and $\Delta$ be an interval of length $s \alpha^2$ centered on the South border $x'y'$.
For any $\beta > \beta_c$ there exist $c_3,c_4>0$ and $s_0 >0$ depending only on $\beta$ such that if $s \geq s_0$ then for any $\ell\in \N$
we have
\[ \pi_R^\eta(\delta\lambda_1=\{ x,  y\}\,,\, \delta \lambda_2=\{u, v\}) \leq \ell^{c_3}\exp(-c_4 \alpha^2)\,.\]
\end{Lemma}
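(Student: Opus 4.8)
The plan is to express the probability via the random-line representation and compare the ``wrong'' pairing $\{x,y\},\{u,v\}$ of the four marked boundary points against the ``right'' one $\{x,u\},\{y,v\}$, showing the former is smaller by a factor $e^{-c\alpha^2}$ produced entirely by surface tension. By \eqref{eq:8} and \eqref{eq:7},
\[
\pi_R^\eta\bigl(\delta\lambda_1=\{x,y\},\ \delta\lambda_2=\{u,v\}\bigr)=\frac{\sum_{\underline\lambda=\lambda_1\sqcup\lambda_2:\ \delta\lambda_1=\{x,y\},\,\delta\lambda_2=\{u,v\}}q_{R^*}(\underline\lambda)}{\pi^*_{R^*}(\sigma_x\sigma_y\sigma_u\sigma_v)}\,.
\]
For the numerator, Lemma~\ref{l:6.5} and \eqref{eq:7} give the bound $\pi^*_{R^*}(\sigma_x\sigma_y)\,\pi^*_{R^*}(\sigma_u\sigma_v)$; since $x-y$ and $u-v$ are horizontal, $\tau_\beta(x-y)=\tau_\beta(0)\ell$ and $\tau_\beta(u-v)=\tau_\beta(0)s\alpha^2$, so Lemma~\ref{lem-spin-spin} bounds the numerator by $\exp\bigl(-\tau_\beta(0)(\ell+s\alpha^2)\bigr)$.

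For the denominator I would first apply the second Griffiths (GKS) inequality \cites{cf:Griffiths,cf:KS} to the ferromagnetic dual Ising measure $\pi^*_{R^*}$ (free boundary, no field), with $A=\{x,u\}$ and $B=\{y,v\}$, to obtain $\pi^*_{R^*}(\sigma_x\sigma_y\sigma_u\sigma_v)\ge \pi^*_{R^*}(\sigma_x\sigma_u)\,\pi^*_{R^*}(\sigma_y\sigma_v)$. Each two-point function would then be bounded below --- using GKS monotonicity in the domain --- by passing to the sub-rectangle $\tilde R\subset R$ having $x,u$ (resp.\ $y,v$) as opposite corners, of sides $L=\tfrac12(\ell-s\alpha^2)+O(1)$ and $W=\alpha\sqrt\ell+O(1)$, and invoking a sharp Ornstein--Zernike lower bound for the corner-to-corner correlation in such long rectangles, $\pi^*_{\tilde R^*}(\sigma_x\sigma_u)\ge c\,\ell^{-C}\exp\bigl(-\tau_\beta(x-u)\bigr)$. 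This is the corner analogue of \eqref{eq-strip-two-pt-corr} and of Corollary~\ref{cor-spin-spin-strip}, obtained from the random-line machinery of \cites{cf:PV1,cf:PV2} (cf.\ the remark following Corollary~\ref{cor-spin-spin-strip}). By the lattice symmetries of $\tau_\beta$ one has $\tau_\beta(x-u)=\tau_\beta(y-v)=\tau_\beta\bigl((L,W)\bigr)$, so the denominator is at least $c^2\ell^{-2C}\exp\bigl(-2\tau_\beta((L,W))\bigr)$.

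Combining the two bounds, the claim reduces to the geometric inequality $\tau_\beta(0)(\ell+s\alpha^2)-2\tau_\beta\bigl((L,W)\bigr)\ge c_2\alpha^2$ for all $s\ge s_0(\beta)$, after which $\pi_R^\eta(\cdots)\le c^{-2}\ell^{2C}e^{-c_2\alpha^2}\le \ell^{c_3}e^{-c_4\alpha^2}$ (one may assume $\alpha$ exceeds a $\beta$-dependent constant, else the right-hand side already exceeds $1$). To prove it one uses the identity $(\ell+s\alpha^2)^2-\bigl((\ell-s\alpha^2)^2+4\alpha^2\ell\bigr)=4\alpha^2\ell(s-1)$ (whence $\tau_\beta(0)(\ell+s\alpha^2)-2\tau_\beta(0)\sqrt{L^2+W^2}\ge \tau_\beta(0)\alpha^2(s-1)-O(\sqrt\ell)$, the error being $O(1)$ when $s\alpha^2\le\ell/2$), together with the analyticity of $\tau_\beta$ at its minima $0,\tfrac\pi2$, which gives $\tau_\beta((L,W))\le\tau_\beta(0)\sqrt{L^2+W^2}+C'_\beta\,(L\wedge W)^2/(L\vee W)$, where $(L\wedge W)^2/(L\vee W)\le 4\alpha^2$ when $s\alpha^2\le\ell/2$ and $(L\wedge W)^2/(L\vee W)\le W\le\ell/s_0$ otherwise; in the latter range one also has $\alpha^2(s-1)\ge\ell/4$, which absorbs all error terms. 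Taking $s_0=s_0(\beta)$ large enough then yields $c_2=c_2(\beta)>0$.

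The step I expect to be the main obstacle is the sharp Ornstein--Zernike lower bound on the finite-volume two-point function $\pi^*_{\tilde R^*}(\sigma_x\sigma_u)$, with the correct exponential rate $\tau_\beta(x-u)$ and only a polynomial-in-$\ell$ prefactor: a crude single-contour lower bound is far too lossy, since the rate $\tau_\beta(0)$ carries an entropic (many-path) contribution absent from any one path, so one genuinely needs the random-line summation. A secondary technicality is keeping the geometric estimate uniform over all admissible $s$, in particular when $s\alpha^2$ is comparable to $\ell$; this is handled by the case split above.
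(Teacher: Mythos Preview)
Your overall architecture is exactly the paper's: write the probability as $\Psi_1/\Psi_2$, bound the numerator via Lemma~\ref{l:6.5} and Lemma~\ref{lem-spin-spin} to get $e^{-\tau_\beta(0)(\ell+s\alpha^2)}$, bound the denominator below via GKS by $\pi^*_{R^*}(\sigma_x\sigma_u)^2$, and finish with the surface-tension comparison. Your treatment of the final geometric inequality is correct and slightly more detailed than the paper's (which only writes the case $s\alpha^2\ll\ell$ explicitly).

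The genuine gap is precisely the step you flag: the lower bound $\pi^*_{\tilde R^*}(\sigma_x\sigma_u)\ge c\,\ell^{-C}e^{-\tau_\beta(x-u)}$ for the finite rectangle with $x,u$ at \emph{opposite} corners. This is not available off the shelf in \cites{cf:PV1,cf:PV2,cf:GI}, and the remark after Corollary~\ref{cor-spin-spin-strip} that you cite concerns two \emph{upper} corners (same side), not opposite corners; the geometry is different and the extension is not automatic. The paper does not invoke such a result either: instead it \emph{derives} the needed lower bound from the tools of \S\ref{sec:strip}. Concretely, it restricts to the sub-rectangle $\cG=(x,u',u,x')$, introduces its center $z$, uses GKS to get $\pi^*_{\cG^*}(\sigma_x\sigma_u)\ge \pi^*_{\cG_1^*}(\sigma_x\sigma_z)^2$ for the half-rectangle $\cG_1$, and then compares $\cG_1$ first to a half-strip and then to a full strip. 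The full-strip two-point function is controlled by~\eqref{eq-strip-two-pt-corr}; the passage from strip to half-strip costs $c/\ell$ by Theorem~\ref{thm:positive}; and the passage from half-strip to $\cG_1$ (i.e.\ the contour staying above the South side of $\cG_1$) is controlled by Proposition~\ref{prop:Bound1}. Since $x-u=2(x-z)$, homogeneity gives $4\tau_\beta(x-z)=2\tau_\beta(x-u)$, matching your exponent. In short: the ``sharp OZ lower bound'' you need is not a black box here --- it is precisely what the center-point decomposition together with Theorem~\ref{thm:positive} and Proposition~\ref{prop:Bound1} is designed to produce.
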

\begin{proof}
Our starting point is the equality~\eqref{eq:8}, which allows us to rewrite the probability of certain contours in terms of their weights:
\[ \pi_R^\eta(\delta\lambda_1=\{ x,  y\}\,,\, \delta \lambda_2=\{u, v\}) = \frac{\Psi_1}{\Psi_2}\]
where
\begin{align}\label{eq-Psi-def}
  \Psi_1 :=\sum_{\substack{\underline{\lambda}= \lambda_1\sqcup\lambda_2 \\ \d\lambda_1=\{x,y\},\, \d\lambda_2=\{u,v\}}} q_{R^*}(\underline{\lambda})\,,&\quad&
  \Psi_2 := \sum_{\substack{\underline\l = \l_1 \sqcup \l_2 \\ \d\underline\l=\{x,y,u,v\}}} q_{R^*}(\underline\l)\,.
\end{align}
By Lemma~\ref{l:6.5} we have
\begin{align*}
\Psi_1 &\leq \sum_{\substack{\l_1 \\
\d\l_1=\{x,y\}}}q_{R^*}(\l_1)
\sum_{\substack{\l_2 \\
\d\l_2=\{u,v\}}}q_{R^*}(\l_2)
= \pi_{R^*}^*(\sigma_x\sigma_y) \pi_{R^*}^*(\sigma_u \sigma_v)\,,
\end{align*}
where the last equality is by~\eqref{eq:7}.
Plugging in Lemma~\ref{lem-spin-spin} it now follows that
\begin{align}
  \Psi_1 &\leq \exp\left(-\left[\tau_{\beta}(x-y) +\tau_{\beta}(u-v)\right]\right)= \exp\left(-\left[\ell +s\alpha^2\right]\tau_{\beta}(0)\right)\,.
\label{eq-Psi1}
\end{align}
Next we consider $\Psi_2$. As before,~\eqref{eq:7}, the GKS-inequalities~(\cites{cf:Griffiths,cf:KS}) and symmetry imply that
\begin{align*}
\Psi_2 &= \pi_{R^*}^*(\sigma_x\sigma_y\sigma_u\sigma_v) \geq
\pi_{R^*}^*(\sigma_x\sigma_u) \pi_{R^*}^*(\sigma_y\sigma_v)=\pi_{R^*}^*(\sigma_x\sigma_u)^2\,.
\end{align*}
\begin{figure}
\centering
\includegraphics[width=0.95\textwidth]{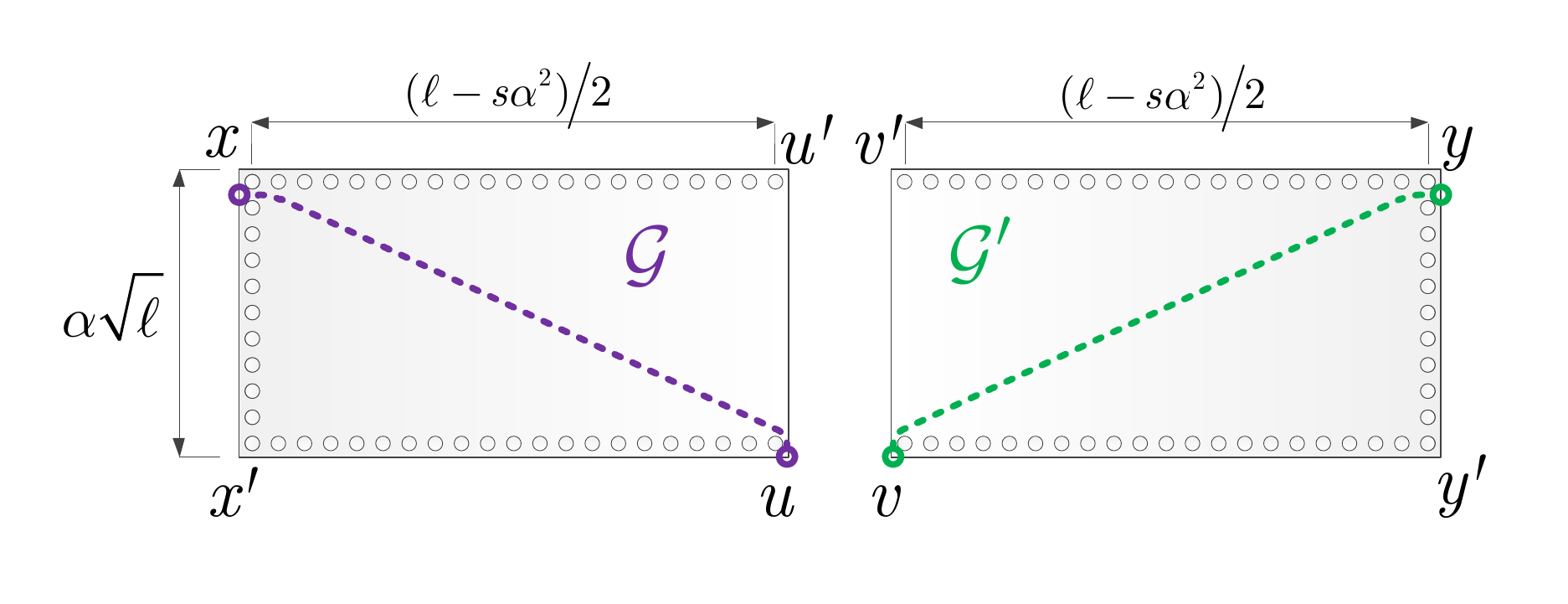}
\vspace{-0.3in}
\caption{Domain decrease from the rectangle $R$ to a disjoint union of rectangles separating the two open contours.}
\label{fig:rect-split}
\end{figure}
Furthermore, by GKS, spin-spin correlations are non-decreasing in the domain so
we can clearly confine our domain to the disjoint union of the two rectangles $\cG = (x,u',u,x')$ as shown in Figure~\ref{fig:rect-split}, and obtain by symmetry that
\[ \Psi_2 \geq X^2 \quad\mbox{ where } X:= \pi^*_{\cG^*}(\sigma_x \sigma_u) \,.\]
To control the value of $X$, let $z$ be the center of the rectangle $\cG$ and further define $\cG_1 = (x,a,b,x')$ and $\cG_2 = (a,u',u,b)$ to be the left and right halves of $\cG$, each of dimensions $\tfrac14(\ell-s\alpha^2) \times \alpha\sqrt{\ell}$. See Figure~\ref{fig:rect-split2} for an illustration.
The GKS-inequalities (together with a reduction of the domain) yield
\[ \pi^*_{\cG^*}(\sigma_x \sigma_u) \geq \pi^*_{\cG_1^*}(\sigma_x \sigma_z) \pi^*_{\cG_2^*}(\sigma_u \sigma_z)\,.\]
Another application of ~\eqref{eq:7} gives
\begin{align*}
 X &\geq \sum_{\substack{\l\subset\cG_1 \\ \d\l=\{x,z\}}} q_{\cG_1^*}(\l)
 \sum_{\substack{\l\subset\cG_2 \\ \d\l=\{u,z\}}} q_{\cG_2^*}(\l)
= \bigg(\sum_{\substack{\l\subset\cG_1 \\ \d\l=\{x,z\}}} q_{\cG_1^*}(\l) \bigg)^2\,,
\end{align*}
with the equality due to symmetry. Define $\bar{S}$ to be the infinite half-strip of width $\frac14(\ell-s\alpha^2)$ obtained by extending the South border of $\cG_1$ (\ie the edge $b x'$) to $-\infty$. Since $\cG_1$ is  a subgraph of $\bar{S}$ it  follows from Lemma~\ref{l:6.3} that
\[ X \geq \bar{X}^2 \quad\mbox{ where }\quad \bar{X} := \sum_{\substack{\l\subset\cG_1 \\ \d\l=\{x,z\}}} q_{\bar{S}^*}(\l) \,.\]
We now claim that
\begin{equation}
  \label{eq-barX-over-barY}
  \frac{\bar{X}}{\bar{Y}} \geq 1 - (\ell/4)^{c_1}\exp(-c_2\alpha^2)  \,,
\end{equation}
where
\[
\bar{Y}:=\sum_{\substack{\l\subset\bar{S} \\ \d\l=\{x,z\}}} q_{\bar{S}^*}(\l)\,.
\]
Indeed, $\bar{X} / \bar{Y}$ is precisely the probability that the contour $\lambda \subset \bar{S}_n$ whose endpoints are $\d\l = \{x,z\}$ stays above the horizontal line $x' b$ (the South border of $\cG_1$).
If $z'$ is the midpoint of $x$ and $x'$ then by monotonicity if we condition all $(+)$ in the rectangle $(x,a,z,z')$ then this only increases the probability that the contour hits the line $x' b$.
Proposition~\ref{prop:Bound1} then establishes equation~\eqref{eq-barX-over-barY}.  Taking $c_3$ large enough
in the statement of Lemma \ref{lem:ab-cd} we can assume that $\alpha$ is at least a large constant times $\sqrt{\log \ell}$, in which case \eqref{eq-barX-over-barY}
gives
$\bar{X}=(1-o(1))\bar{Y}$.

\begin{figure}
\centering
\includegraphics[width=0.7\textwidth]{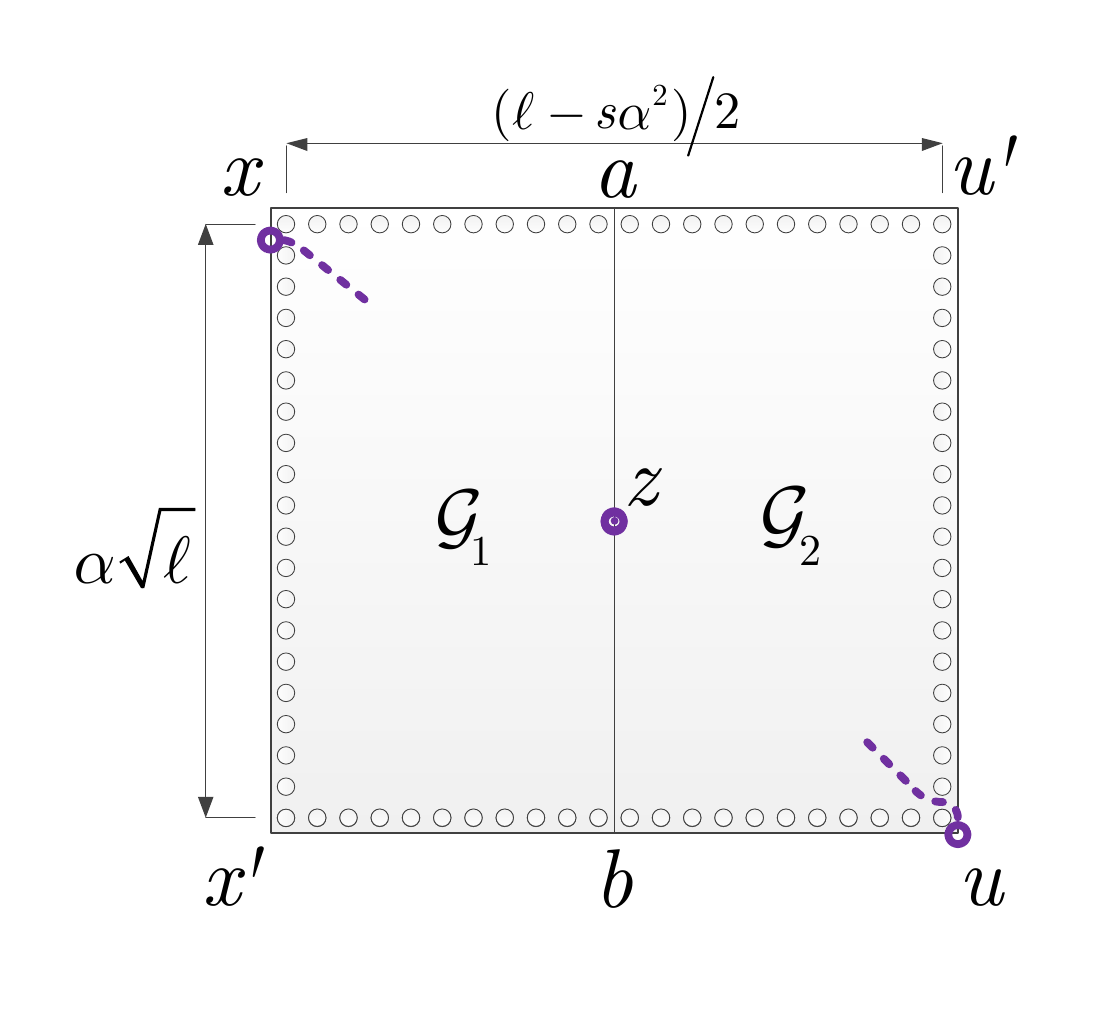}
\vspace{-0.3in}
\caption{Comparing the weight of contours from $x$ to $u$ to that of contours connecting both points to the center of the rectangle $z$.}
\label{fig:rect-split2}
\end{figure}

On the other hand, taking $S$ to be the  strip obtained by extending the North and South boundaries of $\cG_1$ to $\pm\infty$, we can now express $\bar{Y}$ in terms of
\[ Y:=\sum_{\substack{\l\subset S \\ \d\l=\{x,z\}}} q_{S^*}(\l)\,.\]
To this end, observe that $\bar{Y}/Y$ is the probability that the open contour $\lambda$ connecting $x,z$ in $S$ stays below the horizontal line $x a$ (the North border of $\cG_1$).
Therefore, by monotonicity and Theorem~\ref{thm:positive} there exists some $c=c(\beta)$ such that
\begin{equation}
  \label{eq-barY-over-Y}
  \frac{\bar{Y}}{Y} \geq c/\ell\,,
\end{equation}
and on the other hand \cite{cf:GI}*{Formula (2.22)} gives for some $c_\beta>0$
\begin{equation}
  \label{eq-Y}
  Y = \frac{c_\beta+o(1)}{\sqrt{\ell}}\exp(-\tau_{\beta}(x-z)) \,.
\end{equation}
Combining~\eqref{eq-barX-over-barY},\eqref{eq-barY-over-Y} and \eqref{eq-Y} it now follows that
\[ \bar{X} \geq c_1'\ell^{-c_2'}\exp(-\tau_{\beta}(x-z))\,,\]
and recalling that $\Psi_2 \geq X^2 \geq \bar{X}^4$ we deduce that there exists some $c''(\beta)>0$ such that
\begin{equation}
  \label{eq-Psi2}
  \Psi_2 \geq c'' \ell^{-c''}\exp(-2\tau_{\beta}(x-u))\,.
\end{equation}
To conclude the proof, we combine \eqref{eq-Psi1}, \eqref{eq-Psi2} and get
that
\[ \frac{\Psi_1}{\Psi_2} \leq (1/c'') \ell^{c''} \exp\left(-\left[\ell + s\alpha^2\right]\tau_{\beta}(0)+2\tau_{\beta}(x-u) \right)  \,.
 \]
Recall that $\tau_{\beta}(\theta)$ is an analytic and even function of $\theta$ for any $\beta>\beta_c$, hence in particular there exists some $c=c(\beta)>0$ such that
\[ \tau_{\beta}(\theta) - \tau_{\beta}(0) \leq  c \theta^2 \mbox{ for any $\theta\in\R$.}\]
Since in our case $\theta \leq \arctan\big(\frac{\alpha\sqrt{\ell}}{\ell/2}\big) \leq 2\alpha/\sqrt{\ell}$ it follows that for some $c'=c'(\beta)>0$
\[ \tau_{\beta}(x-u) \leq \tau_{\beta}(0)|x-u| + c' \alpha^2\,.\]
On the other hand,
\[\ell + s\alpha^2 - |x-u| = O(s \alpha^2)\,.\]
Combining these inequalities completes the proof.
\end{proof}

We next wish to show that whenever $\delta\lambda_1=\{ x,  u\}\,,\, \delta \lambda_2=\{y, v\}$ we also have that the corresponding open contours are confined to the $R_l$ and $R_r$, the left and right halves of $R$ respectively, except with an appropriate exponentially small probability. The complement event we wish to analyze is illustrated in Figure~\ref{fig:rect-central}.
\begin{figure}
\centering
\includegraphics[width=0.9\textwidth]{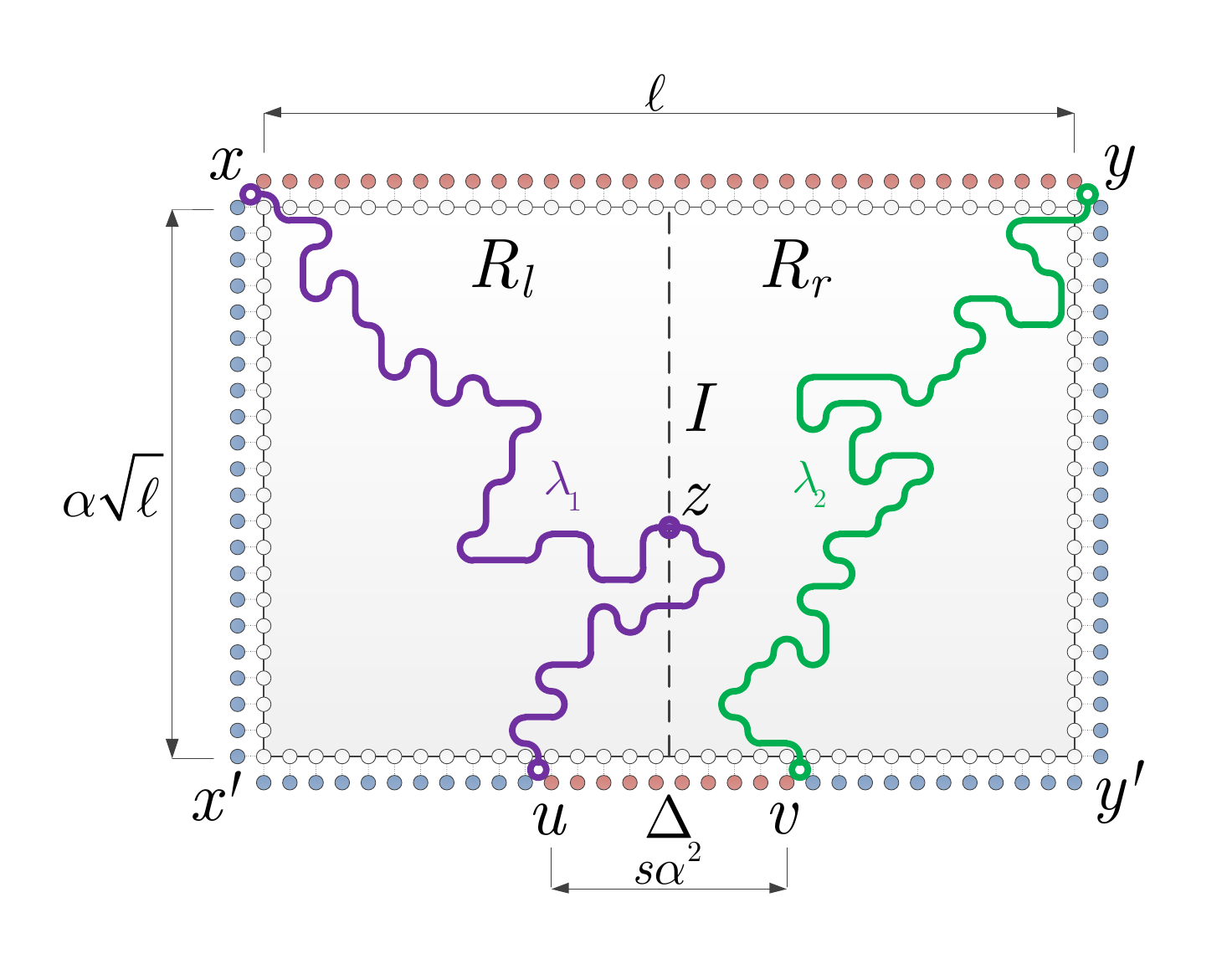}
\vspace{-0.3in}
\caption{Open contours under b.c.\ $(-,+,\Delta)$ crossing the central column of $R$, addressed by the estimate in Lemma~\ref{lem:ac-left-bd-right}.}
\label{fig:rect-central}
\end{figure}

\begin{Lemma}\label{lem:ac-left-bd-right}
Let $R = (x,y,y',x')$ and $\Delta$ be an interval of length $s \alpha^2$ centered on the South border $x'y'$. Denote by $R_l$ and $R_r$ the left and right halves of $R$ respectively.
For any $\beta > \beta_c$ there exist $c_5,c_6>0$ and $s_0 >0$ depending only on $\beta$ such that if $s \geq s_0$ then for any $\ell\in \N$
we have
\[ \pi_R^\eta\left(\lambda_1 \subset R_l \,,\, \lambda_2 \subset R_r  \given \delta\lambda_1=\{ x,  u\}\,,\, \delta \lambda_2=\{y, v\} \right) \geq 1 - \ell^{c_5}\exp(-c_6 \alpha^2)\,.\]
\end{Lemma}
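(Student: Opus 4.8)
The plan is to condition on the event $\{\delta\lambda_1=\{x,u\},\,\delta\lambda_2=\{y,v\}\}$ and to show that, inside this event, it is exponentially unlikely for either open contour to cross the vertical column $\mathcal{C}$ of dual vertices separating $R_l$ from $R_r$. This column passes through the midpoint of $\Delta$, and since $x,u\in R_l$ while $y,v\in R_r$, the event in the statement fails precisely when $\lambda_1$ or $\lambda_2$ meets $\mathcal{C}$; by the reflection symmetry of $\eta$ across $\mathcal{C}$ it suffices to bound the conditional probability that $\lambda_1$ meets $\mathcal{C}$ and double it. Using~\eqref{eq:8}, that conditional probability equals $\mathrm{num}/\Psi_1'$, where $\Psi_1'=\sum_{\underline\lambda=\lambda_1\sqcup\lambda_2:\ \delta\lambda_1=\{x,u\},\ \delta\lambda_2=\{y,v\}}q_{R^*}(\underline\lambda)$ is the weight of all configurations in the conditioning event and $\mathrm{num}$ is the same sum with the extra constraint $\lambda_1\cap\mathcal{C}\neq\emptyset$.

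For $\mathrm{num}$ I would fix the (say, topmost) vertex $z\in\mathcal{C}\cap\lambda_1$, split $\lambda_1$ at $z$ into two contours with boundaries $\{x,z\}$ and $\{z,u\}$, enlarge the sum, and apply Lemma~\ref{l:6.5} repeatedly (exactly as in~\eqref{eq:whereby} and in the proof of Lemma~\ref{cl:tube}) to obtain
\[
\sum_{\substack{\underline\lambda=\lambda_1\sqcup\lambda_2:\ \delta\lambda_1=\{x,u\},\ \delta\lambda_2=\{y,v\}\\ z\in\lambda_1}} q_{R^*}(\underline\lambda)\ \le\ \pi^*_{R^*}(\sigma_x\sigma_z)\,\pi^*_{R^*}(\sigma_z\sigma_u)\,\pi^*_{R^*}(\sigma_y\sigma_v).
\]
Then Lemma~\ref{lem-spin-spin} (the $1/\sqrt{\cdot}$ prefactors are all bounded since $|x-z|,|y-v|\ge 1$ and $|z-u|\ge\tfrac12 s_0$) together with a union bound over the fewer than $\ell$ vertices $z\in\mathcal{C}$ yields $\mathrm{num}\le C\sum_{z\in\mathcal{C}}\exp\big(-\tau_\beta(x-z)-\tau_\beta(z-u)-\tau_\beta(y-v)\big)$ for some $C=C(\beta)>0$.

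For $\Psi_1'$ I would avoid re-deriving any correlation bound and instead reuse the work already done for Lemma~\ref{lem:ab-cd}. Of the three pairings of the boundary set $\{x,y,u,v\}$ of $\eta$, the pairing $\{x,v\}\mid\{y,u\}$ is geometrically impossible: the cyclic order of these points on $\partial R$ is $x,y,v,u$, so two disjoint contours realising that pairing would have to cross. Hence $\Psi_2=\Psi_1+\Psi_1'$ in the notation of~\eqref{eq-Psi-def}. Since Lemma~\ref{lem:ab-cd} gives $\Psi_1/\Psi_2=\pi_R^\eta(\delta\lambda_1=\{x,y\},\delta\lambda_2=\{u,v\})\le\ell^{c_3}e^{-c_4\alpha^2}$, and since (as there) we may enlarge the exponents in the present statement so as to assume $\alpha$ exceeds a large multiple of $\sqrt{\log\ell}$ — the asserted bound being vacuous otherwise — we get $\Psi_1'\ge\tfrac12\Psi_2$, and combining with the lower bound $\Psi_2\ge c''\ell^{-c''}e^{-2\tau_\beta(x-u)}$ from~\eqref{eq-Psi2} we obtain $\Psi_1'\ge\tfrac12 c''\ell^{-c''}e^{-2\tau_\beta(x-u)}$.

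Putting the pieces together and using $\tau_\beta(y-v)=\tau_\beta(x-u)$ (reflection symmetry), the conditional probability is at most a polynomial in $\ell$ times $\max_{z\in\mathcal{C}}\exp\big(-[\tau_\beta(x-z)+\tau_\beta(z-u)-\tau_\beta(x-u)]\big)$, and by the sharp triangle inequality~\eqref{eq-sharp-tri-ineq} the bracketed exponent is $\ge\kappa_\beta(|x-z|+|z-u|-|x-u|)$. The final ingredient is the elementary geometric claim that, once $s_0$ is a large enough constant, $|x-z|+|z-u|-|x-u|\ge c\,\alpha^2$ for every $z\in\mathcal{C}$: using $|x-z|\ge\ell/2$, $|z-u|\ge\tfrac12 s\alpha^2$ and $\sqrt{a^2+b^2}\le a+\tfrac{b^2}{2a}$ one gets excess $\ge(s-2)\alpha^2$ when $s\alpha^2\le\ell/2$, while when $s\alpha^2>\ell/2$ the crude bound $|x-u|\le(\tfrac\ell2-\tfrac{s\alpha^2}{2})+\alpha\sqrt{\ell}$ together with $\alpha<\sqrt{\ell}/s_0$ gives excess $\ge\alpha^2$ for $s_0$ large. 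This yields $\ell^{c_5}e^{-c_6\alpha^2}$, and doubling for $\lambda_2$ completes the proof. The one point needing care is the denominator: the whole argument rests on $\Psi_1'$ being $\Psi_2$ up to a bounded factor, which is exactly what the cancellation $\Psi_1'=\Psi_2-\Psi_1$ provides, so I would carefully confirm that the aspect-ratio and rounding hypotheses on $R$ and $\Delta$ under which~\eqref{eq-Psi2} was proved are in force here (they are, the two lemmas sharing the same geometric setup) and that $\Psi_1\le\tfrac12\Psi_2$ throughout the relevant range of $\alpha$ and $s$.
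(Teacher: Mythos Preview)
Your proposal is correct and follows essentially the same approach as the paper: write the conditional probability as a ratio of weights, lower-bound the denominator via $\Psi_1'=\Psi_2-\Psi_1\ge(1-o(1))\Psi_2$ together with~\eqref{eq-Psi2}, upper-bound the numerator by pinning a crossing point $z$ on the central column and decoupling into three spin--spin correlations, and finish with the sharp triangle inequality plus a geometric excess estimate. The only cosmetic differences are that the paper decouples the numerator via Lemma~\ref{lem-edge-boundary} (conditioning on $\lambda_2$) followed by Corollary~\ref{lem-3-pt-spins} rather than by iterating Lemma~\ref{l:6.5}, and it computes the geometric excess more cleanly by recognizing that $\min_{z\in I}(|z-x|+|z-u|)=|x-v|$ (since $v$ is the reflection of $u$ across $I$), yielding directly $|v-x|-|u-x|\ge\tfrac12 s\alpha^2$.
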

\begin{proof}
Let $I$ denote the central column of $R$, \ie $I$ is the vertical line connecting the centers of the North and South boundaries of $R$.
We aim to bound the probability that the contour connecting $x$ to $u$ crosses $I$, and similarly for the contour connecting $y$ to $v$.
By equality~\eqref{eq:8} we can write the former probability as $\Phi_1/\Phi_2$
where
\begin{align*}
  \Phi_1 :=\sum_{\substack{\underline{\lambda}= (\lambda_1,\lambda_2) \\ \d\lambda_1=\{x,u\} \,,\, \lambda_1 \cap I \neq \emptyset \\ \d\lambda_2=\{y,v\}}} q_{R^*}(\underline{\lambda})\,,&\quad&
  \Phi_2 := \sum_{\substack{\underline\l = (\lambda_1,\lambda_2) \\ \d\underline\l_1=\{x,u\} \\ \d\underline\l_2=\{y,v\}}} q_{R^*}(\underline\l)\,,
\end{align*}
and a union bound (together with symmetry) gives
\[ \pi_R^\eta\left(\lambda_1 \subset R_l \,,\, \lambda_2 \subset R_r  \given \delta\lambda_1=\{ x,  y\}\,,\, \delta \lambda_2=\{u, v\} \right) \geq 1 - 2 \frac{\Phi_1}{\Phi_2} \,.\]
To bound $\Phi_2$ from below we compare it to $\Psi_1,\Psi_2$ defined in~\eqref{eq-Psi-def}. Indeed,
\[ \frac{\Phi_2}{\Psi_2} = 1 - \frac{\Psi_1}{\Psi_2} = 1 -  \pi_R^\eta(\delta\lambda_1=\{ x,  y\}\,,\, \delta \lambda_2=\{u, v\}) \geq 1- \ell^{c_3}\exp(-c_4 \alpha^2)\,,\]
where the last inequality is precisely the statement of Lemma~\ref{lem:ab-cd}. Combining this with the estimate on $\Psi_2$ given in ~\eqref{eq-Psi2} we conclude that for some absolute $c''>0$,
\begin{equation}
  \label{eq-Phi2}
  \Phi_2 \geq \left(1- \ell^{c_3}\exp(-c_4 \alpha^2)\right)c'' \ell^{-c''} e^{-\tau_{\beta}(x-u)-\tau_{\beta}(y-v)}
\end{equation}
(we can assume that $\alpha$ is at least a large constant times $\sqrt{\log \ell}$, otherwise the statement of Lemma \ref{lem:ac-left-bd-right} trivially holds).
\begin{remark*}
  The estimate given in~\eqref{eq-Psi2} for $\Psi_2$ in the proof of Lemma~\ref{lem:ab-cd} in fact had an exponent of $-2\tau_{\beta}(x-u)$. The above bound featuring $-(\tau_{\beta}(x-u)+\tau_{\beta}(y-v))$ in the exponent readily follows from \eqref{eq-Psi2} by the symmetry between $\{x,u\}$ and $\{y,v\}$ in the definition of $\Psi_2$.
\end{remark*}
It remains to bound $\Phi_1$. To this end, for a given contour $\gamma$ define $\cG_{\gamma}$ to be the graph with the edge-set $R^* \setminus \Delta(\gamma)$, where $\Delta(\gamma)$ is the edge-boundary of the contour $\gamma$. Crucially, in our case the edge-boundary of $\gamma$ is disjoint to the edges of $\l_1$ since these contours are compatible. Therefore, by Lemma~\ref{lem-edge-boundary},
\[ q_{R^*}(\l,\gamma) = q_{R^*}(\gamma) q_{\cG_{\gamma}}(\l)\,,\]
and plugging this into the definition of $\Phi_1$ we deduce that
\[ \Phi_1 = \sum_{\gamma \,:\, \d\gamma=\{y,v\}} \bigg( q_{R^*}(\gamma)\sum_{\substack{\l \,:\, \d\l=\{x,u\} \\ \l\cap I \neq \emptyset}} q_{\cG_{\gamma}}(\l)\bigg)\,.\]
The sum over the weights $q_{\cG_\gamma}$ can be estimated via Corollary~\ref{lem-3-pt-spins} using a simple union bound (see e.g.~\cite{cf:CGMS}*{Eq.(3.3)} for a similar argument).
Indeed,
\[
\sum_{\substack{\l \,:\, \d\l=\{x,u\} \\ \l\cap I \neq \emptyset}} q_{\cG_{\gamma}}(\l)
\leq \sum_{z\in I} \sum_{\substack{\lambda:\,\d\lambda=\{x,u\}\\ z\in\lambda}}
q_{\cG_\gamma}(\lambda) \leq \sum_{z\in I} \pi_{\cG_\gamma}^*\left(\s_x\,\s_z\right) \, \pi_{\cG_\gamma}^*\left(\s_u\,\s_z\right)\,.
\]
%The same argument will now provide an estimate for the last term in the r.h.s.\ above via a union bound on a site $z\in I$ where $\l$ intersects the central column $I$
% Given a contour $\l_1$ in $\cG_{\gamma}$ such that $\d\l_1 = \{x,z\}$ for some $z\in I$ we write $\cG_{\gamma,\lambda_1}$ for the graph with the edge-set $\cG_\gamma \setminus \Delta(\lambda_1)$.
%As before, for any $l_2$ such that $\d\l_2 = \{z,u\}$ we have
%\[ q_{\cG_{\gamma}}(\l_1,\l_2) = q_{\cG_\gamma}(\l_1) q_{\cG_{\gamma,\l_1}}(\l_2)\,,\]
%and thus
%\begin{align*}
%\sum_{\substack{\l \,:\, \d\l=\{x,u\} \\ \l\cap I \neq \emptyset}} q_{\cG_{\gamma}}(\l) \leq
%\sum_{z\in I} \sum_{\l_1 \,:\, \d\l_1=\{x,z\}} \bigg( q_{\cG_\gamma}(\l_1)\sum_{\l_2 \,:\, \d\l_2=\{z,u\}} q_{\cG_{\gamma,\l_1}}(\l_2)\bigg)\,.
%\end{align*}
At the same time, we can use \eqref{eq:7} and then increase the appropriate domains to the infinite-volume lattice to obtain that
\begin{align*}
\sum_{\gamma \,:\, \d\gamma=\{y,v\}} q_{R^*}(\gamma) &\leq \pi^*_\infty(\sigma_y \,\sigma_v)\,,\\
\sum_{\substack{\l \,:\, \d\l=\{x,u\} \\ \l\cap I \neq \emptyset}} q_{\cG_{\gamma}}(\l)
 &\leq \sum_{z\in I} \pi^*_\infty(\sigma_x\, \sigma_z)\, \pi_\infty^*(\s_u\,\s_z)\,,
\end{align*}
where here $\pi_\infty^*$ denotes the (unique) Gibbs measure on $\Z^2$ at $\beta^*$. Altogether,
\begin{align*}
\Phi_1 &\leq \sum_{z \in I}  \pi_\infty^*(\s_x\,\s_z)
\pi_\infty^*(\s_z\,\s_u) \pi_\infty^*(\s_y\,\s_v) \\
&\leq \sum_{z\in I} \exp\left(-\tau_{\beta}(z-x)-\tau_{\beta}(z-u)-\tau_{\beta}(y-v)\right)\,.
 \end{align*}
At the same time, due to the sharp triangle inequality property of the surface tension (Eq.~\eqref{eq-sharp-tri-ineq}), for any $\beta>\beta_c$ there exists some $\kappa_\beta>0$ such that for any $u,x,z$
 \begin{align*}
 \tau_{\beta}(z-x)&+\tau_{\beta}(u-z) \geq \tau_{\beta}(u-x) + \kappa_\beta \left(|z-x|+|u-z|-|u-x|\right)\,.
 \end{align*}
Combining the last two displays implies that
\begin{align}
\label{eq-Phi1}
\Phi_1 &\leq e^{-\tau_{\beta}(x-u)-\tau_{\beta}(y-v)} \sum_{z\in I} \exp\left[
-\kappa_\beta \left(|z-x|+|u-z|-|u-x|\right) \right]\,.
 \end{align}
The first term above cancels when combining~\eqref{eq-Phi1},\eqref{eq-Phi2} and we obtain that for some $c''>0$,
\begin{equation}\label{eq-Phi1/Phi2}
\frac{\Phi_1}{\Phi_2} \leq c'' \ell^{c''} \sum_{z\in I} \exp\left[
-\kappa_\beta \left(|z-x|+|u-z|-|u-x|\right) \right]\,.
\end{equation}
A straightforward manipulation of the above exponent will now complete the proof: One can easily infer from the triangle-inequality and symmetry
that
\[\min_{z \in I} \left(|z-x|+|u-z|-|u-x|\right) = |v-x|-|u-x|\,,\]
and recalling that the dimensions of $R$ are $\alpha\sqrt{\ell}\times\ell$ and $|u-v|=s\alpha^2$, we have
\begin{align*}
|v-x|^2 &= \tfrac14(\ell + s\alpha^2)^2 + \alpha^2 \ell \,,&\quad& |u-x|^2 = \tfrac14(\ell - s\alpha^2)^2 + \alpha^2 \ell\,.
\end{align*}
Hence, as long as $\alpha < (1/s)\sqrt{\ell}$ (guaranteed by the assumptions of Proposition~\ref{prop:Bound2}) we have $|u-x|+|v-x| \leq 2\ell$ and
\[ |v-x| - |u-x| = \frac{s\alpha^2 \ell}{|u-x|+|v-x|} \geq \frac12 s\alpha^2 \,.\]
Plugging this in~\eqref{eq-Phi1/Phi2} while summing over the $|I|$ values that $z$ can assume gives that
\[ \frac{\Phi_1}{\Phi_2} \leq c'' \ell^{c''+1} \exp(-(\kappa_\beta/2)s \alpha^2)\]
(with room to spare), as required.
\end{proof}

\section*{Acknowledgments}
We are very grateful to Yvan Velenik for valuable suggestions on the
random-line representation and related equilibrium estimates, as well as for pointing out the novelty
of Corollary~\ref{cor-spin-spin-strip}. EL and AS are grateful to  the
Mathematics Department of the University of Roma Tre for hospitality and support.
FM and FLT are similarly grateful to the Theory Group of Microsoft
Research.
FLT acknowledges partial support by ANR grant LHMSHE.
\begin{bibdiv}
\begin{biblist}

\bib{cf:AR}{article}{
  title = {Phase Separation in the Two-Dimensional Ising Ferromagnet},
  author = {Abraham, D. B.},
  author = {Reed, P.},
  journal = {Phys. Rev. Lett.},
  volume = {33},
  number = {6},
  pages = {377--379},
%  numpages = {2},
  date = {1974},
%  month = {Aug},
%  doi = {10.1103/PhysRevLett.33.377},
  publisher = {American Physical Society}
}

\bib{cf:Alexander}{article}{
   author={Alexander, Kenneth S.},
   title={The spectral gap of the 2-D stochastic Ising model with nearly
   single-spin boundary conditions},
   journal={J. Statist. Phys.},
   volume={104},
   date={2001},
   number={1-2},
   pages={59--87},
%   issn={0022-4715},
%   review={\MR{1851384 (2003g:82048)}},
%   doi={10.1023/A:1010301525937},
}

\bib{cf:AY}{article}{
   author={Alexander, Kenneth S.},
   author={Yoshida, Nobuo},
   title={The spectral gap of the 2-D stochastic Ising model with mixed boundary conditions},
   journal={J. Statist. Phys.},
   volume={104},
   date={2001},
   number={1-2},
   pages={89--109},
%   issn={0022-4715},
%   review={\MR{1851385 (2003d:82017)}},
%   doi={10.1023/A:1010382316368},
}

\bib{cf:Bianchi}{article}{
	author = {Bianchi, A.},
	title = {Glauber dynamics on nonamenable graphs: boundary conditions and mixing time},
	journal = {Electronic Journal of Probability},
	volume = {13},
	date = {2008},
%	paper = {65},
	pages = {1980--2012},
%	url = {http://www.math.washington.edu/~ejpecp/EjpVol13/paper65.abs.html}
}

\bib{cf:BM}{article}{
   author={Bodineau, T.},
   author={Martinelli, Fabio},
   title={Some new results on the kinetic Ising model in a pure phase},
   journal={J. Statist. Phys.},
   volume={109},
   date={2002},
   number={1-2},
   pages={207--235},
%   issn={0022-4715},
%   review={\MR{1927919 (2003i:82057)}},
%   doi={10.1023/A:1019939712267},
}

\bib{cf:BEF}{article}{
  author={Bricmont, J.},
  author={El Mellouki, A.},
  author={Fr\"ohlich, J.},
  title={Random surfaces in statistical mechanics: roughening, rounding, wetting...},
  journal={J. Statist. Phys.},
  volume={29},
  date={1982},
  pages={193--203},
}

\bib{cf:CMST}{article}{
   author={Caputo, Pietro},
   author={Martinelli, Fabio},
   author={Simenhaus, Francois},
   author={Toninelli, Fabio Lucio},
   title={``Zero'' temperature stochastic 3D Ising model and dimer covering fluctuations: a first step towards interface mean curvature motion},
   status = {Comm. Pure Appl. Math., to appear},
   note={Available at \texttt{arXiv:1007.3599} (2010)},
}

\bib{cf:CGMS}{article}{
   author={Cesi, F.},
   author={Guadagni, G.},
   author={Martinelli, F.},
   author={Schonmann, R. H.},
   title={On the two-dimensional stochastic Ising model in the phase
   coexistence region near the critical point},
   journal={J. Statist. Phys.},
   volume={85},
   date={1996},
   number={1-2},
   pages={55--102},
%   issn={0022-4715},
%   review={\MR{1413237 (98a:82014)}},
}

\bib{cf:CSS}{article}{
   author={Chayes, L.},
   author={Schonmann, R. H.},
   author={Swindle, G.},
   title={Lifshitz' law for the volume of a two-dimensional droplet at zero
   temperature},
   journal={J. Statist. Phys.},
   volume={79},
   date={1995},
   number={5-6},
   pages={821--831},
%   issn={0022-4715},
%   review={\MR{1330364 (96d:82023)}},
}

\bib{cf:DH}{article}{
   author={Dobrushin, R.},
   author={Hryniv, O.},
   title={Fluctuations of the phase boundary in the $2$D Ising ferromagnet},
   journal={Comm. Math. Phys.},
   volume={189},
   date={1997},
   number={2},
   pages={395--445},
%   issn={0010-3616},
%   review={\MR{1480026 (99a:82037)}},
%   doi={10.1007/s002200050209},
}

\bib{cf:DKS}{book}{
   author={Dobrushin, R.},
   author={Koteck{\'y}, R.},
   author={Shlosman, S.},
   title={Wulff construction. A global shape from local interaction},
   series={Translations of Mathematical Monographs},
   volume={104},
   note={Translated from Russian by the authors},
   publisher={American Mathematical Society},
   place={Providence, RI},
   date={1992},
   pages={x+204},
%   isbn={0-8218-4563-2},
%   review={\MR{1181197 (93k:82002)}},
}

\bib{cf:FH}{article}{
   author={Fisher, Daniel S.},
   author={Huse, David A.},
   title = {Dynamics of droplet fluctuations in pure and random Ising systems},
   journal = {Phys. Rev. B},
   volume = {35},
   number = {13},
   pages = {6841--6846},
   date = {1987},
%  doi = {10.1103/PhysRevB.35.6841},
%  publisher = {American Physical Society}
}

\bib{cf:FSS}{article}{
   author={Fontes, L. R.},
   author={Schonmann, R. H.},
   author={Sidoravicius, V.},
   title={Stretched exponential fixation in stochastic Ising models at zero
   temperature},
   journal={Comm. Math. Phys.},
   volume={228},
   date={2002},
   number={3},
   pages={495--518},
%   issn={0010-3616},
%   review={\MR{1918786 (2003g:82087)}},
%   doi={10.1007/s002200200658},
}

\bib{cf:FKG}{article}{
   author={Fortuin, C. M.},
   author={Kasteleyn, P. W.},
   author={Ginibre, J.},
   title={Correlation inequalities on some partially ordered sets},
   journal={Comm. Math. Phys.},
   volume={22},
   date={1971},
   pages={89--103},
%   issn={0010-3616},
%   review={\MR{0309498 (46 \#8607)}},
}

\bib{cf:Gallavotti}{article}{
   author={Gallavotti, Giovanni},
   title={The phase separation line in the two-dimensional Ising model},
   journal={Comm. Math. Phys.},
   volume={27},
   date={1972},
   pages={103--136},
%   issn={0010-3616},
%   review={\MR{0342116 (49 \#6862)}},
}

\bib{cf:Glauber}{article}{
   author={Glauber, Roy J.},
   title={Time-dependent statistics of the Ising model},
   journal={J. Mathematical Phys.},
   volume={4},
   date={1963},
   pages={294--307},
%   issn={0022-2488},
%   review={\MR{0148410 (26 \#5917)}},
}

\bib{cf:GI}{article}{
   author={Greenberg, Lev},
   author={Ioffe, Dmitry},
   title={On an invariance principle for phase separation lines},
  % language={English, with English and French summaries},
   journal={Ann. Inst. H. Poincar\'e Probab. Statist.},
   volume={41},
   date={2005},
   number={5},
   pages={871--885},
%   issn={0246-0203},
%   review={\MR{2165255 (2006h:60059)}},
%   doi={10.1016/j.anihpb.2005.05.001},
}

\bib{cf:Griffiths}{article}{
   author={Griffiths, Robert B.},
   title={Correlations in Ising ferromagnets. III. A mean-field bound for binary correlations},
   journal={Comm. Math. Phys.},
   volume={6},
   date={1967},
   number={2},
   pages={121--127},
%   issn={0010-3616},
%   review={\MR{1552524}},
%   doi={10.1007/BF01654128},
}

\bib{cf:Higuchi}{article}{
   author={Higuchi, Yasunari},
   title={On some limit theorems related to the phase separation line in the
   two-dimensional Ising model},
   journal={Z. Wahrsch. Verw. Gebiete},
   volume={50},
   date={1979},
   number={3},
   pages={287--315},
%   issn={0044-3719},
%   review={\MR{554548 (81f:60139)}},
%   doi={10.1007/BF00534152},
}

\bib{cf:Hryniv}{article}{
  title={{On local behaviour of the phase separation line in the 2D Ising model}},
  author={Hryniv, O.},
  journal={Probability Theory and Related Fields},
  volume={110},
  pages={91--107},
  date={1998},
  publisher={Springer}
}

\bib{cf:Ioffe1}{article}{
   author={Ioffe, Dmitry},
   title={Exact large deviation bounds up to $T\sb c$ for the Ising model in
   two dimensions},
   journal={Probability Theory Related Fields},
   volume={102},
   date={1995},
   number={3},
   pages={313--330},
%   issn={0178-8051},
%   review={\MR{1339736 (97f:60063)}},
%   doi={10.1007/BF01192464},
}

\bib{cf:Ioffe2}{article}{
   author={Ioffe, Dmitry},
   title={Large deviations for the $2$D Ising model: a lower bound without
   cluster expansions},
   journal={J. Statist. Phys.},
   volume={74},
   date={1994},
   number={1-2},
   pages={411--432},
%   issn={0022-4715},
%   review={\MR{1257822 (95a:82017)}},
}

\bib{cf:KS}{article}{
   author = {Kelly, D. G.},
   author = {Sherman, S.},
   title = {General Griffiths' Inequalities on Correlations in Ising Ferromagnets},
   journal = {J. Math. Phys.},
   volume={9},
   number={3},
   pages={466--484},
   date={1968},
%    doi={10.1063/1.1664600},
}

\bib{cf:LPW}{book}{
    author = {Levin, David A.},
    author = {Peres, Yuval},
    author = {Wilmer, Elizabeth},
    title =  {Markov Chains and Mixing Times},
    publisher = {American Mathematical Society},
    date = {2008},
    pages = {371},
}

\bib{cf:Lifshitz}{article}{
  title={Kinetics of ordering during second-order phase transitions},
  author={Lifshitz, Ilya M.},
  journal={Sov. Phys. JETP},
  volume={15},
  pages={939},
  date={1962},
}

\bib{cf:Liggett}{book}{
   author={Liggett, Thomas M.},
   title={Interacting particle systems},
   series={Classics in Mathematics},
   note={Reprint of the 1985 original},
   publisher={Springer-Verlag},
   place={Berlin},
   date={2005},
   pages={xvi+496},
%   isbn={3-540-22617-6},
%   review={\MR{2108619 (2006b:60003)}},
}

\bib{cf:LS}{article}{
    author = {Lubetzky, Eyal},
    author = {Sly, Allan},
    title = {Critical Ising on the square lattice mixes in polynomial time},
    status = {preprint},
    note={Available at \texttt{arXiv:1001.1613} (2010)},
}

\bib{cf:Martinelli97}{article}{
   author={Martinelli, Fabio},
   title={Lectures on Glauber dynamics for discrete spin models},
   conference={
      title={Lectures on probability theory and statistics},
      address={Saint-Flour},
      date={1997},
   },
   book={
      series={Lecture Notes in Math.},
      volume={1717},
      publisher={Springer},
      place={Berlin},
   },
   date={1999},
   pages={93--191},
%   review={\MR{1746301 (2002a:60163)}},
}

\bib{cf:Martinelli}{article}{
   author={Martinelli, F.},
   title={On the two-dimensional dynamical Ising model in the phase coexistence region},
   journal={J. Statist. Phys.},
   volume={76},
   date={1994},
   number={5-6},
   pages={1179--1246},
%   issn={0022-4715},
%   review={\MR{1298100 (95j:82035)}},
}

\bib{cf:MO1}{article}{
   author={Martinelli, F.},
   author={Olivieri, E.},
   title={Approach to equilibrium of Glauber dynamics in the one phase
   region. I. The attractive case},
   journal={Comm. Math. Phys.},
   volume={161},
   date={1994},
   number={3},
   pages={447--486},
}

\bib{cf:MO2}{article}{
   author={Martinelli, F.},
   author={Olivieri, E.},
   title={Approach to equilibrium of Glauber dynamics in the one phase
   region. II. The general case},
   journal={Comm. Math. Phys.},
   volume={161},
   date={1994},
   number={3},
   pages={487--514},
}

\bib{cf:MS}{article}{
   author={Martinelli, Fabio},
   author={Sinclair, Alistair},
   title = {Mixing time for the solid-on-solid model},
   booktitle = {Proceedings of the 41st annual ACM symposium on Theory of computing (STOC 2009)},
%   date = {2009},
  pages = {571--580},
%  location = {Bethesda, MD, USA},
  publisher = {ACM},
  address = {New York, NY, USA},
% isbn = {978-1-60558-506-2},
% doi = {http://doi.acm.org/10.1145/1536414.1536492},
}

\bib{cf:MSW}{article}{
   author={Martinelli, Fabio},
   author={Sinclair, Alistair},
   author={Weitz, Dror},
   title={Glauber dynamics on trees: boundary conditions and mixing time},
   journal={Comm. Math. Phys.},
   volume={250},
   date={2004},
   number={2},
   pages={301--334},
%   issn={0010-3616},
%   review={\MR{2094519 (2005j:82052)}},
}

\bib{cf:MT}{article}{
   author={Martinelli, Fabio},
   author={Toninelli, Fabio Lucio},
   title={On the mixing time of the $2$D stochastic Ising model with ``plus'' boundary conditions at low temperature},
   journal={Comm. Math. Phys.},
   volume={296},
   date={2010},
   number={1},
   pages={175--213},
}

\bib{cf:MW}{book}{
  title={The two-dimensional Ising model},
  author={McCoy, Barry M.},
  author={Wu, Tai Tsun},
  place={Cambridge, Massachusetts},
  date={1973},
  publisher={Harvard University Press},
}

\bib{cf:Ogielski}{article}{
  title = {Dynamics of fluctuations in the ordered phase of kinetic Ising models},
  author = {Ogielski, Andrew T.},
  journal = {Phys. Rev. B},
  volume = {36},
  number = {13},
  pages = {7315--7318},
  date = {1987},
%  month = {Nov},
%  doi = {10.1103/PhysRevB.36.7315},
%  publisher = {American Physical Society}
}

\bib{cf:Onsager}{article}{
   author={Onsager, Lars},
   title={Crystal statistics. I. A two-dimensional model with an
   order-disorder transition},
   journal={Phys. Rev. (2)},
   volume={65},
   date={1944},
   pages={117--149},
%   review={\MR{0010315 (5,280d)}},
}

\bib{cf:noteperes}{article}{
    AUTHOR = {Peres, Yuval},
    conference = {
        title = {Lectures on ``Mixing for Markov Chains and Spin Systems''},
        address = {U.\ of British Columbia},
        date = {August 2005},
        },
    note ={Summary available at \texttt{http://www.stat.berkeley.edu/\~{}peres/ubc.pdf}},
}

\bib{cf:Pfister}{article}{
  author={Pfister, C.-E.},
   title={Large deviations and phase separation in the two-dimensional Ising
   model},
   journal={Helv. Phys. Acta},
   volume={64},
   date={1991},
   number={7},
   pages={953--1054},
%   issn={0018-0238},
%   review={\MR{1149430 (93c:82011)}},
}

\bib{cf:PV1}{article}{
   author={Pfister, C.-E.},
   author={Velenik, Y.},
   title={Interface, surface tension and reentrant pinning transition in the $2$D Ising model},
   journal={Comm. Math. Phys.},
   volume={204},
   date={1999},
   number={2},
   pages={269--312},
%   issn={0010-3616},
%   review={\MR{1704276 (2000k:82022)}},
%   doi={10.1007/s002200050646},
}

\bib{cf:PV2}{article}{
   author={Pfister, C.-E.},
   author={Velenik, Y.},
   title={Large deviations and continuum limit in the $2$D Ising model},
   journal={Probab. Theory Related Fields},
   volume={109},
   date={1997},
   number={4},
   pages={435--506},
%   issn={0178-8051},
%   review={\MR{1483597 (99c:60058)}},
%   doi={10.1007/s004400050139},
}

\bib{cf:SaloffCoste}{article}{
   author={Saloff-Coste, Laurent},
   title={Lectures on finite Markov chains},
   conference={
      title={Lectures on probability theory and statistics},
      address={Saint-Flour},
      date={1996},
   },
   book={
      series={Lecture Notes in Math.},
      volume={1665},
      publisher={Springer},
      place={Berlin},
   },
   date={1997},
   pages={301--413},
%   review={\MR{1490046 (99b:60119)}},
}

\bib{cf:Shlosman}{article}{
   author={Shlosman, S. B.},
   title={The droplet in the tube: a case of phase transition in the
   canonical ensemble},
   journal={Comm. Math. Phys.},
   volume={125},
   date={1989},
   number={1},
   pages={81--90},
%   issn={0010-3616},
%   review={\MR{1017740 (90k:82082)}},
}

\bib{cf:Spohn}{article}{
   author={Spohn, Herbert},
   title={Interface motion in models with stochastic dynamics},
   journal={J. Statist. Phys.},
   volume={71},
   date={1993},
   number={5-6},
   pages={1081--1132},
%   issn={0022-4715},
%   review={\MR{1226387 (94k:82074)}},
}

\bib{cf:Sugimine}{article}{
   author={Sugimine, Nobuaki},
   title={A lower bound on the spectral gap of the 3-dimensional stochastic
   Ising models},
   journal={J. Math. Kyoto Univ.},
   volume={42},
   date={2002},
   number={4},
   pages={751--788 (2003)},
%   issn={0023-608X},
%   review={\MR{1967057 (2004e:82042)}},
}

\bib{cf:Thomas}{article}{
   author={Thomas, Lawrence E.},
   title={Bound on the mass gap for finite volume stochastic Ising models at
   low temperature},
   journal={Comm. Math. Phys.},
   volume={126},
   date={1989},
   number={1},
   pages={1--11},
%   issn={0010-3616},
%   review={\MR{1027910 (90j:82014)}},
}

%\bib{cf:Yvpriv}{article}{
%   author={Velenik, Yvan},
%   title={private communication},
%%   journal={Comm. Math. Phys.},
% %  volume={126},
%  % date={1989},
%  %% number={1},
%   %pages={1--11},
%%   issn={0010-3616},
%%   review={\MR{1027910 (90j:82014)}},
%}

\end{biblist}
\end{bibdiv}

\end{document}